\newcounter{notes}%
\newtheorem{cor}{Corollary}[section]
\newtheorem{theorem}[cor]{Theorem}
\newtheorem{prop}[cor]{Proposition}
\newtheorem{lemma}[cor]{Lemma}
\newtheorem{claim}[cor]{Claim}
 \newtheorem{Lemma}[cor]{Lemma}
 \newtheorem{Proposition}[cor]{Proposition}
\theoremstyle{definition}
\newtheorem{defi}[cor]{Definition}
\newtheorem{Remark}[cor]{Remark}
\newtheorem{remark}[cor]{Remark}
\newcommand{\HS}{\mathrm{HS}}
\newcommand{\HH}{{\mathbb H}}
\newcommand{\N}{{\mathbb N}}
\newcommand{\HP}{{\mathbb{HP}}}
\newcommand{\R}{{\mathbb R}}
\newcommand{\AdS}{\mathbb{A}\mathrm{d}\mathbb{S}}
\newcommand{\dS}{\mathrm{d}\mathbb{S}}
\newcommand{\SL}{\mathrm{SL}}
\newcommand{\PSL}{\mathrm{PSL}}
\newcommand{\Teich}{\mathscr{T}}
\newcommand{\Angles}{\mathcal{A}}
\newcommand{\XX}{\mathbb X}
\newcommand{\BB}{\mathcal B}
\newcommand{\RR}{\mathbb R}
\newcommand{\CC}{\mathbb C}
\newcommand{\CP}{\mathbb{CP}}
\newcommand{\Rtau}{\mathbb R + \mathbb R \tau}
\newcommand{\Rsigma}{\mathbb R + \mathbb R \sigma}
\newcommand{\RP}{\mathbb{RP}}
\newcommand{\OO}{\operatorname{O}}
\newcommand{\PSO}{\operatorname{PSO}}
\newcommand{\PO}{\operatorname{PO}}
\newcommand{\GL}{\operatorname{GL}}
\newcommand{\PGL}{\operatorname{PGL}}
\newcommand{\Herm}{\operatorname{Herm}}
\newcommand{\minimatrix}[4]{\left(\begin{matrix} #1 & #2 \\ #3 & #4 \end{matrix}\right)  }
\newcommand{\bminimatrix}[4]{\begin{bmatrix} #1 & #2 \\ #3 & #4 \end{bmatrix}  }
\newcommand{\btwovector}[2]{\begin{bmatrix} #1\\#2 \end{bmatrix} }
\newcommand{\rot}{\operatorname{rot}}
\newcommand{\wt}{\widetilde}
\newcommand{\plane}{\mathscr{P}}
\newcommand{\doubles}{\mathscr{D}}
\newcommand{\poly}{\mathsf{polyg}}
\newcommand{\ML}{\mathcal{ML}}
\newcommand{\Graph}{\mathsf{Graph}}
\newcommand{\AdSPoly}{\mathsf{AdSPolyh}}
\newcommand{\PsiAdS}{\Psi^{\mathsf{AdS}}}
\newcommand{\HPPoly}{\mathsf{HPPolyh}}
\newcommand{\PsiHP}{\Psi^{\mathsf{HP}}}
\let\oldtocsection=\tocsection
\let\oldtocsubsection=\tocsubsection
\let\oldtocsubsubsection=\tocsubsubsection
\renewcommand{\tocsection}[2]{\hspace{0em}\oldtocsection{#1}{#2}}
\renewcommand{\tocsubsection}[2]{\hspace{1em}\oldtocsubsection{#1}{#2}}
\renewcommand{\tocsubsubsection}[2]{\hspace{2em}\oldtocsubsubsection{#1}{#2}}
\begin{document}

\title{Polyhedra inscribed in a quadric}

\author{Jeffrey Danciger}
\address{Department of Mathematics, University of Texas at Austin}
\email{jdanciger@math.utexas.edu}
\urladdr{www.ma.utexas.edu/users/jdanciger}

\author{Sara Maloni}
\address{Department of Mathematics, Brown University}
\email{sara\_maloni@brown.edu}
\urladdr{http://www.math.brown.edu/$\sim$maloni}

\author{Jean-Marc Schlenker}
\address{Department of mathematics, University of Luxembourg}
\email{jean-marc.schlenker@uni.lu}
\urladdr{math.uni.lu/schlenker}

\thanks{The first author was partially supported by the National Science Foundation under the grant DMS 1103939. The second author was partially supported by the European Research Council under the {\em European Community}'s seventh Framework Programme (FP7/2007-2013)/ERC {\em grant agreement}}

\date{\today}

\begin{abstract}
We study convex polyhedra in three-space that are inscribed in a quadric surface. Up to projective transformations, there are three such surfaces: the sphere, the hyperboloid, and the cylinder. Our main result is that a planar graph $\Gamma$ is realized as the $1$--skeleton of a polyhedron inscribed in the hyperboloid or cylinder if and only if $\Gamma$ is realized as the $1$--skeleton of a polyhedron inscribed in the sphere and $\Gamma$ admits a Hamiltonian cycle.

Rivin characterized convex polyhedra inscribed in the sphere by studying the geometry of ideal polyhedra in hyperbolic space. We study the case of the hyperboloid and the cylinder by parameterizing the space of convex ideal polyhedra in anti-de Sitter geometry and in half-pipe geometry. Just as the cylinder can be seen as a degeneration of the sphere and the hyperboloid, half-pipe geometry is naturally a limit of both hyperbolic and anti-de Sitter geometry. We promote a unified point of view to the study of the three cases throughout.
\end{abstract}

\maketitle

\tableofcontents

\section{Introduction and results}

\subsection{Polyhedra inscribed in a quadric}

According to a celebrated result of Steinitz (see e.g. \cite[Chapter 4]{ziegler:lectures}), 
a graph $\Gamma$ is the $1$--skeleton of a convex polyhedron in $\R^3$ if and
only if $\Gamma$ is planar and $3$--connected. Steinitz \cite{ste_iso} also discovered, however, that there exists a $3$--connected planar graph
which is not realized as the $1$--skeleton of any polyhedron inscribed in the unit sphere~$S$,
answering a question asked by Steiner
\cite{ste_sys} in 1832. An understanding of which polyhedral types can or can not be inscribed in the sphere remained elusive until Hodgson, Rivin, and Smith~\cite{hodgson1992characterization} gave a full characterization in 1992. This article is concerned with realizability by polyhedra inscribed 
in other quadric surfaces in $\R^3$. Up to projective transformations, there are two such surfaces: 
the hyperboloid $H$, defined by $x_1^2 + x_2^2 - x_3^2 = 1$, and the cylinder $C$, defined 
by $x_1^2 + x_2^2 = 1$ (with $x_3$ free).

\begin{defi}\label{def:inscribed}
A convex polyhedron $P$ is {\it inscribed} in the hyperboloid $H$ (resp. the cylinder $C$) if $P\cap H$ (resp. $P \cap C$) is exactly the set
of vertices of $P$. 
\end{defi}

\noindent If a polyhedron $P$ is inscribed in the cylinder $C$, then $P$ lies in the solid cylinder $x_1^2 + x_2^2 \leq 1$ (and $x_3$ free), with all points of $P$ except its vertices lying in the interior.
A polyhedron $P$ inscribed in the hyperboloid $H$ could lie in (the closure of) either complementary region of $\RR^3 \setminus H$. However, after performing a projective transformation, preserving $H$ and exchanging the two complementary regions of $\RR^3 \setminus H$, we may (and will henceforth) assume that all points of $P$, except its vertices, lie in the interior of the solid hyperboloid $x_1^2 +x_2^2 - x_3^2 \leq 1$.

Recall that a {\it Hamiltonian cycle} in  is a closed path visiting each
vertex exactly once. We prove the following.

\begin{theorem}
\label{thm:main}
Let $\Gamma$ be a planar graph. Then the following conditions are equivalent:
\begin{enumerate}
\item[(C):] $\Gamma$ is the $1$--skeleton of some convex polyhedron inscribed in the cylinder.
\item[(H):] $\Gamma$ is the $1$--skeleton of some convex polyhedron inscribed in the hyperboloid.
\item[(S):]  $\Gamma$ is the $1$--skeleton of some convex  polyhedron inscribed in the sphere and $\Gamma$ admits a Hamiltonian cycle.
\end{enumerate}
\end{theorem}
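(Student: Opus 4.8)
The plan is to prove the cyclic chain of implications (S)$\Rightarrow$(C)$\Rightarrow$(H)$\Rightarrow$(S), with the middle implication being the easiest. For (C)$\Rightarrow$(H): if $P$ is inscribed in the cylinder $C$, I would exhibit a projective transformation of $\mathbb{RP}^3$ carrying (a bounded piece of) the solid cylinder into the solid hyperboloid, sending $C$ onto $H$, and check that the image of $P$ is still a convex polyhedron with its vertices exactly on $H$ and the rest of $P$ interior. Concretely, the cylinder is a projective degeneration of the hyperboloid; one can take a one-parameter family of quadrics $x_1^2+x_2^2-t x_3^2 = 1$ interpolating between $H$ ($t=1$) and $C$ ($t=0$), and a polyhedron inscribed in $C$ perturbs to one inscribed in the nearby hyperboloid, which is projectively equivalent to $H$. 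This direction should be short.

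For (S)$\Rightarrow$(C): suppose $\Gamma$ is the $1$-skeleton of a polyhedron inscribed in $S$ and carries a Hamiltonian cycle $\mathcal{H}$. The idea is that the Hamiltonian cycle $\mathcal{H}$ cuts $\Gamma$ into two paths; the key is to use the equatorial circle of the cylinder $C$ (the ``waist'' at $x_3$-level, intersecting the relevant horizontal affine plane) as the image of the Hamiltonian cycle, with the vertices alternately above and below. I would try to build the inscribed-in-$C$ polyhedron directly: place the vertices of $\mathcal{H}$ on the circle $x_1^2+x_2^2=1$ in cyclic order; then $\Gamma\setminus\mathcal{H}$ decomposes (since $\Gamma$ is planar and $\mathcal{H}$ is Hamiltonian) into two forests, one drawn ``inside'' and one ``outside'' $\mathcal{H}$ in the planar embedding, which should correspond to the edges visible from $x_3=+\infty$ and those visible from $x_3=-\infty$. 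However, this combinatorial-to-geometric step is exactly where a genuine existence/rigidity theorem is needed, and I expect the actual argument to route through the hyperboloid picture instead: ideal polyhedra inscribed in $C$ (resp. $H$) are the projective models of ideal polyhedra in half-pipe (resp. anti-de Sitter) geometry, and the existence is obtained by a parametrization/deformation theorem for such ideal polyhedra analogous to Rivin's for the sphere, characterizing them by a dihedral-angle (or induced-metric) condition. So (S)$\Rightarrow$(C) would be reduced to checking that Rivin's angle condition for sphere-inscribed polyhedra, together with the existence of a Hamiltonian cycle, matches the corresponding condition in the AdS/half-pipe parametrization.

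For (H)$\Rightarrow$(S): given $P$ inscribed in the hyperboloid $H$, I need to produce both a sphere-inscribed realization of $\Gamma$ and a Hamiltonian cycle. The Hamiltonian cycle should come for free from the geometry: the hyperboloid $H$ is divided by the plane $x_3=0$ into two pieces (or more precisely, $H$ retracts onto a circle, and the two families of rulings give a natural ``equator''); the vertices of $P$ split according to whether they lie in $x_3>0$ or $x_3<0$, and I would argue that the edges of $\Gamma$ joining same-side vertices, together with a suitable matching of cross edges, assemble into a Hamiltonian cycle — more cleanly, one shows that the ``link'' structure of $P$ relative to the ruling of $H$ forces $\Gamma$ to have a Hamiltonian cycle. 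To get the sphere realization, one again passes through Rivin: a polyhedron inscribed in $H$ has an induced ideal AdS structure with prescribed dihedral angles satisfying Rivin-type linear inequalities, and those same inequalities (Rivin's characterization) produce a polyhedron inscribed in $S$ with the same $1$-skeleton.

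The main obstacle is the correspondence between the combinatorial condition ``$\Gamma$ admits a Hamiltonian cycle'' and the geometric condition of being inscribable in $C$ or $H$: in the sphere case the link of each vertex is a cycle with no extra structure, while in the hyperboloid/cylinder case the two rulings (or the two complementary regions) impose a bipartition-like structure on the edges around each vertex, and the global shadow of that local structure is precisely a Hamiltonian cycle. Pinning down this equivalence — showing the Hamiltonian cycle is exactly the obstruction, in both directions, and that it interacts correctly with Rivin's linear-programming characterization of dihedral angles — is the crux, and it is what forces the detailed development of ideal polyhedra in anti-de Sitter and half-pipe geometry promised in the abstract.
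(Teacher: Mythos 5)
Your high-level route is the one the paper takes: realizability in $C$ and in $H$ is characterized by the same dihedral-angle cone $\Angles$ via the HP and AdS parameterization theorems (Theorems~\ref{thm:main-HP} and~\ref{thm:main-AdS-angles}), and the equivalence with (S) is a dictionary between Rivin's inequalities and the conditions defining $\Angles_\Gamma$, with the Hamiltonian cycle playing the role of the equator. However, your opening claim for (C)$\Rightarrow$(H) --- a projective transformation of $\RP^3$ carrying $C$ onto $H$ --- is false: these two quadrics have different rank and so are projectively inequivalent, which is precisely why the paper treats the sphere, hyperboloid and cylinder as genuinely distinct cases. Your fallback via the degenerating family $x_1^2+x_2^2-tx_3^2=1$ has the right flavor, but the step ``a polyhedron inscribed in $C$ perturbs to one inscribed in a nearby hyperboloid with unchanged combinatorics'' is itself an openness/rigidity statement and cannot be taken for free --- establishing such stability is essentially what Theorems~\ref{thm:main-HP} and~\ref{thm:main-AdS-angles} do. The paper instead reads (C)$\Leftrightarrow$(H) directly off the fact that $\PsiHP$ and $\PsiAdS$ are both homeomorphisms onto the \emph{same} complex $\Angles$, so the sets of realizable $1$-skeleta coincide by definition.

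You correctly identify that the crux of (H)$\Leftrightarrow$(S) is converting between Rivin's weight conditions and the $\Angles_\Gamma$ conditions, but you stop at ``checking that they match,'' and this matching is where the argument actually lives. The paper's dictionary is a concrete arithmetic: given $\theta \in \Angles_\Gamma$, rescale by $t>0$ small enough that $t\theta(e) \in (-\pi,\pi)\setminus\{0\}$ for all $e$ and the $t\theta$-sum over every simple circuit in $\Gamma^*$ exceeds $-\pi$; then set $\theta'(e)=t\theta(e)+\pi$ for $e$ on the equator $\gamma$ and $\theta'(e)=t\theta(e)$ otherwise, and verify that $\theta'$ satisfies Rivin's three conditions, hence is realized by a polyhedron inscribed in $S$. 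Conversely, given Rivin data $\theta'$ for $\Gamma$ together with a Hamiltonian cycle $\gamma$, subtracting $\pi$ from $\theta'$ on the edges of $\gamma$ lands in $\Angles_\Gamma$, and Theorem~\ref{thm:main-AdS-angles} realizes it by a polyhedron inscribed in $H$. Without this explicit $\pm\pi$ translation on the equator the equivalence remains programmatic rather than proven.
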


The ball $x_1^2 + x_2^2 + x_3^2 < 1$, thought of as lying in an affine chart $\RR^3$ of $\RP^3$, gives the projective model for hyperbolic space $\HH^3$, with the sphere $S$ describing the ideal boundary $\partial_\infty \HH^3$. 
In this model, projective lines and planes intersecting the ball correspond to totally geodesic lines and planes in $\HH^3$. Therefore a convex polyhedron inscribed in the sphere is naturally associated to a \emph{convex ideal polyhedron} in the hyperbolic space $\HH^3$. 

Following the pioneering work of Andreev~\cite{Andreev, Andreev-ideal}, Rivin~\cite{riv_ach} gave a parameterization of the deformation space of such ideal polyhedra in terms of dihedral angles. As a corollary, Hodgson, Rivin and Smith \cite{hodgson1992characterization} showed that deciding whether a planar graph $\Gamma$ may be realized as the $1$--skeleton of a polyhedron inscribed in the sphere amounts to solving a linear programming problem on~$\Gamma$. To prove Theorem~\ref{thm:main}, we show that, given a Hamiltonian path in $\Gamma$, there is a similar linear programming problem whose solutions determine polyhedra inscribed in either the cylinder or the hyperboloid. 

The solid hyperboloid $x_1^2 + x_2^2 - x_3^2 < 1$ in $\RR^3$ gives a picture of the projective model for \emph{anti-de Sitter} (AdS) geometry in an affine chart.
Therefore a convex polyhedron inscribed in the hyperboloid is naturally associated to a convex ideal polyhedron in the anti-de Sitter space $\AdS^3$, which is a Lorentzian analogue of hyperbolic space. Similarly, the solid cylinder $x_1^2 + x_2^2 < 1$ (with $x_3$ free) in an affine chart $\RR^3$ of $\RP^3$ gives the projective model for \emph{half-pipe} (HP) geometry. Therefore a convex polyhedron inscribed in the cylinder is naturally associated to a convex ideal polyhedron in the half-pipe space $\HP^3$. Half-pipe geometry, introduced by Danciger~\cite{dan_geo, dan_age, dan_ide}, is a transitional geometry which, in a natural sense, is a limit of both hyperbolic and anti-de Sitter geometry.
 In order to prove Theorem~\ref{thm:main} we study the deformation spaces of ideal polyhedra in both $\AdS^3$ and $\HP^3$ concurrently. By viewing polyhedra in $\HP^3$ as limits of polyhedra in both $\HH^3$ and $\AdS^3$, we are able to translate some geometric information between the three settings. In fact we are able to give parameterizations (Theorems~\ref{thm:main-AdS-angles}, \ref{thm:main-AdS-metrics} and Theorem~\ref{thm:main-HP}) of the spaces of ideal polyhedra in both $\AdS^3$ and $\HP^3$ in terms of geometric features of the polyhedra.
 This, in turn, describes the moduli of convex polyhedra inscribed in the hyperboloid and the moduli of convex polyhedra inscribed in the cylinder, where polyhedra are considered up to projective transformations fixing the respective quadric. It is these parameterizations which should be considered the main results of this article; Theorem~\ref{thm:main} will follow as a corollary.
 
 \begin{figure}[h]
 \includegraphics[width = 4.0in]{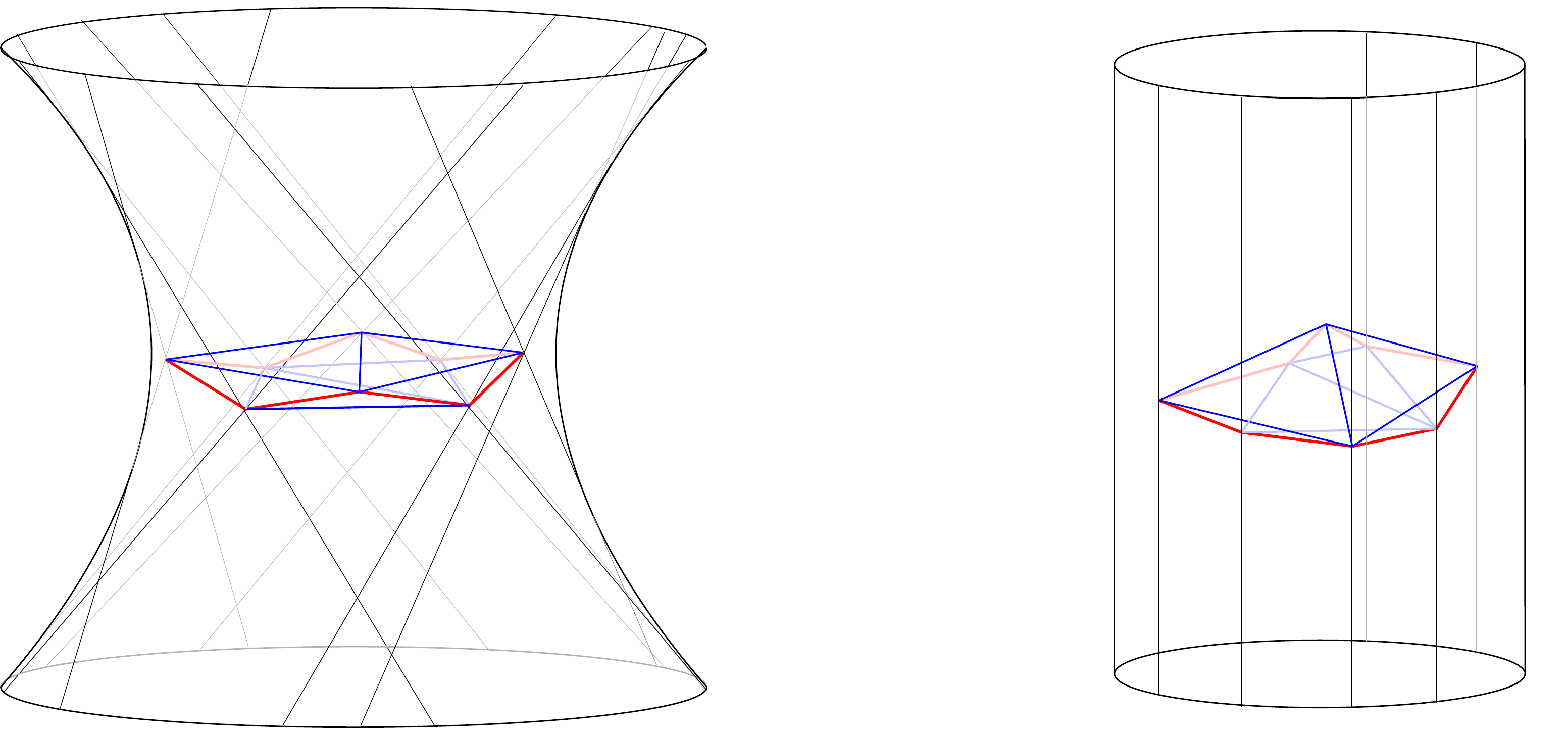}
 \caption{\label{fig:hyperboloid-and-cylinder} A polyhedron inscribed in the hyperboloid (left) and a combinatorial equivalent polyhedron inscribed in the cylinder (right). The $1$--skeleton of any such polyhedron admits a Hamiltonian which we call the \emph{equator} (red). }
 \end{figure}
 
\subsection{Rivin's two parameterizations of ideal polyhedra in $\HH^3$}\label{sec:rivin_param}

Rivin gave two natural parameterizations of the space of convex ideal polyhedra in the hyperbolic space~$\HH^3$. Let $P$ be a convex ideal polyhedron in $\HH^3$, let $P^*$ denote the Poincar\'e dual of $P$, and let $E$ denote the set of edges of the $1$--skeleton of $P^*$ (or of $P$). Then the function $\theta \in \RR^E$ assigning to each edge $e^*$ of $P^*$ the dihedral angle at the corresponding edge $e$ of $P$ satisfies the following three conditions:
 
 \begin{enumerate}
\item  $0 < \theta(e^*) < \pi$ for all edges $e^*$ of $P^*$.
\item If $e_1^*, \ldots, e_k^*$ bound a face of $P^*$, then $\theta(e_1^*) + \cdots + \theta(e_k^*) = 2 \pi$.
\item  If $e_1^*, \ldots, e_k^*$ form a simple circuit which does not bound a face of $P^*$, then $\theta(e_1^*) + \cdots + \theta(e_k^*) > 2 \pi$.
\end{enumerate}
Rivin \cite{riv_ach} shows that, for an abstract polyhedron $P$, any assignment of weights $\theta$ to the edges of $P^*$ that satisfy the above three conditions is realized as the dihedral angles of a unique (up to isometries) non-degenerate ideal polyhedron in~$\HH^3$. Further the map taking any ideal polyhedron $P$ to its dihedral angles $\theta$ is a homeomorphism onto the complex of all weighted planar graphs satisfying the above linear conditions. This was first shown by Andreev~\cite{Andreev-ideal} in the case that all angles are acute.

The second parameterization~\cite{MR1280952} characterizes an ideal polyhedron $P$ in terms of the geometry intrinsic to the surface of the boundary of $P$. The path metric on $\partial P$, called the \emph{induced metric}, is a complete hyperbolic metric on the $N$-times punctured sphere $\Sigma_{0,N}$, which determines a point in the Teichm\"uller space $\Teich_{0,N}$. Rivin also shows that the map taking an ideal polyehdron to its induced metric is a homeomorphism onto $\Teich_{0,N}$.
 
\subsection{Two parameterizations of ideal polyhedra in $\AdS^3$} \label{sec:intro-AdS-param}

Anti-de Sitter geometry is a Lorentzian analogue of hyperbolic geometry in the sense that the anti-de Sitter space $\AdS^n$ has all sectional curvatures equal to~$-1$. However, the metric is Lorentzian (meaning indefinite of signature $(n-1, 1)$), making the geometry harder to work with than hyperbolic geometry, in many cases. For our purposes, it is most natural to work with the projective model of $\AdS^3$ (see Section~\ref{ads_background}), which identifies $\AdS^3$ with an open region in $\RP^3$, and its ideal boundary $\partial_\infty \AdS^3$ with the boundary of that region. The intersection of $\AdS^3$ with an affine chart is the region $x_1^2 + x_2^2 - x_3^2 < 1$ bounded by the hyperboloid $H$. The ideal boundary $\partial_\infty \AdS^3$, seen in this affine chart, is exactly $H$.

Let $P$ be a convex ideal polyhedron in $\AdS^3$ with $N$ vertices. That $P$ is ideal means that the closure of $P$ in $\AdS^3 \cup \partial_\infty \AdS^3$ is a polyhedron whose intersection with $\partial_\infty \AdS^3$ is precisely its vertices. That $P$ is convex means that after removing a space-like plane in its complement, $P$ is geodesically convex. Alternatively, $P$ is convex if and only if it is convex in some affine chart of $\RP^3$. Unlike in the hyperbolic setting, there are restrictions (Proposition~\ref{prop:cyclic-order}) on the positions of the $N$ vertices. Some choices of $N$ vertices on the ideal boundary $\partial_\infty \AdS^3$ do not determine an ideal polyhedron. Roughly, this is because the hyperboloid $H$ has mixed curvature and the convex hull of a collection of vertices on $H$ may contain points both inside and outside of $H$. All facets of $P$ are \emph{spacelike}, meaning the restriction of the AdS metric is positive definite. Therefore, by equipping $\AdS^3$ with a time-orientation, we may sort the faces of $P$ into two types, those whose normal is future-directed, and those whose normal is past-directed. The future-directed faces unite to form a disk (a bent ideal polygon), as do the past-directed faces. The edges which separate the past faces from the future faces form a Hamiltonian cycle, which we will refer to as the \emph{equator} of $P$. A \emph{marking} of $P$ will refer to an identification, up to isotopy, of the equator of $P$ with the standard $N$-cycle graph so that the induced ordering of the vertices is positive with respect to the orientation and time orientation of $\AdS^3$.
We let $\AdSPoly = \AdSPoly_N$ denote the space of all marked, non-degenerate convex ideal polyhedra in $\AdS^3$ with $N$ vertices, considered up to orientation and time-orientation preserving isometries of $\AdS^3$. The term \emph{ideal polyhedron in }$\AdS^3$ will henceforth refer to an element of this space. Let $\Sigma_{0,N}$ denote the $N$-punctured sphere. Fix an orientation on $\Sigma_{0,N}$, a simple loop $\gamma$ visiting each puncture once and label the punctures in order along the path. We call the polygon on the positive side of $\gamma$ the \emph{top} and the polygon on the negative side the \emph{bottom} of $\Sigma_{0,N}$. Then, each ideal polyhedron $P$ is naturally identified with $\Sigma_{0,N}$ via the (isotopy class of the) map taking each ideal vertex to the corresponding puncture and the equator to~$\gamma$. This identifies the union of the future faces of $P$ with the top of $\Sigma_{0,N}$ and the past faces with the bottom. See Figure~\ref{fig:marking}. We let $\Graph(\Sigma_{0,N}, \gamma)$ denote the collection of three-connected graphs embedded in $\Sigma_{0,N}$, up to isotopy, each of whose edges connects two distinct punctures and whose edge set contains the edges of $\gamma$. Via the marking, any ideal polyhedron $P$ \emph{realizes} the edges of a graph $\Gamma \in\Graph(N, \gamma)$ as a collection of geodesic lines either on the surface of or inside of $P$.

\begin{figure}[h]
{
\centering

\def\svgwidth{10.0cm}
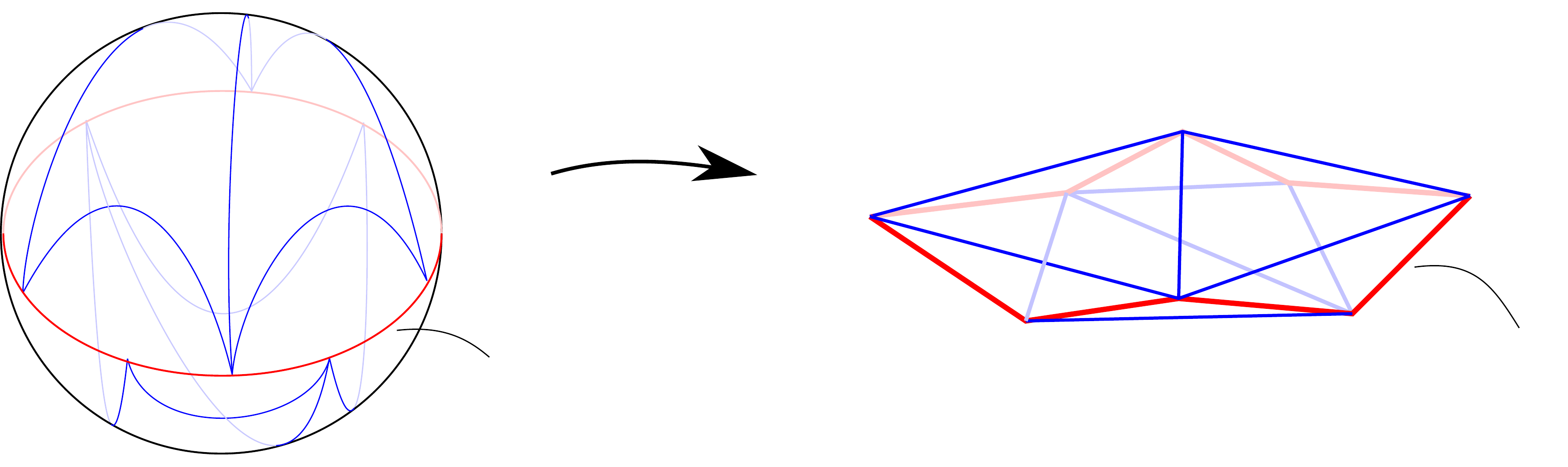

}

\caption{A marking of an ideal polyhedron $P$ (right) in $\AdS^3$ is a labeling of the ideal vertices in order going around the equator in the positive direction. It defines an identification of $\Sigma_{0,N}$ with $P$ that takes $\gamma$ to the equator (red) and the top (resp. bottom) hemisphere of $\Sigma_{0,N}$ (left) to the union of the future (resp. past) faces of~$P$. The $1$--skeleton of $P$ (right, blue and red) defines a graph $\Gamma \in \Graph(\Sigma_{0,N}, \gamma)$ (left, blue and red). \label{fig:marking}}

\end{figure}

Consider a space-like oriented piece-wise totally geodesic surface in $\AdS^3$ and let $T$ and $T'$ be two faces of this surface meeting along a common edge $e$. We measure the \emph{exterior dihedral angle} at $e$ as follows.  The group of isometries of $\AdS^3$ that point-wise fix the space-like line $e$ is a copy of $\OO(1,1)$, which should be thought of as the group of \emph{hyperbolic rotations} or \emph{Lorentz boosts} of the time-like plane orthogonal to~$e$. By contrast to the setting of hyperbolic (Riemannian) geometry, $\OO(1,1)$ has two non-compact components. Therefore there are two distinct types of dihedral angles possible, each of which is described by a real number rather than an element of the circle. Let $\varphi$ be the amount of hyperbolic rotation needed to rotate the plane of $T'$ into the plane of $T$. The sign of $\varphi$ is defined as follows. The light-cone of~$e$ locally divides $\AdS^3$ into four quadrants, two of which are space-like and two of which are time-like. If $T$ and $T'$ lie in opposite space-like quadrants, then we take $\varphi$ to be non-negative, if the surface is convex along $e$, and negative, if the surface is concave along $e$. If $T$ and $T'$ lie in the same space-like quadrant, we take $\varphi$ to be non-positive, if the surface is convex at $e$, and positive, if the surface is concave at~$e$. Therefore, the dihedral angles along the equator of a convex ideal polyhedron $P$ are negative, while the dihedral angles along the other edges are positive. Note that this definition of angle, and in particular the sign convention, agrees with a natural alternative definition in terms of cross-ratios (see Section~\ref{sec:models}). Let $P^*$ and $E$ be as before. We will show (Proposition~\ref{prop:imageinA-AdS}) that the function $\theta \in \RR^E$ assigning to each edge $e^*$ of $P^*$ the dihedral angle at the corresponding edge $e$ of $P$ satisfies the following three conditions:

\begin{enumerate}[(i)]
\item  $ \theta(e^*) < 0$ if $e$ is an edge of the equator $\gamma$, and $\theta(e^*) > 0$ otherwise.
\item If $e_1^*, \ldots, e_k^*$ bound a face of $P^*$, then $\theta(e_1^*) + \cdots + \theta(e_k^*) = 0$.
\item  If $e_1^*, \ldots, e_k^*$ form a simple circuit which does not bound a face of $P^*$, and such that exactly two of the edges are dual to edges of the equator, then $\theta(e_1^*) + \cdots + \theta(e_k^*) > 0$.
\end{enumerate}

Let $\Gamma \in\Graph(N, \gamma)$. Then, thinking of $\Gamma$ as the $1$--skeleton of an abstract polyhedron $P$, we define $\Angles_{\Gamma}$ to be the space of all functions $\theta \in \RR^E$ which satisfy the above three conditions.  Define $\AdSPoly_{\Gamma}$ to be the space of ideal polyhedra in $\AdS^3$ with $1$--skeleton identified with $\Gamma$, and let $\PsiAdS_\Gamma: \AdSPoly_\Gamma \to \Angles_\Gamma$ denote the map assigning to an ideal polyhedron its dihedral angles. All of the maps $\PsiAdS_{\Gamma}$ may be stitched together into one. Let $\Angles$ denote the disjoint union of all $\Angles_{\Gamma}$ glued together along faces corresponding to common subgraphs. Then, we show:

\begin{theorem}
\label{thm:main-AdS-angles}
The map $\PsiAdS: \AdSPoly \to \Angles$, defined by $\PsiAdS(P) = \PsiAdS_{\Gamma}(P)$ if $P \in \AdSPoly_{\Gamma}$, is a homeomorphism.
\end{theorem}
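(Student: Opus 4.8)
The plan is to prove that $\PsiAdS$ is a homeomorphism by the usual strategy for such rigidity/parameterization results: show that $\PsiAdS$ is proper, that it is a local homeomorphism, and that the target $\Angles$ is connected (indeed each $\Angles_\Gamma$ is a convex polytope in a linear subspace, hence connected, and the pieces are glued along common faces), and then invoke the fact that a proper local homeomorphism onto a connected, locally connected, simply connected space is a global homeomorphism — or more elementarily, that a proper local homeomorphism onto a connected manifold-with-corners is a covering onto each stratum and surjective, and combine this with injectivity coming from a separate rigidity argument. Concretely, I would first establish the two hard analytic inputs and then assemble them.

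\textbf{Step 1: Rigidity (injectivity of $\PsiAdS_\Gamma$).} I would prove that an ideal polyhedron in $\AdS^3$ is determined up to isometry by its combinatorics together with its dihedral angles. The natural approach mirrors Rivin's (and Cauchy's): suppose $P, P'$ have the same $1$--skeleton $\Gamma$ and the same dihedral angle function $\theta$. One constructs an infinitesimal or global deformation argument. The cleanest route is to use the other parameterization, by induced metrics on $\Sigma_{0,N}$ (Theorem~\ref{thm:main-AdS-metrics}), together with a Schläfli-type formula relating the variation of the induced metric to the dihedral angles; alternatively one argues directly that the top (future) bent polygon and bottom (past) bent polygon are each rigid given the angles along their edges and along the equator, using the developing map of the bent surface into $\AdS^3$ and the fact that the holonomy around each puncture is parabolic. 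I expect this to reduce to a statement about bent ideal polygons: a convex ideal polygon in $\HH^2$ or a bent configuration in $\AdS^3$ with prescribed bending angles is unique, which follows from convexity of an associated energy/volume functional. This rigidity gives injectivity on each $\AdSPoly_\Gamma$; injectivity of $\PsiAdS$ globally then follows because the combinatorial type $\Gamma$ is recovered from the sign pattern and support of $\theta$ (condition (i) picks out the equator, condition (ii) detects faces), so distinct $\Gamma$ map to disjoint pieces of $\Angles$.

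\textbf{Step 2: Properness.} I would show that if $P_n \in \AdSPoly$ is a sequence with $\PsiAdS(P_n) = \theta_n \to \theta \in \Angles$, then $\theta \in \Angles_\Gamma$ for some fixed $\Gamma$ (eventually), and a subsequence of $P_n$ converges in $\AdSPoly$. The potential degenerations are: two vertices colliding, a vertex escaping to a non-generic position on $H$ violating Proposition~\ref{prop:cyclic-order}, or a face degenerating. Each such degeneration must force $\theta_n$ to leave every compact subset of $\Angles$ — e.g. a collapsing edge forces its dihedral angle to $0$ or its cofacial angles to blow up, contradicting that $\theta$ lies in the open conditions (i)--(iii); here the strict inequalities in (iii) and the linear equalities in (ii) are exactly what rules out the boundary behavior. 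This is the step I expect to be the \emph{main obstacle}, since controlling AdS geometry under limits is delicate (the metric is Lorentzian, and one must keep the equator spacelike and the faces from becoming lightlike); I would handle it by passing to the half-pipe limit and the hyperbolic picture via the unified transitional framework emphasized in the paper, using that HP polyhedra arise as limits of both $\HH^3$ and $\AdS^3$ polyhedra, so compactness can be imported from the well-understood hyperbolic case of Rivin.

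\textbf{Step 3: Local homeomorphism and conclusion.} I would compute that $\PsiAdS_\Gamma$ is a local diffeomorphism by showing its differential is everywhere injective; by an Euler-characteristic/dimension count, $\dim \AdSPoly_\Gamma = \dim \Angles_\Gamma$ (both equal $|E| - |F| = 2N-6$ after accounting for the face relations and the equator sign constraint — the same count as in Rivin's theorem, with the Hamiltonian equator playing the combinatorial role of the outer structure), so injectivity of $d\PsiAdS_\Gamma$ gives that it is an open map and a local homeomorphism onto $\Angles_\Gamma$. The injectivity of the differential is essentially the infinitesimal version of Step~1 and would be proven by the same Schläfli/convexity argument, or deduced from it. Finally, $\PsiAdS$ is a proper (Step 2), injective (Step 1), local homeomorphism onto $\Angles$; since $\Angles$ is connected (each $\Angles_\Gamma$ is a nonempty open subset of an affine space, the gluing is along codimension-one faces, and the whole complex is connected because any two combinatorial types of inscribed-in-$H$ polyhedra are related by diagonal flips, each realized by crossing a wall of $\Angles$), a proper injective local homeomorphism onto a connected space is a homeomorphism onto a clopen subset, hence onto all of $\Angles$. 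The only nontrivial point in this last assembly is verifying nonemptiness/surjectivity onto each top-dimensional cell $\Angles_\Gamma$, which follows from properness plus the fact that the image is open and closed in the connected stratified space $\Angles$ once we know $\AdSPoly_\Gamma \neq \emptyset$ for at least one $\Gamma$ — and that in turn follows from exhibiting a single explicit example (e.g. a suitably symmetric antiprism inscribed in $H$).
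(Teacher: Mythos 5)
Your overall blueprint—properness, local homeomorphism, topology of the domain and target, then assemble—matches the architecture of the paper's argument, but there are genuine gaps where the routes you sketch would fail or are missing an ingredient that the paper had to work for.

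First, you never verify that the image of $\PsiAdS$ actually lands in $\Angles$, i.e.\ that the dihedral angle function of a convex ideal AdS polyhedron satisfies condition~(iii). This is Proposition~\ref{prop:imageinA-AdS} in the paper, and it is not a formality: the argument passes to the dual polyhedron, uses the link of a vertex viewed in the HS~sphere $\HS^2$, and requires a substitute for the triangle inequality for polygonal paths mixing de~Sitter and hyperbolic pieces (Claim~\ref{cl:HS}). Your write-up treats ``(i)--(iii) hold'' as implicit; without this step you do not even know that $\PsiAdS$ maps into the claimed target, so the covering/surjectivity argument has nothing to stand on. Second, your Step~1 proposes proving \emph{global} injectivity directly, e.g.\ by appealing to the induced-metric parameterization (Theorem~\ref{thm:main-AdS-metrics}) together with ``a Schl\"afli-type formula,'' but this is circular: relating dihedral angles uniquely to the induced metric is precisely the global statement in question, and the duality the paper actually proves (Theorem~\ref{thm:duality}, via the pseudo-complex-holomorphic cross-ratio map) is only infinitesimal. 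The paper deliberately avoids global injectivity: it proves that $\PsiAdS$ is a proper local homeomorphism, hence a covering, and then uses that $\Angles$ is connected and \emph{simply connected} for $N \geq 6$—a fact it gets \emph{from} the HP theorem $\PsiHP : \HPPoly \xrightarrow{\sim} \Angles$ plus the computation that $\HPPoly_N$ is homotopy-equivalent to $S^{N-4}$—and then treats $N = 4, 5$ by hand since there the target is not simply connected. Your proposal neither supplies simple connectivity of $\Angles$ (your diagonal-flip heuristic is not a proof and doesn't address $\pi_1$) nor handles $N=4,5$.

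Two smaller discrepancies. Your properness argument suggests importing compactness from the hyperbolic/Rivin setting via transitional geometry; the paper's proof of Lemma~\ref{lem:PsiAdS-proper} is entirely intrinsic to AdS: it uses the factorization $\partial_\infty\AdS^3 \cong \RP^1\times\RP^1$, shows that a divergent sequence $P_n$ with bounded angles forces vertices of the left and right polygons to collide in matched pairs, and then reads off that the limiting angle sum around the collapsed path vanishes, violating~(iii). And your Step~3 proposes proving $d\PsiAdS_\Gamma$ injective ``by the same Schl\"afli/convexity argument''; the paper instead deduces the local immersion (Lemma~\ref{lem:PsiAdS-rigidity}) from the metric rigidity Lemma~\ref{lem:Phi-rigidity}, which is itself proven by transporting the problem to Euclidean $3$-space with an infinitesimal Pogorelov map and invoking Alexandrov's theorem. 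That Euclidean transfer is a real technical ingredient you would need to replace, not hand-wave.
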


The equivalence of conditions (H) and (S) in Theorem~\ref{thm:main} follows directly from this theorem and from Rivin's theorem (see Section \ref{sec:rivin_param}). Indeed, it is an easy exercise in basic arithmetic to convert any weight function $\theta \in \Angles_{\Gamma}$ into one that satisfies conditions (1), (2), and (3) of Rivin's theorem. To convert any weight function on the edges of a graph $\Gamma$ that satisfies Rivin's conditions into a weight function satisfying our conditions (i), (ii), and (iii) (which define $\Angles_{\Gamma}$) is also easy, provided there is a Hamiltonian cycle $\gamma$ in the $1$--skeleton. See Section \ref{sec:proof_main} for the detailed proof.

We also give a second parameterization of ideal polyhedra in terms of the geometry intrinsic to their boundaries. 
Here we parameterize the space $\overline{\AdSPoly}_N = \AdSPoly_N \cup \poly_N$ of all marked polyhedra with $N$ vertices including both the non-degenerate polyhedra $\AdSPoly_N$ and the degenerate (or collapsed) polyhedra, parameterized by the space $\poly_N$ of marked ideal polygons in $\HH^2$ with $N$ vertices. Any space-like plane in $\AdS^3$ is isometric to the hyperbolic plane $\HH^2$.  Therefore similar to the setting of hyperbolic $3$-space, the path metric on the surface of $P$ is a complete hyperbolic metric on the $N$-times punctured sphere $\Sigma_{0,N}$ determining a point in the Teichm\"uller space $\Teich_{0,N}$, again called the \emph{induced metric}. We show the following result:

\begin{theorem}\label{thm:main-AdS-metrics}
The map $\Phi: \overline{\AdSPoly}_N \to \Teich_{0, N}$, taking a convex ideal polyhedron $P$ in $\AdS^3$ to the induced metric on $\partial P$, is a diffeomorphism.\end{theorem}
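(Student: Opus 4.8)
The plan is to follow the classical strategy, due to Alexandrov and adapted by Rivin to the ideal hyperbolic setting: show that $\Phi$ is well-defined, proper, a local diffeomorphism, and that the target $\Teich_{0,N}$ is connected and simply connected, so that a proper local diffeomorphism is a covering and hence a diffeomorphism. Concretely, I would first set up the domain carefully. The space $\overline{\AdSPoly}_N$ should be given the structure of a manifold (with the degenerate polygons $\poly_N$ sitting inside as a subset, not a boundary), of the same dimension as $\Teich_{0,N}$, namely $2N-6$; this dimension count comes from the fact that a marked ideal polyhedron is determined by $N$ points on $\partial_\infty\AdS^3 \cong H$ modulo the isometry group, with the cyclic-order constraint of Proposition~\ref{prop:cyclic-order} cutting out an open subset. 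I would then check that $\Phi$ is continuous (and smooth), which is essentially the observation that the induced path metric on $\partial P$ varies smoothly with the vertex positions, using that each spacelike face is isometric to a piece of $\HH^2$ and the gluing data depends smoothly on the vertices.

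The heart of the argument is a \emph{rigidity} (injectivity, or more precisely local injectivity) statement: an infinitesimal deformation of $P$ that fixes the induced metric on $\partial P$ to first order is trivial. This is the AdS analogue of Rivin's infinitesimal rigidity theorem for ideal hyperbolic polyhedra, and I expect it to be the main obstacle. The natural route is a Pogorelov-type argument or, better, to exploit the duality between the induced-metric parameterization and the dihedral-angle parameterization already established in Theorem~\ref{thm:main-AdS-angles}: one shows that the derivative $d\Phi$ and the derivative $d\PsiAdS$ are, in a suitable sense, adjoint with respect to a nondegenerate pairing (a Schl\"afli-type formula relating variations of edge lengths and variations of dihedral angles), so that surjectivity of $d\PsiAdS$ — which follows from Theorem~\ref{thm:main-AdS-angles} being a diffeomorphism — forces injectivity of $d\Phi$. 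Since domain and target have equal dimension, $d\Phi$ is then an isomorphism everywhere and $\Phi$ is a local diffeomorphism. One must handle the degenerate stratum $\poly_N$ separately, checking that $\Phi$ restricted there is a local diffeomorphism onto its image (essentially the statement that a marked ideal polygon in $\HH^2$ is determined up to isometry by the induced metric on its double, which is standard) and that $\Phi$ is a submersion transverse to this stratum.

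For properness, I would argue that if a sequence $P_n$ degenerates — either vertices collide on $H$, or the polyhedron escapes to infinity in $\AdSPoly_N$ — then the induced metric leaves every compact subset of $\Teich_{0,N}$, i.e. some simple closed geodesic on $\Sigma_{0,N}$ has length tending to $0$ or the systole blows up. This is a compactness/no-collapse analysis: one shows a sequence of polyhedra with induced metrics staying in a compact part of Teichm\"uller space has vertices staying in the region cut out by the cyclic-order condition and bounded away from collisions, up to isometry. The half-pipe limit picture and the AdS-to-$\HH^2$ comparison for spacelike planes should make the needed estimates tractable, as should comparison with the already-understood hyperbolic case. Finally, $\Teich_{0,N}$ is diffeomorphic to $\RR^{2N-6}$, hence connected and simply connected, so a proper local diffeomorphism onto it is a diffeomorphism; to conclude that $\Phi$ is onto (and not onto a proper clopen subset) one also uses connectedness of $\overline{\AdSPoly}_N$, which follows from the explicit description of the domain or can be deduced a posteriori once the image is shown to be open, closed, and nonempty. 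This completes the proof.
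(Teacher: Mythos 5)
Your overall scheme---show $\Phi$ is a proper local diffeomorphism and conclude using that $\Teich_{0,N}$ is a ball---is exactly the paper's, and the ``Pogorelov-type argument'' you mention first for the rigidity step is precisely what the paper carries out (the infinitesimal Pogorelov map $\Pi$ to Euclidean $3$-space, followed by Alexandrov's infinitesimal rigidity theorem). But the route you then say you prefer is circular. You propose to deduce injectivity of $d\Phi$ from surjectivity of $d\PsiAdS$, ``which follows from Theorem~\ref{thm:main-AdS-angles} being a diffeomorphism,'' via a Schl\"afli-type pairing. In the paper the dependencies run in the opposite direction: Lemma~\ref{lem:Phi-rigidity} (metric rigidity) is proved first, by Pogorelov transfer; the pseudo-complex duality of Theorem~\ref{thm:duality} (which is exactly the nondegenerate pairing you have in mind---holomorphicity of $\log z_\Gamma$ in the pseudo-complex variable) is then used to \emph{convert} metric rigidity into angle rigidity, Lemma~\ref{lem:PsiAdS-rigidity}; and only after that---together with Proposition~\ref{prop:imageinA-AdS}, Lemma~\ref{lem:PsiAdS-proper}, and the analysis of $\Angles$ via Theorem~\ref{thm:main-HP}---is Theorem~\ref{thm:main-AdS-angles} established. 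The paper states explicitly that the local version of Theorem~\ref{thm:main-AdS-metrics} ``is a crucial ingredient in proving Theorem~\ref{thm:main-AdS-angles},'' so you cannot invoke Theorem~\ref{thm:main-AdS-angles} here without breaking the chain. The duality idea is correct and present in the paper, but it must be applied going from metric rigidity to angle rigidity, not the reverse.

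One smaller point: for properness your sketch (``compactness/no-collapse'') is in the right spirit but omits the mechanism the paper actually uses. Given a compact $\mathcal K \subset \Teich_{0,N}$, the proof in Section~\ref{sec:properness} fixes the combinatorics $\Gamma$, uses the earthquake relations $m = E_\theta(m_L)$ and $m_R = E_\theta(m)$ of Theorem~\ref{thm:earthquakes}, and bounds the (negative) angle $\theta(e)$ at each equatorial edge $e$ in terms of the shear coordinate $s(e)$ of the induced metric $m$, which is bounded over $\mathcal K$, together with a sign observation about $s_L(e)$ or $s_R(e)$ coming from the relative position of the third vertices of the two triangles adjacent to $e$. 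Condition~(ii) then bounds the remaining angles, hence $m_L$ and $m_R$ stay compact, hence $P$ does. Only conditions~(i) and~(ii)---not the full Proposition~\ref{prop:imageinA-AdS}---are used here, which matters for the order of the arguments.
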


\noindent The (weaker) local version of this theorem is a crucial ingredient in proving Theorem~\ref{thm:main-AdS-angles}.

Before continuing on to half-pipe geometry and the cylinder, let us make two remarks about potential generalizations of Theorems~\ref{thm:main-AdS-angles} and~\ref{thm:main-AdS-metrics}.

\begin{remark}[Hyperideal polyhedra]
In the proofs of Theorems~\ref{thm:main-AdS-angles} and~\ref{thm:main-AdS-metrics}, many
of our techniques should apply in the setting of \emph{hyperideal} polyhedra, i.e. polyhedra whose vertices lie outside of the hyperboloid, but all of whose edges pass through the hyperboloid. We believe that similar parameterization statements may hold in this setting.
\end{remark}

\begin{remark}[Relationship with the bending conjecture]
The statements of Theorems~\ref{thm:main-AdS-angles} and~\ref{thm:main-AdS-metrics} bear close resemblance to a conjecture of Mess~\cite{mes_lor} in the setting of globally hyperbolic Cauchy compact AdS space-times. Mess conjectured, by analogy to a related conjecture of Thurston in the setting of quasifuchsian groups, that such a spacetime should be determined uniquely by the bending data or by the induced metric on the boundary of the convex core inside the spacetime. There are existence results known in both cases, due to Bonsante--Schlenker~\cite{bon_fix} and Diallo~\cite{diallo2013} respectively, but no uniqueness or parameterization statement is known in this setting. Ultimately, Theorems~\ref{thm:main-AdS-angles} and~\ref{thm:main-AdS-metrics} on the one hand and Mess's conjecture on the other hand boil down to understanding the connection between the geometry of a subset of $\partial_\infty \AdS^3$ and the geometry of its convex hull in $\AdS^3$. It is natural to ask whether Mess's conjecture and our theorems on ideal polyhedra might naturally coexist as part of some larger universal theory relating the geometry of a convex spacetime in $\AdS^3$ to its asymptotic geometry at the ideal boundary.
\end{remark}

\subsection{A parameterization of ideal polyhedron in $\HP^3$}\label{sec:HP-param}

Half-pipe (HP) geometry is a transitional geometry lying at the intersection of hyperbolic and anti-de Sitter geometry. Intuitively, it may be thought of as the normal bundle of a codimension one hyperbolic plane inside of either hyperbolic space or anti-de Sitter space. In \cite{dan_age, dan_ide}, the first named author constructs paths of three-dimensional projective structures on certain manifolds which transition from hyperbolic geometry to AdS geometry passing through an HP structure. In our setting, it is informative to imagine families of polyhedra in projective space whose vertices lie on a quadric surface evolving from the sphere to the hyperboloid passing through the cylinder. Indeed, the notion of transition is also useful for proving several key statements needed along the way to the main theorems.

Half-pipe geometry is a homogeneous $(G,X)$--geometry. The projective model $X = \HP^3$ for half-pipe space is simply the solid cylinder $x_1^2 + x_2^2 < 1$ in the affine $x_1$-$x_2$-$x_3$ coordinate chart $\RR^3$.  There is a natural projection $\varpi: \HP^3 \to \HH^2$, seen, in this model, as the projection of the solid cylinder to the disk. The projection is equivariant taking projective transformations which preserve the cylinder to isometries of the hyperbolic plane. The projection also extends to take the ideal boundary $\partial_\infty\HP^3 = C$ to the ideal boundary $\partial_\infty\HH^2$ of the hyperbolic plane. The structure group $G$ is the codimension one subgroup of all projective transformations preserving the cylinder which preserves a certain length function along the fibers of this projection. By pullback, the projection $\varpi$ determines a metric on $\HP^3$ which is degenerate along the fiber direction. In this metric, all non-degenerate $2$-planes are isometric to the hyperbolic plane. 

Let $P$ be a convex ideal polyhedron in $\HP^3$ with $N$ vertices. That $P$ is ideal means that the closure of $P$ in $\RP^3$ is a polyhedron contained in $\HP^3 \cup \partial_\infty \HP^3$ whose intersection with $\partial_\infty \HP^3$ is precisely its vertices. Since $\HP^3$ is contained in an affine chart, the notion of convexity is defined to be the same as in affine space. Then the $N$ vertices project to $N$ distinct points on the ideal boundary of the hyperbolic plane (else one of the edges of $P$ would be contained in $\partial_\infty\HP^3$, which we do not allow).
Therefore $P$ determines an ideal polygon $p = \varpi(P)$ in the hyperbolic plane. Further, all facets of an ideal polyhedron in $\HP^3$
are non-degenerate; in particular the faces of $P$ are transverse to the fibers of $\varpi$. By equipping $\HP^3$ with an orientation of the fiber direction, we may sort the faces of $P$ into two types, those for which the outward pointing fiber direction is positive, and those for which it is negative. We call such faces \emph{positive} or \emph{negative}, respectively. The positive faces form a disk (a bent polygon) as do the negative faces. The edges of $P$ which separate a positive face from a negative face form a Hamiltonian cycle in the $1$--skeleton of $P$, again called the \emph{equator}. As in the AdS setting, we let $\HPPoly = \HPPoly_N$ denote the space of all marked non-degenerate convex ideal polyhedra in $\HP^3$ with $N$ vertices, up to orientation preserving and fiber-orientation preserving transfomations. Again, the boundary of each ideal polyhedron $P$ is naturally identified with $\Sigma_{0,N}$ via the (isotopy class of) map taking each ideal vertex to the corresponding puncture and the equator to~$\gamma$. Under this identification, the union of the positive faces (resp. the union of the negative faces) is identified with the top (resp. bottom) disk of $\Sigma_{0,N}$. Via the marking, any ideal polyhedron $P$ realizes the edges of a graph $\Gamma \in\Graph(N, \gamma)$ as a collection of geodesic lines either on the surface of or inside of $P$.

The angle measure between two non-degenerate planes in $\HP^3$ can be defined in terms of the length function on the fibers. Alternatively, one should think of a non-degenerate plane in $\HP^3$ as an infinitesimal deformation of some fixed central hyperbolic plane in $\HH^3$ or $\AdS^3$. As such, the angle between two intersecting planes in $\HP^3$ should be thought of as an infinitesimal version of the standard angle measure in $\HH^3$ or $\AdS^3$. As in the AdS setting, we must distinguish between two types of dihedral angles: two non-degenerate half-planes meeting along a non-degenerate edge $e$ either lie on opposite sides of or the same side of the degenerate plane (which is the union of all degenerate lines) passing through $e$.
As in the AdS setting, we take the convention that the dihedral angles along the equator of a convex ideal polyhedron $P$ are negative, while the dihedral angles along the other edges are positive. Let $\Gamma$ be the $1$--skeleton of $P$ with $\gamma$ subgraph corresponding to the equator. Let $P^*$ be the Poincar\'e dual of $P$. A simple argument in HP geometry (Section~\ref{sec:proof-main-HP}) shows that the function $\theta$ assigning to each edge $e^*$ of $P^*$ the dihedral angle at the corresponding edge $e$ of $P$ satisfies the same three conditions (i), (ii), and (iii) of the previous section; in other words $\theta \in \Angles$. Define $\HPPoly_{\Gamma}$ to be the space of ideal polyhedra in $\HP^3$ with $1$--skeleton identified with $\Gamma \in\Graph(N, \gamma)$ and let $\PsiHP_{\Gamma}: \HPPoly_{\Gamma} \to \Angles_{\Gamma}$ be the map assigning to an ideal polyhedron its dihedral angles. Then all of the maps $\PsiHP_{\Gamma}: \HPPoly_{\Gamma} \to \Angles_{\Gamma}$ may be, again, stitched together into one. We show:

\begin{theorem}
\label{thm:main-HP}
The map $\PsiHP: \HPPoly \to \Angles$, defined by $\PsiHP(P) = \PsiHP_{\Gamma}(P)$, if $P \in \HPPoly_{\Gamma}$, is a homeomorphism.
\end{theorem}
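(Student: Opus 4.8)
The plan is to run the same scheme as for Theorem~\ref{thm:main-AdS-angles} — well-definedness, a local homeomorphism statement, properness, and a global degree/connectedness argument — but to exploit the fact that half-pipe geometry is \emph{linear}: a non-degenerate plane in $\HP^3$ is a first-order deformation of the fixed central hyperbolic plane, and the $\HP$ dihedral angle along an edge is the first-order variation of the corresponding hyperbolic (or AdS) dihedral angle. Concretely, an ideal polyhedron $P\in\HPPoly_\Gamma$ is equivalent to the data of its projected ideal polygon $p=\varpi(P)\in\poly_N$ together with the heights of its $N$ ideal vertices modulo a $3$-dimensional gauge: the positive and negative caps of $P$ are then the upper and lower piecewise-totally-geodesic graphs over $p$ through these vertices, and the $\HP$ dihedral angles depend linearly on the heights. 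Hence $\HPPoly_\Gamma$ fibers over $\poly_N$ with fibers open chambers in $(N-3)$-dimensional vector spaces, so $\dim\HPPoly_\Gamma = 2N-6 = \dim\Angles_\Gamma$, and along each fiber $\PsiHP_\Gamma$ is the restriction of a linear map. Granting (from Section~\ref{sec:proof-main-HP}) that $\PsiHP$ is well-defined and continuous, and noting that both $\HPPoly$ and $\Angles$ are stratified by $\Graph(\Sigma_{0,N},\gamma)$ — with a face of $\HPPoly_\Gamma$ obtained by letting one non-equatorial dihedral angle tend to $0$ (so two adjacent faces merge) mapping to the corresponding face of $\Angles_\Gamma$ — it suffices to prove that $\PsiHP$ is a proper local homeomorphism compatible with these stratifications, and then to promote this to a global homeomorphism.

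For the \emph{local homeomorphism} I would show that on each top stratum, i.e.\ for $\Gamma$ a triangulation, the differential $d\PsiHP_\Gamma\colon T\HPPoly_\Gamma\to T\Angles_\Gamma$ is injective, hence an isomorphism since the two dimensions agree. Decomposing an infinitesimal deformation of $P$ into a fiber part (varying the heights) and a base part (varying $p\in\poly_N$), a deformation in the kernel must in particular have all angle variations zero; restricted to the fiber direction this says exactly that the bending of a fixed convex ideal polygon is determined by its induced first-order angle changes — a finite-dimensional linear statement I would establish by a summation-by-parts over the triangulation, using convexity of $p$ and the alternation of signs between equatorial and non-equatorial edges. (This is precisely the linearization, at a collapsing configuration, of Rivin's infinitesimal rigidity for ideal hyperbolic polyhedra; alternatively one can deduce the local statement directly from the local homeomorphism property underlying Theorem~\ref{thm:main-AdS-angles}, by viewing $\HP^3$ polyhedra as rescaled limits of collapsing $\AdS^3$ polyhedra.) Combined with the observation that the base direction is itself detected by the angles to first order, this yields injectivity of $d\PsiHP_\Gamma$.

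\emph{Properness} is the step I expect to be the main obstacle. Given $P_n\in\HPPoly$ with $\PsiHP(P_n)\to\theta_\infty\in\Angles$, I would show that after passing to a subsequence and, if necessary, to a smaller stratum, the polygons $p_n=\varpi(P_n)$ converge in $\poly_N$ and the bending data converge, so that $P_n$ converges to a non-degenerate $\HP$ polyhedron. The heart of the matter is to rule out the possible degenerations — collision of two ideal vertices of $p_n$, escape of a vertex height to $\pm\infty$, or flattening of a face — by showing that each of them forces some circuit sum in condition (iii) to tend to $0$, or some angle in condition (i) to change sign, and hence pushes $\theta_\infty$ onto the boundary of $\Angles$, which is either impossible or lands in a lower stratum already treated by induction. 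This mirrors the properness argument in the $\AdS$ case but should be genuinely simpler: the bending data is a linear object, so the quadratic estimates of $\AdS$ geometry are replaced by linear ones. Still, reading off from the combinatorics of $\Gamma$ and the sign pattern of $\theta$ exactly which of the inequalities (iii) must become equalities in each degeneration is where the real work lies.

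Finally, for the \emph{globalization}: a proper local homeomorphism compatible with the stratifications restricts, over each connected component of $\Angles$, to a covering of a well-behaved space; since $\Angles$ is connected it remains only to compute the degree. I would do this either by checking that $\Angles$ is simply connected — it is assembled from convex polytopes glued along common faces — or by exhibiting one point of $\Angles$ with exactly one preimage, for instance the dihedral-angle data of the regular $\HP$ ideal polyhedron over the regular ideal $N$-gon (whose rigidity follows from the local injectivity above). Either way $\PsiHP$ is a continuous proper bijection, hence a homeomorphism, which is Theorem~\ref{thm:main-HP}.
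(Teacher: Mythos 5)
Your proposed route — well-definedness, local homeomorphism, properness, then a degree argument — is the scheme the paper uses for Theorem~\ref{thm:main-AdS-angles}, but it is \emph{not} how the paper proves Theorem~\ref{thm:main-HP}. The paper's proof of the HP case is entirely variational: it constructs the inverse of $\PsiHP$ directly. Given $\theta\in\Angles$, one takes $H(\theta)$ to be the unique minimizer of the length function $\ell_\theta$ on $\poly_N$ (Proposition~\ref{pr:convex}, which rests on the earthquake theorem for ideal polygons and on convexity of $\ell_\theta$ along earthquake paths). The key structural observation is that at the minimizer, the symplectic-gradient identity $D^\omega\ell_\theta=-e_\theta$ (Lemma~\ref{lm:gradient}) together with the Lagrangian property of the space of doubles $\doubles\subset\Teich_{0,N}$ (Proposition~\ref{pr:doubles}) force the infinitesimal shear $e_\theta$ to be \emph{tangent} to $\doubles$ at $H(\theta)$, hence to give a well-defined infinitesimal deformation of the polygon $H(\theta)$, hence an HP polyhedron with dihedral angles $\theta$. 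Bijectivity and continuity of the inverse fall out at once, with no topology of $\Angles$ needed.

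Beyond being different, your plan has a genuine gap: it is circular in the context of this paper. The statement that $\Angles$ is connected and simply connected (Corollary~\ref{cor:topology}) is \emph{deduced from} Theorem~\ref{thm:main-HP}, so you cannot invoke it when proving that theorem. Your proposed fixes don't close the gap. ``Assembled from convex polytopes glued along common faces'' does not give connectedness or simple connectedness without analyzing the nerve of the gluing pattern (the same shape of assembly can easily produce a circle), and the paper goes out of its way to avoid computing the topology of $\Angles$ directly. The alternative of exhibiting one $\theta$ with a unique preimage and running a degree argument faces two problems: (1) $\Angles$ is a stratified convex complex with boundary, not a manifold, so a proper local homeomorphism is not automatically a covering, and (2) even if it were, degree-$1$ from one fiber propagates only over a connected target, so you again need connectedness of $\Angles$, which you haven't established independently. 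You also concede that the properness step is only sketched. The paper's length-function construction sidesteps all of these issues at once, which is exactly why it is used here.
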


\noindent The equivalence of conditions (C) and (H) in Theorem~\ref{thm:main} follows from Theorem~\ref{thm:main-HP} and Theorem~\ref{thm:main-AdS-angles}. Note that there is no direct analogue of Theorem~\ref{thm:main-AdS-metrics} in the half-pipe setting. Indeed the induced metric on a ideal polyhedron in $\HP^3$ is exactly the double of the ideal polygon $\varpi(P)$ and the space of such doubles is a half-dimensional subset of $\Teich_{0,N}$. Intuitively, the induced metric does not determine $P$ because, as a polyhedron in $\HH^3$ (or $\AdS^3$) collapses onto a plane, the induced metric only changes to second order: the path metric on a plane bent by angle $\theta$ differs from the ambient metric only to second order in~$\theta$.

\subsection{Strategy of the proofs and organization}

There is a natural relationship between bending in $\AdS^3$ and earthquakes on hyperbolic surfaces. We describe this relationship, in our context of interest, in Section~\ref{sec:models}. Here is a synopsis. Via the product structure on the ideal boundary $\partial_\infty \AdS^3 \cong \RP^1 \times \RP^1$, an ideal polyhedron $P \in \AdSPoly_N$ is determined by two ideal polygons $p_L$ and $p_R$ in the hyperbolic plane, each with $N$ labeled vertices (see Section \ref{sec:ideal_poly}). The two metrics $m_L, m_R \in \Teich_{0,N}$ obtained by doubling $p_L$ and $p_R$ respectively are called the \emph{left  metric} and \emph{right metric} respectively. Given weights $\theta$ on a graph $\Gamma \in \Graph(\Sigma_{0,N}, \gamma)$, the pair $p_L, p_R$ determine an ideal polyhedron $P$ with bending data $\theta$ if and only if the left and right metrics satisfy:
\begin{equation}\label{eqn:earthquake-diagram-intro}
m_R = E_{2\theta} m_L,
\end{equation} where $E_\theta$ is the shear map defined by shearing a surface along the edges of $\Gamma$ according to the weights given by $\theta$ (where a positive weight means shear to the left, and a negative weight means shear to the right). Directly solving for $p_L$ and $p_R$ given $\theta$ is very difficult. However, the infinitesimal version of this problem is more tractable; this is the relevant problem in the setting of half-pipe geometry.

An ideal polyhedron $P \in \HPPoly_N$ is determined by an $N$-sided ideal polygon $p$ in the hyperbolic plane and an infinitesimal deformation $V$ of $p$ (see Section~\ref{sec:models}). Doubling yields an element $m$ of the Teichm\"uller space $\Teich_{0,N}$ and an infinitesimal deformation $W$ of $m$ which is tangent to the sub-space of doubled ideal polygons. The data $p, V$ determine an ideal polyhedron $P \in \HPPoly$ with bending data $\theta$ if and only if the infinitesimal deformation $W$ is obtained by infinitesimally shearing $m$ along the edges of $\Gamma$ according to the weights $\theta$. In Section~\ref{sec:lengthfunctions}, we show how to solve for the polygon $p$ given $\theta \in \Angles_{\Gamma}$ by minimizing an associated length function. In Section~\ref{sec:proof-main-HP}, we apply the results of Section~\ref{sec:lengthfunctions} to directly prove Theorem~\ref{thm:main-HP}, that $\PsiHP$ is a homeomorphism, after first proving: 

\begin{prop}
\label{prop:imageinA-HP}
The map $\PsiHP_{\Gamma}$ taking an ideal polyhedron $P \in \HPPoly_{\Gamma}$ to its dihedral angles $\theta$ has image in $\Angles_{\Gamma}$. In other words, $\theta$ satisfies conditions (i), (ii), and (iii) of Section~\ref{sec:intro-AdS-param}.
\end{prop}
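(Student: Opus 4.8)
The plan is to verify the three conditions directly from the definition of dihedral angle in half-pipe geometry, using the fact that a non-degenerate plane in $\HP^3$ is an infinitesimal deformation of a fixed central hyperbolic plane, so that dihedral angles in $\HP^3$ are ``infinitesimal'' hyperbolic dihedral angles. Concretely, I would fix a central spacelike plane $\HH^2 \subset \HP^3$ and realize the ideal polyhedron $P$ as the graph of an infinitesimal bending field: the polygon $p = \varpi(P)$ sits in $\HH^2$, its triangulation (or the graph $\Gamma$) carries weights, and the ``height'' of each face of $P$ above $\HH^2$ is a first-order quantity whose jumps across edges record the dihedral angles $\theta(e^*)$. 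Condition (i) is then essentially a definition: the edges of the equator separate the positive faces from the negative faces, and by our sign convention these get negative weight, while along any non-equatorial edge the surface is locally convex on one side of the equator and the weight is positive; this I would spell out by examining the two sides of the degenerate plane through $e$ and the local convexity of $\partial P$, exactly as in the AdS discussion of Section~\ref{sec:intro-AdS-param}.

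For condition (ii), the face relation: around a vertex $v^*$ of $P^*$ — equivalently, around a face $f$ of $P$ with edges $e_1, \dots, e_k$ — the sum $\sum \theta(e_i^*)$ is the total ``infinitesimal turning'' one accumulates going once around $f$. In $\HH^3$ this sum equals $2\pi$ because the developing map of the link of a face must close up; in the degenerating/half-pipe picture the $2\pi$ is absorbed by the flat structure of the central plane and only the first-order correction survives, which must vanish since the height function on face $f$ is single-valued. I would make this precise by writing the height of a face as an affine function on $\HH^2$ (the restriction of a global affine function on the affine chart $\RR^3$), noting that moving across edge $e_i$ changes this affine function by a term proportional to $\theta(e_i^*)$ times the (signed) defining functional of the line $e_i$, and observing that going around a closed loop of faces returns to the same affine function; the holonomy of this walk being trivial yields $\sum \theta(e_i^*) = 0$.

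Condition (iii) is the one I expect to be the main obstacle, since it is a strict inequality rather than an identity and so cannot come from a closing-up argument alone. Here the loop $e_1^*, \dots, e_k^*$ is a simple circuit in $\Gamma$ not bounding a face, crossing the equator in exactly two edges, hence separating the vertices of $P$ into two nonempty arcs, one ``above'' and one ``below'' the loop. The circuit bounds a sub-disk $D$ of faces of $P$; applying condition (ii) face-by-face inside $D$, the sum $\sum \theta(e_i^*)$ around the boundary equals the sum of the weights of the interior edges of $D$ that are cut — but more usefully, I would argue geometrically: the circuit, realized on $\partial P$, bounds an embedded ideal polygon inside $P$ (in the half-pipe structure) that is strictly non-planar unless $P$ degenerates, and the total infinitesimal bending around a strictly convex such polygon is strictly positive. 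The cleanest route is probably to transport the strict inequality from the AdS or hyperbolic setting via the transition: a result like this holds in $\HH^3$ (Rivin's condition (3), with $2\pi$) and in $\AdS^3$ (Proposition~\ref{prop:imageinA-AdS}), and $\HP^3$ is a limit of both, so one recovers the $\HP^3$ inequality by a rescaling/limiting argument — but one must check the inequality does not degenerate to an equality in the limit, which requires using that a non-equatorial sub-polygon of a non-degenerate $\HP^3$ polyhedron has strictly non-zero second-order bending somewhere along the circuit. Making that last non-degeneracy quantitative is the crux; I would handle it by the same length-function/convexity machinery developed in Section~\ref{sec:lengthfunctions}, showing that if equality held the circuit would bound a flat region, forcing the relevant vertices to be coplanar and contradicting non-degeneracy of $P$.
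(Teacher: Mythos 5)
Your plan for condition (i) is fine, and your instinct that (iii) is the hard part is correct, but there are two genuine problems.

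First, your argument for condition (ii) has the Poincar\'e duality backwards. Condition (ii) concerns edges $e_1^*,\ldots,e_k^*$ bounding a \emph{face} of $P^*$, and a face of $P^*$ is dual to a \emph{vertex} of $P$, not a face of $P$: the sum $\theta(e_1^*)+\cdots+\theta(e_k^*)$ runs over the edges of $P$ incident to a common ideal vertex, i.e.\ over the link of a vertex, not over the boundary of a face. (Indeed in the hyperbolic case Rivin's version of this condition is the statement that the link of an ideal vertex is a Euclidean polygon whose exterior angles sum to $2\pi$.) Your holonomy idea could in principle be adapted to a cycle in $\Gamma^*$ around a vertex, but then the trivial-holonomy relation reads $\sum_i \theta(e_i^*)\,\ell_i \equiv 0$ as a linear functional, not directly $\sum_i \theta(e_i^*)=0$; extracting the latter requires normalizing the $\ell_i$ using the fact that all edges pass through the same ideal point (e.g.\ sending $\varpi(v)$ to $\infty$), a step you would need to supply. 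The paper sidesteps all of this with Propositions~\ref{prop:limit} and~\ref{prop:angles-limit}: a half-pipe dihedral angle is the $t$-derivative of the dihedral angle of a collapsing family $Q_t$ of ideal polyhedra in $\HH^3$, and the sum around a vertex of $Q_t$ is identically $2\pi$, so its derivative is zero.

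Second, for condition (iii) you sketch several directions without committing to one, and none is a complete argument. Transporting Proposition~\ref{prop:imageinA-AdS} back to $\HP^3$ by a rescaled limit faces exactly the degeneration-to-equality problem you flag (and the appendix's alternative proof of Proposition~\ref{prop:imageinA-AdS} actually relies on Proposition~\ref{prop:imageinA-HP}, so that route would be circular), and the length-function machinery of Section~\ref{sec:lengthfunctions} is used in the paper only for the \emph{existence} half of Theorem~\ref{thm:main-HP}, not for the necessity statement you are trying to prove. The paper's actual argument is shorter and uses a feature special to half-pipe geometry that your sketch misses: the normal circuit can be realized as $P\cap H$ for a single degenerate plane $H$ that is simultaneously orthogonal to \emph{both} equatorial edges crossed (orthogonality to a degenerate plane is a condition downstairs in $\HH^2$, so there is no obstruction, unlike in $\HH^3$ or $\AdS^3$). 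The cross-section is a convex polygon in $H\cong\HP^2$; Proposition~\ref{prop:HP-angle-bound} bounds its exterior angles above by the corresponding dihedral angles of $P$, with equality precisely at the two equatorial edges, and the infinitesimal Gauss--Bonnet formula (Proposition~\ref{prop:inf-GB}) gives that the exterior angles of the cross-section sum to its positive area. Strictness is immediate; no separate non-degeneracy argument is needed.
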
 
\noindent The proof of this proposition is a simple computation in half-pipe geometry, which uses (among other things) an infinitesimal version of the Gauss--Bonnet theorem for polygons.

In the AdS setting constructing inverses for the maps $\PsiAdS$ and $\Phi$ is too difficult, so we proceed in the usual next-best way: we prove each map is a proper, local homeomorphism, and then argue via topology. Because Teichm\"uller space $\Teich_{0,N}$ is a ball and because $\overline{\AdSPoly}_N$ is connected and has dimension equal to that of $\Teich_{0,N}$ (Proposition~\ref{topol_poly}), Theorem~\ref{thm:main-AdS-metrics} is implied by the following two statements.

\begin{lemma}\label{lem:Phi-proper}
The map $\Phi: \overline{\AdSPoly}_N \to \Teich_{0,N}$ is proper.
\end{lemma}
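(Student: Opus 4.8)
\textbf{Proof proposal for Lemma~\ref{lem:Phi-proper}.}

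The plan is to show that if a sequence $P_n \in \overline{\AdSPoly}_N$ has induced metrics $m_n = \Phi(P_n)$ converging to some $m_\infty \in \Teich_{0,N}$, then a subsequence of $P_n$ converges in $\overline{\AdSPoly}_N$. Since $\overline{\AdSPoly}_N$ sits inside the space of $N$-tuples of points on $\partial_\infty \AdS^3 \cong \RP^1 \times \RP^1$ (via the left and right polygons $p_L, p_R$, or equivalently via the two metrics $m_L, m_R \in \Teich_{0,N}$ obtained by doubling), up to the relevant group action, the compactness we need is control on these data. So the first step is to recall that $m_L$ and $m_R$ are each determined by $P$, and that the induced metric $m = \Phi(P)$ is obtained by gluing the top polygon (carrying a metric built from $p_L, p_R$) to the bottom polygon along the equator; more precisely one should extract from Section~\ref{sec:models} the relationship between $m$ and the pair $(m_L, m_R)$, and the fact that the edge lengths of the equatorial polygon and the combinatorial data (the graph $\Gamma_n$) are read off from $m_n$.

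The second step is to pass to a subsequence so that the combinatorial type $\Gamma_n$ is constant, equal to some $\Gamma$ (there are finitely many three-connected graphs on $N$ vertices containing $\gamma$), and so that $P_n \in \overline{\AdSPoly}_\Gamma$ for all $n$. Within a fixed combinatorial type, an element is determined by finitely many real parameters: the shape of the top ideal polygon and of the bottom ideal polygon as bent surfaces, or equivalently the positions of the $N$ ideal vertices in $\RP^1 \times \RP^1$ modulo $\PSL(2,\RR) \times \PSL(2,\RR)$. I would then argue that convergence of the induced metrics $m_n \to m_\infty$ in $\Teich_{0,N}$ forces these parameters to stay in a compact region: the key point is that $m_\infty$ is a genuine point of Teichm\"uller space, so no puncture-to-puncture distance degenerates to $0$ and no curve length blows up, which in turn bounds the geometry of both the top and bottom polygons and of the equatorial cycle away from degeneration of the vertex configuration. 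Here one also needs that the limiting vertex configuration still satisfies the cyclic-order/position constraint of Proposition~\ref{prop:cyclic-order} so that the limit is still a bona fide (possibly collapsed) ideal polyhedron, i.e. an element of $\overline{\AdSPoly}_N$ rather than something outside this space; the reason is precisely that this constraint is a closed condition, and $\poly_N$ (the collapsed polyhedra) has been included in the domain exactly to accommodate limits where $P_n$ degenerates onto a plane.

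The main obstacle I expect is \emph{ruling out the escape of vertices}: showing that, along the equator, no two consecutive ideal vertices collide and no vertex runs off so that the configuration degenerates while the induced metric stays bounded. Concretely, one must show that a pinch in the ideal-polyhedron data (a vanishing or blowing-up of some geometric length of $P_n$) produces a corresponding degeneration of $m_n$ in $\Teich_{0,N}$, contradicting convergence to $m_\infty$. I would handle this by a compactness-and-contradiction argument: assume a subsequence degenerates, normalize by the group action, extract a limiting (degenerate) configuration on $\RP^1 \times \RP^1$, and check directly that the induced path metric on $\Sigma_{0,N}$ either develops a cusp splitting (a curve of length tending to $0$) or a cone point / infinite length, in each case leaving $\Teich_{0,N}$; this uses that a spacelike totally geodesic face is isometric to $\HH^2$ together with the description of how the faces are glued along the $1$--skeleton. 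The degenerate (collapsed) case is handled in the same framework since $\poly_N$ is a closed stratum of $\overline{\AdSPoly}_N$ and the map $\Phi$ extends continuously to it by doubling the limiting ideal polygon.
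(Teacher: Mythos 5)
Your proposal correctly identifies the logical structure of the statement (show that degeneration of $P_n$ forces degeneration of $m_n = \Phi(P_n)$, so convergence of $m_n$ to an interior point of $\Teich_{0,N}$ confines $P_n$ to a compact set), and correctly notes that the collapsed stratum $\poly_N$ is included in the domain precisely to accommodate flat limits. But the proposal leaves the crucial step entirely unargued. When you write ``assume a subsequence degenerates, \ldots and check directly that the induced path metric \ldots develops a cusp splitting or a cone point / infinite length,'' this ``check directly'' is exactly the content of the lemma, and it is not straightforward. (Also, cone points and infinite area never occur: the induced metric on the boundary of an ideal polyhedron is always a complete finite-area hyperbolic metric on $\Sigma_{0,N}$; the only way to leave $\Teich_{0,N}$ is a length pinching off.)

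The paper's actual mechanism is the earthquake relation $m_L \xrightarrow{E_\theta} m \xrightarrow{E_\theta} m_R$ from Theorem~\ref{thm:earthquakes}, written in shear coordinates along the edges of the $1$--skeleton as $s_R(e) - s(e) = \theta(e) = s(e) - s_L(e)$. It then shows directly that $\theta$ stays bounded when $m$ stays in a compact set, which bounds $m_L = E_{-\theta}m$ and $m_R = E_\theta m$. The subtle and non-obvious geometric input, which your outline does not supply, is the \emph{sign} of the shear of the left or right metric along an equatorial edge $e$: because $m_R$ (resp.\ $m_L$) is the double of an honest ideal polygon, the shear $s_R(e)$ (resp.\ $s_L(e)$) across an equatorial edge has a sign determined purely by the combinatorics (whether the apex $v^+$ of the top triangle at $e$ lies to the left or right of the apex $v^-$ of the bottom triangle), namely $s_R(e) = \pi_R(v^+;e) - \pi_R(v^-;e)$. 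Combining this sign with $\theta(e) < 0$ gives $\theta(e) > -|s(e)|$, bounding the equatorial angles, after which condition (ii) (balancedness at each vertex) bounds the remaining angles. Without this observation about the signs of $s_L, s_R$ along the equator, a generic ``compactness-and-contradiction'' argument does not close, because degeneration of $m_L$ and $m_R$ with $\theta \to \infty$ could \emph{a priori} leave $m$ bounded.

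In short: the skeleton of your argument is compatible with the paper's, but the decisive estimate is missing, and it is not one that falls out of a routine limiting argument; you need the earthquake identity together with the sign analysis of the left/right shears at equatorial edges.
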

\begin{lemma}
\label{lem:Phi-rigidity}
The map $\Phi: \overline{\AdSPoly}_N \to \Teich_{0,N}$ is a local immersion.
\end{lemma}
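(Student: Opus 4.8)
\medskip
\noindent\textbf{Proof strategy.} The plan is to establish the infinitesimal statement: the differential $d\Phi_P$ is injective at every $P \in \overline{\AdSPoly}_N$, which is exactly the assertion that $\Phi$ is a local immersion (and, since $\dim\overline{\AdSPoly}_N = \dim\Teich_{0,N}$ by Proposition~\ref{topol_poly}, a local diffeomorphism). So let $\dot P$ be a first-order deformation of $P$ with $d\Phi_P(\dot P)=0$, i.e.\ one that fixes the induced metric on $\partial P$ to first order; we must show that $\dot P$ is tangent to the orbit of the isometry group.

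The first step is to identify $\ker d\Phi_P$ with a space of compatible infinitesimal bendings of the boundary. Each face of $P$ is a spacelike ideal polygon, hence (after unbending) the top and the bottom of $\partial P$ are each intrinsically isometric to an ideal $N$-gon in $\HH^2$; the induced metric on $\partial P$ records precisely the shapes of these two polygons together with the shears gluing them along the equator. Consequently a deformation that fixes the induced metric to first order fixes every face up to a first-order isometry of its supporting plane, so it is determined by the tuple $\dot\theta \in \RR^E$ of first-order variations of the dihedral angles along all edges of the $1$--skeleton $\Gamma$, subject only to the linear ``closing-up'' conditions around each ideal vertex (there is no condition around a face). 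Conversely every such compatible $\dot\theta$ is realized by an infinitesimal bending, which preserves the induced metric; and if $\dot\theta = 0$ then all intrinsic and extrinsic data of $\partial P$ are fixed to first order, forcing $\dot P$ to be trivial. Hence $\ker d\Phi_P$ is naturally identified with the space of compatible infinitesimal bendings, and the lemma reduces to: the only compatible infinitesimal bending of $\partial P$ is the trivial one.

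To prove this I would pass to the earthquake/shear description of Section~\ref{sec:models}. Writing $P \leftrightarrow (p_L,p_R)$ via the product structure $\partial_\infty\AdS^3 \cong \RP^1\times\RP^1$, the doubled metrics $m_L, m_R \in \Teich_{0,N}$ obey the identity \eqref{eqn:earthquake-diagram-intro}, $m_R = E_{2\theta}m_L$. Differentiating, a deformation of $P$ is a pair $(\dot m_L,\dot m_R)$ of tangent vectors to the locus of doubled ideal polygons, linked to $\dot\theta$ through the linearization of the shear map $E_{2\theta}$; the condition $d\Phi_P(\dot P)=0$ says that the corresponding first-order change of the induced metric --- the ``symmetric'' combination of $(\dot m_L,\dot m_R)$, in contrast to the bending which is the ``antisymmetric'' one --- vanishes. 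Combining the two, $\dot\theta$ is forced into the kernel of a symmetric bilinear form $Q$ on $\RR^E$ which, under these identifications, is the Hessian at its minimum of the length functional of Section~\ref{sec:lengthfunctions}. This functional is set up precisely so that its Hessian is non-degenerate (sign-definite on the relevant subspace) --- this is the infinitesimal content behind the half-pipe parameterization, Theorem~\ref{thm:main-HP} and Proposition~\ref{prop:imageinA-HP} --- whence $\dot\theta = 0$ and $\dot P$ is trivial.

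The main obstacle is exactly this non-degeneracy, together with the clean decoupling of the induced-metric (``symmetric'') data from the bending (``antisymmetric'') data in a genuinely Lorentzian setting, where the shear map and its linearization behave differently than in the hyperbolic case. Should the earthquake bookkeeping prove too delicate, there is a robust alternative: a Cauchy--Alexandrov-type sign argument applied directly to the bent surface $\partial P$. Assuming a nonzero compatible infinitesimal bending $\dot\theta$, color each edge of $\Gamma$ by the sign of $\dot\theta$ and use the combinatorics of sign changes in the links of the ideal vertices together with an Euler-characteristic count on $\Gamma$ --- exploiting the decomposition of $\Gamma$ into top, bottom and equator to handle the two types of dihedral angle separately --- to reach a contradiction. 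Either route yields the infinitesimal rigidity; combined with properness (Lemma~\ref{lem:Phi-proper}), connectedness of $\overline{\AdSPoly}_N$, and the equality of dimensions, it upgrades to the global statement, Theorem~\ref{thm:main-AdS-metrics}.
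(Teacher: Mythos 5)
Your opening reduction is correct and matches the paper's: since the faces of $P$ are totally geodesic and the shear coordinates of the induced metric along the edges of the $1$--skeleton are unchanged to first order, a deformation $V$ with $d\Phi_P(V)=0$ extends to a continuous vector field $W$ on $\partial P$ that restricts to a Killing field of $\AdS^3$ on each face --- i.e.\ an infinitesimal isometric/bending deformation. The question is then whether such a $W$ must be the restriction of a global Killing field.

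From this point your two proposed routes genuinely diverge from the paper, and both have gaps as written. The paper's proof uses the \emph{infinitesimal Pogorelov map} $\Pi$ (Section~\ref{sec:Pogorelov}): $\Pi$ is a bundle map over the inclusion of (the affine chart of) $\AdS^3$ into Euclidean $\R^3$ with the property that it sends vector fields restricting to $\AdS$-Killing fields on each face to vector fields restricting to Euclidean Killing fields on each face. One then invokes Alexandrov's infinitesimal rigidity theorem for convex Euclidean polyhedra to conclude that $\Pi(W)$, hence $W$, is a global Killing field. This transfer argument is short, uses no earthquake bookkeeping, and avoids re-proving any Cauchy-type lemma in Lorentzian signature.

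Your first route, via the earthquake relation $m_R = E_{2\theta}m_L$ and the length functional $\ell_\theta$, is not justified as stated: you assert that $\dot\theta$ is ``forced into the kernel of a symmetric bilinear form $Q$ on $\R^E$ which \dots is the Hessian at its minimum of the length functional,'' but no such identification is established, and the Hessian in Lemma~\ref{lm:path-convex} lives on $\poly_N$ (a space of dimension $N-3$), not on $\R^E$. Moreover the nondegeneracy in Section~\ref{sec:lengthfunctions} is used to prove the \emph{half-pipe} statement (Theorem~\ref{thm:main-HP}); deducing AdS infinitesimal rigidity from it would require a precise transition argument (along the lines of Appendix~\ref{sec:transitional-proof}), which you do not give and which is not immediate, since the linearization of $E_{2\theta}$ at a genuinely bent $P$ is not the same object as the Hessian of $\ell_\theta$ at a flat minimizer. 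Your second route --- a direct Cauchy--Alexandrov sign count on the links of the ideal vertices of $\partial P$ --- is plausible in spirit but is not carried out, and is substantially more delicate than in the Euclidean or hyperbolic case because the link of a vertex in $\AdS^3$ is an $\HS^2$ rather than a round sphere, the relevant rotation group $\OO(1,1)$ is noncompact, and the dihedral angles come in two signs already before any deformation. The Pogorelov-map reduction is precisely designed to sidestep these issues by importing the Euclidean Alexandrov theorem wholesale; if you want a self-contained argument, I would encourage you to pursue that route rather than either of the two you sketch.
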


Lemma~\ref{lem:Phi-proper} is proved in Section~\ref{sec:properness} by directly studying the effect of degeneration of the left and right metrics $m_L, m_R$ of $P$ on the induced metric $\Phi(P)$ via Equation~\eqref{eqn:earthquake-diagram-intro}.
Lemma~\ref{lem:Phi-rigidity} is deduced in Section~\ref{sec:rigidity} from a similar rigidity statement in the setting of convex Euclidean polyhedra using an \emph{infinitesimal Pogorelov map}, which is a tool that translates infinitesimal rigidity questions form one constant curvature geometry to another.

Next, to prove Theorem~\ref{thm:main-AdS-angles}, we need the relevant local parameterization and properness statements in the setting of dihedral angles.  Note that in the following lemmas, we consider each $\PsiAdS_\Gamma$ as having image in $\RR^E$, where again $E$ is the set of edges of the graph $\Gamma \in\Graph(N, \gamma)$. The first lemma is a properness statement for $\PsiAdS$.

\begin{lemma}
\label{lem:PsiAdS-proper}
Consider a sequence $P_n \in \AdSPoly_{\Gamma}$ going to infinity in $\AdSPoly$ such that the dihedral angles $\theta_n = \PsiAdS_{\Gamma}(P_n)$ converge to $\theta_\infty \in \RR^E$. Then $\theta_\infty$ fails to satisfy condition (iii) of Section~\ref{sec:intro-AdS-param}.
\end{lemma}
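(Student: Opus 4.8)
The plan is to use the relationship between ideal AdS polyhedra and pairs of ideal polygons in $\HH^2$, together with the earthquake equation~\eqref{eqn:earthquake-diagram-intro}. Given $P_n \in \AdSPoly_\Gamma$, write $P_n$ in terms of its left and right polygons $p_L^n, p_R^n$, with doubled metrics $m_L^n, m_R^n \in \Teich_{0,N}$ satisfying $m_R^n = E_{2\theta_n} m_L^n$. Since $P_n \to \infty$ in $\AdSPoly$ but $\theta_n \to \theta_\infty \in \RR^E$, at least one of the sequences $m_L^n, m_R^n$ must leave every compact set of $\Teich_{0,N}$ — otherwise, by properness of $\Phi$ (Lemma~\ref{lem:Phi-proper}, or rather the easier direction that the induced metric stays controlled) and Theorem~\ref{thm:main-AdS-metrics}, the polyhedra $P_n$ would stay in a compact subset of $\AdSPoly$. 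Actually the cleanest route: if $\theta_n \to \theta_\infty$ stays bounded then $E_{2\theta_n}$ converges (on the relevant graph) to a fixed shear map, so $m_L^n$ and $m_R^n$ go to infinity together, i.e. $m_L^n \to \infty$ in $\Teich_{0,N}$; the subtlety is that the shearing coordinates along the \emph{equator} edges are the negative entries of $\theta_n$, and these may be the ones that blow up, which is exactly what condition (iii) is designed to detect.

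The key steps, in order: First, set up the shear (Thurston/Penner-type) coordinates on $\Teich_{0,N}$ adapted to $\Gamma$, so that a point is recorded by edge weights and $E_{2\theta}$ is literally addition of $2\theta$ in these coordinates; recall that a puncture of $\Sigma_{0,N}$ corresponds to a linear functional on the edge-weights (the sum of weights around the puncture), and completeness of the metric forces this to vanish. Second, analyze the degeneration $m_L^n \to \infty$: a sequence in $\Teich_{0,N}$ goes to infinity iff some simple closed curve has length going to $0$ or $\infty$; encode this as a measured lamination limit and, crucially, observe that the doubled-polygon locus consists of metrics symmetric under the reflection swapping top and bottom, so the degeneration is governed by a curve on $\Sigma_{0,N}$ that is symmetric, hence crosses the equator $\gamma$ in an even number of points — and for the "cheapest" such curve this number is exactly $2$. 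Third, track what the earthquake $m_R^n = E_{2\theta_n} m_L^n$ does to the length of this curve: since $\theta_n$ is bounded, the earthquake changes the length of any fixed curve by a bounded amount (earthquake along a lamination with bounded weights is bi-Lipschitz with controlled constant on the thick part, and on the thin part one uses the explicit effect on the pinching curve), so $m_R^n \to \infty$ along the "same" curve $c$. Fourth — this is the heart — extract from the common degeneration of $m_L^n$ and $m_R^n$ the statement that the total $\theta_n$-weight around the circuit dual to $c$ (which has exactly two edges dual to equator edges, by the symmetry observation) tends to a limit $\le 0$, contradicting condition (iii) at $\theta_\infty$. The mechanism: the length of $c$ in $m_L^n$ (resp. $m_R^n$) is, up to bounded multiplicative error, $\exp$ of (a signed sum of the shear coordinates of $m_L^n$, resp. $m_R^n$, crossed by $c$); taking the difference and using $m_R^n - m_L^n = 2\theta_n$ on each edge, the blow-up of $\ell_c(m_L^n)/\ell_c(m_R^n)$ or its reciprocal forces the alternating sum of $\theta_n$ along the intersection pattern of $c$ with $\Gamma$ to be unbounded unless it has the wrong sign — and the "simple circuit with exactly two equator edges" in condition (iii) is precisely this intersection pattern, read off via the duality between $\Gamma$ and $P^*$.

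The main obstacle I anticipate is the fourth step: cleanly relating the geometric degeneration (a curve pinching) to the combinatorial/linear condition (iii) on $\theta_\infty$, because several curves may pinch simultaneously and because the shear-coordinate formula for geodesic length is only asymptotic. I would handle this by passing to a subsequence so that the pinching locus is a single simple closed curve $c$ (choosing $c$ to be a curve of minimal extremal length among those that degenerate, which can be taken disjoint from enough of the rest of the surface), using the symmetry under the top–bottom reflection to guarantee $i(c,\gamma) = 2$ and hence that the dual circuit $e_1^*,\dots,e_k^*$ has exactly two edges dual to $\gamma$, and then invoking the sub-additivity/asymptotic form of the length-versus-shear estimate (e.g. via the collar lemma and Penner's $\lambda$-length formulas) to get the required inequality $\theta_\infty(e_1^*) + \cdots + \theta_\infty(e_k^*) \le 0$. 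A cross-check: when $P_n$ degenerates to a \emph{collapsed} polyhedron (an element of $\poly_N$) the angles stay bounded and (iii) can degenerate to an equality along a face boundary, but such sequences do not go to infinity in $\AdSPoly$ in the sense of the lemma, so they are correctly excluded; the genuinely-at-infinity sequences are exactly those where $m_L^n$ and $m_R^n$ separately blow up, which is the case handled above.
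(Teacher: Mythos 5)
Your setup agrees with the paper's: pass to the left and right ideal polygons, use Theorem~\ref{thm:earthquakes} to see that since $\theta_n$ stays bounded the polygons $p_L^n$ and $p_R^n$ degenerate \emph{together}, and observe that the degeneration is governed by a collection of vertices colliding, which corresponds combinatorially to a circuit in $\Gamma^*$ crossing the equator exactly twice.

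The gap is in your ``heart of the argument'' (step four), and you essentially flag it yourself. You aim to deduce that the angle sum around the circuit dual to $c$ tends to a value $\le 0$ by comparing $\ell_c(m_L^n)$ and $\ell_c(m_R^n)$ via shear coordinates, writing $s_R - s_L = 2\theta_n$, and invoking some ``blow-up of $\ell_c(m_L^n)/\ell_c(m_R^n)$''. But precisely because $\theta_n$ converges, this ratio stays bounded above and below — there is no blow-up, and the comparison gives you no sign information at all. The earthquake relation is used here to know that $m_L^n$ and $m_R^n$ degenerate in the same way, but it cannot by itself produce the sign constraint on $\theta_\infty$ that you need; as stated the mechanism is vacuous.

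What is missing is the observation the paper uses: after normalizing so the first three vertices of $P_n$ are fixed, the polyhedra $P_n$ actually \emph{converge} to a non-degenerate convex ideal polyhedron $P_\infty$ in $\AdS^3$ with strictly fewer vertices (some consecutive vertices of the equator collide into a single new vertex). The angle sums of $\theta_n$ along the dual circuit $c$ in $\Gamma^*$ converge to the angle sum of $P_\infty$ around the new collapsed vertex, and by condition~(ii) applied to $P_\infty$ this is exactly $0$. Since $c$ does not bound a face of $\Gamma^*$, this violates (iii) at $\theta_\infty$. No length-versus-shear asymptotics, no collar lemma, no $\lambda$-lengths — the whole argument is the limiting vertex equation. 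If you want to carry through a proof along the pinching-curve route, you would still need something structurally equivalent to this limiting polyhedron, so it is cleaner to use it directly.
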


\noindent Lemma~\ref{lem:PsiAdS-proper} is proven in Section~\ref{sec:properness} together with Lemma~\ref{lem:Phi-proper}. In the next lemma, we assume $\Gamma$ is a triangulation (i.e. maximal) and extend the definition of $\Psi_\Gamma$ to all of $\AdSPoly$. Indeed, for $P \in \AdSPoly$, each ideal triangle of $\Gamma$ is realized as a totally geodesic ideal triangle in $P$. Therefore, the punctured sphere $\Sigma_{0,N}$ maps into $P$ as a bent (but possibly not convex) totally geodesic surface with $1$--skeleton $\Gamma$ and we may measure the dihedral angles (with sign) along the edges. 

\begin{lemma}\label{lem:PsiAdS-rigidity}
Assume $\Gamma$ is a triangulation of $\Sigma_{0,N}$, with $E$ denoting the set of $3N - 6$ edges of $\Gamma$. 
If the $1$--skeleton of $P \in \AdSPoly$ is a subgraph of~$\Gamma$, then $\PsiAdS_{\Gamma}: \AdSPoly \to \RR^E$ is a local immersion near $P$. \end{lemma}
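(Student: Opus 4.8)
The plan is to reduce the infinitesimal rigidity of ideal AdS polyhedra (with respect to dihedral angles) to the analogous, and classical, infinitesimal rigidity statement for convex Euclidean polyhedra, via an infinitesimal Pogorelov map. First I would set up the deformation space: fix the triangulation $\Gamma$ and consider $P \in \AdSPoly$ whose $1$--skeleton is a subgraph of $\Gamma$, realized as a bent spacelike piecewise-totally-geodesic ideal surface $S \cong \Sigma_{0,N}$ with simplicial structure $\Gamma$. An infinitesimal deformation of $P$ (through ideal polyhedra with vertices on $H$, equivalently through bent realizations of $\Sigma_{0,N}$ with the ideal vertices free to move along $H = \partial_\infty \AdS^3$) induces a first-order variation $\dot\theta \in \RR^E$ of the dihedral angles; I must show that if $\dot\theta = 0$ then the deformation is trivial (tangent to the orbit of the isometry group of $\AdS^3$). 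Counting dimensions, the space of marked ideal polyhedra with vertices on $H$ has dimension $2N-3$ after quotienting by isometries (it is covered near $P$ by the product of two copies of the space of $N$-gons in $\HH^2$ modulo isometry, via the left/right metrics $p_L, p_R$), while $\RR^E$ has dimension $3N-6$; so $\PsiAdS_\Gamma$ being a local immersion is the expected statement and there is no index obstruction.

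The key step is the transfer to the Euclidean (or hyperbolic/spherical) setting. The infinitesimal Pogorelov map is a linear isomorphism between the space of infinitesimal deformations of a spacelike polyhedral surface in $\AdS^3$ and the space of infinitesimal deformations of a combinatorially identical polyhedral surface in Euclidean space $\RR^3$ (realized in the appropriate affine chart), which has the crucial property that it sends trivial (isometric) infinitesimal deformations to trivial ones, and sends deformations preserving all dihedral angles to deformations preserving all dihedral angles — because the map is built so as to intertwine the relevant angle-type data. Here one must be careful about the ideal (vertices at infinity) nature of $P$: the Pogorelov map should be applied to a truncated picture, or one works with the condition that vertices remain on the quadric, which corresponds under the map to the vertices of the Euclidean polyhedron remaining on an ellipsoid/paraboloid. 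So the statement to invoke is: infinitesimal rigidity of convex polyhedra inscribed in a quadric in $\RR^3$ with respect to dihedral angles — equivalently, the statement that a convex polyhedron with vertices on an ellipsoid admits no nontrivial angle-preserving infinitesimal flex keeping vertices on the ellipsoid. This follows from the classical infinitesimal rigidity of convex polyhedra (Dehn, Alexandrov, or in the modern treatment via the rigidity of the associated framework / Cauchy's theorem at the infinitesimal level) together with the fact that prescribing dihedral angles along a triangulation is a "co-rigid" condition dual to prescribing edge lengths.

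Concretely, the steps in order: (1) describe the tangent space $T_P \AdSPoly$ as infinitesimal bendings of the spacelike surface $S$, keeping ideal vertices on $H$; (2) compute $d\PsiAdS_\Gamma$ as the map recording first-order changes of dihedral angles, and identify its kernel with angle-preserving infinitesimal bendings; (3) apply the infinitesimal Pogorelov map (as developed in Section~\ref{sec:rigidity}, to which I would defer the construction) to land in the space of infinitesimal deformations of a projectively-equivalent convex Euclidean polyhedron inscribed in an ellipsoid, checking that it matches the kernel with the space of angle-preserving Euclidean infinitesimal deformations preserving the inscribed condition; (4) invoke classical infinitesimal rigidity of convex Euclidean polyhedra to conclude this space is trivial; (5) pull back to conclude the original deformation is trivial in $\AdS^3$, i.e. $\PsiAdS_\Gamma$ is a local immersion at $P$. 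The main obstacle I expect is step (3): verifying that the infinitesimal Pogorelov map genuinely respects the dihedral-angle condition (not merely the edge-length condition, which is the more standard use of Pogorelov maps) and correctly handles the ideal vertices — one likely needs to pass to the dual/polar picture or to phrase everything in terms of the second fundamental form of the bent surface, where the Pogorelov map acts transparently. Everything else is dimension-counting and citation of classical rigidity.
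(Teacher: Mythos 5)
Your plan runs into exactly the obstacle you flagged at your step (3), and it is not merely a technical wrinkle: the infinitesimal Pogorelov map does not intertwine angle data. What the Pogorelov map $\Pi$ actually does (Lemma~\ref{lm:pogorelov-Euclidean}) is send Killing fields of $\AdS^3$ to Euclidean Killing fields, and that is the only property one gets. This is why the Pogorelov argument works cleanly for \emph{metric} rigidity (Lemma~\ref{lem:Phi-rigidity}): if a first-order deformation $V$ preserves the induced metric, then the shear coordinate at each edge is fixed, so the Killing fields attached to the two triangles adjacent to an edge agree along that edge; this produces a well-defined piecewise-Killing vector field $W$ on $\partial P$, to which one can apply $\Pi$ and then Alexandrov's theorem. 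In contrast, a deformation $V$ preserving all dihedral angles need not produce any such piecewise-Killing field, because the shears are free to change while the angles are held fixed; so the adjacent face-Killing fields need not agree on the common edge, $W$ is not defined, and there is nothing to feed into $\Pi$. Your gloss that prescribing angles is ``co-rigid'' and dual to prescribing edge lengths is where the real content lies, and it does not come for free: in the split-complex setting the relevant duality has to be exhibited.

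The paper's resolution is indirect and shorter than what you propose: it first proves Lemma~\ref{lem:Phi-rigidity} (metric rigidity) by the Pogorelov-plus-Alexandrov argument just sketched, and then deduces Lemma~\ref{lem:PsiAdS-rigidity} from it via Theorem~\ref{thm:duality}. That duality theorem is the missing ingredient in your sketch. It uses the pseudo-complex structure on $\AdSPoly_N$: the cross-ratio map $z_\Gamma$ assigns to each edge a shape parameter $z = \pm e^{s+\tau\theta}$ with $s$ the shear and $\theta$ the angle, and $z_\Gamma$ is $(\RR+\RR\tau)$--holomorphic. Thus $d\log z_\Gamma(V)$ being pure imaginary (metric preserved) is equivalent to $d\log z_\Gamma(\tau V)$ being real (angles preserved by $\tau V$), so metric rigidity at $P$ is equivalent to angle rigidity at $P$. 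This is the AdS analogue of the Bonahon-style shear/bend duality (the hyperbolic case is Theorem~\ref{thm:H3-duality}, where one multiplies by $i$ instead of $\tau$). You cannot avoid some such duality: there is no ``angle version'' of Alexandrov's theorem that you can cite in the Euclidean model, and the classical results you name (Dehn, Alexandrov, Cauchy) are all statements about metric data, not angle data. Minor point: your dimension count $2N-3$ for $\AdSPoly_N$ is off; the space of marked ideal $N$-gons in $\HH^2$ up to isometry has dimension $N-3$, so $\AdSPoly_N$ has dimension $2(N-3)=2N-6$, matching $\dim\Angles_\Gamma$ rather than leaving room for an index.
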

\noindent Lemma~\ref{lem:PsiAdS-rigidity} is obtained as a corollary of Lemma~\ref{lem:Phi-rigidity} via a certain duality between metric data and bending data derived from the natural pseudo-complex structure on $\AdSPoly$. See Section~\ref{sec:pseudo} and Section~\ref{sec:rigidity}. 

The next ingredient for Theorem~\ref{thm:main-AdS-angles} is:
\begin{prop}\label{prop:imageinA-AdS}
The map $\PsiAdS_{\Gamma}$ taking an ideal polyhedron $P \in \AdSPoly_{\Gamma}$ to its dihedral angles $\theta$ has image in $\Angles_{\Gamma}$. 
\end{prop}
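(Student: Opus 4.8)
The plan is to verify conditions (i), (ii), and (iii) directly from the geometry of a convex ideal polyhedron $P \in \AdSPoly_{\Gamma}$, working in the projective model of $\AdS^3$ inside an affine chart where $P$ is convex. Condition (i) is essentially the sign convention already built into the definition of dihedral angles in Section~\ref{sec:intro-AdS-param}: the equator edges separate future-directed from past-directed faces, and a direct inspection of the light-cone quadrants shows that along the equator the two adjacent faces lie in opposite spacelike quadrants with the surface locally concave (from the point of view of the convexity of $P$), giving $\theta(e^*) < 0$; along the non-equator edges the two faces lie in the same spacelike quadrant with the surface convex, giving $\theta(e^*) > 0$. This is the same bookkeeping as in the HP setting of Proposition~\ref{prop:imageinA-HP}, and I would simply carry it out carefully once.

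For condition (ii), the faces of $P^*$ correspond to the vertices $v$ of $P$, and the edges $e_1^*, \dots, e_k^*$ bounding such a face are dual to the edges $e_1, \dots, e_k$ of $P$ incident to $v$. The link of the ideal vertex $v$ in $\partial_\infty\AdS^3 \cong \RP^1\times\RP^1$ is an affine (flat) polygon: a small horosphere-like section near $v$ carries a degenerate (flat Lorentzian) structure, and the faces of $P$ meeting at $v$ cut this flat cylinder into a flat polygon whose exterior angles are precisely the $\theta(e_j^*)$. The Gauss--Bonnet theorem for a flat polygon then forces $\sum_j \theta(e_j^*) = 0$. Equivalently, and perhaps cleaner to write, one can argue via the holonomy around $v$: going once around the link, the product of the $k$ hyperbolic rotations (Lorentz boosts) by amounts $\theta(e_j^*)$ fixing the successive edges must be trivial, and since all these boosts act on essentially the same degenerate plane their amounts simply add to zero. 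I expect this to be the cleanest of the three arguments.

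Condition (iii) is the main obstacle. Here $e_1^*, \dots, e_k^*$ is a simple circuit in $P^*$ not bounding a face, with exactly two of the $e_j$ being equator edges. Dually, $e_1, \dots, e_k$ cross a simple closed curve $c$ on $\partial P \cong \Sigma_{0,N}$ which separates the sphere into two disks, and the hypothesis that exactly two of the crossed edges lie on the equator means $c$ meets the equator in exactly two points, hence separates some nonempty arcs of equator vertices on each side. The strategy is to build, from the faces of $P$ crossed by $c$, a developing map of the ``necklace'' of faces into $\AdS^3$ (or rather into a suitable subspace), and to show that closing up this necklace forces the total turning $\sum_j \theta(e_j^*)$ to be positive. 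Concretely, I would project the relevant chain of totally geodesic faces via the standard identification $\partial_\infty\AdS^3\cong\RP^1\times\RP^1$ onto the left and right factors, obtaining two chains of geodesics in $\HH^2$, and track how the shear/bending accumulates; the constraint that $c$ crosses the equator exactly twice is exactly what guarantees that the ``top'' and ``bottom'' parts of $c$ each contribute the same sign, so the inequality does not degenerate. Alternatively, one can imitate Rivin's original argument (conditions (1)--(3) in Section~\ref{sec:rivin_param}): realize the dual circuit as bounding a ``corner region'' and compare with a genuine face, using convexity of $P$ to get strict inequality. The delicate point, and where I would spend the most care, is correctly accounting for the signs coming from the two equator crossings --- an equator edge contributes a negative $\theta$, so a naive count would suggest the sum could be negative; the resolution is that the geometric turning picked up when $c$ passes from the future part of $P$ to the past part (and back) contributes positively and dominates, and making this rigorous requires either the flat-geometry Gauss--Bonnet computation extended to an annular region or the explicit cross-ratio estimate hinted at in Section~\ref{sec:models}. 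I would also double-check that the argument genuinely uses the exactly-two-crossings hypothesis, since circuits crossing the equator in $0$ or more than $2$ points are explicitly excluded from condition (iii).

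Finally, I would remark that much of this is parallel to the half-pipe computation of Proposition~\ref{prop:imageinA-HP}, and where possible I would phrase the AdS argument so as to make the HP statement a direct infinitesimal limit, in keeping with the unified viewpoint of the paper. If a fully synthetic proof of (iii) proves cumbersome, a workable fallback is to first establish (iii) for $P$ with a triangulated $1$--skeleton (where the faces are genuine ideal triangles and the cross-ratio computation is explicit), and then pass to general $\Gamma$ by degenerating flat edges, noting that the strict inequalities are open conditions.
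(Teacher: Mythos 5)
For conditions (i) and (ii) your argument is essentially the paper's: (i) is the sign convention, and (ii) follows by intersecting $P$ with a small ``horo-torus'' about the ideal vertex and applying a Gauss--Bonnet/holonomy count to the resulting link polygon. One small correction: the link lives in a \emph{non-degenerate} flat Minkowski plane, not a degenerate one --- it is a convex Lorentzian polygon whose exterior angles are the $\theta(e_j^*)$ and whose total exterior angle is $0$.

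For condition (iii), however, you have not given a proof; you have listed three or four possible strategies (project to left/right $\RP^1$ factors and ``track how shear/bending accumulates''; imitate Rivin via ``corner regions''; use a cross-ratio estimate; or triangulate and degenerate), and in each case you stop at the point where the actual work would begin. None of your sketches engages the real obstacle, which is that two of the $\theta(e_j^*)$ are \emph{negative}, so you need a genuine geometric comparison showing that the positive contributions dominate, with strictness when the circuit does not bound a face. The paper's mechanism for this is quite specific and is not among the alternatives you list: it forms the polyhedron $Q$ obtained by extending the faces $f_1,\dots,f_n$ crossed by the circuit (forgetting the other faces), passes to the \emph{dual} polyhedron $Q^*$ with respect to the $(2,2)$ form, shows that the half $(\partial Q^*)_1$ of its boundary cut off by the dual circuit is intrinsically locally convex, and reduces the angle inequality to a Gauss--Bonnet statement for a compact convex space-like polygon in $\AdS^2$. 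The intrinsic convexity step is itself nontrivial and is proved by studying the links of vertices of $(\partial Q^*)_1$ as \emph{time-oriented polygonal paths in the HS-sphere} $\HS^2$ (the space of rays in a tangent copy of $\R^{2,1}$), where a substitute for the triangle inequality (Claim~\ref{cl:HS}) is established via the hyperbolic triangle inequality applied to the dual geodesics of the $\dS^2$-vertices. None of this apparatus --- the extension/dualization of $Q$, HS geometry, the intrinsic convexity lemma, or the $\AdS^2$ Gauss--Bonnet reduction --- appears in your proposal.

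Two further concrete issues. First, your ``fallback'' of proving (iii) for a triangulation and then ``passing to general $\Gamma$ by degenerating flat edges'' is backwards: adding edges to triangulate changes which circuits are faces and which are not, and openness of strict inequalities does not help you take limits where an inequality might close up. If anything one adds edges with weight $0$ to reduce to a triangulation, but then one must check that (iii) for the triangulated graph implies (iii) for the original --- and one must still prove (iii) for the triangulation, which you have not done. Second, the paper does have an alternative proof by transitional geometry (Appendix~\ref{sec:transitional-proof}), but it is not the one you sketch: it deforms $P$ along the ray $t\theta$ using the properness and local-rigidity lemmas, takes a rescaled half-pipe limit, and invokes Proposition~\ref{prop:imageinA-HP}. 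That route requires Lemmas~\ref{lem:PsiAdS-proper} and~\ref{lem:PsiAdS-rigidity} as inputs and a careful analysis of the collapsed limit, none of which is in your proposal either. As written, (iii) is a genuine gap.
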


\noindent The content of this proposition is that $\PsiAdS_\Gamma(P)$ satisfies condition~(iii) of Section~\ref{sec:intro-AdS-param} (conditions (i) and (ii) are automatic).
This will be proven directly in Section~\ref{sec:necess} by a computation in $\AdS$ geometry. See Appendix~\ref{sec:transitional-proof} for an alternative indirect proof using transitional geometry.

In Section~\ref{sec:topology}, we explain why Lemmas~\ref{lem:PsiAdS-proper} and~\ref{lem:PsiAdS-rigidity}, and Proposition~\ref{prop:imageinA-AdS} imply that $\PsiAdS$ is a covering onto $\Angles$. We then argue that $\Angles$ is connected and simply connected when $N \geq 6$ using Theorem~\ref{thm:main-HP}, and we prove Theorem~\ref{thm:main-AdS-angles} (treating the cases $N=4,5$ separately). We also deduce Theorems \ref{thm:main} from Theorem  \ref{thm:main-AdS-angles}, \ref{thm:main-HP} and Rivin's theorem.

\vspace{0.1in}
{\noindent \bf Acknowledgements}
Some of this work was completed while we were in residence together at the 2012 special program on Geometry and analysis of surface group representations at the Institut Henri Poincar\'e; we are grateful for the opportunity to work in such a stimulating environment.  Our collaboration was greatly facilitated by support from the GEAR network (U.S. National Science Foundation grants DMS 1107452, 1107263, 1107367 ``RNMS: GEometric structures And Representation varieties'').

%
%

\section{Hyperbolic, anti-de Sitter, and half-pipe geometry in dimension 3}
\label{sec:models}

This section is dedicated to the description of the three-dimensional geometries of interest in this paper, and to the relationship between these geometries. We prove a number of basic but fundamental theorems, some of which have not previously appeared in the literature as stated. Of central importance is the interpretation of bending data in these geometries in terms of shearing deformations in the hyperbolic plane (Theorem~\ref{thm:earthquakes} and~\ref{thm:HP-bend}).

In \cite{dan_age}, the first named author constructs a family of model geometries in projective space that transitions from hyperbolic geometry to anti-de Sitter geometry, passing though half-pipe geometry. We review the dimension-three version of this construction here. Each model geometry $\XX = \XX(\BB)$ is associated to a real two-dimensional commutative algebra~$\BB$.

Let $\BB = \mathbb R + \mathbb R \kappa$ be the real two-dimensional, commutative algebra generated by a non-real element $\kappa$ with $\kappa^2 \in \RR$. As a vector space $\BB$ is spanned by $1$ and $\kappa$. There is a conjugation action: 
$ \overline{(a + b \kappa)} := a - b \kappa, $
which defines a square-norm
$$ |a + b\kappa|^2 := (a + b\kappa)\overline{(a + b\kappa)} = a^2 -b^2 \kappa^2 \ \in \ \mathbb R.$$
Note that $|\cdot|^2$ may not be positive definite. We refer to $a$ as the \emph{real part}  and $b$ as the \emph{imaginary part} of $a + b\kappa$. If $\kappa^2 = -1$, then our algebra $\mathcal B = \CC$ is just the complex numbers, and in this case we use the letter~$i$ in place of~$\kappa$, as usual. If $\kappa^2 = +1$, then $\mathcal B$ is the \emph{pseudo-complex (or Lorentz) numbers} and we use the letter~$\tau$ in place of~$\kappa$. In the case $\kappa^2 = 0$, we use the letter $\sigma$ in place of $\kappa$. In this case $\mathcal B = \Rsigma$ is isomorphic to the tangent bundle of the real numbers. Note that if $\kappa^2 < 0$, then $\mathcal B \cong \CC$, and if $\kappa^2 > 0$ then $\mathcal B \cong \Rtau$.

Now consider the $2\times2$ matrices $M_2(\BB)$. 
Let $$\Herm(2, \BB) = \{ A \in M_2(\BB) : A^* = A\}$$ denote the $2\times 2$ Hermitian matrices, where $A^*$ is the conjugate transpose of $A$.  As a real vector space, $\Herm(2,\BB) \cong \mathbb R^4$. We define the following (real) inner product on $\Herm(2,\BB)$:
$$ \left\langle \bminimatrix{a}{z}{\bar{z}}{d}, \bminimatrix{e}{w}{\bar{w}}{h} \right\rangle = -\frac{1}{2}tr\left( \bminimatrix{a}{z}{\bar{z}}{d} \bminimatrix{h}{-w}{-\bar{w}}{e}\right).$$
We will use the coordinates on $\Herm(2,\BB)$ given by 
\begin{align} \label{coordinates-on-Herm}
X &= \bminimatrix{x_4+x_1}{x_2 - x_3 \kappa}{x_2 + x_3\kappa}{x_4 - x_1}.
\end{align} In these coordinates, we have that
$$\langle X, X\rangle = -\text{det}(X) = x_1^2 + x_2^2 - \kappa^2 x_3^2 - x_4^2,$$
and we see that the signature of the inner product is $(3,1)$ if $\kappa^2 < 0$, or $(2,2)$ if $\kappa^2 > 0$.

The coordinates above identify $\Herm(2,\BB)$ with $\RR^4$. Therefore we may identify the real projective space $\RP^3$ with the non-zero elements of $\Herm(2,\BB)$, considered up to multiplication by a real number. We define the region $\XX$ inside $\RP^3$ as the negative lines with respect to $\langle \cdot, \cdot\rangle$:
$$\XX = \left\{ X \in \Herm(2,\BB) : \langle X, X\rangle < 0 \right\} / \RR^*.$$
Note that in the affine chart $x_4 = 1$, our space $\XX$ is the standard round ball if $\kappa = i$, the standard solid hyperboloid if $\kappa = \tau$, or the standard solid cylinder if $\kappa = \sigma$.

Next, define the group $\PGL^+(2,\mathcal B)$ to be the $2\times 2$ matrices $A$, with coefficients in~$\BB$, such that $|\det(A)|^2 > 0$, up to the equivalence $A \sim \lambda A$ for any $\lambda \in \mathcal B^{\times}$.
The group $\PGL^+(2, \BB)$ acts on $\XX$ by orientation preserving projective linear transformations as follows. Given $A \in \PGL^+(2,\BB)$ and $X \in \XX$:
$$ A \cdot X := A X A^*.$$
\begin{remark}\label{rem:plane}
The matrices with real entries determine a copy of $\PSL(2,\RR)$ inside of $\PGL^+(2, \BB)$, which preserves the set $\plane$ of negative lines in the $x_1$-$x_2$-$x_4$ plane (in the coordinates above). The subspace $\plane$ of $\XX$ is naturally a copy of the projective model of the hyperbolic plane. We think of $\plane$ as a common copy of $\HH^2$ contained in every model space $\XX = \XX(\BB)$ independent of the choice of $\kappa^2$.
\end{remark}

Note that if $\BB = \CC$, then $\PGL^+(2,\BB) = \PSL(2,\CC)$ and $\XX$ identifies with the usual projective model for \emph{hyperbolic space} $\XX = \HH^3$. In this case, the action above is the usual action by orientation preserving isometries of $\HH^3$, and gives the familiar isomorphism $\PSL(2,\CC) \cong \PSO(3,1)$,

If $\BB = \Rtau$, with $\tau^2 = +1$, then $\XX$ identifies with the usual projective model for \emph{anti-de Sitter space} $\XX = \AdS^3$. Anti-de Sitter geometry is a Lorentzian analogue of hyperbolic geometry. The inner product $\langle \cdot, \cdot \rangle$ determines a metric on $\XX$, defined up to scale. We choose the metric with constant curvature $-1$. Note that the metric on $\AdS^3$ has signature $(2,1)$, so tangent vectors are partitioned into three types: \emph{space-like}, \emph{time-like}, or \emph{light-like}, according to whether the inner product is positive, negative, or null, respectively. In any given tangent space, the light-like vectors form a cone that partitions the time-like vectors into two components. Thus, locally there is a continuous map assigning the name \emph{future pointing} or \emph{past pointing} to time-like vectors. The space $\AdS^3$ is \emph{time-orientable}, meaning that the labeling of time-like vectors as future or past may be done consistently over the entire manifold. The action of $\PGL^+(2,\Rtau)$ on $\AdS^3$ is by isometries, thus giving an embedding $\PGL^+(2,\Rtau) \hookrightarrow \PSO(2,2)$. In fact, $\PGL^+(2,\Rtau)$ has two components, distinguished by whether or not the action on $\AdS^3$ preserves time-orientation, and the map is an isomorphism.

Lastly, we discuss the case $\BB = \Rsigma$, with $\sigma^2 = 0$. In this case, $\XX = \HP^3$ is the projective model for \emph{half-pipe geometry} (HP), defined in \cite{dan_age} for the purpose of describing a geometric transition going from hyperbolic to AdS structures. 
The algebra $\mathbb R + \mathbb R \sigma$ should be thought of as the tangent bundle of $\mathbb R$: Letting $x$ be the standard coordinate function on $\mathbb R$, we think of $a + b \sigma$ as a path based at $a$ with tangent $b \frac{\partial}{\partial x}$. More appropriately, one should think of $\Rsigma$ as the bundle of imaginary directions in $\CC$ (resp. $\Rtau$) restricted to the subspace $\RR$. See Section~\ref{subsec:polyhedra-HP}.

\begin{Remark}
In each case, the orientation reversing isometries are also described by $\PGL^+(2, \BB)$ acting by $X \mapsto A \overline{X} A^*$.
\end{Remark}

Although, we focus on dimension three, there are projective models for these geometries in all dimensions. Generally, the $n$-dimensional hyperbolic space $\HH^n$ (resp. the $n$-dimensional anti-de Sitter space $\AdS^n$) may be identified with the space of negative lines in $\RP^n$ with respect to a quadratic form of signature $(n,1)$ (resp. of signature $(n-1,2)$); the isometry group is the projective orthogonal group with respect to this quadratic form, isomorphic to $\PO(n,1)$ (resp. $\PO(n-1,1)$). The $n$-dimensional half-pipe space $\HP^n$ identifies with the space of negative lines with respect to a degenerate quadratic form with $n-1$ positive eigenvalues, one negative eigenvalue, and one zero eigenvalue. The structure group, as in the three-dimensional case, is a codimension one subgroup of all projective transformations preserving this set. See Section~\ref{s:HP3-geometry}.

\vspace{0.2cm}

{\noindent \bf The ideal boundary.} The ideal boundary $\partial_\infty\XX$ is the boundary of the region $\XX$ in $\RP^3$. It is given by the null lines in $\Herm(2,\BB)$ with respect to $\langle \cdot, \cdot \rangle$. Thus $$\partial_\infty\XX = \left\{ X \in \Herm(2,\BB) : \det(X) = 0, X \neq 0\right\}/\RR^*$$ can be thought of as the $2\times 2$ Hermitian matrices of rank one. We now give a useful description of $\partial_\infty \XX$ that generalizes the identification $\partial_\infty \HH^3 = \CP^1$. 

Any rank one Hermitian matrix $X$ can be decomposed (up to $\pm$) as 
\begin{equation} \label{eqn:decomposition}
X= \pm v v^*,
\end{equation} where $v \in \BB^2$ is a two-dimensional column vector with entries in $\BB$, unique up to multiplication by $\lambda \in \BB$ with $|\lambda|^2 = 1$ (and $v^*$ denotes the transpose conjugate). 
This gives the identification $$\partial_\infty \mathbb X \cong \mathbb P^1 \BB = \left\{ v \in \BB^2 : v v^* \neq 0 \right\} / \sim,$$ where $v \sim v \lambda \text{ for } \lambda \in \BB^{\times}.$  The action of $\PGL^+(2, \BB)$ on $\mathbb P^1 \BB$ by matrix multiplication extends the action of $\PGL^+(2,\BB)$ on $\XX$ described above. We note also that the metric on $\XX$ determines a compatible conformal structure on $\partial_{\infty} \XX = \mathbb P^1 \BB$. Restricted to $\BB \subset \mathbb P^1 \BB$, this conformal structure is exactly the conformal structure induced by the square-norm $| \cdot |^2$. In particular, it is Euclidean if $\kappa^2 < 0$, Lorentzian if $\kappa^2 > 0$, or degenerate if $\kappa^2 = 0$.

We use the square-bracket notation $\btwovector{x}{y}$ to denote the equivalence class in $\mathbb P^1 \BB$ of $\begin{pmatrix}x \\ y \end{pmatrix} \in \BB^2$. Similarly, a $2\times 2$ square-bracket matrix $\bminimatrix{a}{b}{c}{d}$ denotes the equivalence class in $\PGL^+(2,\BB)$ of the matrix $\minimatrix{a}{b}{c}{d} \in \GL^+(2,\BB)$. Throughout, we will identify $\BB$ with its image under the injection $\BB \hookrightarrow \mathbb P^1 \BB$ given by $z \mapsto \btwovector{z}{1}$.

\begin{Remark}
In the case $\kappa^2 \geq 0$, the condition $v v^* \neq 0$ in the definition of $\mathbb P^1 \BB$ is \emph{not} equivalent to the condition $v \neq 0$, because $\BB$ has zero divisors.\end{Remark}

The inclusion $\RR \hookrightarrow \BB$ induces an inclusion $\RP^1 \hookrightarrow \mathbb P^1 \BB$. This copy of $\RP^1$ is precisely the ideal boundary of the common hyperbolic plane $\plane$ contained in all model spaces $\XX$ (independent of the choice of $\kappa^2$).

Recall that a subset $P$ of projective space is called \emph{convex} if $P$ is contained in an affine chart and is convex in that affine chart.
In the notation introduced here, the fundamental objects of this article are defined as follows:

\begin{defi}
A \emph{convex ideal polyhedron} in $\XX$ is a convex polyhedron $P$ in projective space such that the vertices of $P$ lie in $\partial_\infty \XX$ and the rest of $P$ lies in $\XX$.
\end{defi}

\noindent An \emph{ideal triangle} in $\XX$ is a convex ideal polyhedron with three vertices.
An \emph{ideal simplex} or \emph{ideal tetrahedron} is a convex ideal polyhedron with four vertices. Ideal simplices and their moduli will play an important role in this article. We review some of the basic theory, referring the reader to~\cite{dan_ide} for a more detailed account.

Let $Z_1, Z_2, Z_3, Z_4 \in \Herm(2,\BB)$ have rank one, and let $z_1, z_2, z_3, z_4$ denote the corresponding elements of $\mathbb P^1 \BB$. Assume that $Z_1, Z_2, Z_3$ determine an ideal triangle in $\XX$. There is a unique $A \in \PGL^+(2,\BB)$ such that $A z_1 = \infty := \btwovector{1}{0},  Az_2 = 0:= \btwovector{0}{1}$, and $A z_3 = 1:= \btwovector{1}{1}$. Then $$(z_1,z_2; z_3,z_4) := A z_4$$ is an invariant of the ordered ideal points $z_1,\ldots,z_4$, which will be referred to as the \emph{cross ratio} of the four points, since it generalizes the usual cross ratio in $\CP^1$. 
It is straighforward to check that $z_1,z_2,z_3,z_4$ define an ideal tetrahedron in $\mathbb X$ if and only if $z = (z_1,z_2;z_3,z_4)$ (is defined and) lies in $\mathcal B \subset \mathbb P^1 \mathcal B$ and satisfies:
\begin{equation}
|z|^2, |1-z|^2 > 0.
\label{spacelike}
\end{equation}
In this case $z$ is called the \emph{shape parameter} of the ideal tetrahedron (with ordered vertices $z_1, z_2, z_3, z_4$). Using the language of Lorentzian geometry, we say that $z$ and $z-1$, as in \eqref{spacelike}, are \emph{space-like}. In fact, all facets of an ideal tetrahedron are space-like and totally geodesic with respect to the metric induced by $\langle \cdot, \cdot \rangle$ on $\mathbb X$. The shape parameter $z$ is a natural geometric quantity associated to the edge $e = z_1z_2$ of the tetrahedron in the following sense, described in Thurston's notes \cite[\S 4]{thu_the} in the hyperbolic case.  Change coordinates (using an element of $\PGL^+(2,\BB)$) so that $z_1 = \infty$, and $z_2 = 0$. Then the subgroup $G_e$ of $\PGL^+(2,\BB)$ that preserves $e$ is given by $$G_e = \left\{ A = \bminimatrix{\lambda}{0}{0}{1}: \lambda \in \BB, |\lambda|^2 > 0 \right\}.$$ The number $\lambda = \lambda(A)$ associated to $A \in G_e$ is called the \emph{exponential $\BB$-length} and generalizes the exponential complex translation length of a loxodromic element of $\PSL(2,\CC)$. Let $A \in G_e$ be the unique element so that $A z_3 = z_4$. Then the shape parameter is just the exponential $\BB$-length of $A$: $z = \lambda(A)$. 

There are shape parameters associated to the other edges as well. We may calculate them as follows. Let $\pi$ be any \emph{even} permutation of $\{1,2,3,4\}$, which corresponds to an orientation preserving diffeomorphism of the standard simplex. Then $(z_{\pi(1)}, z_{\pi(2)}; z_{\pi(3)}, z_{\pi(4)})$ is the shape parameter associated to the edge $e' = z_{\pi(1)} z_{\pi(2)}$. This definition a priori depends on the orientation of the edge $e'$. However, one easily checks that $(z_2,z_1;z_4,z_3) = (z_1,z_2; z_3,z_4).$ Figure~\ref{fig:shape-params} summarizes the relationship between the shape parameters of the six edges of an ideal tetrahedron, familiar from the hyperbolic setting.

\begin{figure}[h]
{\centering
\def\svgwidth{1.5in}
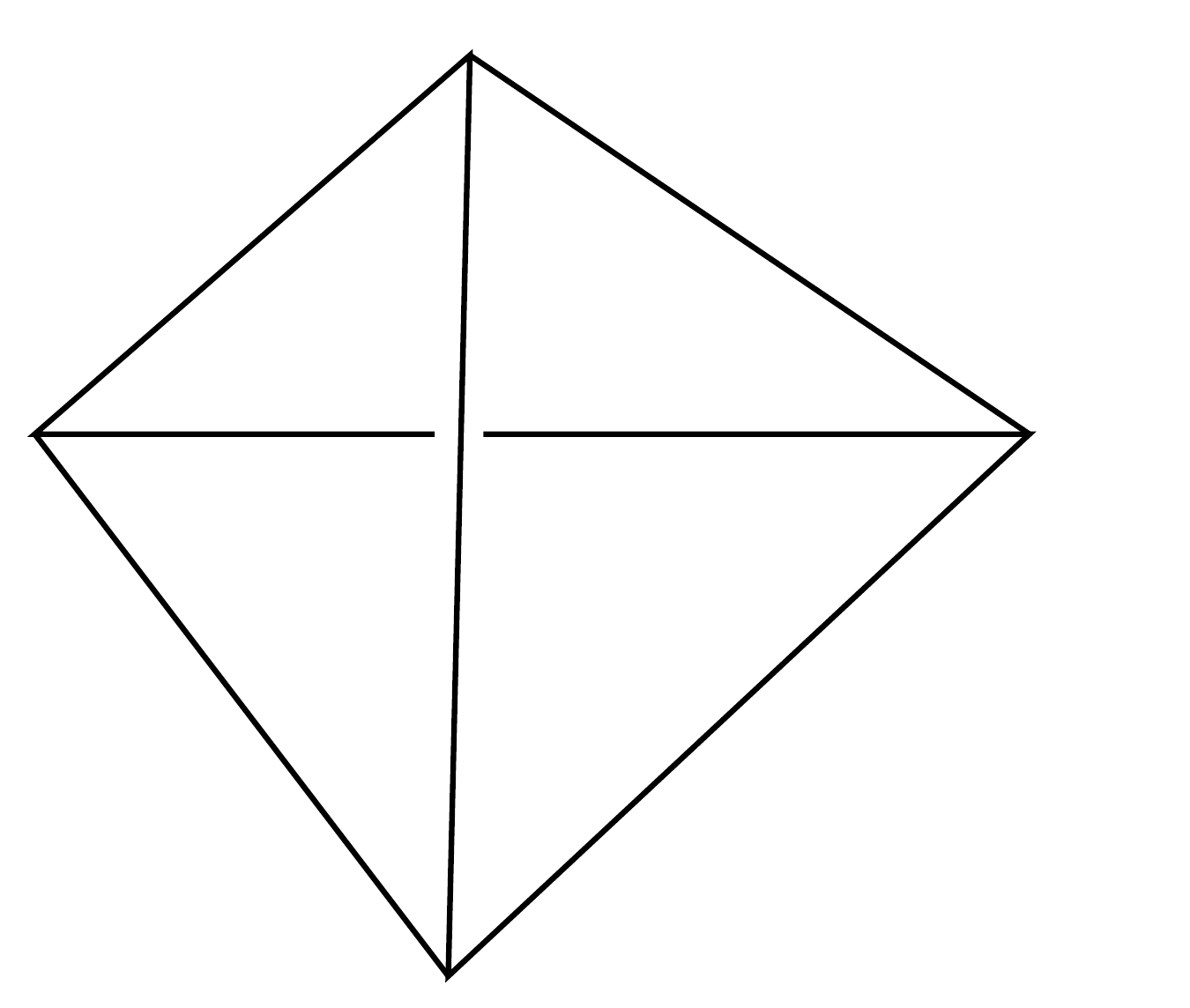
}
\caption{\label{fig:shape-params} The shape parameters corresponding to the six edges of an ideal tetrahedron.}
\end{figure}

\subsection{Hyperbolic geometry in dimension three}
Let $\kappa^2 = -1$, so that $\mathcal B = \mathbb C$ is the complex numbers. In this case, the inner product $\langle \cdot, \cdot \rangle$ on $\Herm(2,\mathbb C)$ is of type $(3,1)$ and $\mathbb X$ is the unit ball in the affine chart $x_4 = 1$, known as the projective model for $\mathbb H^3$. A basic understanding of hyperbolic geometry, although not the main setting of interest, is very important for many of the arguments in this article. 
We will often use intuition from the hyperbolic setting as a guide, and so we assume the reader has a basic level of familiarity. Let us recall some basic facts here and present an important theorem, whose analogue in the AdS setting will be crucial.

The ideal boundary $\partial_\infty \HH^3$ identifies with $\mathbb P^1 \BB = \CP^1$. Since the ball is strictly convex, any $N$ distinct points $z_1,\ldots, z_N$ determine an ideal polyhedron $P$ in $\HH^3$. In the case $N = 4$, the ideal simplex $P$ is determined by the shape parameter $z = (z_1, z_2; z_3, z_4) \in \CC$. Indeed, Condition~\eqref{spacelike} gives the well-known fact that the shape parameter $z$ may take any value in $\mathbb C\setminus \{0,1\}$.  
Consider the two faces $T = \Delta z_1 z_2 z_3$ and $T' = \Delta z_2 z_1 z_4$ of $P$, each oriented compatibly with the outward pointing normal, meeting along the edge $e = z_1 z_2$. Then, writing $z = e^{s + i \theta}$, the quantity $s$ is precisely the amount of \emph{shear} along $e$ between $T$ and $T'$, while $\theta$ is precisely the dihedral angle at $e$. 

An infinitesimal deformation of an ideal polyhedron $P$ is given by a choice $V = (V_1, \ldots, V_N)$ of tangent vectors to $\CP^1$ at each of the vertices $z_1, \ldots, z_N$ of $P$. Such a deformation is considered trivial if $V_1, \ldots, V_N$ are the restriction of a global Killing field on $\HH^3 \cup \CP^1$ to the vertices $z_1, \ldots, z_n$. 
If necessary, augment the $1$--skeleton of $P$ so that it is an ideal triangulation $\Gamma$ of the surface of $P$. Then the map $z_\Gamma$, taking an ideal polyhedron $P$ to the collection of $3N-6$ cross ratios associated to the edges of $\Gamma$, is holomorphic and the following holds:

\begin{theorem}\label{thm:H3-duality}
An ideal polyhedron $P$ is infinitesimally rigid with respect to the induced metric if and only if $P$ is infinitesimally rigid with respect to the dihedral angles.
\end{theorem}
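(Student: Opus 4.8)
The plan is to package the induced metric and the dihedral angles as the real and imaginary parts of the tuple of cross ratios along $\Gamma$, which is holomorphic, and then to read off the equivalence of the two rigidity statements from the Cauchy--Riemann equations. Fix an ideal triangulation $\Gamma$ of $\Sigma_{0,N}$ refining the $1$--skeleton of $P$, with edge set $E$ of size $3N-6$, and work in the space $\mathcal{D}$ of non-degenerate ideal polyhedra with $N$ labelled vertices, realized as an open subset of the configuration space of $N$ points of $\CP^1$ modulo $\PSL(2,\CC)$. This $\mathcal{D}$ is a complex manifold of complex dimension $N-3$, and $V := T_P \mathcal{D}$ is the space of infinitesimal deformations of $P$ modulo the trivial ones. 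As recalled above, the cross-ratio map $z_\Gamma : \mathcal{D} \to (\CC\setminus\{0,1\})^E$ is holomorphic; write $\log z_e = s_e + i\theta_e$ for $e\in E$.

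First I would translate the two rigidity statements into statements about $z_\Gamma$. The imaginary part $(\theta_e)_e = \operatorname{Im}\log z_\Gamma$ is, by construction, the dihedral-angle function $\Psi_\Gamma$ of the totally geodesic bent surface realizing $\Gamma$ inside $P$, so $P$ is infinitesimally rigid with respect to the dihedral angles exactly when $d(\operatorname{Im}\log z_\Gamma)_P$ is injective on $V$. The real part $(s_e)_e = \operatorname{Re}\log z_\Gamma$ is the vector of shears of that same surface along the edges of $\Gamma$, which is precisely a shear-coordinate chart for the induced metric on $\partial P$; since shear coordinates attached to a fixed ideal triangulation give a real-analytic embedding of $\Teich_{0,N}$ into $\RR^E$, passing from the induced-metric map to $\operatorname{Re}\log z_\Gamma$ does not change the rank of the differential, and so $P$ is infinitesimally rigid with respect to the induced metric exactly when $d(\operatorname{Re}\log z_\Gamma)_P$ is injective on $V$.

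Then comes the core of the argument, a single line of linear algebra. Put $L := d(\log z_\Gamma)_P : V \to \CC^E$. Because $z_\Gamma$ is holomorphic, $L$ is complex linear, so $L(iv) = iL(v)$ and therefore $\operatorname{Re}\bigl(L(iv)\bigr) = -\operatorname{Im}\bigl(L(v)\bigr)$ for every $v\in V$. Hence multiplication by $i$ on $V$ carries $\ker(\operatorname{Im}\circ L)$ bijectively onto $\ker(\operatorname{Re}\circ L)$, so one of these two kernels is trivial precisely when the other is; in view of the previous paragraph, this is exactly the asserted equivalence.

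The only step that needs care — rather than a genuine obstacle — is the identification of $\operatorname{Re}\log z_\Gamma$ with the induced metric as a map on all of $\mathcal{D}$ when $P$ is not simplicial. Then the combinatorial type of nearby polyhedra jumps, and the boundary metric of a deformed polyhedron is assembled from a triangulation differing from $\Gamma$ by flips. However, each such flip occurs in an ideal quadrilateral that is bent by an amount of the order of the deformation parameter, and bending a totally geodesic surface along an edge changes its path metric only to second order in the bending angle; hence the induced-metric map is still differentiable at $P$ and has the same differential there as $\operatorname{Re}\log z_\Gamma$ (read through the shear-coordinate embedding of $\Teich_{0,N}$), so the argument above is unaffected. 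Everything else is standard: the holomorphicity of $z_\Gamma$ is the holomorphic dependence of the cross ratio on its four ideal arguments, and the shear-coordinate embedding of Teichm\"uller space is classical.
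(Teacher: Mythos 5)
Your proof is correct and follows essentially the same route as the paper: identify $\operatorname{Re}\log z_\Gamma$ with the shear/metric data and $\operatorname{Im}\log z_\Gamma$ with the dihedral angles, then use holomorphicity of $z_\Gamma$ (i.e.\ complex linearity of $d\log z_\Gamma$) to see that multiplication by $i$ on the deformation space exchanges the two kernels. Your closing paragraph on differentiability at non-simplicial $P$ is a helpful piece of extra care that the paper leaves implicit, but it does not change the structure of the argument.
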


\begin{proof}
Since the induced metric is determined entirely by the shear coordinates with respect to $\Gamma$, we have that the infinitesimal deformation $V$ does not change the induced metric to first order if and only if  $d\log z_\Gamma (V)$ is pure imaginary. On the other hand, $V$ does not change the dihedral angles to first order if\ and only if $d\log z_\Gamma (V)$ is real. Therefore $V$ does not change the induced metric if and only if $i V$ does not change the dihedral angles.
\end{proof}

\begin{Remark}
Theorem~\ref{thm:H3-duality} is a simpler version of Bonahon's argument~\cite{bon_she} that a hyperbolic three-manifold is rigid with respect to the metric data on the boundary of the convex core if and only if it is rigid with respect to bending data on the boundary of the convex core. In this setting of polyhedra, Bonahon's shear-bend cocycle is replaced by a finite graph $\Gamma$ with edges labeled by the relevant shape parameters $z$ (or $\log z$).
\end{Remark}

\subsection{Anti-de Sitter geometry in dimension three}\label{ads_background}

Let $\mathcal{B}$ be the real algebra generated by an element $\tau$, with $\tau^2 = +1$, which defines $\mathbb X = \AdS^3$, the anti-de Sitter space. Let us discuss some important properties of the algebra $\mathcal B = \mathbb R + \mathbb R \tau$, known as the pseudo-complex numbers.

\vspace{0.2cm}

{\noindent \bf The algebra $\BB = \mathbb R + \mathbb R \tau$ of pseudo-complex numbers.}

First, note that $\mathcal{B}$ is not a field as, for example, $(1+\tau)\cdot(1-\tau) = 0.$
The square-norm defined by the conjugation operation $|a+b\tau|^2 = (a + b \tau) \overline{(a + b\tau)} = a^2 - b^2$ comes from the $(1,1)$ Minkowski inner product on $\mathbb R^2$ (with basis  $\{1, \tau\}$). The space-like elements of $\mathcal{B}$ (i.e. square-norm $ > 0$), acting by multiplication on $\mathcal{B}$, form a group and can be thought of as the similarities of the Minkowski plane that fix the origin. Note that if $|a + b\tau|^2 = 0$, then $b=\pm a$, and multiplication by $a + b\tau$ collapses all of $\mathcal{B}$ onto the light-like line spanned by $a + b\tau$.

The elements $\frac{1+\tau}{2}$ and $\frac{1-\tau}{2}$ are two spanning idempotents which annihilate one another: $$\left(\frac{1\pm\tau}{2}\right)^2 = \frac{1\pm\tau}{2}, \ \text{ and } \ \left(\frac{1+\tau}{2}\right) \cdot  \left(\frac{1-\tau}{2}\right) = 0.$$ Thus $\mathcal{B} \cong \mathbb R \oplus \mathbb R$, as $\mathbb R$--algebras, via the isomorphism 
\begin{equation}(\varpi_L, \varpi_R): a\left(\frac{1-\tau}{2}\right) + b \left(\frac{1+\tau}{2}\right) \longmapsto (a,b).
\label{isomorphism}
\end{equation}
Here $\varpi_L$ and $\varpi_R$ are called the left and right projections $\BB \to \RR$. These projections extend to left and right projections $\mathbb P^1 \BB \to \RP^1$ which give the isomorphism $\mathbb P^1 \BB \cong \RP^1 \times \RP^1$. Indeed, $\mathbb P ^1 \mathcal B$ is the \emph{Lorentz compactification} of $\mathcal B = \left\{ \btwovector{x}{1} : \ x \in \mathcal B \right\}$. The added points make up a wedge of circles, so that $\mathbb P^1 \mathcal B$ is topologically a torus. The square-norm $|\cdot|^2$ on $\mathcal B$ induces a flat conformal Lorentzian structure on $\mathbb P^1 \mathcal B$ that is preserved by $\PGL^+(2, \mathcal B)$. We refer to $\PGL^+(2, \mathcal B)$ as the \emph{Lorentz M\"obius transformations}. With its conformal structure $\mathbb P^1 \mathcal B$ is the $(1+1)$-dimensional Einstein universe $\text{Ein}^{1,1}$ (see e.g. \cite{barbot-1,primer-einstein} for more about Einstein space).

The splitting $\BB \cong \RR \oplus \RR$ determines a similar splitting $M_2 (\mathcal B) \cong M_2 \RR \oplus M_2 \RR$ of the algebra of $2 \times 2$ matrices which respects the determinant in the following sense: $\forall A \in M_2 (\mathcal B)$ $$(\varpi_L \det A, \varpi_R \det A) = (\det \varpi_L(A), \det \varpi_R(A)),$$
where, by abuse of notation, $\varpi_L$ and $\varpi_R$ also denote the extended maps $M_2 (\mathcal B) \to M_2 (\R)$. The orientation preserving isometries $\operatorname{Isom}^+ \AdS^3 = \PGL^+(2,\mathcal B)$ correspond to the subgroup of $\PGL(2,\mathbb R) \times \PGL(2,\mathbb R)$ such that the determinant has the same sign in both factors. The identity component of the isometry group (which also preserves time orientation) is given by $\PSL(2,\RR) \times \PSL(2,\RR)$.

Note also that the left and right projections $\varpi_L, \varpi_R: \mathbb P^1 \BB \to \RP^1$ respect the cross ratio: $$(z_1, z_2; z_3, z_4) = \dfrac{1-\tau}{2} (\varpi_L z_1 , \varpi_L z_2; \varpi_L z_3, \varpi_L z_4) + \dfrac{1+\tau}{2} (\varpi_R z_1, \varpi_R z_2; \varpi_R z_3, \varpi_R z_4),$$ where on the right-hand side $(\cdot,\cdot;\cdot,\cdot)$ denotes the usual cross ratio in $\RP^1$.

\subsection{Ideal Polyhedra in $\AdS^3$}\label{sec:ideal_poly}

Consider an ideal polyhedron $P$ in $\AdS^3$ with $N$~vertices $z_1, \ldots, z_N \in \mathbb P^1 \BB$. For each $i = 1, \ldots, N$, let $x_i = \varpi_L(z_i)$ and $y_i = \varpi_R(z_i)$ be the left and right projections of $z_i$. Then, all of the $x_i$ (resp. all of the $y_i$) are distinct. Otherwise, the convex hull of the $z_i$ (in any affine chart) will contain a full segment in the ideal boundary.

\begin{prop}
\label{prop:cyclic-order}
The vertices $z_1, \ldots, z_N \in \mathbb P^1 \BB$ determine an ideal polyhedron $P$ in $\AdS^3$ if and only the left projections $x_1, \ldots, x_N$ and right projections $y_1, \ldots, y_N$ are arranged in the same cyclic order on the circle $\RP^1$.
\end{prop}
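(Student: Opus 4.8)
The plan is to use the product structure $\partial_\infty \AdS^3 \cong \RP^1 \times \RP^1$ coming from the idempotent splitting $\BB \cong \RR \oplus \RR$, and to reduce the statement to an elementary fact about convex hulls in this product. Recall that a point $z_i \in \mathbb P^1 \BB$ corresponds to the pair $(x_i, y_i) = (\varpi_L z_i, \varpi_R z_i)$, and that the convex hull in question is taken in an affine chart of $\RP^3$. The key geometric observation is that a segment joining $z_i$ to $z_j$ on the ideal boundary is light-like (i.e.\ lies in $\partial_\infty \AdS^3$) precisely when $x_i = x_j$ or $y_i = y_j$; this follows from the description of $\mathbb P^1 \BB = \mathrm{Ein}^{1,1}$ as a conformally flat Lorentzian torus whose light cone through a point is the union of the two rulings $\{x = \mathrm{const}\}$ and $\{y = \mathrm{const}\}$. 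Since we have already assumed the $x_i$ are pairwise distinct and the $y_i$ are pairwise distinct, no edge of the putative polyhedron is light-like; the content of the proposition is exactly when the (closed) convex hull of $\{z_1, \ldots, z_N\}$ meets $\partial_\infty \AdS^3$ only in the $z_i$ themselves.

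First I would handle the forward direction (``only if''). Suppose the cyclic orders of $(x_1, \ldots, x_N)$ and $(y_1, \ldots, y_N)$ on $\RP^1$ disagree. Then after relabeling there exist four indices whose left projections appear in the cyclic order $x_a, x_b, x_c, x_d$ but whose right projections appear in an order that is not a cyclic rotation of $y_a, y_b, y_c, y_d$ — concretely, one can arrange that the chords $\overline{x_a x_c}$ and $\overline{x_b x_d}$ ``cross'' on one side while $\overline{y_a y_c}$ and $\overline{y_b y_d}$ do not, or vice versa. Working in an affine chart adapted to these four points, I would write down the convex hull of the corresponding four boundary points $z_a, z_b, z_c, z_d$ explicitly and show that it contains an interior point of $\partial_\infty \AdS^3 = H$: the crossing/non-crossing mismatch forces a point of the hull to satisfy $\det = 0$ (equivalently $x_1^2 + x_2^2 - x_3^2 = 1$) while not being one of the $z_i$. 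Equivalently, and perhaps more cleanly, I would invoke Proposition~\ref{prop:cyclic-order}'s natural companion: a quadruple $z_a, z_b, z_c, z_d$ spans an ideal tetrahedron iff its cross ratio $z = (z_a, z_b; z_c, z_d)$ is space-like, i.e.\ $|z|^2, |1-z|^2 > 0$; by the formula $z = \tfrac{1-\tau}{2}\,z_L + \tfrac{1+\tau}{2}\,z_R$ with $z_L, z_R$ the ordinary $\RP^1$ cross ratios, space-likeness is equivalent to $z_L$ and $z_R$ having the same sign, which is exactly the statement that the left and right quadruples have the same cyclic order. A cyclic-order mismatch thus produces four vertices failing Condition~\eqref{spacelike}, hence not spanning an ideal tetrahedron, hence (by taking their convex hull) exhibiting a boundary point of the hull of all the $z_i$ that is not a vertex.

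For the converse (``if''), assume the two cyclic orders agree; I want to show the $z_i$ genuinely bound a convex ideal polyhedron. By the cross-ratio computation above, agreement of cyclic orders makes \emph{every} quadruple $z_i, z_j, z_k, z_l$ span a nondegenerate ideal tetrahedron, so in particular the convex hull $P$ of $\{z_1, \ldots, z_N\}$ is a genuine (three-dimensional, since $N \geq 4$) polytope in the affine chart, all of whose vertices lie on $H$. It remains to check $P \cap H$ consists of exactly the $z_i$, i.e.\ every face of $P$ is space-like and meets $H$ only in its vertices. A supporting plane of $P$ at a face is a plane through some subset of the $z_i$; using that all sub-quadruples are space-like one shows each such plane is space-like, and a space-like plane meets the one-sheeted hyperboloid $H$ in an ellipse (a space-like conic) whose only points of $P$ are the finitely many vertices lying on it — so no extra intersection with $H$ appears. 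I would also record that the induced cyclic order of the vertices around the resulting ``equator'' is the common cyclic order, which is what makes the marking in the definition of $\AdSPoly_N$ well defined.

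The main obstacle I anticipate is making the ``convex hull meets $H$ in an interior point'' argument in the forward direction fully rigorous and chart-independent: one must be careful that the convex hull is taken in a fixed affine chart containing all of $P$ (not across the plane at infinity), and that the mixed signature of $H$ — so that the hull of boundary points can poke \emph{outside} $H$ rather than staying inside as in the hyperbolic case — is exploited correctly. Routing everything through the cross-ratio criterion \eqref{spacelike} and the left/right splitting of cross ratios, rather than through direct convex-hull computations, is what I expect to make this clean; the sign-of-cross-ratio $\Leftrightarrow$ same-cyclic-order equivalence on $\RP^1$ is the elementary fact on which the whole proposition pivots.
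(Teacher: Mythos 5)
Your proposal is built on exactly the same central computation as the paper's: a quadruple $z_{i_1},\ldots,z_{i_4}$ spans an ideal simplex iff $|z|^2, |1-z|^2 > 0$ for the $\BB$-valued cross ratio, and via the idempotent splitting $z = \tfrac{1-\tau}{2}x + \tfrac{1+\tau}{2}y$ this becomes $xy>0$ and $(1-x)(1-y)>0$, i.e. the left and right quadruples have the same cyclic order on $\RP^1$. You correctly identify this as the engine of the proof. The ``only if'' direction you give (a cyclic-order mismatch produces a bad quadruple, hence a sub-simplex of the hull that leaves $\overline{\AdS^3}$) is also the paper's.

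The difference, and the gap, is in the ``if'' direction. You write that once all quadruples span ideal tetrahedra, ``it remains to check $P\cap H$ consists of exactly the $z_i$, i.e.\ every face of $P$ is space-like and meets $H$ only in its vertices.'' That ``i.e.'' is not an equivalence: showing every face is space-like and each face plane meets $H$ in a strictly convex conic controls $\partial P\cap H$, but it does not by itself rule out $\operatorname{int}(P)$ crossing $H$. Since $\overline{\AdS^3}$ is not a convex region and the function cutting out $H$ is indefinite, a convex body whose whole boundary lies in $\overline{\AdS^3}$ can in principle bulge through $H$ (the restriction of that quadratic to a time-like chord is concave, so the usual ``max on the boundary'' argument fails). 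One \emph{can} close this gap directly --- for instance by noting that $\operatorname{int}(P)\cap H\ne\emptyset$ would force an entire ruling line segment of $H$ to lie in $P$, hitting $\partial P\cap H$ in two points, i.e.\ producing a full boundary edge $[z_i,z_j]\subset\partial_\infty\AdS^3$, contradicting the standing assumption that all $x_i$ and all $y_i$ are distinct --- but your write-up does not contain such an argument.

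The paper sidesteps all of this with a single elementary fact, essentially Carath\'eodory's theorem in $\RP^3$: a point of $P=\operatorname{conv}\{z_1,\ldots,z_N\}$ lies in the convex hull of some four of the $z_i$, so $P$ is the union of the $4$-subset simplices, and $P$ meets $\partial_\infty\AdS^3$ only in the $z_i$ iff every $4$-subset spans an ideal simplex. This gives both directions of the reduction to quadruples in one line, with no need to analyze supporting planes or to treat $\partial P$ and $\operatorname{int}(P)$ separately. I would recommend you route the converse through this Carath\'eodory observation as well; it collapses the hardest part of your argument and makes the proof genuinely two steps (reduce to quadruples, then apply the cross-ratio/splitting lemma).
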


\begin{proof}
In general, a closed set $\Omega$ in $\RP^M$ is convex if and only any $M+1$ points of $\Omega$ span a (possibly degenerate) simplex contained in $\Omega$.
Therefore the $z_1, \ldots, z_N$ define an ideal polyhedron if and only if any four vertices $z_{i_1}, z_{i_2}, z_{i_3}, z_{i_4}$ span an ideal simplex. This is true if and only if the cross ratio $z = (z_{i_1}, z_{i_2}; z_{i_3}, z_{i_4})$ is defined and satisfies that $|z|^2, |1-z|^2 > 0$. Since $z = \frac{1-\tau}{2}x + \frac{1+\tau}{2}y$, where $x = (x_{i_1}, x_{i_2}; x_{i_3}, x_{i_4})$ and $y = (y_{i_1}, y_{i_2}; y_{i_3}, y_{i_4})$, we have that $|z|^2 = xy$ and $|1-z|^2 = (1-x)(1-y)$. So $|z|^2, |1-z|^2 > 0$ if and only if $x$ and $y$  have the same sign and $(1-x)$ and $(1-y)$ have the same sign. Hence, $z_{i_1}, z_{i_2}, z_{i_3}, z_{i_4}$ span an ideal simplex if and only if the two four-tuples of vertices $(x_{i_1}, x_{i_2}, x_{i_3}, x_{i_4})$ and $(y_{i_1}, y_{i_2}, y_{i_3}, y_{i_4})$ are arranged in the same cyclic order on $\RP^1$. The proposition follows by considering all subsets of four vertices.
\end{proof}

\noindent We denote by $p_L = \varpi_L(P)$ (resp. $p_R = \varpi_R(P)$) the ideal polygon in the hyperbolic plane with vertices $x_1, \ldots, x_N$ (resp. $y_1,\ldots, y_N$). 

Let us quickly recall the definitions and terminology from Section~\ref{sec:intro-AdS-param}. We fix, once and for all, a time orientation on $\AdS^3$. Since all faces of an ideal polyhedron $P$ are space-like, the outward normal to each face is time-like and points either to the future or to the past. This divides the faces into two groups, the future (or top) faces, and the past (or bottom) faces. The union of the future faces is a bent polygon, as is the union of the past faces. The edges dividing the future faces from the past faces form a Hamiltonian cycle, called the \emph{equator}, in the $1$--skeleton of~$P$. We may project $P$ combinatorially to the left and right ideal polygons $p_L$ and $p_R$ respectively. Each face of $P$ is isometric to an ideal polygon in the hyperbolic plane. Therefore the \emph{induced metric} on the boundary of $P$ is naturally a hyperbolic metric $m$ on the $N$-punctured sphere; it is a complete metric. Further, the labeling of the vertices, the equator, and the top and bottom of $P$ determine an identification (up to isotopy) of the surface of $P$ with the $N$-puncture sphere $\Sigma_{0,N}$, making $m$ into a point of the Teichm\"uller space $\Teich_{0,N}$. The marking also identifies the $1$--skeleton of $P$ with a graph $\Gamma$ on $\Sigma_{0,N}$ with vertices at the punctures.
The edges of the equator project to exterior edges of $p_L$ (resp. $p_R$) and top/bottom edges project to interior edges of $p_L$ (resp. $p_R$). 
We may assume the $1$--skeleton is a triangulation by adding additional top/bottom edges as needed. Consider an edge $e = z_1z_2$ adjacent to two faces $T = \Delta z_1 z_2 z_3$ and $T' = \Delta z_4 z_1 z_2$, each oriented so that the normal points out of $P$. Then the cross ratio $z = (z_1, z_2; z_3, z_4)$ contains the following information:

\begin{prop}
The edge $e$ is an equatorial edge if and only if $z = a + b \tau$ has real part $a > 0$.
\end{prop}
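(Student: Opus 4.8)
The plan is to reduce the statement to a property of the left projection $p_L = \varpi_L(P)$, using the description of the equator recorded above. Write $x_i = \varpi_L(z_i)$, $y_i = \varpi_R(z_i)$ and let $x = (x_1,x_2;x_3,x_4)$ and $y = (y_1,y_2;y_3,y_4)$ be the ordinary cross ratios in $\RP^1$. By the cross-ratio splitting from Section~\ref{ads_background}, one has $z = \tfrac{1-\tau}{2}x + \tfrac{1+\tau}{2}y$, so that $a = \operatorname{Re} z = \tfrac12(x+y)$. Since $z_1,z_2,z_3,z_4$ are four vertices of the ideal polyhedron $P$, they span an ideal simplex, so condition~\eqref{spacelike} holds and, as computed in the proof of Proposition~\ref{prop:cyclic-order}, $|z|^2 = xy > 0$. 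Hence $x$ and $y$ have the same sign and $a > 0$ if and only if $x > 0$, so it suffices to show that $e$ is equatorial if and only if $x > 0$.

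To do this I would pass to the geometry of $p_L$. By the paragraph preceding the proposition, $e$ is equatorial precisely when $\varpi_L(e)$ is a boundary edge of the ideal polygon $p_L$, and is a top or bottom edge precisely when $\varpi_L(e)$ is an interior diagonal of $p_L$. The two triangles $T = \Delta z_1z_2z_3$ and $T' = \Delta z_4z_1z_2$ of $\Gamma$ meeting along $e$ project under $\varpi_L$ to non-degenerate ideal triangles $\Delta x_1x_2x_3, \Delta x_1x_2x_4 \subseteq p_L$ sharing the geodesic side $g$ with endpoints $x_1$ and $x_2$. Using that $\varpi_L$ carries the union of the top faces of $P$, and likewise the union of the bottom faces, homeomorphically onto $p_L$, and that $e$ is the only edge of $\Gamma$ whose left projection has endpoints $\{x_1, x_2\}$ (the $x_i$ being distinct), one sees that $\Delta x_1x_2x_3$ and $\Delta x_1x_2x_4$ lie on opposite sides of $g$ when $g$ is a diagonal of $p_L$ (the non-equatorial case) and on the same side of $g$ when $g$ is a boundary edge of $p_L$ (the equatorial case). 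Finally, normalizing so that $x_1 = \infty$, $x_2 = 0$ and $x_3 = 1$, one has $x = x_4$, the geodesic $g$ is the imaginary axis, and the standard sign rule for the real cross ratio gives $x > 0$ if and only if $x_3$ and $x_4$ lie on the same side of $g$. Chaining these equivalences, $e$ is equatorial if and only if $x > 0$, if and only if $a > 0$.

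The hard part will be the projection step: making precise that the two triangles of $\Gamma$ incident to $e$ really map onto the two triangles of $p_L$ incident to $g$, with no overlap or degeneration. This is exactly where one uses that the future (resp. past) part of $\partial P$ is a convex spacelike pleated surface, hence is carried homeomorphically onto $\HH^2$ by $\varpi_L$. Alternatively, one can bypass this entirely and argue combinatorially from Proposition~\ref{prop:cyclic-order}: that result already determines which pairs of vertices are consecutive around $p_L$, hence which projected edges of $\Gamma$ are boundary edges of $p_L$, and then the cross-ratio computation of the first paragraph finishes the proof.
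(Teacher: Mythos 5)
Your proof is correct, and it supplies an argument that the paper itself leaves implicit (the proposition is stated without a proof environment, followed only by a discussion of the decomposition $z = \pm e^{s+\tau\theta}$). Your route --- passing to the left projection $p_L$, using $|z|^2 = xy > 0$ from Proposition~\ref{prop:cyclic-order} to reduce $\operatorname{sign}(a)$ to $\operatorname{sign}(x)$, and then reading off the sign from whether $x_3, x_4$ lie on the same side of $g = \varpi_L(e)$ --- is a clean and complete way to establish the claim, and it only invokes facts already recorded in the paragraph preceding the proposition (that the equator of $P$ projects to the boundary of $p_L$ and that the top and bottom faces each project homeomorphically onto~$p_L$).

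The one step you flag as ``the hard part'' is in fact benign, precisely for the reason you give: when $g$ is a diagonal of $p_L$, the edge $e$ is a top (or bottom) edge, so $T$ and $T'$ are both top (or both bottom) faces; their images $\varpi_L(T), \varpi_L(T')$ are the two triangles of the triangulation of $p_L$ induced from that single hemisphere, which are forced to lie on opposite sides of $g$. When $g$ is a boundary edge, $p_L$ itself lies on one side of $g$, so both images lie on that side regardless of which hemisphere each came from. Your alternative suggestion --- to argue purely combinatorially from Proposition~\ref{prop:cyclic-order} by reading off adjacency in the cyclic order of the $x_i$ --- also works and would avoid even mentioning the pleated surface; either version is fine.

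One small remark for consistency: your conclusion (equatorial $\Leftrightarrow a > 0$) matches the proposition as stated and the sign convention used later in the half-pipe discussion (where $\varepsilon = +1$ on equatorial edges), but it runs contrary to the sentence in the paper immediately following the proposition, which asserts that $z = +e^{s+\tau\theta}$ (i.e.\ $a>0$) corresponds to a top/bottom edge. That sentence appears to have ``former'' and ``latter'' transposed; a direct check on a tetrahedron with, say, $z_1 = (\infty,\infty)$, $z_2 = (0,0)$, $z_3 = (1,1)$, $z_4 = (2,3)$ confirms your reading.
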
 

\noindent Since $z$ is space-like, we may express it as  $$z = \pm e^{s + \tau \theta} := \pm e^s(\cosh \theta + \tau \sinh \theta).$$
By convexity of $P$, the imaginary part of $z$ is always positive. Hence, either $z = +e^{s + \tau \theta}$ with $\theta > 0$, or $z = -e^{s+\tau \theta}$ with $\theta < 0$. 
In the former case, the edge $e$ is a top/bottom edge and in the latter case, $e$ is an equatorial edge. 
In either case, $s = s(e)$ is precisely the \emph{shear coordinate} of the induced metric $m$ along the edge~$e$, and $\theta$~is the exterior \emph{dihedral angle} at the edge~$e$.

We now give the fundamentally important relationship between shearing and bending in the setting of ideal polyhedra. Let $m_L$ (resp. $m_R$) denote the double of $p_L$ (resp. $p_R$). Since the vertices of $P$, and its projections $p_L$ and $p_R$, are labeled, we may regard $m_L$ and $m_R$ as points of the Teichm\"uller space $\Teich_{0,N}$; we call $m_L$ the \emph{left metric} and $m_R$ the \emph{right metric}. Recall the definition of $\AdSPoly_N$ given in Section \ref{sec:intro-AdS-param}.

\begin{theorem}\label{thm:earthquakes}
Let $m_L, m_R, m \in \Teich_{0,N}$ be the left metric, the right metric, and the induced metric defined by $P \in \AdSPoly_N$, and let $\theta$ denote the dihedral angles. Then the following diagram holds: 
\begin{equation}\label{eqn:earthquake-diagram}
m_L \xrightarrow[]{E_\theta} m \xmapsto[]{E_\theta} m_R,
\end{equation}
 where $E_\theta$ denotes shearing along $\Gamma$ according to the weights $\theta$ (a positive weight means shear to the left). Further, given the left and right metrics $m_L$ and $m_R$ (any two metrics obtained by doubling two ideal polygons $p_L$ and $p_R$), the induced metric $m$ and the dihedral angles $\theta$ are the unique metric and weighted graph on $\Sigma_{0,N}$ (with positive weights on the top/bottom edges) such that~\eqref{eqn:earthquake-diagram} holds.
\end{theorem}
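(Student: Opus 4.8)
The plan is to establish the diagram \eqref{eqn:earthquake-diagram} by working edge-by-edge in terms of shear coordinates, using the key computational fact established just above: if $e = z_1z_2$ is an edge with adjacent triangles $T = \Delta z_1z_2z_3$ and $T' = \Delta z_4z_1z_2$, then the cross ratio decomposes as $z = (z_1,z_2;z_3,z_4) = \frac{1-\tau}{2}x + \frac{1+\tau}{2}y$ with $x = (x_1,x_2;x_3,x_4)$ and $y = (y_1,y_2;y_3,y_4)$ the corresponding cross ratios in the left and right polygons, and that $z = \pm e^{s+\tau\theta}$ where $s$ is the shear coordinate of the induced metric $m$ along $e$ and $\theta$ is the dihedral angle. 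First I would record that writing $x = \epsilon e^{s_L}$ and $y = \epsilon e^{s_R}$ (with common sign $\epsilon = \pm 1$ forced by Proposition~\ref{prop:cyclic-order}, $\epsilon = +1$ on interior edges and $\epsilon = -1$ on equatorial edges), the relation $z = \frac{1-\tau}{2}x + \frac{1+\tau}{2}y = \epsilon e^{s+\tau\theta}$ unwinds — using $e^{s+\tau\theta} = \frac{1-\tau}{2}e^{s-\theta} + \frac{1+\tau}{2}e^{s+\theta}$ under the idempotent splitting — to the two scalar identities $s_L = s - \theta$ and $s_R = s + \theta$. Here $s_L$ (resp. $s_R$) is precisely the shear coordinate along $e$ of the doubled polygon $m_L$ (resp. $m_R$): the double of $p_L$ has shear $s_L$ on each interior edge of $p_L$, and along the boundary $N$-cycle the shear is read off from the position of the four relevant vertices of $p_L$, which is exactly $(x_1,x_2;x_3,x_4)$ written as $-e^{s_L}$.

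The content of the first assertion is then immediate from these two identities: $s_L = s - \theta$ says $m = E_\theta m_L$ (shearing $m_L$ along every edge of $\Gamma$ by the weight $\theta(e)$ produces shear coordinate $s_L + \theta = s$), and $s_R = s + \theta$ says $m_R = E_\theta m$. One must be slightly careful about sign conventions — that a positive weight means "shear to the left" matches the convention that increasing the shear coordinate corresponds to a left earthquake along that edge — and about the fact that shear coordinates with respect to a triangulation determine a point of Teichm\"uller space (the standard fact that a complete hyperbolic metric on $\Sigma_{0,N}$ is recovered from its shear coordinates, subject to the puncture conditions, which are automatically satisfied here since all three metrics come from honest ideal polygons). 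Since $\Gamma$ may be assumed to be a triangulation after adding top/bottom edges, and the shear coordinate along an added edge inside a single face is $0$ for $m$ (the face is flat) and equals the corresponding value for $m_L$, $m_R$, the argument is unaffected by this augmentation.

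For the uniqueness statement, suppose $(m', \theta')$ is a metric and a weighted triangulation on $\Sigma_{0,N}$, with $\theta'$ positive on top/bottom edges (so $\theta'$ negative on equatorial edges, or rather: having the sign pattern that distinguishes the two edge types), satisfying $m_L \xrightarrow{E_{\theta'}} m' \xmapsto{E_{\theta'}} m_R$. Reading this in shear coordinates along each edge $e$: if $s'$ is the shear of $m'$ and $s_L', s_R'$ those of $m_L, m_R$ (which are \emph{fixed}, determined by $p_L, p_R$), then $s' = s_L' + \theta'(e)$ and $s_R' = s' + \theta'(e)$, whence $\theta'(e) = \frac{1}{2}(s_R' - s_L')$ and $s'(e) = \frac{1}{2}(s_R' + s_L')$ are both completely determined, edge by edge. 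Thus $(m',\theta') = (m,\theta)$; the sign hypothesis on $\theta'$ is used only to pin down which edges are equatorial (equivalently, to select the correct branch $\epsilon = \pm 1$, hence the correct polygon $p_L, p_R$ among the two polygons with prescribed boundary — but since $m_L, m_R$ are given as doubles of honest ideal polygons this is already determined), and to guarantee that the resulting data actually assembles into a convex polyhedron rather than merely a bent surface. I expect the main obstacle to be purely bookkeeping: getting the signs and the left/right labels consistent between the idempotent splitting $\BB \cong \RR \oplus \RR$, the two earthquake maps, and the convexity-driven sign of the imaginary part of $z$; once the dictionary "$x = (x_1,x_2;x_3,x_4) = -e^{s_L}$ on the boundary cycle" is nailed down, everything is a two-line computation per edge.
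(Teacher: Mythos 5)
Your proof of the forward direction (that the diagram \eqref{eqn:earthquake-diagram} holds) is essentially the paper's own computation, just run in the opposite direction: you start from $x=\epsilon e^{s_L}$, $y=\epsilon e^{s_R}$ and equate the idempotent recombination with $\epsilon e^{s+\tau\theta}$, whereas the paper starts from $z=\varepsilon e^{s+\tau\theta}$ and expands. Either way the key identities $s_L=s-\theta$, $s_R=s+\theta$ drop out, and the sign bookkeeping you track (the common $\epsilon$ forced by Proposition~\ref{prop:cyclic-order}) is correct. One small slip: when discussing the triangulating augmentation, what is zero on an added edge interior to a flat face is the \emph{dihedral angle}, not the shear; the shear of $m$ there is whatever the face geometry gives, and it coincides with $s_L=s_R$ precisely because $\theta=0$. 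This does not affect the argument.

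The genuine gap is in the uniqueness claim. You write ``Thus $(m',\theta')=(m,\theta)$'' after computing $\theta'(e)=\tfrac12(s_R'(e)-s_L'(e))$ and $s'(e)=\tfrac12(s_R'(e)+s_L'(e))$, but those formulas are taken with respect to the support graph $\Gamma'$ of $\theta'$, which the theorem does \emph{not} assume is the $1$--skeleton $\Gamma$ of $P$. Your argument determines $(\theta',m')$ \emph{given} $\Gamma'$; it does not rule out a different graph $\Gamma'$ with its own sign pattern giving a different pair. You cannot shortcut this by invoking uniqueness of earthquakes on ideal polygons (which, restricted to the top hemisphere, would pin down $\theta'_+$): that uniqueness is Corollary~\ref{cor:earthquake}, which is derived \emph{from} this theorem, so the argument would be circular. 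The paper closes this gap constructively: for any $\Gamma'$ with the right sign pattern, solve for $(s',\theta')$, then bend the doubled polygon $p_L$ along $\Gamma'$ by $\theta'$ to produce a convex ideal polyhedron $P'$ whose left and right metrics are $m_L,m_R$; since a convex ideal polyhedron is uniquely determined by its vertices $(x_i,y_i)\in\RP^1\times\RP^1$ (hence by $p_L,p_R$), one gets $P'=P$, so $\Gamma'=\Gamma$ as the $1$--skeleton and $\theta'=\theta$, $m'=m$. You gesture at ``the resulting data assembles into a convex polyhedron'' but never use that polyhedron's uniqueness, which is exactly the step needed.
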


\begin{proof}
Let $\Gamma \in \Graph(\Sigma_{0,N}, \gamma)$ represent the $1$--skeleton of $P$.
By adding extra edges if necessary, we may assume $\Gamma$ is a triangulation.
As above we associate the shape parameter $z = \varepsilon e^{s(\alpha)+ \tau \theta(\alpha)}$ to a given edge~$\alpha$ of $\Gamma$, where $\varepsilon = \pm 1$.  Then,
\begin{align*}
z &= \varepsilon e^{s(\alpha)} (\cosh \theta(\alpha) + \tau \sinh \theta(\alpha))\\
&= \varepsilon e^{s(\alpha)} \left( \frac{1-\tau}{2} e^{-\theta(\alpha)} + \frac{1+\tau}{2} e^{\theta(\alpha)}\right)\\
&= \frac{1-\tau}{2}\varepsilon e^{s(\alpha) - \theta(\alpha)} +  \frac{1+\tau}{2}\varepsilon e^{s(\alpha)+\theta(\alpha)} 
\end{align*}
Therefore the shear coordinates in the left metric $m_L$ are given by $s_L = s- \theta$ 
and the shear coordinate in the right metric $m_R$ are $s_R = s + \theta$.
Equation~\eqref{eqn:earthquake-diagram} follows. The uniqueness statement also follows from this calculation. Indeed, given $m_L$, $m_R$ and any graph $\Gamma \in \Graph(\Sigma_{0,N})$ we may solve for the shear coordinates $s$, determining a metric $m$, and the weights $\theta$ needed to satisfy~\eqref{eqn:earthquake-diagram}. Specifically, $s  = (s_R + s_L)/2$ and $\theta = (s_R - s_L)/2$, where now $s_L$ and $s_R$ denote the shear coordinates with respect to $\Gamma$. We may construct a polyhedral embedding of $\Sigma_{0,N}$ whose induced metric is $m$ and whose (exterior) bending angles are $\theta$ as follows. Begin with the polyhedral embedding of $\Sigma_{0,N}$ into a space-like plane given by doubling $p_L$. Then bend this embedding along the edges of $\Gamma$ according to the weights $\theta$; note that this can be done consistently because $\theta$ satisfies condition (ii) in the definition of $\Angles$ (Section~\ref{sec:intro-AdS-param}) (because the shear coordinates for $m_L$ and $m$ satisfy that condition). If $\theta$ is positive on the top/bottom edges and negative on the equatorial edges, this polyhedral embedding is convex; it is the boundary of a convex ideal polyhedron $P$. By the definition of $s$ and $\theta$, we have that the left and right metrics of $P$ are precisely $m_L$ and $m_R$. The uniqueness statement follows because $P$ is uniquely determined by $m_L$ and $m_R$.
\end{proof}

As a corollary we obtain a version of Thurston's earthquake theorem for ideal polygons in the hyperbolic plane. 
A \emph{measured lamination} on the standard ideal $N$-gon is simply a pairwise disjoint collection of diagonals with \emph{positive} weights. We denote by $\ML_N$ the complex of these measured laminations. A function $\theta \in \Angles_{\Gamma}$ determines two measured laminations $\theta_+$ and~$\theta_-$ by restriction to the top edges of $\Gamma$ and to the bottom edges.

\begin{cor}[Earthquake theorem for ideal polygons]\label{cor:earthquake}
Let $p_L, p_R \in \poly_N$ be two ideal polygons. Then there exists unique $\theta_+, \theta_- \in \ML_N$ such that $p_R = E_{\theta_+} p_L$ and $p_L = E_{\theta_-} p_R$, where again $E_\lambda$ denotes shearing according to the edges of $\lambda \in \ML_N$ according to the weights of $\lambda$.
\end{cor}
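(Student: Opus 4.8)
The plan is to deduce the corollary from Theorem~\ref{thm:earthquakes} and Proposition~\ref{prop:cyclic-order}, by realizing the two earthquakes as the bending data of the future and past boundaries of a single convex ideal polyhedron in $\AdS^3$.

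\emph{Setup.} Given $p_L,p_R\in\poly_N$ with vertices $x_1,\dots,x_N$ and $y_1,\dots,y_N$ (labelled in the standard cyclic order), set $z_i=(x_i,y_i)\in\RP^1\times\RP^1=\partial_\infty\AdS^3$. Since both tuples are cyclically ordered, Proposition~\ref{prop:cyclic-order} produces a convex ideal polyhedron $P\in\AdSPoly_N$ with $\varpi_L(P)=p_L$ and $\varpi_R(P)=p_R$; and $P$ is the unique such polyhedron, since its vertices must be the $z_i$ and $P$ is their convex hull. Thus $P\mapsto(p_L,p_R)$ identifies $\AdSPoly_N$ with $\poly_N\times\poly_N$.

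\emph{Existence.} Let $\theta$ be the dihedral angles of $P$ and $\gamma$ its equator. The edges of $P$ separating two future faces project under $\varpi_L$ to a pairwise disjoint family of diagonals of $p_L$; weighting this family by twice the corresponding (positive) dihedral angles defines $\theta_+\in\ML_N$, and $\theta_-\in\ML_N$ is defined analogously from the past faces using $\varpi_R$ and $p_R$. I claim $E_{\theta_+}p_L=p_R$ and $E_{\theta_-}p_R=p_L$. This is the shape‑parameter computation in the proof of Theorem~\ref{thm:earthquakes} restricted to the two halves of the doubled surfaces. Restricting the diagram $m_L\xrightarrow{E_\theta}m\xrightarrow{E_\theta}m_R$ of~\eqref{eqn:earthquake-diagram} to the top half of the double of $p_L$ (which is $p_L$) and of $p_R$ (which is $p_R$) changes the shear coordinate along each top diagonal by $2\theta$; since an ideal polygon is determined by its shear coordinates along a triangulating family of diagonals, $p_R=E_{\theta_+}p_L$. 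On the bottom half the doubled surfaces carry the mirror polygons $\overline{p_L},\overline{p_R}$, and undoing the mirror turns the relation $\overline{p_R}=E_{2\theta|_{\mathrm{bottom}}}\overline{p_L}$ into $p_L=E_{\theta_-}p_R$. (Equivalently, apply the identities $\varpi_L(e^{s+\tau\theta})=e^{s-\theta}$ and $\varpi_R(e^{s+\tau\theta})=e^{s+\theta}$ to the shape parameters along the future, resp.\ past, edges of a triangulation of $P$.)

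\emph{Uniqueness.} Suppose $\mu\in\ML_N$ satisfies $E_\mu p_L=p_R$. Realize $p_L$ as an ideal polygon in a spacelike plane $\plane\cong\HH^2$ of $\AdS^3$ and bend it to the left along $\mu$ by $\mu/2$; since hyperbolic rotations about spacelike geodesics preserve spacelikeness, this produces a convex spacelike pleated ideal disk $D$, and the same shear computation shows that the ideal vertices of $D$ are exactly the $z_i$ and that $D$ is pleated precisely along $\mu/2$. A convex spacelike pleated disk spanning all the ideal vertices of $P=\mathrm{conv}(z_i)$ must be the future or the past boundary of $P$, and the sign convention (positive weights $=$ shearing left $=$ bending toward the future) forces $D=\partial_+P$. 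Hence $\mu/2$ is the future bending lamination of $P$, so $\mu=\theta_+$; the argument for $\theta_-$ is symmetric, using $\partial_-P$.

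\emph{Main obstacle.} The only real difficulty is the bookkeeping of orientations and signs: matching the ``shear left / shear right'' convention of $E_\bullet$ with the time‑orientation of $\AdS^3$ and with the orientation reversal introduced by the mirror polygon in the double (this is what makes $\theta_+\neq\theta_-$ in general, consistently with the $2\theta$ in~\eqref{eqn:earthquake-diagram-intro}), and verifying that bending a spacelike ideal polygon by positive amounts yields a genuinely future‑convex disk, so that it is literally $\partial_+P$ rather than $\partial_-P$. Once these conventions are fixed consistently, both the existence computation and the uniqueness argument are immediate consequences of Theorem~\ref{thm:earthquakes} and Proposition~\ref{prop:cyclic-order}.
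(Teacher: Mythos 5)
Your proof is correct and follows essentially the same route as the paper's: build the unique $P\in\AdSPoly_N$ with $\varpi_L(P)=p_L$, $\varpi_R(P)=p_R$ via Proposition~\ref{prop:cyclic-order}, extract $\theta_\pm$ from the dihedral angles via the shape-parameter computation of Theorem~\ref{thm:earthquakes}, and derive uniqueness from the uniqueness of $P$ and its bending data. The only difference is presentational: where the paper simply cites the uniqueness clause of Theorem~\ref{thm:earthquakes}, you unpack that clause into a direct pleated-disk argument, which is in fact the same construction the paper uses to prove that clause.
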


\begin{proof}
Let $x_1, \ldots, x_N$ be the ideal vertices of $p_L$ and let $y_1, \ldots, y_N$ be the ideal vertices of $p_R$. Then, the vertices $z_i = \frac{1-\tau}{2}x_i + \frac{1+\tau}{2} y_i$ define an ideal polyhedron $P \in \AdSPoly_N$ such that $\varpi_L(P) = p_L$ and $\varpi_R(P) = p_R$.  We think of $\Sigma_{0,N}$ as the double of the standard ideal $N$-gon, meaning that the top hemisphere is identified with the standard ideal $N$-gon and the bottom hemisphere is identified with the standard ideal $N$-gon but with orientation reversed.  The left  metric $m_L$ (resp. $m_R)$ is obtained from $p_L$ (resp. $p_R$) by doubling. This means that the restriction of $m_L$ to the top hemisphere of $\Sigma_{0,N}$ is $p_L$ and the restriction of $m_L$ to the bottom hemisphere is $\overline{p_L}$, the same ideal polygon but with opposite orientation. Similarly, the restriction of $m_R$ to the top and bottom hemispheres of $\Sigma_{0,N}$ is $p_R$ and $\overline{p_R}$.
Let $\Gamma \in \Graph(\Sigma_{0,N}, \gamma)$ denote the $1$--skeleton of $P$ and let $\theta \in \RR^{E(\Gamma)}$ denote the dihedral angles.  
Theorem~\ref{thm:earthquakes} implies that $m_R = E_{2 \theta} m_L$.  Restricting to the top hemisphere, we have that $p_R = E_{\theta_+} p_L$ where $\theta_+ \in \ML_N$ is twice the restriction of $\theta$ to the top hemisphere. Restricting to the bottom hemisphere, we have that $\overline{p_R} = E_{\theta_-} \overline{p_L}$, where $\theta_-$ is the restriction of $\theta$ to the bottom hemisphere. This implies that $p_R = E_{-\theta_-} p_L$, or equivalently $p_L = E_{\theta_-} p_R$. Uniqueness of $\theta_+, \theta_-$ follows from uniqueness of $\theta$ in Theorem~\ref{thm:earthquakes}.
\end{proof}

\begin{Remark}
In the setting of closed surfaces, it is known~\cite{bon_fix} that given two filling measured laminations $\theta_+$ and $\theta_-$, there exists two hyperbolic surfaces $\rho_L$ and $\rho_R$ such that $\rho_R$ is obtained from $\rho_L$ by left earthquake along $\theta_+$ and also by right earthquake along $\theta_-$, and it is conjectured~\cite{mes_lor} that $\rho_L$ and $\rho_R$ are unique.
The analogous question, in the context of Corollary~\ref{cor:earthquake}, of whether a given $\theta_+$ and $\theta_-$ are realized by some $p_L$ and $p_R$, and whether they are realized uniquely, is an interesting one. A necessary condition is that $\theta_+$ and $\theta_-$ be filling, which means that any lamination intersects $\theta_+$ or $\theta_-$ transversely; this is equivalent to the statement that the graph $\Gamma \in \Graph(\Sigma_{0,N}, \gamma)$, obtained by placing the support of $\theta_+$ on the top hemisphere and the support of $\theta_-$ on the bottom hemisphere, is three-connected.
It will follow from Theorem~\ref{thm:main-AdS-angles} that in the case $N$ is odd, the polygons $p_L, p_R$ are unique, given the measured laminations $\theta_+, \theta_-$. This is because $\theta \in \Angles$ is determined entirely by its restrictions $\theta_+$ and $\theta_-$ to the top and bottom edges. However, there are examples of filling measured laminations $\theta_+, \theta_-$ such that there is no element $\theta \in \Angles$ whose restriction to the top edges is $\theta_+$ and whose restriction to the bottom edges is $\theta_-$ (see Appendix~\ref{sec:transitional-proof}).
The situation is even worse in the case $N$ is even. There is a one dimensional family of pairs $p_L, p_R$ for which the laminations $\theta_+, \theta_-$ turn out to be the same. This is because for any $\theta \in \Angles$, there is a one parameter family of deformations of $\theta$ which leave $\theta_+, \theta_-$ unchanged: simply add and subtract the same quantity from the weights of alternating edges on the equator. Further, in the case $N$ even, only a codimension one subspace of filling laminations $\theta_+, \theta_-$ are realized in Corollary~\ref{cor:earthquake}. It is an interesting problem to determine this codimension one subspace.
\end{Remark}

\subsection{The pseudo-complex structure on $\AdSPoly_N$}\label{sec:pseudo}
The space of marked ideal polyhedra $\AdSPoly_N$ naturally identifies with a subset of $(\RR + \RR \tau)^{N-3}$, by transforming each ideal polyhedron so that its first three vertices are respectively $0, 1, \infty \in \mathbb P^1 \BB$. The marking on each polyhedron $P \in \AdSPoly_N$ identifies $P$ with the standard $N$-punctured sphere $\Sigma_{0,N}$. So, given a triangulation $\Gamma$ on $\Sigma_{0,N}$ with vertices at the punctures and edge set denoted $E$, we may define the map $z_\Gamma: \AdSPoly_N \to (\RR + \RR \tau)^E$ which associates to each edge $e$ of a polyhedron $P$ the cross ratio of the four points defining the two triangles adjacent at $e$. This map is pseudo-complex holomorphic, meaning that the differential is $(\RR+ \RR\tau)$--linear. This observation allows us to prove the following analogue of Theorem~\ref{thm:H3-duality}.

\begin{theorem}\label{thm:duality}
A polyhedron $P \in \AdSPoly_N$ is infinitesimally rigid with respect to the induced metric if and only if $P$ is infinitesimally rigid with respect to the dihedral angles.
\end{theorem}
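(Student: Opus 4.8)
The plan is to run the argument of Theorem~\ref{thm:H3-duality} essentially verbatim, with the complex structure on the deformation space replaced by the pseudo-complex structure of Section~\ref{sec:pseudo}, and with multiplication by $i$ replaced by multiplication by $\tau$. The one elementary fact that makes everything work is that on $\BB=\Rtau$ one has $\tau(a+b\tau)=b+a\tau$, so multiplication by $\tau$ interchanges the real part and the imaginary ($\tau$-) part and is an $\RR$-linear involution; in particular it is a linear isomorphism of $\BB^{N-3}$, even though $\BB$ has zero divisors, precisely because $\tau$ itself is a unit ($\tau^{-1}=\tau$).

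First I would fix $P\in\AdSPoly_N$ and, augmenting the $1$--skeleton of $P$ if necessary, an ideal triangulation $\Gamma$ of $\Sigma_{0,N}$ refining it, with edge set $E$. Nearby polyhedra still realize $\Gamma$ geodesically, so the pseudo-complex holomorphic map $z_\Gamma$ is defined on a neighbourhood of $P$ in $\AdSPoly_N$ and has $\BB$-linear differential. For each edge $\alpha$ write, as in Theorem~\ref{thm:earthquakes}, $z_\Gamma(\alpha)=\varepsilon_\alpha e^{s(\alpha)+\tau\theta(\alpha)}$; since the sign $\varepsilon_\alpha=\pm1$ only records whether $\alpha$ lies on the equator (the real part of the shape parameter being positive exactly on equatorial edges), it is locally constant, so $L_\Gamma(\alpha):=s(\alpha)+\tau\theta(\alpha)$ is a well-defined smooth $\BB$-valued function near $P$ with $\BB$-linear differential. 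By the shear-coordinate description of hyperbolic structures on $\Sigma_{0,N}$ (the same input used for Theorem~\ref{thm:H3-duality}), the induced metric on $\partial P$ and the tuple of shear coordinates $s(\alpha)=\mathrm{Re}\,L_\Gamma(\alpha)$ determine one another locally, while the dihedral angles are, by definition, exactly the imaginary parts $\theta(\alpha)=\mathrm{Im}\,L_\Gamma(\alpha)$.

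Next I would exploit that the normalization sending the first three vertices to $0,1,\infty$ exhibits $\AdSPoly_N$ as an \emph{open} subset of $\BB^{N-3}$ (the defining conditions --- the cyclic-order condition of Proposition~\ref{prop:cyclic-order} and non-degeneracy --- being open), so that $T_P\AdSPoly_N=\BB^{N-3}$ is preserved by multiplication by $\tau$, and for any infinitesimal deformation $V$ of $P$ the deformation $\tau V$ is again admissible, with $dL_\Gamma(\tau V)=\tau\,dL_\Gamma(V)$. Since $\mathrm{Re}(\tau w)=\mathrm{Im}(w)$ and $\mathrm{Im}(\tau w)=\mathrm{Re}(w)$, multiplication by $\tau$ carries the kernel of $\mathrm{Re}\,dL_\Gamma$ bijectively onto the kernel of $\mathrm{Im}\,dL_\Gamma$. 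By the previous paragraph, the first kernel is the kernel of the differential of the induced-metric map and the second is the kernel of the differential of the dihedral-angle map $\PsiAdS_\Gamma$. Hence one differential is injective at $P$ if and only if the other is, i.e. $P$ is infinitesimally rigid with respect to the induced metric if and only if it is infinitesimally rigid with respect to the dihedral angles. (If one prefers to phrase rigidity in terms of Killing fields rather than in the normalized chart, one uses in addition that the space of trivial deformations is a $\BB$-submodule, since $\PGL^+(2,\BB)$ acts pseudo-holomorphically, hence is $\tau$-invariant; the argument then carries the metric-preserving deformations onto the angle-preserving ones while preserving the trivial ones.)

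The formal manipulation above is short; I expect the real work to be in nailing down its two inputs in the AdS setting. The first is that $\varepsilon_\alpha$ is genuinely stable under small deformations, so that $L_\Gamma$ is a well-defined smooth function --- this should follow from the characterization of equatorial edges by positivity of $\mathrm{Re}\,z_\Gamma(\alpha)$ together with openness of that condition. The second, and more substantive, is the assertion that (with $\Gamma$ held fixed near $P$) the induced metric and the shear coordinates $\mathrm{Re}\,L_\Gamma$ along $\Gamma$ are smooth functions of one another, so that ``$V$ preserves the induced metric to first order'' is \emph{literally} equivalent to ``$\mathrm{Re}\,dL_\Gamma(V)=0$''; this requires checking that the linear constraints cutting out the space of shear systems coming from complete structures are compatible with the picture (they are, being the real parts of the $\BB$-holomorphic puncture-holonomy conditions, whose imaginary parts are exactly condition~(ii) in the definition of $\Angles$). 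A minor point to confirm along the way is that the zero divisors in $\BB$ cause no trouble --- and indeed they do not, since the only operation used, multiplication by $\tau$, is invertible.
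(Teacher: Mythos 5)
Your proposal is correct and follows the paper's own proof essentially verbatim: both arguments fix a triangulation $\Gamma$, exploit the $\Rtau$-linearity of $d\log z_\Gamma$, observe that the induced metric is governed by the real parts (shears) and the dihedral angles by the imaginary parts, and conclude via multiplication by $\tau$ exchanging these. The only difference is that you spell out two implicit points the paper leaves to the reader (local constancy of the sign $\varepsilon_\alpha$, and the harmlessness of $\BB$'s zero divisors because $\tau$ is a unit), which is a reasonable thing to flag but does not change the argument.
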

\begin{proof}
Let $V \in T_P \AdSPoly_N \cong (\Rtau)^{N-3}$. Let $\Gamma$ be a triangulation obtained from the $1$--skeleton of $P$ by adding edges in the non-triangular faces if necessary. Since the induced metric is determined entirely by the shear coordinates with respect to $\Gamma$, we have that $V$ does not change the induced metric to first order if and only if  $d\log z_\Gamma (V)$ is pure imaginary. On the other hand, $V$ does not change the dihedral angles to first order if and only if $d\log z_\Gamma (V)$ is real. Therefore $V$ does not change the induced metric if and only if $\tau V$ does not change the dihedral angles.
\end{proof}

\subsection{Half-pipe geometry in dimension three}\label{s:HP3-geometry}

We give some lemmas useful for working with $\HP^3$.
Recall the algebra $\RR + \RR\sigma$, with $\sigma^2 = 0$. The half-pipe space is given by 
$$\HP^3:= \mathbb X = \left\{ X + Y \sigma : X,Y \in M_2(\RR), X^T = X, \det(X) > 0, Y^T = -Y\right\}/\sim,$$
where $(X + Y \sigma) \sim \lambda (X + Y \sigma)$ for $\lambda \in \RR^\times$. 
There is a projection $\varpi : \HP^3 \to \HH^2$, defined by $\varpi(X + Y \sigma) = X$, where we interpret the symmetric matrices $X$ of positive determinant, considered up to scale, as a copy of $\HH^2$. The fibers of this projection will be referred to simply as \emph{fibers}. The projection can be made into a diffeomorphism $\mathbb X \to \HH^2 \times \RR$ (not an isometry) given in coordinates by 
\begin{align}\label{product-coords-dim3}
X + Y \sigma \mapsto (X, L),
\end{align} where the length $L$ along the fiber is defined by the equation 
\begin{align} \label{fiber-length-dim3}
Y &= L \sqrt{\det X}\begin{pmatrix} 0 & -1\\ 1 & 0 \end{pmatrix}. 
\end{align}
The ideal boundary $\partial_\infty \XX_0$ identifies with $\mathbb P^1 (\Rsigma)$, which identifies with the tangent bundle $T \RP^1$ via the natural map $T \RR^2 \to (\RR + \RR\sigma)^2$ sending a vector $v \in \RR^2$ and a tangent vector $w \in T_v \RR^2 = \RR^2$ to $v + \sigma w$. It will be convenient to think of an ideal vertex as an infinitesimal variation of a point on $\RP^1 \cong \partial_\infty \HH^2$. In this way, a convex ideal polyhedron $P$ in $\HP^3$ defines an infinitesimal deformation $V = V(P)$ of the ideal polygon $p = \varpi(P)$ in $\HH^2$.

We restrict to the identity component of the structure group, which is given by
\begin{align*}
\mathbb G_0 &= \PSL(2, \RR + \RR\sigma)\\
&= \{ A + B \sigma : A \in \SL(2,\RR), \text{ and } B \in T_A \SL(2,\RR) \} / \pm.
\end{align*}
The structure group identifies with the tangent bundle $T \PSL(2,\RR)$, and it will be convenient to think of its elements as having a finite component $A\in \PSL(2,\RR)$ and an infinitesimal component $a \in \mathfrak{sl}(2,\RR)$, via the isomorphism
\begin{align*}
\PSL(2,\RR) \ltimes \mathfrak{sl}(2,\RR) &\to \mathbb G_0\\
(A,a) &\mapsto A + Aa \sigma,
\end{align*}
where $Aa \in T_A \PSL(2,\RR)$.
(This is the usual isomorphism $G \ltimes \mathfrak g \to T G$ for a Lie group $G$ with Lie algebra $\mathfrak g = T_1 G$, gotten by left translating vectors from the identity.)
The identification $\mathbb G_0 \cong T \PSL(2,\RR)$ is compatible with the identification $\partial_\infty \HP^3 \cong T \RP^1$.

Thinking of $a \in \mathfrak{sl}(2,\RR)$ as an infinitesimal isometry of $\HH^2$, recall that at each point $X \in \HH^2$ we may decompose $a$ into its translational ($X$-symmetric) and rotational ($X$-skew) parts: 
\begin{align*}
a &= a_{X\text{-sym}} + a_{X\text{-skew}} \\ 
&:= \frac{1}{2}\left( a + Xa^T X^{-1} \right) + \frac{1}{2} \left( a - Xa^T X^{-1} \right),
\end{align*}
where the rotational part $a_{X\text{-skew}}$ is a rotation centered at $X$ of infinitesimal angle $\rot(a,X)$ defined by
$$\sqrt{X}^{-1} a_{X\text{-skew}} \sqrt{X} = \rot(a,X) \begin{pmatrix} 0 & -1/2 \\ 1/2 & 0\end{pmatrix}.$$
 The action of an element of $\mathbb G_0$ in the fiber direction depends on the rotational part of the infinitesimal part of that element.
\begin{figure}[h]
{\centering

\def\svgwidth{3.7in}
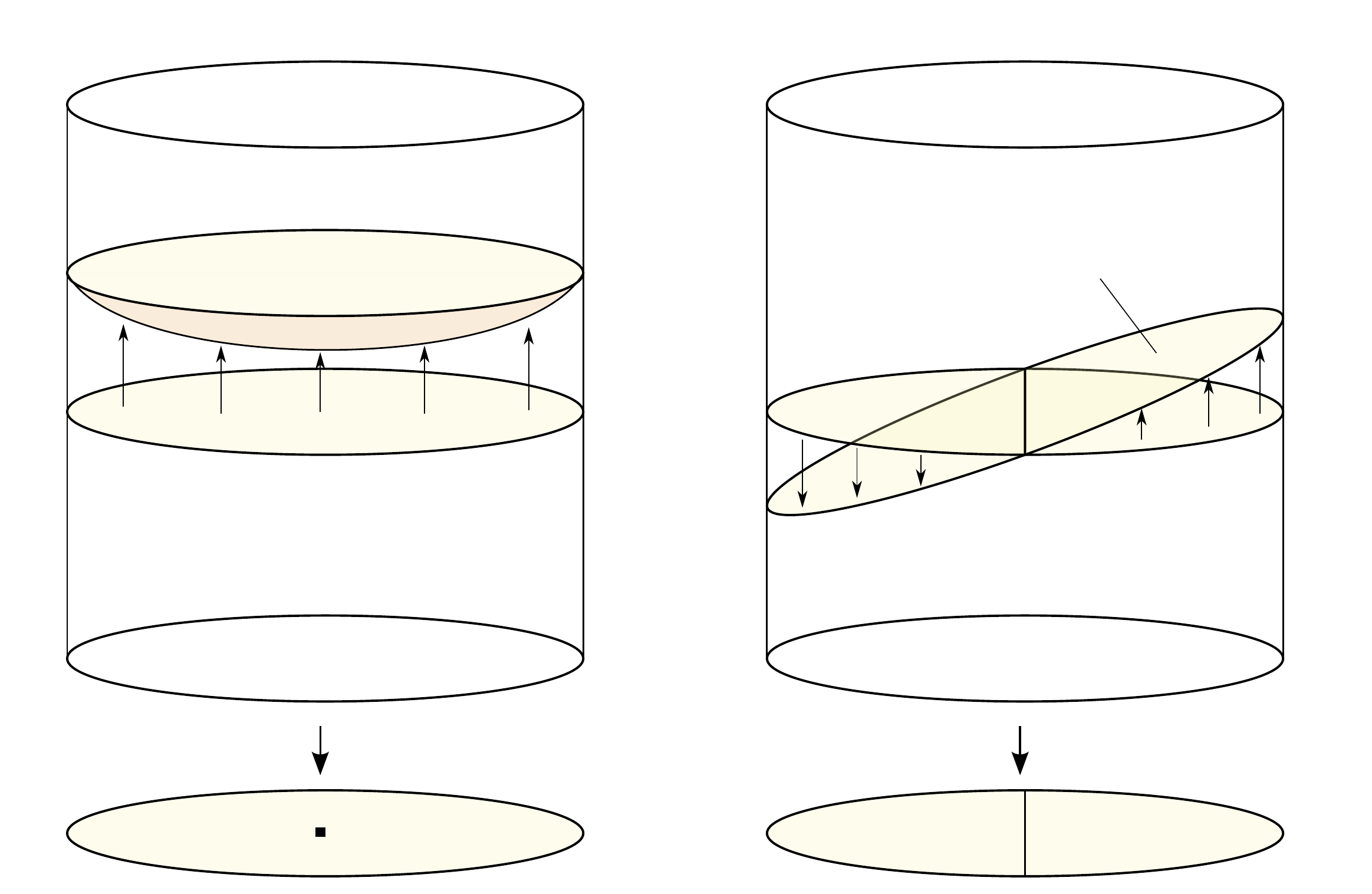

\caption{The action of $1+a\sigma$ on $\HP^3$ when $a$ is an infinitesimal rotation centered at $x$ (left), or $a$ is an infinitesimal translation along $L$ (right).\label{fig:HP-action}}
}
\end{figure}

\begin{Lemma}
\label{lem:HP3-action}
The action of a pure infinitesimal $1 + a\sigma$ on the point $X + Y\sigma \in \mathbb X$ is by translation in the fiber direction by amount equal to the rotational part $\rot(a, X)$ of the infinitesimal isometry $a$ at the point $X \in \HH^2$.  In the product coordinates~(\ref{product-coords-dim3}):
$$1+ a \sigma : (X,L) \mapsto (X, L + \rot(a,X)).$$
More generally, the action of $A + Aa\sigma$ is given by 
$$A + Aa\sigma : (X,L)\mapsto (A\cdot X, L + \rot(a,X)).$$
\end{Lemma}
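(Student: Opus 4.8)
The plan is to verify the formula by a direct computation using the matrix models set up above, reducing first to the case of a pure infinitesimal $1 + a\sigma$ and then handling the general element $A + Aa\sigma$ by factoring it through this case. First I would recall that a point of $\HP^3$ is represented by $X + Y\sigma$ with $X$ symmetric positive definite, $Y$ antisymmetric, considered up to real scaling, and that the action of $A + B\sigma \in \mathbb G_0$ is the restriction of the $\PGL^+(2,\BB)$-action $Z \mapsto (A+B\sigma) Z (A+B\sigma)^*$. Expanding this product modulo $\sigma^2 = 0$, with $Z = X + Y\sigma$ and conjugate-transpose $(A+B\sigma)^* = A^T - B^T\sigma$ (since conjugation negates the $\sigma$-part), gives the symmetric part $AXA^T$ and the $\sigma$-part $A Y A^T + B X A^T - A X B^T$. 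So the new point is $AXA^T + (AYA^T + BXA^T - AXB^T)\sigma$. For the general element, writing $B = Aa$ this reads $AXA^T + \big(A Y A^T + A(aX - Xa^T)A^T\big)\sigma$, where I used $AXB^T = AX(Aa)^T = AXa^T A^T$ and similarly for the other term. Comparing with the product coordinates, the new symmetric part is $A\cdot X$ as expected (the equivariance of $\varpi$), and the new fiber datum comes from $Y' = AYA^T + A(aX - Xa^T)A^T$.

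The next step is to translate $Y'$ into the length coordinate $L$ via Equation~\eqref{fiber-length-dim3}. Since $\det(A) = 1$, a short computation shows that if $Y = L\sqrt{\det X}\, J$ with $J = \minimatrix{0}{-1}{1}{0}$, then $AYA^T = L\sqrt{\det X}\, (AJA^T)$ and one checks $AJA^T = (\det A)\, J = J$, while $\det(AXA^T) = \det X$; hence the contribution of $AYA^T$ to the new length is exactly $L$. So the remaining task is to identify the additional term: we must show that $A(aX - Xa^T)A^T = \rot(a,X)\sqrt{\det X}\, J$, i.e. that the antisymmetric matrix $aX - Xa^T$, rescaled, computes the infinitesimal rotation angle $\rot(a,X)$ of $a$ at $X \in \HH^2$. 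This reduces, by conjugating by $A$ and using that $\rot$ is an isometry invariant (or simply by the defining formula $\sqrt{X}^{-1} a_{X\text{-skew}}\sqrt{X} = \rot(a,X)\minimatrix{0}{-1/2}{1/2}{0}$), to the case $X = \Id$: there $a - a^T = 2 a_{\Id\text{-skew}}$ and one reads off $a - a^T = \rot(a,\Id)\, J$ directly from the definition of $\rot$, while $\sqrt{\det \Id} = 1$. Unwinding the conjugation then gives $A(aX - Xa^T)A^T = \rot(a,X)\sqrt{\det X}\, J$ in general; combining this with the $AYA^T$ term shows the total new length coordinate is $L + \rot(a,X)$, which is the claim. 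The pure infinitesimal case $A = \Id$ is the special case, and the displayed formula $1 + a\sigma : (X,L) \mapsto (X, L + \rot(a,X))$ follows.

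The main obstacle is purely bookkeeping: getting the conjugate-transpose sign conventions for $\BB = \Rsigma$ right (the conjugation on $\Rsigma$ negates the $\sigma$-component, so $(A + Aa\sigma)^* = A^T - (Aa)^T\sigma = A^T - a^T A^T \sigma$), and then correctly matching the antisymmetric part $aX - Xa^T$ with the normalization in \eqref{fiber-length-dim3} and with the rather delicate normalization $\minimatrix{0}{-1/2}{1/2}{0}$ versus $\minimatrix{0}{-1}{1}{0}$ in the definition of $\rot(a,X)$. I would do this reduction to $X = \Id$ carefully and invoke the isometry-invariance of $\rot$ to avoid chasing $\sqrt{X}$ through the computation; apart from that the argument is a one-line matrix expansion.
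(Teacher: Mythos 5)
Your proposal is correct and is essentially the same computation as the paper's: both expand the action $Z\mapsto (A+Aa\sigma)Z(A+Aa\sigma)^*$ modulo $\sigma^2=0$ to isolate the antisymmetric piece $aX - Xa^T$, and both then identify this with $\rot(a,X)\sqrt{\det X}\,J$ via the definition of $\rot$. The only cosmetic differences are that you compute the general case directly and specialize to $A=\Id$, whereas the paper does the pure case first and obtains the general case by left-multiplying; and you offer a reduction to $X=\Id$ by conjugating by $\sqrt{X}$ as an alternative to the paper's direct chain $aX - Xa^T = 2a_{X\text{-skew}}X = \rot(a,X)\sqrt{X}\,\bigl(\begin{smallmatrix}0&-1/2\\1/2&0\end{smallmatrix}\bigr)\sqrt{X}$, though you also note the latter route yourself.
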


\begin{proof}
\begin{align*}
(1+a\sigma)\cdot (X + Y\sigma) &= (1+a\sigma) (X+  \sigma Y) (1 - a^T\sigma)\\
&= X + \sigma Y + \sigma(a X - X a^T)\\
&= X + \sigma Y + \sigma \ 2 a_{X\text{-skew}} X\\
&= X + \sigma Y + \sigma \ 2 \rot(a,X) \sqrt{X} \begin{pmatrix} 0 & -1/2\\ 1/2 & 0\end{pmatrix} \sqrt{X}\\
&= X + \sigma Y + \sigma \  \rot(a,X) \det(\sqrt{X}) \begin{pmatrix} 0 & -1\\ 1 & 0\end{pmatrix},
\end{align*}
and the first statement now follows from Equation~(\ref{fiber-length-dim3}). The second more general formula follows easily after left multiplication by $A$.
\end{proof}

\begin{defi}
Let $a \in \mathfrak{sl}_2 \RR$ be an infinitesimal translation of length $t$ along an oriented geodesic $\ell$ in $\HH^2$. Then, for any oriented geodesic $\wt{\ell}$ in $\HP^3$ that projects to $\ell$, the element $1+ a \sigma$ is called an \emph{infinitesimal rotation} about the axis $\wt{\ell}$ of infinitesimal angle $t$.
\end{defi}

Thinking of the fiber direction in $\HP^3$ as the direction of infinitesimal unit length normal to $\HH^2$ into either $\HH^3$ or $\AdS^3$, the definition is justified by the previous lemma. In fact, the amount of translation in the fiber direction is $t$ times the signed distance to $\widetilde \ell$.  

\subsection{Ideal polyhedra in $\HP^3$}\label{subsec:polyhedra-HP}

There are several important interpretations of a convex ideal polyhedron $P$ in $\HP^3$. As described in the previous section, $P$ defines an infinitesimal deformation $V = V(P)$ of the ideal polygon $p = \varpi(P)$ in $\HH^2$. Alternatively, $P$ may be interpreted as an infinitesimally thick polyhedron in $\HH^3$ or $\AdS^3$.
 Multiplying the tangent vector $V$ by $i$ (resp. $\tau$) describes an infinitesimal deformation $iV$ (resp. $\tau V$) of the polygon $p$ into $\HH^3$ (resp. $\AdS^3$). The polyhedron $P$ in $\HP^3$ is a rescaled limit of a path of hyperbolic (resp. anti-de Sitter) polyhedra collapsing to $p$ and tangent to $iV$ (resp. $\tau V$) in the following sense. Consider the path of algebras $\BB_t$ generated by $\kappa_t$ such that $\kappa_t^2 = -t|t|$. Then the geometries $\XX(\BB_t)$ associated to these algebras are conjugate to $\XX(\BB_1) = \XX(\CC) = \HH^3$ for all $t > 0$, or to $\XX(\BB_{-1}) = \XX(\Rtau) = \AdS^3$ for $t < 0$. For $t > 0$, the map $\mathfrak a_t : \CC \to \BB_t$ defined by $i \mapsto \kappa_t/|t|$ is an isomorphism of algebras. For $t < 0$, the map $\mathfrak a_t : \Rtau \to \BB_t$ defined by $\tau \mapsto \kappa_t/|t|$ is an isomorphism. Each of these maps defines a projective transformation, again denoted $\mathfrak a_t$, taking the standard model of hyperbolic space $\HH^3 = \XX(\BB_1)$ (resp. the standard model of anti-de Sitter space $\AdS^3 = \XX(\BB_{-1})$) to the conjugate model $\XX(\BB_t)$.
 
\begin{prop}\label{prop:limit}
Consider a smooth family $Q_t$ of ideal polyhedra in $\HH^3$ (resp. $\AdS^3$), defined for $t > 0$ (resp. for $t < 0$). Assume that $Q_0 = p$ is an ideal polygon contained in the central hyperbolic plane $\plane$ bounded by $\RP^1$ and $Q'_0 = U + iW$ (resp. $Q_0' = U + \tau W$), where $U, W$ are infinitesimal deformations of $p$ as an ideal polygon in $\HH^2$. Then the limit of $\mathfrak a_t (Q_t)$ as $t \to 0$ is an ideal polyhedron $P$ in $\XX(\BB_0) = \HP^3$ which satisfies $\varpi(P) = Q_0$ and $V(P) = W$.
\end{prop}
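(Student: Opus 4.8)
The plan is to reduce the proposition to a statement about the $N$ ideal vertices of $Q_t$ and their first-order variations. Use the linear coordinates $(x_1,x_2,x_3,x_4)$ on $\Herm(2,\BB_t)$ from~\eqref{coordinates-on-Herm}; these identify $\PP(\Herm(2,\BB_t))$ with a single copy of $\RP^3$ independent of $t$, in which $\XX(\BB_t)$ is the image of $\{x_1^2+x_2^2+t|t|\,x_3^2-x_4^2<0\}$. Thus in the affine chart $x_4=1$ the regions $\XX(\BB_t)$ are solid quadrics degenerating to the solid cylinder $\HP^3$ as $t\to0$, while the algebra isomorphism $\mathfrak a_t$ (which sends the generator $i$, resp.\ $\tau$, to $\kappa_t/|t|$) acts on $\RP^3$ by rescaling a single coordinate,
\[
\mathfrak a_t[x_1:x_2:x_3:x_4]=[x_1:x_2:x_3/|t|:x_4].
\]
In particular $\mathfrak a_t$ is affine in the chart $x_4=1$, hence preserves convexity there and carries the convex polyhedron $Q_t$ to the convex hull of the images of its vertices.

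First I would track the vertices. Write the $i$-th ideal vertex of $Q_t$ in the chart $x_4=1$ as $\big(x_{1,i}(t),x_{2,i}(t),x_{3,i}(t)\big)$. Since $Q_0=p$ lies in the central plane $\plane=\{x_3=0\}$ with its vertices on $\partial_\infty\plane=\{x_3=0,\ x_1^2+x_2^2=1\}$, we have $x_{3,i}(0)=0$ and $x_{1,i}(0)^2+x_{2,i}(0)^2=1$; and unravelling the identification $\partial_\infty\HP^3\cong T\RP^1$ (under which the fiber/$\sigma$-direction is the $x_3$-direction), the hypothesis $Q_0'=U+iW$ (resp.\ $U+\tau W$) says precisely that $\big(\dot x_{1,i}(0),\dot x_{2,i}(0)\big)$ is the $i$-th component of $U$ and $\dot x_{3,i}(0)$ the $i$-th component of $W$. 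Applying $\mathfrak a_t$ and Taylor expanding (using $x_{3,i}(0)=0$), the $i$-th vertex of $\mathfrak a_t(Q_t)$ is
\[
\Big(x_{1,i}(t),\ x_{2,i}(t),\ \tfrac{x_{3,i}(t)}{|t|}\Big)=\big(x_{1,i}(0)+O(t),\ x_{2,i}(0)+O(t),\ \dot x_{3,i}(0)+O(t)\big),
\]
so as $t\to0$ these vertices converge to the points $\zeta_i:=\big(x_{1,i}(0),x_{2,i}(0),\dot x_{3,i}(0)\big)$, which lie on the cylinder $C=\{x_1^2+x_2^2=1\}=\partial_\infty\HP^3$.

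Next I would upgrade convergence of vertices to convergence of polyhedra. By the previous step the vertices of $\mathfrak a_t(Q_t)$ stay in a fixed bounded region of the chart $x_4=1$ for $t$ near $0$, hence so does their convex hull, which by convexity equals $\mathfrak a_t(Q_t)$. Since taking convex hulls is continuous for Hausdorff convergence of uniformly bounded finite point sets, $\mathfrak a_t(Q_t)\to P:=\mathrm{Conv}(\zeta_1,\dots,\zeta_N)$. As $\HP^3$ is convex in its affine chart, $P\subset\HP^3\cup C$ automatically; and since the $\zeta_i$ project under $\varpi$ to the $N$ \emph{distinct} vertices of $p$, no two of them share a ruling of $C$ and no three are collinear, so $P\cap C=\{\zeta_1,\dots,\zeta_N\}$ and $P$ is an ideal polyhedron in $\HP^3$ with these $N$ ideal vertices (non-degenerate precisely when the $\zeta_i$ are not coplanar, i.e.\ when $W$ is a non-trivial infinitesimal deformation of $p$, which is implicit in the statement). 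Finally $\varpi$ is linear in this chart, so $\varpi(P)=\mathrm{Conv}(\varpi\zeta_1,\dots,\varpi\zeta_N)$ is the convex hull of the vertices of $p$, that is $\varpi(P)=p=Q_0$; and under $\partial_\infty\HP^3\cong T\RP^1$ the vertex $\zeta_i$ is the point of $\partial p$ at $\big(x_{1,i}(0),x_{2,i}(0)\big)$ carrying the tangent datum $\dot x_{3,i}(0)$, so $V(P)=W$.

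The underlying computation is short; the points needing care are (a) reading ``$\mathfrak a_t(Q_t)\to P$'' as genuine convergence inside a common $\RP^3$, and checking that the $\mathfrak a_t(Q_t)$ remain in a fixed affine chart so that continuity of the convex hull applies; and (b) confirming that the limiting vertex configuration is in general position for $\HP^3$ — lying on $C$ and projecting to distinct boundary points of $\HH^2$ — so that $P$ is a bona fide ideal polyhedron rather than a degenerate limit. I expect (a) to be the step worth writing out most carefully; one should also verify the signs in the Taylor expansion against the conventions defining $\mathfrak a_t$, so that the two one-sided limits $t\to0^{\pm}$ agree and both give $V(P)=W$.
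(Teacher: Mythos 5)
The paper does not actually supply a proof of Proposition~\ref{prop:limit}; it is stated as a fact from transitional geometry, with~\cite{dan_age, dan_ide} as the implicit references. Your coordinate computation is the natural way to verify it, and it is essentially correct: the key observations --- that $\mathfrak a_t$ acts as the diagonal rescaling $[x_1:x_2:x_3:x_4]\mapsto[x_1:x_2:x_3/|t|:x_4]$, hence is affine on the chart $x_4=1$ and preserves convex hulls; that $x_{3,i}(0)=0$ so the rescaled third coordinate has a finite limit; that the limiting vertices lie on $C$ and project bijectively to the vertices of $p$ so the limit is a bona fide ideal polyhedron; and that $\varpi$ is linear in the chart so $\varpi(P)=p$ --- are all correct and closely track what a written-out proof would say. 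One small point of clarification: the proposition does not assert non-degeneracy of $P$, so your digression about $W$ being ``non-trivial'' (which should mean \emph{not Killing}, not merely nonzero) is not strictly needed, though it is harmless.

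The sign concern you raise at the end is genuine and worth sharpening. The statement is two separate one-sided limits, not two sides of a single family, so ``the two one-sided limits agree'' is not quite the right framing; but if you take the paper's definition $\mathfrak a_t(\tau)=\kappa_t/|t|$ literally, then for $t<0$ you get $x_{3,i}(t)/|t| = x_{3,i}(t)/(-t)\to -\dot x_{3,i}(0)$, which would give $V(P)=-W$ in the AdS case, not $V(P)=W$. (Using the explicit dictionary $x_3 = -\,\mathrm{Im}(z)$ in~\eqref{coordinates-on-Herm} does not rescue this; the discrepancy persists.) This suggests either that the paper intends $\mathfrak a_t(\tau)=\kappa_t/t$ for $t<0$, or that the sign convention is meant to be absorbed elsewhere. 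Since the paper identifies polyhedra only up to \emph{fiber-orientation preserving} transformations, $V(P)=W$ and $V(P)=-W$ are genuinely different points of $\HPPoly_N$, so this is not cosmetic. You were right to flag it; resolving which convention is intended is the one thing a complete write-up should nail down before this proposition is invoked in Propositions~\ref{prop:angles-limit} and~\ref{prop:flat-limit}.
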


The interplay between these two interpretations leads to Theorem~\ref{thm:HP-bend} below, which is a fundamental tool for studying half-pipe geometry. Before stating the theorem, let us recall the terminology introduced in Section~\ref{sec:HP-param} and state a proposition.
We fix an orientation of the fiber direction once and for all. 
Every convex ideal polyhedron in $\HP^3$ has a top, for which the outward pointing fiber direction is positive, and a bottom, for which the outward pointing fiber direction is negative. The edges naturally sort into three types: an edge is called a \emph{top edge} if it is adjacent to two top faces or a \emph{bottom edge} if it is adjacent to two bottom faces, or an \emph{equatorial edge} if it is adjacent to both a top and bottom face. The union of the top faces is a bent polygon which projects down to the ideal polygon $p = \varpi(P)$ in~$\HH^2$. The union of the bottom faces also projects to $p$. The \emph{infinitesimal dihedral angle} at an edge is measured in terms of the infinitesimal rotation angle needed to rotate one face adjacent to the edge into the same plane as the other. The dihedral angle at a top/bottom edge will be given a positive sign, while the dihedral angles at an equatorial edge will be given a negative sign. This sign convention is justified by the following (see \cite[\S 4.2]{dan_age}):

\begin{prop}\label{prop:angles-limit}
The infinitesimal dihedral angle along an edge of $P$ is simply the derivative of the dihedral angle of the corresponding edge of $Q_t$, where $Q_t$ is as in Proposition~\ref{prop:limit}.
\end{prop}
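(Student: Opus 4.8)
The plan is to realize the infinitesimal dihedral angle of $P$ as the derivative, in the algebra-degeneration parameter $t$, of the honest dihedral angles of the family $Q_t$ from Proposition~\ref{prop:limit}. The computation localizes at a single edge $e$ of $P$ adjacent to two faces $F$ and $F'$, so it suffices to treat the case of two totally geodesic half-planes in $\XX(\BB_t)$ meeting along a common edge, and to track how the dihedral angle between them behaves as $t \to 0$. First I would use an element of $\PGL^+(2,\BB_t)$ to normalize the edge $e$ so that its two endpoints are $0, \infty$ and so that the plane of $F$ is the central hyperbolic plane $\plane$; this is possible consistently in the family by Remark~\ref{rem:plane}, since the real $\PSL(2,\RR)$ sits inside every $\PGL^+(2,\BB_t)$ and fixes $\plane$. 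Then the plane of $F'$ is obtained from $\plane$ by applying the element of $G_e \subset \PGL^+(2,\BB_t)$ whose exponential $\BB_t$-length $\lambda_t = \varepsilon e^{s_t + \kappa_t \vartheta_t}$ records the shape parameter along $e$; by the discussion following Figure~\ref{fig:shape-params}, the imaginary part $\vartheta_t$ of $\log \lambda_t$ (divided by $|t|$ under the rescaling $\mathfrak{a}_t$) is exactly the dihedral angle of $Q_t$ at the corresponding edge.

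Next I would carry out the limit. Writing $Q_t'|_{t=0} = U + \kappa_0 W$ and applying Proposition~\ref{prop:limit}, the rescaled family $\mathfrak{a}_t(Q_t)$ converges to $P$, and under the isomorphisms $\mathfrak{a}_t$ the shape parameters of $Q_t$ converge to the HP shape parameter of $P$, which has the form $z = \varepsilon e^{s}(1 + \sigma \vartheta)$ for the HP-length $s$ and infinitesimal angle $\vartheta$. Concretely, the real part of $\mathfrak{a}_t^{-1}\!\big(\tfrac{1}{|t|}\log \lambda_t\big)$ limits to $s$ while the imaginary (i.e. $\kappa_t/|t|$-) part limits to $\vartheta$, which is precisely the infinitesimal dihedral angle of $P$ along $e$ as defined via the infinitesimal rotation of Lemma~\ref{lem:HP3-action} and the definition preceding the proposition. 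Thus $\vartheta = \lim_{t\to 0} \vartheta_t / |t| \cdot (\text{scale})$; more to the point, since the dihedral angle of $Q_t$ is $\vartheta_t$ and $Q_0$ is the flat polygon $p$ (dihedral angle $0$ along every edge), the derivative $\tfrac{d}{dt}\big|_{t=0}$ of the dihedral angle of $Q_t$ equals the $\kappa_t$-coefficient of the first-order term, which is exactly $\vartheta$. The sign bookkeeping — that a top/bottom edge gives positive $\vartheta$ and an equatorial edge negative $\vartheta$ — follows because the two faces $F, F'$ lie on opposite sides or the same side of the degenerate plane through $e$ according to whether $e$ is top/bottom or equatorial, matching the $\varepsilon = \pm 1$ dichotomy for the shape parameters exactly as in the AdS computation in Section~\ref{sec:ideal_poly}.

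The main obstacle I anticipate is not any single hard estimate but rather the careful unwinding of three layers of definitions so that they line up on the nose: the definition of the infinitesimal dihedral angle via $\rot(a, X)$ in Lemma~\ref{lem:HP3-action}, the definition of the HP shape parameter as an exponential $\BB$-length with $\sigma^2 = 0$, and the definition of the rescaled limit in Proposition~\ref{prop:limit}. In particular I would need to check that the normalization bringing $e$ to $\{0,\infty\}$ and $F$ to $\plane$ can be chosen to vary smoothly with $t$ (so that $\mathfrak{a}_t$-conjugation commutes with it up to terms that vanish in the limit), and that the $\varepsilon$-sign and orientation conventions for faces adjacent to $e$ transport correctly through $\mathfrak{a}_t$. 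Once those identifications are pinned down, the statement is essentially the first-order Taylor expansion of $\vartheta_t$ at $t = 0$, and there is nothing further to prove; indeed, as the excerpt notes, this is exactly the content of \cite[\S 4.2]{dan_age}, so the proof can be kept brief by citing that reference for the detailed first-order computation and supplying here only the translation into the present notation.
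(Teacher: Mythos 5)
The paper does not actually spell out a proof of Proposition~\ref{prop:angles-limit}: it defers entirely to~\cite[\S 4.2]{dan_age}, and your proposed route via the exponential $\BB_t$-length of the edge stabilizer and the rescaled limit of shape parameters under $\mathfrak a_t$ is exactly what that reference (and the paper's own remark on cross ratios immediately after the proposition) has in mind, so this is the intended approach. One small bookkeeping slip: in your normalization the shape parameter of $\mathfrak a_t(Q_t)$ is $\lambda_t=\varepsilon e^{s_t+\kappa_t\vartheta_t}$ with $Q_t$'s actual dihedral angle equal to $|t|\vartheta_t$; the HP infinitesimal angle is $\lim_{t\to0}\vartheta_t$ itself, not the real or imaginary part of $\mathfrak a_t^{-1}\bigl(\tfrac{1}{|t|}\log\lambda_t\bigr)$ as written, and the derivative $\tfrac{d}{dt}\bigl(|t|\vartheta_t\bigr)\big|_{t=0}$ then gives the claimed equality directly.
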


Alternatively, dihedral angles may also be measured using the cross ratio. Indeed, if two (consistently oriented) ideal triangles $T = \Delta z_1 z_2 z_3$ and $T' = \Delta z_4 z_1 z_2$ meet at a common edge $\alpha = z_1 z_2$, then the cross ratio $z = (z_1, z_2; z_3, z_4)$ satisfies that $z = \varepsilon e^{s+ \sigma \theta} = \varepsilon e^s(1+ \sigma \theta)$, where $s$ is the shear between $T$ and $T'$, where $\theta$ is the dihedral angle, and where $\varepsilon$ is $+1$ if $\alpha$ is an edge of the equator and $-1$ if $\alpha$ is a top/bottom edge. 

We consider the bending angles on the top (resp. bottom) edges of an ideal polyhedron $P$ as a (positive) measured lamination on the ideal polygon $p = \varpi(P)$. 
The following theorem is the infinitesimal version of Theorem~\ref{thm:earthquakes} about the interplay between earthquakes and AdS geometry.

\begin{theorem}\label{thm:HP-bend}
Let $P$ be an ideal polyhedron in $\HP^3$ and let $\theta_{+}$ (resp. $\theta_{-}$) be the measured lamination on $p = \varpi(P)$ describing the bending angles on top (resp. on bottom). Then the infinitesimal deformation $V = V(P)$ of $p$ defined by $P$ is equal to $e_{\theta_{+}}(p)$, where $e_{\theta_{+}}$ is the infinitesimal left earthquake along $\theta_{+}$. Similarly, $V = -e_{\theta_{-}}(p)$ is obtained by right earthquake along $\theta_{-}$.
\end{theorem}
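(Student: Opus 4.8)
The plan is to deduce Theorem~\ref{thm:HP-bend} as the infinitesimal limit of Theorem~\ref{thm:earthquakes}, exploiting Proposition~\ref{prop:limit} and Proposition~\ref{prop:angles-limit} to transfer the AdS earthquake relation to the half-pipe setting. First I would set up a path of ideal polyhedra $Q_t$ in $\AdS^3$ (for $t<0$) degenerating to the polygon $p = \varpi(P)$ with $Q_0' = U + \tau V$ for an appropriate infinitesimal deformation $U$ of $p$, chosen so that $\mathfrak{a}_t(Q_t) \to P$ as in Proposition~\ref{prop:limit}; here $V = V(P)$. One must check that such a family exists realizing the given combinatorics $\Gamma$ with equatorial subgraph $\gamma$, and that for $t$ small the top/bottom/equatorial edge types of $Q_t$ agree with those of $P$. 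This is where Theorem~\ref{thm:main-AdS-metrics} or simply the explicit cross-ratio description of edges (via $z = \varepsilon e^{s+\tau\theta}$, $z = \varepsilon e^{s+\sigma\theta}$) is convenient: the shear and dihedral-angle data of $P$ prescribe the shear and angle data of the family $Q_t$ to first order.

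Next I would apply Theorem~\ref{thm:earthquakes} to each $Q_t$: writing $m_L(t), m_R(t) \in \Teich_{0,N}$ for the left and right metrics and $\theta(t)$ for the dihedral angles of $Q_t$, we have $m_R(t) = E_{2\theta(t)}\, m_L(t)$, and restricting to the top hemisphere of $\Sigma_{0,N}$, exactly as in the proof of Corollary~\ref{cor:earthquake}, we get $(p_R)_t = E_{(\theta_+)_t}\, (p_L)_t$ where $(\theta_+)_t$ is twice the restriction of $\theta(t)$ to the top edges. Now differentiate at $t=0$. Since $Q_t \to p$, both $(p_L)_t$ and $(p_R)_t$ converge to $p$; their derivatives at $0$ are, by the definition of the left/right projections and Proposition~\ref{prop:limit}, the infinitesimal deformations $-V$ (from $\varpi_L$, with the $\frac{1-\tau}{2}$ factor) and $+V$ (from $\varpi_R$, with the $\frac{1+\tau}{2}$ factor) respectively — more precisely $\frac{d}{dt}\big|_0 (p_R)_t - \frac{d}{dt}\big|_0 (p_L)_t = V$ up to the normalization fixed by $\mathfrak{a}_t$. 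The composition of the shear map $E_{(\theta_+)_t}$ at $t=0$ is the identity (since $(\theta_+)_0 = 0$), so differentiating $(p_R)_t = E_{(\theta_+)_t}(p_L)_t$ yields $\dot{(p_R)} = \dot{(p_L)} + e_{\dot\theta_+}(p)$, where $e$ denotes the infinitesimal shear/earthquake vector field along the support of $\theta_+$ with the derivative weights $\dot\theta_+$. By Proposition~\ref{prop:angles-limit}, $\dot\theta_+$ is precisely the measured lamination $\theta_+$ of $P$ (up to the factor of $2$, which one must track carefully through the $2\theta$ in Theorem~\ref{thm:earthquakes} versus the normalization of $V$). Rearranging gives $V = e_{\theta_+}(p)$, the first claim. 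Restricting to the bottom hemisphere instead — where, as in the proof of Corollary~\ref{cor:earthquake}, orientation reversal turns a left earthquake into a right earthquake — gives $V = -e_{\theta_-}(p)$, the second claim.

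The main obstacle I expect is bookkeeping of signs and normalization factors: the factor of $2$ between $\theta$ and $2\theta$ in the AdS diagram, the $\frac{1\pm\tau}{2}$ idempotent factors in the left/right projections, the rescaling by $\mathfrak{a}_t$ (which sends $\tau \mapsto \kappa_t/|t|$ and so introduces a $|t|$-dependent rescaling of the imaginary part that must be undone in the limit), and the orientation-reversal on the bottom hemisphere. Getting all of these to line up so that the bare statement ``$V = e_{\theta_+}(p)$'' (no stray constants) holds requires care. A cleaner, more self-contained alternative — which I would present as the actual proof to avoid these limiting subtleties — is a direct computation in $\HP^3$ mirroring the proof of Theorem~\ref{thm:earthquakes}: assume $\Gamma$ is a triangulation, write the cross ratio of each edge $\alpha$ as $z = \varepsilon e^{s(\alpha)+\sigma\theta(\alpha)} = \varepsilon e^{s(\alpha)}(1 + \sigma\theta(\alpha))$, and expand using the idempotent-like decomposition available in $\Rsigma$. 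One finds that the real part $s(\alpha)$ records the shear of the double $m$ of $p$, while the $\sigma$-part records $\theta(\alpha)$; comparing with the infinitesimal shear coordinates of the deformation $V$ relative to the undeformed metric $m$ (on the top hemisphere), and invoking Lemma~\ref{lem:HP3-action} to identify infinitesimal rotation in the fiber direction with the earthquake flow, yields $V = e_{\theta_+}(p)$ directly. The bottom-hemisphere statement follows by the same orientation-reversal argument as in Corollary~\ref{cor:earthquake}. I would also remark that consistency of the bending construction — that one can actually bend the flat embedding of $\Sigma_{0,N}$ along $\Gamma$ by the weights $\theta$ — uses condition~(ii) of $\Angles$ exactly as in the proof of Theorem~\ref{thm:earthquakes}.
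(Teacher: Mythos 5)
Your second approach --- the one you say you would actually present --- is essentially the paper's proof: assume $\Gamma$ is a triangulation, write the cross ratio at each edge as $z = \varepsilon e^{s+\sigma\theta}$, and read off that $s$ is the shear coordinate of $p = \varpi(P)$ while $\theta$ is the infinitesimal variation of the shear under $V(P)$, so $V$ is the infinitesimal earthquake with weights $\theta$. One step of your description is wrong, however, and would stop a naive reader cold: there is no ``idempotent-like decomposition'' in $\Rsigma$. Since $\sigma^2 = 0$, the algebra $\Rsigma$ is local with a nilpotent maximal ideal $(\sigma)$ and has no nontrivial idempotents at all (one checks $(\tfrac{1\pm\sigma}{2})^2 \neq \tfrac{1\pm\sigma}{2}$ and $\tfrac{1+\sigma}{2}\cdot\tfrac{1-\sigma}{2} = \tfrac14 \neq 0$). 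This is exactly the point where the half-pipe argument must diverge from the proof of Theorem~\ref{thm:earthquakes}: the $\Rtau$ computation splits $z$ through $\Rtau \cong \RR\oplus\RR$ into a pair $(e^{s-\theta}, e^{s+\theta})$, but $\Rsigma$ does not split. The correct mechanism, which is what the paper uses, is the tangent-bundle interpretation $\Rsigma \cong T\RR$ (equivalently $\PP^1(\Rsigma) \cong T\RP^1$), together with the observation that this identification commutes with the cross-ratio map. Under it, the real part of $\log z$ is the shear coordinate at $\alpha$ for the polygon $p$, and the $\sigma$-coefficient $\theta(\alpha)$ is the derivative of that shear coordinate along $V$, which is precisely the definition of infinitesimal shearing. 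With that substitution your argument is the paper's argument; the appeal to Lemma~\ref{lem:HP3-action} is not needed (the conclusion is already definitional once the shear derivatives are identified), though it does no harm.

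Your first approach --- passing to the limit of Theorem~\ref{thm:earthquakes} along a collapsing family $Q_t$ via Propositions~\ref{prop:limit} and~\ref{prop:angles-limit} --- is genuinely different from the paper's route and is sound in principle; it presents Theorem~\ref{thm:HP-bend} as the literal first-order Taylor coefficient of the AdS statement, which is conceptually pleasant and consistent with the paper's transitional philosophy. The price is the normalization bookkeeping you yourself flag (the factor of $2$, the $\tfrac{1\pm\tau}{2}$ projections, the rescaling by $\mathfrak{a}_t$, orientation reversal on the bottom), and the need to establish existence of a family $Q_t$ with the prescribed combinatorics, which you can get from Corollary~\ref{cor:earthquake} without circularity. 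The paper's direct cross-ratio computation avoids all of that in three lines, and you were right to prefer it.
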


\begin{proof}
Let $\Gamma \in \Graph(\Sigma_{0,N}, \gamma)$ represent the $1$--skeleton of $\partial P$. By adding extra edges if necessary, we may assume $\Gamma$ is a triangulation.
As above we associate the shape parameter $z(\alpha)  = \varepsilon e^{s(\alpha)+ \sigma \theta(\alpha)}$ to any given edge $\alpha$ of $\Gamma$. Note that the map taking four points on $\RP^1$ to their cross ratio is smooth and that the isomorphism $T \RP^1 \cong \mathbb P^1(\RR + \RR \sigma)$ commutes with the cross ratio operation. Therefore the shear coordinate of $p = \varpi(P)$ at $\alpha$ is $s(\alpha)$ and the infinitesimal variation of the shear coordinate at $\alpha$ under the deformation $V(P)$ is $\theta(\alpha)$. The result follows.
\end{proof}

\subsection{Half-pipe geometry in dimension two}
The structure group $\mathbb G$ for $\HP^3$ acts transitively on degenerate planes, i.e. the planes for which the restriction of the metric on $\HP^3$ is degenerate. These are exactly the planes that appear vertical in the standard picture of $\HP^3$ (as in Figure~\ref{fig:HP-action}); they are the inverse image of lines (copies of $\HH^1$) in $\HH^2$ under the projection~$\varpi$. Each degenerate plane is a copy of two-dimensional half-pipe geometry $\HP^2$. For the purposes of the following discussion, we will fix one degenerate plane in $\HP^3$ as our model:
$$ \HP^2 := \left\{ \begin{pmatrix} x & 0 \\ 0 & x^{-1} \end{pmatrix} + \sigma\begin{pmatrix} 0 & y \\ -y & 0 \end{pmatrix} \right\}.$$
Here we describe two important facts about $\HP^2$. The first is (reasonably) named the \emph{infinitesimal Gauss-Bonnet formula}. See \cite[\S 3]{dan_age} for details about half-pipe geometry in arbitrary dimensions.

There is an invariant notion of area in $\HP^2$. As above, let $L$ denote the length function along the fiber direction. Then the area of a polygon $p$ (or a more complicated body) is the integral of the length $L(\varpi^{-1}(x) \cap p)$ of the segment of $p$ above $x$, over all $x \in \varpi(p) \subset \HH^1$. Alternatively, if $p$ is the limit as $t \to 0$ of $\mathfrak a_t p_t$, where $p_t$ is a smooth family of collapsing polygons in $\HH^2$, then the area of $p$ is simply derivative at $t=0$ of the area of $p_t$.

\begin{prop}[Infinitesimal Gauss-Bonnet formula]\label{prop:inf-GB}
Let $p$ be a polygon in $\HP^2$ whose edges are each non-degenerate. Then the area of $p$ is equal to the sum of the exterior angles of $p$. In particular, the sum of the exterior angles of any polygon is positive.
\end{prop}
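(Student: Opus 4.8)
The plan is to recognize this formula as the first derivative of the classical Gauss--Bonnet theorem for hyperbolic polygons, read off through the transition picture, and to get the positivity clause for free from the integral description of $\HP^2$-area.

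First I would realize $p$ as a rescaled limit of collapsing hyperbolic polygons: by the two-dimensional analogue of Proposition~\ref{prop:limit} (this is exactly the family appearing in the ``derivative'' description of $\HP^2$-area), choose a smooth family $p_t$ of hyperbolic $n$-gons in $\HH^2$, $t>0$, with the same combinatorics as $p$, collapsing as $t\to 0^+$ onto the central geodesic $\HH^1\subset\HH^2$ over which $\HP^2$ fibers, and with $\mathfrak a_t(p_t)\to p$. For each $t>0$ the classical Gauss--Bonnet theorem (curvature $-1$, geodesic sides) reads
\begin{equation*}
\area(p_t) \;=\; \sum_{i=1}^n \beta_i(t) \;-\; 2\pi,
\end{equation*}
where $\beta_i(t)$ is the exterior angle of $p_t$ at its $i$-th vertex. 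Since $\area(p)=\tfrac{d}{dt}\big|_{t=0}\area(p_t)$ and $n$ is independent of $t$, differentiating this identity at $t=0$ annihilates the constant $2\pi$ and gives
\begin{equation*}
\area(p) \;=\; \sum_{i=1}^n \dot\beta_i(0).
\end{equation*}
To conclude the formula I would then invoke the two-dimensional analogue of Proposition~\ref{prop:angles-limit}: the exterior angle of $p$ at a vertex, measured by the infinitesimal angle of $\HP^2$ attached to the fiber-length function, is by construction the derivative $\dot\beta_i(0)$ of the corresponding exterior angle of the collapsing family. Hence $\area(p)=\sum_i(\text{exterior angle of }p\text{ at vertex }i)$.

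For the positivity clause I would instead use the first description of $\HP^2$-area: if $\varpi(p)\subset\HH^1$ denotes the projected interval, then $\area(p)=\int_{\varpi(p)} L\big(\varpi^{-1}(x)\cap p\big)\,dx$, and the integrand, being the length of the fiber segment of $p$ over $x$, is strictly positive on the interior of $\varpi(p)$ precisely because the edges of $p$ are nondegenerate, so that $p$ does not pinch to a single point over any interior $x$. Therefore $\sum_i(\text{exterior angle of }p)=\area(p)>0$.

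The step I expect to cost the most care is the identification $\dot\beta_i(0)=(\text{exterior angle of }p)$ at the two ``turn-around'' vertices of $p_t$, where $\beta_i(0)=\pi$ rather than $0$: one has to check that the intrinsic infinitesimal angle of $\HP^2$ sees only the first-order term, the zeroth-order contributions $\sum_i\beta_i(0)=2\pi$ being exactly what the $-2\pi$ in Gauss--Bonnet absorbs. This is a routine unwinding of the definitions of Section~\ref{s:HP3-geometry} (compare \cite[\S3]{dan_age}). If one prefers to avoid the transition machinery, the statement can also be proved directly in $\HP^2$: a nondegenerate geodesic is the graph $L=g(u)$, over arc length $u$ on $\HH^1$, of a solution of the Jacobi equation $g''=g$, so $\int_a^b g\,du=g'(b)-g'(a)$; summing such contributions along the upper and lower boundary arcs of $p$ telescopes into the jumps of the boundary slopes at the vertices, which encode the exterior angles. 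In that second argument the only real obstacle is bookkeeping the signs correctly at the vertices and at the two $\varpi$-endpoints of $p$.
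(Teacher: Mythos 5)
Your proof follows the same route as the paper's: realize $p$ as a rescaled limit $\mathfrak a_t p_t$ of collapsing hyperbolic polygons and differentiate the classical Gauss--Bonnet formula at $t=0$, using the two-dimensional analogues of Propositions~\ref{prop:limit} and~\ref{prop:angles-limit} to identify the derivatives with the $\HP^2$-area and exterior angles. Your extra care at the two turn-around vertices (where $\sum_i\beta_i(0)=2\pi$ is exactly absorbed by the $-2\pi$) and the explicit derivation of positivity from the fiber-length integral are correct refinements the paper leaves implicit, and the alternative $g''=g$ Jacobi-field sketch is a sound direct route if one wants to avoid the transition picture.
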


\begin{proof}
Let $p_t$ be a smooth family of collapsing polygons in $\HH^2$ so that $p$ is the limit as $t \to 0$ of $\mathfrak a_t p_t$. Then the area of $p$ is the derivative of the area of $p_t$ at $t=0$. Each exterior angle of $p$ is the derivative of the corresponding angle of $p_t$ at $t= 0$. The proposition follows from the usual Gauss-Bonnet formula for polygons in $\HH^2$.
\end{proof}

Secondly, we give a bound on the dihedral angle between two non-degenerate planes in terms of the angle seen in the intersection with a degenerate plane $H \cong \HP^2$. This will be used in the proof of Proposition~\ref{prop:imageinA-HP}.

\begin{prop}\label{prop:HP-angle-bound}
Let $P, Q$ be two non-degenerate planes in $\HP^3$ which intersect at dihedral angle $\theta$. Let $H$ be a degenerate plane so that the lines $H \cap P$ and $H \cap Q$ intersect at angle $\vartheta$ in $H \cong \HP^2$.  Then $\operatorname{sign}(\vartheta) = \operatorname{sign}(\theta)$ and $|\vartheta| \leq |\theta|$ with equality if and only if $H$ is orthogonal to the line $P \cap Q$.
\end{prop}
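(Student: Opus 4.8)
The plan is to reduce the statement to a concrete computation in two-dimensional half-pipe geometry, by choosing coordinates adapted to the line $P\cap Q$. First I would set up the picture: the line $\ell = P\cap Q$ is a non-degenerate line in $\HP^3$, which projects under $\varpi$ to a geodesic $\bar\ell$ in $\HH^2$; the two non-degenerate planes $P,Q$ each contain $\ell$, so they are determined, once $\ell$ is fixed, by a single infinitesimal-rotation parameter about $\ell$ (recall from Lemma~\ref{lem:HP3-action} and the definition following it that rotation about $\ell$ is implemented by elements $1 + a\sigma$ with $a$ an infinitesimal translation along $\bar\ell$). Thus $Q$ is obtained from $P$ by rotating about $\ell$ through the infinitesimal angle $\theta$. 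I would normalize so that $\ell$ lies over a fixed standard geodesic of $\HH^2$ and so that $P$ is, say, a ``horizontal'' reference plane through $\ell$.

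The key step is then to compute the angle $\vartheta$ of intersection of the two lines $H\cap P$ and $H\cap Q$ inside the degenerate plane $H\cong \HP^2$. The element $1+a\sigma$ that carries $P$ to $Q$ acts on $H$ (a copy of $\HP^2$), carrying the line $H\cap P$ to the line $H\cap Q$; by Lemma~\ref{lem:HP3-action} the relevant quantity is the rotational part $\rot(a, X)$ of the infinitesimal isometry $a$ evaluated at the point $X = \varpi(H\cap \ell) \in \HH^2$, i.e. at the foot of the perpendicular-type intersection of $\bar\ell$ with the geodesic $\varpi(H)$. Now $a$ is a pure infinitesimal translation of unit-normalized length along the geodesic $\bar\ell$, scaled by $\theta$; its rotational part at a point $X$ on the geodesic $\varpi(H)$ is, by the standard formula for decomposing a hyperbolic infinitesimal translation into symmetric and skew parts at a point, equal to $\theta$ times the cosine of the angle that $\bar\ell$ makes with $\varpi(H)$ at $X$ (this is exactly the ``signed distance / projection'' phenomenon noted in the remark after the definition of infinitesimal rotation: translating along $\bar\ell$ looks, infinitesimally and from the vantage of a transverse geodesic, like a rotation whose angular rate is the component of the translation perpendicular to that geodesic). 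Hence $\vartheta = \theta\cos\psi$ where $\psi$ is the angle between $\bar\ell = \varpi(\ell) = \varpi(P\cap Q)$ and $\varpi(H)$; since $|\cos\psi|\le 1$ this gives $\operatorname{sign}(\vartheta) = \operatorname{sign}(\theta)$ and $|\vartheta|\le|\theta|$ immediately, with equality exactly when $\cos\psi = \pm 1$, i.e. when $\varpi(H)$ contains $\bar\ell$, equivalently when $H$ is orthogonal to $P\cap Q$ in the half-pipe metric (orthogonality of a degenerate plane to a non-degenerate line being precisely the condition that the line's projection lie in the plane's projection).

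I expect the main obstacle to be bookkeeping the two sign conventions at once: the sign attached to a half-pipe dihedral angle (positive when the two half-planes lie on opposite sides of the degenerate plane through the edge, negative when on the same side, as fixed in Section~\ref{sec:intro-AdS-param} and Section~\ref{sec:HP-param}) must be shown to be respected both by the ambient rotation angle $\theta$ and by the two-dimensional angle $\vartheta$, and one must check that passing to the slice $H$ does not flip which side is which. Concretely one has to verify that $H\cap P$ and $H\cap Q$ lie on the same (resp.\ opposite) side of the degenerate line of $H$ through $H\cap\ell$ precisely when $P$ and $Q$ lie on the same (resp.\ opposite) side of the degenerate plane of $\HP^3$ through $\ell$. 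This is geometrically clear from the product picture $\HP^3\cong\HH^2\times\RR$ but needs to be stated carefully; once it is in place, the inequality and the equality case both drop out of the identity $\vartheta=\theta\cos\psi$. A clean way to organize the whole argument is to realize $P$ as the limit (via Proposition~\ref{prop:limit} and Proposition~\ref{prop:angles-limit}) of genuine hyperbolic configurations, where the corresponding inequality between a dihedral angle and the angle of a transverse planar slice is the familiar spherical-trigonometry fact $\cos(\text{slice angle}) = \cos\theta\cdot(\dots) $ degenerating to $\vartheta' = \theta'\cos\psi'$ in the collapse; but I would present the direct $\HP$ computation above as the primary proof and mention the transitional interpretation only as a sanity check.
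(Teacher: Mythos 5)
Your primary ``direct $\HP$'' argument hinges on the identity $\vartheta = \theta\cos\psi$, and this is where it breaks. First, the quantity you invoke---$\rot(a,X)$ at the single point $X=\varpi(H\cap\ell)$---cannot equal $\vartheta$: that point $X$ lies \emph{on} the axis $\bar\ell$ of the infinitesimal translation $a$, so $\rot(a,X)=0$ there (a translation has no rotational part along its own axis), and in any case $\rot(a,\cdot)$ is a fiber-translation \emph{length}, not an angle. The angle between $H\cap P$ and $H\cap Q$ inside $H\cong\HP^2$ is instead the \emph{rate of change} of the fiber translation along $\varpi(H)$. By the remark after Lemma~\ref{lem:HP3-action}, $1+a\sigma$ translates the fiber over a point $X$ by $\theta\cdot d(X,\bar\ell)$ (signed distance), so $\vartheta=\theta\,\frac{d}{ds}\big|_{s=0} d(X(s),\bar\ell)$ with $s$ arclength along $\varpi(H)$ from the crossing point, and that derivative is $\sin\psi$, not $\cos\psi$, where $\psi$ is the angle between $\bar\ell$ and $\varpi(H)$. (Your own prose even says ``the component of the translation perpendicular to $\varpi(H)$,'' which is $\theta\sin\psi$, and then the formula writes $\cos\psi$.)

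Second, and this is why the wrong trigonometric function went unnoticed, you have misread orthogonality. The degenerate metric on $\HP^3$ is the pullback under $\varpi$ of the $\HH^2$ metric, so $H\perp(P\cap Q)$ means $\varpi(H)\perp\varpi(P\cap Q)$ in $\HH^2$, not that one projection contains the other. Your proposed equality case $\psi=0$ (i.e.\ $\varpi(H)\supset\bar\ell$) is precisely the degenerate configuration excluded by the proposition's hypotheses: then $H\supset P\cap Q$, hence $H\cap P=H\cap Q=P\cap Q$ and $\vartheta$ is not even defined. With the corrected formula $\vartheta=\theta\sin\psi$ and the correct reading of orthogonality (equality iff $\psi=\pi/2$), your direct $\HP$ computation does prove the proposition and is arguably cleaner than the paper's. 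For comparison, the paper takes what you call the ``sanity check'' route as its actual proof: it sets $P=\plane$, realizes $Q$ as the $\mathfrak a_t$-rescaled limit of hyperbolic planes $Q_t$ with $Q_t\cap\plane$ constant, applies the hyperbolic identity $\tan\vartheta_t=\tan\theta_t\sin\varphi$ (again sine, not cosine), and differentiates at $t=0$.
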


\begin{proof}
We may change coordinates so that $P = \plane$ (recall that $\plane$ is a copy of $\HH^2$ common to all of the models $\XX(\BB)$ in projective space, see Remark~\ref{rem:plane}). The second plane $Q$ is the limit as $t \to 0$ of $\mathfrak a_t Q_t$, where $Q_t$ is a smoothly varying family of planes in $\HH^3$ with limit $Q_0 = \plane$. We may choose the path $Q_t$ so that the line $L = Q_t \cap \plane$ is constant for all $t > 0$. The dihedral angle between $Q$ and $\plane$ is the derivative at $t= 0$ of the dihedral angle $\theta_t$ between $Q_t$ and $\plane$, now thought of as a plane in $\HH^3$. The degenerate plane $H$ defines a plane $H'$ in (the projective model of) $\HH^3$ which is orthogonal to $\plane$. Let $\vartheta_t$ be the angle formed by $Q_t \cap H'$ and $\plane \cap H'$ in $H' \cong \HH^2$. Then, because $H'$ and $\plane$ are orthogonal, we have that $$\tan \vartheta_t = \tan \theta_t \sin \varphi$$
where $0 < \varphi \leq \frac{\pi}{2}$ is the angle between the line $L = Q_t \cap \plane$ and $H'$.
The proposition now follows since $\vartheta = \frac{d}{dt} \big|_{t=0} \vartheta_t$, $\theta = \frac{d}{dt} \big|_{t=0} \theta_t$ and $\theta_0 = \vartheta_0$ (both are either zero or~$\pi$).
\end{proof}

%
%
%

\section{Length functions and earthquakes}\label{sec:lengthfunctions}

We prove Theorem~\ref{thm:main-HP} by showing that each ideal polyhedron in $\HP^3$ is realized as the unique minimum of a certain length function defined in terms of its dihedral angles.
Our strategy is inspired by a similar one used by Series~\cite{ser_ker}, and later Bonahon~\cite{bon_kle}, in the setting of quasifuchsian hyperbolic three-manifolds with small bending.

\subsection{Shear and length coordinates on the Teichm\"uller space of a punctured sphere}

Consider an ideal triangulation $\Gamma$ of the $N$-times punctured sphere $\Sigma_{0,N}$. Let $\alpha_1, \ldots, \alpha_n$ denote the $n=3N-6$ edges of $\Gamma$. There are two natural
coordinate systems on the Teichm\"uller space $\Teich_{0,N}$ of complete hyperbolic metrics on $\Sigma_{0,N}$ (see \cite{penner:decorated,thurston:minimal}):
\begin{itemize}
\item Let $s_1,\cdots, s_n$ denote the shear coordinates
along the edges of $\Gamma$. 
The sum of the shear coordinates over edges adjacent to a particular vertex is always zero. Under this
condition, the shears along the edges provide global coordinates on $\Teich_{0,N}$.
\item We may define length coordinates $\ell_1, \ldots, \ell_n$ on $\Teich_{0,N}$ as follows.
In any hyperbolic structure, choose a horocycle around each cusp, and let $\ell_i$ denote the (signed) length of the segment of $\alpha_i$ connecting the two relevant horocycles. By abuse, we call $\ell_i$ the length of $\alpha_i$.
Changing a horocycle at a particular cusp corresponds 
to adding a constant to the lengths of all edges going into that cusp. The lengths $\ell_1, \ldots, \ell_n$ are only
well-defined up to this addition of constants, making these coordinates elements of $\mathbb R^n/\mathbb R^N$.
\end{itemize}
\noindent It is well-known \cite{penner:decorated,thurston:minimal}
that both the shears and the lengths give global coordinate systems for Teichm\"uller space. 
It is quite simple to go from length coordinates to shear coordinates, in fact the map sending lengths to shears is linear.
To describe this coordinate transformation more precisely, 
let us establish some notation. The orientation of the surface determines a cyclic order on the edges of any triangle.
Given any two edges $\alpha_i, \alpha_j$, let $\epsilon_{ij} = -\epsilon_{ji}$
 be the number of positively oriented triangles $T$ of $\Gamma$ such that $\alpha_i, \alpha_j$ are distinct edges of $T$ counted with a positive sign if $\alpha_j$ follows $\alpha_i$ in the cyclic order on the edges of $T$, and with negative sign if $\alpha_i$ follows $\alpha_j$. By definition, $(\epsilon_{ij})_{1\leq i,j\leq n}$ is an
anti-self adjoint matrix with entries in $\{-1,0,1\}$.
It is straightforward to check the following:
\begin{lemma}[Thurston~{\cite[p.~44]{SOP}}] \label{lm:penner}
Given a hyperbolic metric $h\in \Teich_{0,N}$ with length coordinates $(\ell_i)$,
the corresponding shear coordinates are defined by
$$s_i =\frac{1}{2} \sum_j \epsilon_{ij}\ell_j~. $$
Note that the right-hand side is independent of the horocycles chosen to define the~$\ell_i$.
\end{lemma}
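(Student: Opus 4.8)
The plan is to verify the identity $s_i = \frac{1}{2}\sum_j \epsilon_{ij}\ell_j$ one edge at a time. For a fixed $i$, both sides depend only on the \emph{star} of $\alpha_i$, namely the ideal quadrilateral $Q = T\cup T'$ formed by the two triangles $T,T'$ of $\Gamma$ adjacent to $\alpha_i$: the shear $s_i$ is by definition (see Section~\ref{sec:models}) the logarithm of the modulus of the cross ratio of the four ideal vertices of $Q$, while $\epsilon_{ij}\neq 0$ exactly when $\alpha_j$ is one of the four outer edges of $Q$. A preliminary consistency check, which also indicates why the statement is natural: each ideal vertex of $Q$ is an endpoint of precisely two of the four outer edges, and the orientation convention defining $\epsilon_{ij}$ forces one of those two to enter $\sum_j \epsilon_{ij}\ell_j$ with a $+$ and the other with a $-$; hence that sum is unchanged under a change of the horocycle at any cusp, matching the fact that $s_i$ does not depend on the choice of horocycles.

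Next I would label the ideal vertices of $Q$ cyclically as $A,B,C,D$ with $\alpha_i = AC$, $T=ABC$, $T'=ACD$, so the outer edges in cyclic order around $Q$ are $AB, BC, CD, DA$. Tracing the cyclic-order convention in each of $T$ and $T'$ gives $\epsilon_{i,AB}=\epsilon_{i,CD}=+1$ and $\epsilon_{i,BC}=\epsilon_{i,DA}=-1$, so the right-hand side equals $\frac{1}{2}\bigl(\ell_{AB}-\ell_{BC}+\ell_{CD}-\ell_{DA}\bigr)$. Develop $Q$ into the upper half-plane, normalising by $\PSL(2,\RR)$ so that $A=\infty$, $C=0$, and $B,D$ lie on opposite sides of the imaginary axis; then, by the definition of the cross ratio, $s_i = \log\lvert D'\rvert$ where $D'$ is the image of $D$ after the further normalisation $B\mapsto 1$. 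Now choose an arbitrary horocycle at each of $A,B,C,D$ and compute the four edge lengths using the elementary fact that the signed hyperbolic distance between horocycles of Euclidean diameters $\delta_x,\delta_y$ based at $x,y\in\RR$ is $\log\bigl((x-y)^2/(\delta_x\delta_y)\bigr)$, with the evident modification when a vertex is $\infty$. Substituting and forming the alternating sum $\ell_{AB}-\ell_{BC}+\ell_{CD}-\ell_{DA}$, all horocycle-diameter terms cancel and one is left with $2\log\lvert D'\rvert = 2 s_i$, which is the claim. (Equivalently one may invoke Penner's $\lambda$-length coordinates from \cite{penner:decorated}, where this cancellation is built into the Ptolemy/cross-ratio formula.)

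The only genuinely delicate point is sign bookkeeping: one must line up the orientation convention that defines $\epsilon_{ij}$, the Section~\ref{sec:models} convention that a positive shear means shearing to the left, and the normalisation of the cross ratio, so that the two sides of the identity agree in sign and not merely up to sign. Once these conventions are pinned down, the analytic input is the trivial telescoping cancellation of the horocycle contributions, which is exactly why the map from length coordinates to shear coordinates is linear with matrix $\frac{1}{2}(\epsilon_{ij})$ and why the final sentence of the lemma holds.
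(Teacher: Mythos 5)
The paper itself does not give a proof of this lemma: it says ``It is straightforward to check the following'' and cites Thurston, so there is no in-paper argument to compare your proposal against. Your proposal is the standard (and essentially the only) proof, and it is correct. Reducing both sides to the star $Q=T\cup T'$ of $\alpha_i$, noting that $\epsilon_{ij}\neq 0$ exactly when $\alpha_j$ is one of the four outer edges of $Q$, determining the signs $\epsilon_{i,AB}=\epsilon_{i,CD}=+1$, $\epsilon_{i,BC}=\epsilon_{i,DA}=-1$ from the cyclic order in the two positively oriented triangles, and then computing the alternating sum of horocyclic edge-lengths using the formula $\ell(x,y)=\log\bigl((x-y)^2/(\delta_x\delta_y)\bigr)$ is exactly the Thurston/Penner computation; I checked that the horocycle-diameter terms telescope and the result is $2\log\lvert D'\rvert$, so that the identity holds. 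Your preliminary observation that each vertex of $Q$ is hit by exactly two of the four outer edges with opposite $\epsilon$-signs is also the cleanest way to see the invariance under horocycle change, matching the lemma's final sentence.

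Two minor remarks. First, you correctly flag the sign bookkeeping as the only delicate point; pinning it down requires fixing, once and for all, whether $(A,B,C,D)$ is positively or negatively cyclically ordered on $\RP^1$, and reconciling this with the paper's cross-ratio convention $z=(z_1,z_2;z_3,z_4)$ with $T=\Delta z_1z_2z_3$ and $T'=\Delta z_2z_1z_4$. If $(A,B,C,D)$ is positive then $B<0<D$ once $A=\infty$, $C=0$; your choice ``$B\mapsto 1$'' implicitly takes the other ordering, which globally flips the $\epsilon$'s and the cross ratio simultaneously, so the end result is unaffected, but this should be said. Second, the argument as written implicitly assumes the four outer edges of $Q$ are pairwise distinct edges of $\Gamma$; in a general ideal triangulation of $\Sigma_{0,N}$ two of them could coincide (the paper's assertion that $\epsilon_{ij}\in\{-1,0,1\}$ quietly rests on the same assumption). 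Handling that case requires a sentence observing that the two contributions to $\epsilon_{ij}$ from $T$ and $T'$ cancel, so that coincident outer edges simply drop out of $\sum_j\epsilon_{ij}\ell_j$ while the telescoping of horocycle terms goes through unchanged. Neither point affects the validity of the approach.
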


\begin{defi}
Let $\omega$ denote the anti-symmetric bilinear form on $\Teich_{0,N}$, defined by
\begin{equation}\label{eqn:omega1}
\omega = \frac{1}{2}\sum_{i,j} \epsilon_{ij} d \ell_i \otimes d \ell_j.
\end{equation}
Note that, by Lemma~\ref{lm:penner}, we may also express $\omega$ as
\begin{equation} \label{eqn:omega}
\omega = \sum_{i} d \ell_i \otimes d s_i.
\end{equation}
It follows that $\omega$ is well-defined (independent of the ambiguity in the definition of $d\ell_i$) because for any tangent vector $Y$, $d s_i(Y)$ is a \emph{balanced function} on the set $E = E(\Gamma)$ of edges, meaning it is a function whose values sum to zero on those edges incident to any vertex.
\end{defi}
\noindent From the second expression for $\omega$, we can see that it is a symplectic form, i.e. it is non-degenerate. In fact, we mention that $\omega$ is nothing other than (a multiple of) the Weil-Petersson symplectic form (see Wolpert~\cite{wolpert:products} and Fock-Goncharov \cite{fock-goncharov-1}), though we will not need this fact.
It is straight-forward to check directly that $\omega$ does not depend on 
the particular triangulation used in its definition.

\subsection{The gradient of the length function}

Given a function $f:\Teich_{0,N}\to \R$, we denote by $D^\omega f$ its symplectic gradient
with respect to $\omega$, defined by the following relation: for any vector field
$X$ on $\Teich_{0,N}$, 
$$ \omega(D^\omega f,X) = df(X).$$

Let $\theta = (\theta_1, \ldots, \theta_n)$ be any balanced assignments of weights to the edges of $\Gamma$. Then one may define the corresponding \emph{length function} $\ell_\theta$ as a function on
$\Teich_{0,N}$: for any hyperbolic metric $h\in \Teich_{0,N}$, with length coordinates $(\ell_i)_{1\leq i\leq n}$,
set
$$ \ell_\theta(h) = \sum_i \theta_i \ell_i~.$$
The function $\ell_\theta$ does not depend on the choice of horocycles at the cusps precisely because~$\theta$
is balanced. We let $e_\theta$ denote the vector field on $\Teich_{0,N}$ 
defined by $ds_i(e_\theta) = \theta_i$, in other words $e_\theta$ shears along each edge 
according to the weights~$\theta$. It follows immediately from \eqref{eqn:omega} that:
 
\begin{lemma} \label{lm:gradient}
Let $\theta = (\theta_1, \ldots, \theta_n)$ be balanced weights on the edges of $\Gamma$. Then
$$ D^\omega \ell_\theta = -e_\theta.$$
\end{lemma}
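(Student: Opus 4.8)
The plan is to verify the defining relation of the symplectic gradient directly, using the second expression \eqref{eqn:omega} for $\omega$. Recall that $\ell_\theta = \sum_i \theta_i \ell_i$, so $d\ell_\theta = \sum_i \theta_i\, d\ell_i$, that $e_\theta$ is characterized by $ds_i(e_\theta) = \theta_i$ for every edge $\alpha_i$ of $\Gamma$, and that $\omega$ is the anti-symmetric bilinear form which, by \eqref{eqn:omega}, is given by $\omega = \sum_i d\ell_i \otimes ds_i$.

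First I would evaluate $\omega$ on an arbitrary vector field $X$, paired with $e_\theta$ in the appropriate order. Using \eqref{eqn:omega},
\[
\omega(X, e_\theta) \;=\; \sum_i d\ell_i(X)\, ds_i(e_\theta) \;=\; \sum_i \theta_i\, d\ell_i(X) \;=\; d\ell_\theta(X).
\]
Since $\omega$ is anti-symmetric, this gives $\omega(e_\theta, X) = -\,d\ell_\theta(X)$, equivalently $\omega(-e_\theta, X) = d\ell_\theta(X)$ for every vector field $X$. By the definition of the symplectic gradient ($\omega(D^\omega f, X) = df(X)$) and the non-degeneracy of $\omega$, this says precisely that $D^\omega \ell_\theta = -e_\theta$.

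The only points that deserve a word of care are the well-definedness issues already flagged in the text: the forms $d\ell_i$ are ambiguous up to the addition of constants at the cusps, but $\ell_\theta$, hence $d\ell_\theta$, is genuinely well-defined because $\theta$ is balanced; and $e_\theta$ exists because the balanced condition places $\theta$ in the range of the map $X \mapsto (ds_i(X))_i$. One should also confirm that the two displayed expressions for $\omega$ agree as \emph{anti-symmetric} bilinear forms, which is immediate from Lemma~\ref{lm:penner} (giving $ds_i = \tfrac12\sum_j \epsilon_{ij}\, d\ell_j$) together with $\epsilon_{ij} = -\epsilon_{ji}$. Beyond this bookkeeping there is no real obstacle: the identity is a one-line consequence of \eqref{eqn:omega}, which is exactly why the text asserts that it "follows immediately."
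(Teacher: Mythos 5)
Your proof is correct and follows precisely the route the paper intends: the paper itself gives no argument beyond "It follows immediately from \eqref{eqn:omega}," and your computation is exactly the immediate verification, with the signs tracked correctly through the anti-symmetry of $\omega$ and the well-definedness caveats appropriately noted.
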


\subsection{The space of doubles is Lagrangian}
We assume, from here on, that our graph $\Gamma$ admits a Hamiltonian cycle $\gamma$. Then cutting $\Sigma_{0,N}$ along $\gamma$ yields two topological ideal polygons, one of which we label \emph{top} and the other \emph{bottom}. There is an orientation reversing involution $\iota$ on $\Sigma_{0,N}$ which exchanges top with bottom and point-wise fixes $\gamma$. We let $\doubles$ denote the half-dimensional subspace of $\Teich_{0,N}$ which is fixed by the action of $\iota$, i.e. those hyperbolic metrics which are obtained by doubling a hyperbolic ideal polygon and marking the surface in such a way that the boundary of the polygon identifies with~$\gamma$. 

\begin{prop}\label{pr:doubles}
The space of doubles $\doubles$ is a Lagrangian subspace of $\Teich_{0,N}$ with respect to $\omega$.
\end{prop}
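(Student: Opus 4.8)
The plan is to realize $\doubles$ as the fixed-point locus in $\Teich_{0,N}$ of an involution under which the form $\omega$ is anti-invariant. Since $\doubles$ is already known to be half-dimensional — it is a copy of the space $\poly_N$ of marked ideal $N$-gons in $\HH^2$, which has dimension $N-3 = \tfrac12\dim\Teich_{0,N}$ — it then suffices to prove that $\doubles$ is isotropic, and anti-invariance of $\omega$ will give exactly this.

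First I would make the involution explicit. The orientation-reversing involution $\iota$ of $\Sigma_{0,N}$ acts on $\Teich_{0,N}$ by $h\mapsto \iota^*h$; write $\hat\iota$ for this action. It is a smooth involution of $\Teich_{0,N}$, and by the standard description of the fixed-point set of a smooth involution, $\doubles = \mathrm{Fix}(\hat\iota)$ is a smooth submanifold whose tangent space $T_h\doubles$ is the $(+1)$-eigenspace of $d\hat\iota_h$ at each $h\in\doubles$; in particular $d\hat\iota_h$ restricts to the identity on $T_h\doubles$.

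The crux is the identity $\hat\iota^*\omega = -\omega$ on $\Teich_{0,N}$. Using that $\omega$ does not depend on the choice of triangulation, I would compute it with an $\iota$-symmetric ideal triangulation $\Gamma$: triangulate the top polygon by $N-3$ diagonals, take the mirror diagonals on the bottom, and keep the $N$ edges of $\gamma$. Then $\iota$ permutes the edges of $\Gamma$, say $\alpha_i\mapsto\alpha_{\pi(i)}$. For the length coordinates one has $\ell_{\alpha_{\pi(i)}}(h) = \ell_{\alpha_i}(\iota^*h) = \ell_{\alpha_i}(\hat\iota(h))$, hence $\hat\iota^*(d\ell_{\alpha_i}) = d\ell_{\alpha_{\pi(i)}}$. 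On the other hand, because $\iota$ reverses the orientation of $\Sigma_{0,N}$, it reverses the induced cyclic order of the edges around each triangle of $\Gamma$, so the incidence numbers of Lemma~\ref{lm:penner} flip: $\epsilon_{\pi(i)\pi(j)} = -\epsilon_{ij}$. Substituting into \eqref{eqn:omega1} and reindexing,
\begin{align*}
\hat\iota^*\omega &= \tfrac12\sum_{i,j}\epsilon_{ij}\,d\ell_{\alpha_{\pi(i)}}\otimes d\ell_{\alpha_{\pi(j)}} = \tfrac12\sum_{k,l}\epsilon_{\pi^{-1}(k)\pi^{-1}(l)}\,d\ell_{\alpha_k}\otimes d\ell_{\alpha_l}\\
&= -\tfrac12\sum_{k,l}\epsilon_{kl}\,d\ell_{\alpha_k}\otimes d\ell_{\alpha_l} = -\omega.
\end{align*}
Then for $h\in\doubles$ and $v,w\in T_h\doubles$ one has $d\hat\iota_h v = v$ and $d\hat\iota_h w = w$, so $\omega(v,w) = \omega(d\hat\iota_h v, d\hat\iota_h w) = (\hat\iota^*\omega)(v,w) = -\omega(v,w)$, forcing $\omega(v,w)=0$. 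Thus $\doubles$ is isotropic and half-dimensional, hence Lagrangian.

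The step I expect to require the most care is the identity $\hat\iota^*\omega = -\omega$: one must verify that $\iota$ being orientation-reversing genuinely flips every incidence number $\epsilon_{ij}$ — including the degenerate cases in which an unordered pair of edges lies in two triangles (e.g. two consecutive edges of $\gamma$), where one checks that both sides vanish — and one must set up the $\iota$-symmetric triangulation so that $\hat\iota$ acts by an honest permutation of edges. One could instead dispense with the symmetric triangulation and compute one expression for $\omega$ with an arbitrary triangulation $\Gamma_0$ and the other with $\iota(\Gamma_0)$, again invoking triangulation-independence; the orientation-reversal sign enters in the same way. Everything else — the fixed-point set being a submanifold with the $(+1)$-eigenspace as its tangent space, and the dimension count $\dim\doubles = N-3$ — is standard.
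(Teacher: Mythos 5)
Your proof is correct, and while it lands on the same underlying combinatorial observation as the paper, the packaging is genuinely a bit different. The paper works directly at a point $h\in\doubles$: using the expression $\omega=\sum_i d\ell_i\otimes ds_i$ from~\eqref{eqn:omega} together with the fact that, in an $\iota$-symmetric triangulation, the shears are anti-symmetric ($s_i=-s_{\pi(i)}$) and the lengths symmetric ($\ell_i=\ell_{\pi(i)}$) on $\doubles$, so that the terms for $\alpha_i$ and $\alpha_{\pi(i)}$ cancel in pairs when $\omega$ is evaluated on tangent vectors to $\doubles$. You instead establish the global identity $\hat\iota^*\omega=-\omega$ using the length-only expression~\eqref{eqn:omega1} and the sign flip $\epsilon_{\pi(i)\pi(j)}=-\epsilon_{ij}$ forced by orientation reversal, and then deduce isotropy of $\mathrm{Fix}(\hat\iota)$ formally from anti-invariance. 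The two are tightly linked — via Lemma~\ref{lm:penner}, the paper's anti-symmetry of shears on $\doubles$ is exactly the consequence of your $\epsilon$-sign flip combined with symmetry of lengths — but your framing as anti-invariance of the symplectic form is cleaner and a mildly stronger statement, since it holds at every $h\in\Teich_{0,N}$, not only on $\doubles$.

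The one point you pass over is that $\hat\iota^*(d\ell_{\alpha_i})=d\ell_{\alpha_{\pi(i)}}$ is a statement about functions that are only defined after a choice of horocycles, and it holds verbatim only for an $\hat\iota$-equivariant such choice (for instance, take the horocycle of a fixed length at each cusp; since $\iota$ fixes each puncture and $\iota\colon(\Sigma,\iota^*h)\to(\Sigma,h)$ is an isometry, this choice is automatically equivariant). Without such a choice, horocycle-correction terms $dg_v$ appear, and one must then invoke the well-definedness of $\omega$ modulo per-vertex constants — which is precisely what~\eqref{eqn:omega} encodes — to see that they drop out. This is easily repaired and does not affect the correctness of your argument; I mention it only because it is the one place where your manipulation of the not-individually-well-defined $d\ell_i$ requires a word of justification.
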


\begin{proof}
We may compute $\omega$ with respect to a symmetric triangulation $\Gamma$ (one which is fixed under the involution $\iota$). For $h \in \Teich_{0,N}$, the shear coordinates $(s_i(h))$ are anti-symmetric, in the sense that, if $\iota(\alpha_i) = \alpha_j$, then $s_i(h) = -s_j(h)$. (So, in particular, $s_i(h) = 0$, if $\alpha_i$ is an edge of $\gamma$.) On the other hand, the lengths $(\ell_i(h))$ are symmetric, in the sense that, if $\iota(\alpha_i) = \alpha_j$, then $\ell_i(h) = \ell_j(h)$. The proposition follows immediately from the second expression \eqref{eqn:omega} for $\omega$ above.
\end{proof}

\subsection{Convexity of the length function}

We now show a form of convexity for the restriction of the length function $\ell_\theta$ to the space of doubles $\doubles$ in $\Teich_{0,N}$.
It will sometimes be convenient to identify the space of doubles $\doubles$ with the space $\poly = \poly_N$ of marked ideal polygons in the hyperbolic plane, and to think of (the restriction of) $\ell_\theta$ as a function on $\poly$.
The graph $\Gamma$ on $\Sigma_{0,N}$, then, projects to each polygon $p$ in $\poly$, with $\gamma$ identified to the perimeter edges of $p$ and all other edges of $\Gamma$ identified with diagonals of~$p$.

The following proposition is the analog, in the (simpler) setting of ideal polygons, of a theorem of Kerckhoff~\cite{ker_lin} which played a key role in Series's analysis of quasi-Fuchsian manifolds with small bending~\cite{ser_ker}. In a similar way, the proposition is crucial for Theorem~\ref{thm:main-HP}.

\begin{prop} \label{pr:convex}
For all $\theta\in \Angles_\Gamma$, the length function $\ell_\theta:\poly_N\to \R$ is proper and
admits a unique critical point which is a non-degenerate minimum.
\end{prop}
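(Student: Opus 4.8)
The plan is to establish properness and strict convexity of $\ell_\theta$ on $\poly_N$ separately, since together they force the existence of a unique non-degenerate minimum (a proper, strictly convex function on a connected manifold diffeomorphic to a ball has exactly one critical point, which is its minimum). I will work with the identification of $\poly_N$ with the space of doubles $\doubles \subset \Teich_{0,N}$, so that $\ell_\theta$ is the restriction to $\doubles$ of the length function on $\Teich_{0,N}$ defined in the previous subsection.

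First I would prove \emph{properness}. As a point $p$ escapes every compact subset of $\poly_N$, either some diagonal $\alpha_i$ of $\Gamma$ has length $\ell_i \to +\infty$, or some subconfiguration of the vertices degenerates so that the ideal polygon pinches; in length coordinates (normalized by a choice of horocycles) this means that along some edges $\ell_i \to +\infty$. The key point is to use conditions (i), (ii), (iii) defining $\Angles_\Gamma$. Because $\theta$ restricted to the top edges is a positive measured lamination $\theta_+$ and similarly $\theta_-$ on the bottom (condition (i)), the length $\ell_\theta$ controls, up to bounded error, the total transverse length of a maximal lamination; conditions (ii) and (iii) (the "simple circuit" inequalities, applied to circuits crossing the equator twice) guarantee that $\theta$ pairs positively with any direction of degeneration, so that $\ell_\theta \to +\infty$. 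Concretely, I would show that for a sequence $p_n \to \infty$ in $\poly_N$ the lengths $\ell_i(p_n)$ along the edges on which $\theta$ is supported cannot all stay bounded, using that the equatorial edges carry the balanced complement of $\theta_+ \cup \theta_-$ and the linear relation $s_i = \tfrac12\sum_j \epsilon_{ij}\ell_j$ of Lemma~\ref{lm:penner}; the inequalities defining $\Angles_\Gamma$ are exactly what is needed to rule out cancellation in $\ell_\theta = \sum_i\theta_i\ell_i$.

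Next I would prove \emph{strict convexity} of $\ell_\theta$ along $\doubles$. The cleanest route is via the symplectic gradient: by Lemma~\ref{lm:gradient}, $D^\omega \ell_\theta = -e_\theta$, the infinitesimal shear vector field, and by Proposition~\ref{pr:doubles} the subspace $\doubles$ is Lagrangian. The critical points of $\ell_\theta|_{\doubles}$ are then precisely the points $p \in \doubles$ at which $e_\theta(p) \in T_p\doubles$ — equivalently, by Theorem~\ref{thm:HP-bend}, the ideal polygons $p$ underlying an ideal polyhedron in $\HP^3$ with bending data $\theta$. For convexity I would show the Hessian of $\ell_\theta$ at any point is positive definite on $T_p\doubles$. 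I would do this by computing the second variation of $\sum_i\theta_i\ell_i$ under a deformation of the ideal polygon, reducing (as in Kerckhoff's argument for earthquakes on closed surfaces, which this statement parallels) to a sum of contributions along the diagonals, each of which is a convex function of the shear; the positivity of the weights $\theta_i$ on top/bottom edges (condition (i)) makes each term strictly convex in the relevant direction, while the balancing condition ensures the cross terms assemble correctly. Alternatively one can phrase this as: the first derivative $d\ell_\theta$ equals (a multiple of) the twist/shear one-form paired with $\theta$, and its derivative is controlled by the second-order behavior of hyperbolic trigonometry in an ideal polygon, which is convex.

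The main obstacle I expect is the strict convexity (non-degeneracy of the minimum), not properness: one must show the Hessian is \emph{definite}, not merely semi-definite, which requires ruling out a nonzero deformation of $p$ keeping all the quantities $\theta_i \ell_i$ critical to second order. Here the combinatorial hypothesis that $\Gamma$ is three-connected (built into membership in $\Graph(\Sigma_{0,N},\gamma)$, hence implicit in $\theta \in \Angles_\Gamma$) should be what prevents a degenerate direction — a flat direction of the Hessian would correspond to a nontrivial infinitesimal flex of the abstract configuration, contradicting rigidity. I would isolate this as the technical heart of the argument and prove it by a direct computation of the quadratic form $\hess \ell_\theta$ in shear coordinates on $\doubles$, showing it is a positive combination (with weights the $\theta_i > 0$) of rank-one forms whose common kernel is trivial precisely because $\Gamma$ connects the punctures in a $3$-connected way.
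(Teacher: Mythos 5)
Your overall architecture (properness plus convexity implies a unique nondegenerate minimum) is the same as the paper's, and your properness sketch is in the right spirit: the paper also degenerates a sequence of polygons, identifies pinching segments, and uses condition~(iii) to show the contributions near the pinches are positive (via hypercycle neighborhoods) while the remaining contribution escapes to $+\infty$. That part would just need the missing technical details filled in.

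Where you diverge from the paper, and where I see a real gap, is the uniqueness argument. You propose to prove that $\Hess\,\ell_\theta$ is positive definite on all of $T_p\doubles$ at every $p$, by writing it as ``a positive combination (with weights $\theta_i>0$) of rank-one forms.'' But not all $\theta_i$ are positive: $\theta\in\Angles_\Gamma$ has $\theta<0$ on the $N$ equatorial edges by condition~(i), and those edges genuinely enter $\ell_\theta=\sum_i\theta_i\ell_i$. It is not obvious (and you do not argue) that these negative contributions cannot spoil definiteness, nor is it clear that global convexity of $\ell_\theta$ on $\doubles$ is even true. The paper avoids this entirely: it proves only that $\ell_\theta$ is strictly convex \emph{along earthquake paths} in $\poly_N$ (Lemma~\ref{lm:path-convex}), via a Kerckhoff--Wolpert type first-variation formula $\frac{d}{dt}\ell_\theta(E_{t\lambda}p)=\sum\theta_i\lambda_j\cos\varphi_{ij}+K$; here the crucial structural fact is that only diagonals of the polygon (which carry positive $\theta_i$) can cross the support of the earthquake lamination $\lambda$, so the negative equatorial weights drop into the constant $K$ and never threaten monotonicity. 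Uniqueness is then deduced not from global Hessian positivity but from the earthquake theorem for ideal polygons (Corollary~\ref{cor:earthquake}): any two would-be minima are joined by a single earthquake path, along which $\ell_\theta$ is strictly convex, a contradiction. This is the idea your proposal is missing. Your appeal to $3$-connectedness to rule out a kernel direction of the Hessian is plausible as a heuristic but is not a proof, and it targets a stronger claim than what is actually needed.
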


\noindent The proof is based on two lemmas.

\begin{lemma} \label{lm:proper}
If $\theta\in \Angles_\Gamma$, then $\ell_\theta:\poly\to \R$ is proper.
\end{lemma}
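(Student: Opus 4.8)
The plan is to show that as $p$ escapes to infinity in $\poly_N$, the length function $\ell_\theta(p) = \sum_i \theta_i \ell_i(p)$ tends to $+\infty$. First I would recall the compactification picture for $\poly_N \cong \doubles$: a sequence $p_n$ leaves every compact set precisely when some diagonal of the polygon, or rather some simple closed curve on the doubled surface $\Sigma_{0,N}$, has length tending to $0$ or $\infty$; equivalently, in the shear/length coordinates attached to the triangulation $\Gamma$, some of the (normalized) lengths $\ell_i$ diverge. Since the $\ell_i$ are only defined up to the $\RR^N$ ambiguity coming from the choice of horocycles, I would fix a normalization — for instance, choosing horocycles so that the total length around each cusp is controlled, or passing to differences $\ell_i - \ell_j$ of lengths of edges sharing a vertex, which are honest functions on $\poly_N$. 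The key geometric input is that on an ideal polygon, the lengths of the diagonals cannot all stay bounded above while the polygon degenerates: a degeneration corresponds to a subset of diagonals being "pinched", which forces the transverse diagonals to have length $\to +\infty$.

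The heart of the argument is then to exploit the structure of $\theta \in \Angles_\Gamma$. Conditions (i) and (ii) say that $\theta$ is negative exactly on the equatorial edges $\gamma$, positive on the top/bottom edges, and sums to zero around every face; condition (iii) is the crucial strict inequality on non-facial circuits using exactly two equatorial edges. I would组织 the degeneration into the combinatorial data of which edges of $\Gamma$ get pinched. When $p_n \to \infty$, there is a measured lamination — supported on a collection of disjoint diagonals of the polygon — recording the pinching, and up to extracting a subsequence the dominant contribution to $\ell_\theta(p_n)$ comes from a family of long edges of $\Gamma$ that cross this pinching locus. The positivity conditions on $\theta$, combined with condition (iii) applied to the relevant circuits in $P^*$ dual to the pinching curves, should guarantee that the sum of the $\theta_i$ over these long edges is strictly positive, forcing $\ell_\theta(p_n) \to +\infty$. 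Concretely: a pinched essential simple closed curve on $\Sigma_{0,N}$ that is symmetric under the doubling involution $\iota$ corresponds to a circuit in $P^*$ meeting $\gamma^*$ in exactly two edges, which is exactly the situation covered by condition (iii); the edges of $\Gamma$ crossed by such a curve are those whose lengths blow up, and condition (iii) says their $\theta$-sum is positive.

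The main obstacle, I expect, is making precise the bookkeeping between the analytic statement "$\ell_i \to \infty$ for certain $i$, while the rest stay bounded below" and the combinatorial statement "the long edges form a circuit-crossing family with positive $\theta$-weight". One must rule out cancellation — the worry being a degeneration in which some $\theta_i < 0$ edges (equatorial edges) also become long and their negative contributions swamp the positive ones. This is precisely where condition (iii) must be used in its sharp form, and where the restriction in its hypothesis to circuits meeting the equator in exactly two edges matters: a degenerating symmetric curve crosses $\gamma$ in an even number of points, and the minimal/generic case is two. I would handle the general case by decomposing a more complicated degeneration into elementary pinches, or by an inductive argument on the number of pinched curves, reducing each step to an application of condition (iii). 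A clean way to package all of this may be to use the identity $D^\omega \ell_\theta = -e_\theta$ from Lemma~\ref{lm:gradient} together with Proposition~\ref{pr:doubles}, so that the behavior of $\ell_\theta$ along a degenerating ray is governed by the shearing vector field $e_\theta$; but the most robust route is the direct estimate on diagonal lengths sketched above.
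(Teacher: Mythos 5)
Your strategic outline matches the paper's: degenerate a sequence $p_n$, identify a collection of disjoint pinching arcs, observe that the edges of $\Gamma$ transverse to the pinching locus become long, and use condition~(iii) to force the $\theta$-weighted sum of their lengths to $+\infty$. You also correctly identify the right dual-graph circuits: a pinching arc on the polygon doubles to a simple closed curve on $\Sigma_{0,N}$ meeting $\gamma$ in exactly two edges, which is precisely the hypothesis of condition~(iii). Where the proposal falls short is exactly where you flag it yourself: the ``main obstacle'' of ruling out cancellation is not actually overcome. The suggestion to decompose into ``elementary pinches'' or argue inductively, or to invoke the symplectic identity $D^\omega \ell_\theta = -e_\theta$, does not by itself control the sign of the divergent part, and the normalization-of-horocycle discussion at the start does not resolve it either.

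The paper closes this gap with a specific geometric device you don't mention. For each pinching arc $a_i$ (a shortest geodesic between two non-adjacent perimeter edges of $p_n$, with length $\to 0$), one takes a hypercycle neighborhood $N_{i,n}$ of radius $r_{i,n}\to\infty$, chosen so that these neighborhoods converge to horoball systems for the limiting broken polygons. The crucial observation is that $a_i$ meets the two equatorial (perimeter) edges it connects \emph{orthogonally}, so each of those two edges has length \emph{exactly} $2r_{i,n}$ inside $N_{i,n}$, whereas every top/bottom edge of $\Gamma$ crossing $a_i$ does so at an angle and hence has length \emph{at least} $2r_{i,n}$ inside $N_{i,n}$. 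Pairing the sign of $\theta$ with these inequalities, the contribution to $\ell_\theta$ from $\bigcup N_{i,n}$ is bounded below by $\sum_i 2r_{i,n}\cdot\bigl(\sum_{e\text{ crossing }a_i}\theta(e)\bigr)$, and condition~(iii) makes each inner sum strictly positive. That is what resolves the cancellation worry concretely: the two negative terms are met with equality, and positivity of the circuit sum does the rest. The remaining contribution (outside the neighborhoods) converges to the $\ell_\theta$-values of the limiting ideal polygons with respect to the limiting horoballs, so it is bounded below, and $\ell_\theta(p_n)\to+\infty$. So: right skeleton, but the estimate that makes the argument work is missing from your write-up.
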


\begin{lemma} \label{lm:path-convex}
The function $\ell_\theta$ is convex and non-degenerate on earthquake paths in $\poly$.
\end{lemma}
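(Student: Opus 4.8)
The plan is to pass from earthquake paths on $\poly$ to the length function along earthquake paths via Lemma~\ref{lm:gradient}, and then to reduce the convexity statement to a sum of elementary estimates, one for each edge of $\Gamma$. First, I would recall that an earthquake path in $\poly = \doubles$ is, up to reparameterization, of the form $t \mapsto E_{t\mu}(p)$ for some measured lamination $\mu$ on the polygon $p$, i.e.\ some balanced, $\iota$-antisymmetric assignment of weights to the edges of a symmetric triangulation $\Gamma$; equivalently it is the flow line of the vector field $e_\mu$. By Lemma~\ref{lm:gradient}, $D^\omega \ell_\theta = -e_\theta$, and since $\omega$ restricts to a symplectic form on $\doubles$ (it is nondegenerate there — this uses the Lagrangian complement picture, or one checks directly using the symmetric/antisymmetric splitting as in the proof of Proposition~\ref{pr:doubles}), the derivative of $\ell_\theta$ along the earthquake path generated by $e_\mu$ is $d\ell_\theta(e_\mu) = \omega(e_\theta, e_\mu)$. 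Then I would compute the second derivative: differentiating again along the $e_\mu$-flow gives the Hessian of $\ell_\theta$ evaluated on $(e_\mu, e_\mu)$, which I want to show is strictly positive whenever $e_\mu \neq 0$ and $\theta \in \Angles_\Gamma$.

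The key computational input is the classical formula, going back to Kerckhoff and Wolpert, for how lengths of geodesics (here: edge lengths relative to horocycles, or rather the relevant combination $\ell_\theta$) vary under earthquakes. Concretely, along the earthquake $E_{t\mu}$, the first variation of a length coordinate $\ell_i$ is a sum over the intersection points of $\alpha_i$ with the support of $\mu$ of terms of the form $\cosh(\text{signed distance})$ or similar, and the second variation is a sum of manifestly nonnegative terms (squares of $\cosh$, or $\cosh^2 - \text{something} \geq 0$) plus cross terms. In the discrete polygon setting this becomes much more elementary: I would work directly with the shear coordinates, using the linear relation $s_i = \tfrac12\sum_j \epsilon_{ij}\ell_j$ of Lemma~\ref{lm:penner}, and the fact that the earthquake $E_{t\mu}$ acts on shear coordinates by $s_i \mapsto s_i + t\mu_i$ while acting on each length $\ell_i$ by an explicit real-analytic function whose convexity I can establish edge-by-edge. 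The positivity of the second derivative should then come down to: (a) nonnegativity of each local contribution, always true; plus (b) strict positivity of at least one contribution, which is where the hypothesis $\theta \in \Angles_\Gamma$ enters — conditions (i) and (iii) guarantee that $\theta$ is ``positive enough'' that it pairs nontrivially against any nonzero earthquake direction $\mu$, i.e.\ $\theta$ cannot be supported in a way that makes the Hessian degenerate.

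The main obstacle I anticipate is precisely step (b): controlling the sign of the cross terms in the second variation and showing the Hessian is \emph{strictly} positive-definite (non-degenerate), not merely semidefinite. The clean way to handle this is to exploit condition~(iii) of $\Angles_\Gamma$ (the strict inequality on non-facial circuits crossing the equator twice) together with the fact that any nonzero earthquake direction $\mu$ on the polygon $p$ has a geodesic that it genuinely shears; then the path-convexity reduces to the statement that $\ell_\theta$ restricted to the real-analytic earthquake path has strictly positive second derivative at every point. I would also need to confirm that the length function $\ell_\theta$ is $C^2$ (indeed real-analytic) along earthquake paths, which is standard. If a direct edge-by-edge argument proves awkward, the fallback is to mimic Kerckhoff's original convexity argument \cite{ker_lin}: write $\ell_\theta$ as an integral (finite sum here) of lengths of individual diagonals weighted by $\theta_i > 0$ on top/bottom edges, use that each diagonal length is strictly convex along any earthquake that crosses it, and use that some diagonal with positive weight is always crossed — the latter again being exactly the content of $\theta$ satisfying the conditions defining $\Angles_\Gamma$.
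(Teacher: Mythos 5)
Your first-paragraph setup contains a real error: you assert that $\omega$ restricts to a nondegenerate form on $\doubles$, but Proposition~\ref{pr:doubles}, which you cite, says exactly the opposite---$\doubles$ is \emph{Lagrangian}, so $\omega$ vanishes identically on $T\doubles$. (The computation $d\ell_\theta(e_\mu)=-\omega(e_\theta,e_\mu)$ is still valid, because $\ell_\theta$ and $e_\theta$ live on all of $\Teich_{0,N}$; no restriction of $\omega$ to $\doubles$ is needed.) You also misattribute the source of strictness: you invoke condition~(iii) of $\Angles_\Gamma$, but that condition is what drives properness (Lemma~\ref{lm:proper}), not non-degeneracy here. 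What actually gives strict convexity is condition~(i) (top/bottom weights are positive) together with the three-connectedness of $\Gamma$ built into $\Graph(\Sigma_{0,N},\gamma)$: for any diagonal $\beta$ of $p$, some top or bottom edge of $\Gamma$ must cross $\beta$ transversally, since otherwise deleting the two endpoints of $\beta$ would disconnect $\Gamma$. Finally, your outline never confronts the negative equatorial weights: $\ell_\theta$ is \emph{not} a positive combination of convex functions. The point you would need to supply is that equatorial edges project to perimeter edges of $p$, disjoint from any diagonal, and that edges of $\Gamma$ sharing an endpoint with the support of $\lambda$ contribute only a $p$- and $t$-independent constant to $\tfrac{d}{dt}\ell_\theta$.

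Your ``fallback'' is essentially the paper's actual proof. The paper establishes a discrete Kerckhoff--Wolpert formula
$$\frac{d}{dt}\,\ell_\theta(E_{t\lambda}p)=\sum_{i,j}\theta_i\lambda_j\cos(\varphi_{ij})+K,$$
where $\varphi_{ij}\in(0,\pi)$ is the intersection angle of $\alpha_i$ with $\beta_j$, the sum runs over transversal intersections, and $K$ is independent of $p$ and $t$ (absorbing the $\pm\tfrac12$ contributions from edges meeting the support of $\lambda$ at an endpoint). Note it is $\cos$, not the $\cosh$ you mention. Strict convexity then follows because each $\varphi_{ij}$ strictly decreases along the earthquake (from the viewpoint of $\beta_j$, the endpoints of $\alpha_i$ move to the left), hence each term strictly increases, and the sum is nonempty by the connectivity remark above.
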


\begin{proof}[Proof of Proposition \ref{pr:convex}]
Let $\theta\in \Angles_\Gamma$. Since $\ell_\theta$ is proper by Lemma \ref{lm:proper}, it has
at least one minimum in $\poly$. Moreover Lemma \ref{lm:path-convex} shows that any
critical point is a non-degenerate minimum.

Let $p,p'\in \poly_N$ be two minima of $\ell_\theta$. There is, by Corollary \ref{cor:earthquake},
a unique measured lamination $\lambda$ on $p$ such that $E_\lambda(p)=p'$. 
Then Lemma \ref{lm:path-convex} shows that the function 
$t\mapsto \ell_\theta(E_{t\lambda}(p))$ is convex and non-degenerate, so it cannot
have critical points both at $t=0$ and at $t=1$, a contradiction. So $\ell_\theta$
has a unique critical point on $\poly_N$.
\end{proof}

\noindent We now turn to the proofs of the two lemmas.

\begin{proof}[Proof of Lemma \ref{lm:proper}]
Let $(p_n)_{n\in \N}$ be a sequence of ideal polygons with $N$ vertices,
which degenerates in $\poly_N$. Then, after taking a subsequence, if necessary,
there is a finite collection of segments $a_1, \ldots, a_p$ on the polygon such that:
\begin{itemize}
\item $a_i$ and $a_j$ are disjoint, if $i\neq j$,
\item for all $n$, each $a_i$ is realized as a minimizing geodesic segment connecting 
two non-adjacent edges of $p_n$,
\item for all $i\in \{ 1,\cdots, p\}$, the length of $a_i$ in $p_n$ goes to zero, as $n \to\infty$,
\item any two edges of $p_n$ that can be connected by a segment disjoint from the 
$a_i$ remain at distance at least $\epsilon$, for some $\epsilon>0$ independent of $n$. 
\end{itemize}

After taking a further subsequence, the $p_n$ converge to the union of $p+1$ ideal polygons $p_\infty^{(1)}, \ldots, p_\infty^{(p+1)}$, which, topologically, is obtained by cutting the original polygon along each $a_i$ and then collapsing each (copy of each) segment $a_i$ to a new ideal vertex. 
Recall that given $r > 0$ and a geodesic line $\alpha$ in $\HH^2$, the $r$-neighborhood of $\alpha$ is called a \emph{hypercycle} neighborhood of $\alpha$.
We may choose horoballs at each ideal vertex and disjoint hypercycle neighborhoods $N_{i,n}$ of the (geodesic realization in $p_n$ of) $a_i$, with radii $r_{i,n} \to \infty$, which converge to a system of horoballs for the limiting ideal polygons $p_\infty^{(1)}, \ldots, p_\infty^{(p+1)}$. Our function $\theta$ is naturally defined on the limiting polygons, since all edges of the limit correspond to edges of the original polygon. However, $\theta$ is no longer balanced at the new ideal vertices of $p_\infty^{(1)}, \ldots, p_\infty^{(p+1)}$; instead the sum of the $\theta$ values along the edges going into one of the new vertices is strictly positive, since $\theta$ satisfies assumption $(iii)$ of Section~\ref{sec:intro-AdS-param}. Now, we may split $\ell_\theta$ into two pieces $$\ell_\theta= \ell_\theta \big |_{\cup N_{i,n}} + \ell_\theta \big |_{(\cup N_{i,n})^c},$$ corresponding to the weighted length contained in the union of the neighborhoods $N_{i,n}$ and the weighted length outside of those neighborhoods. The former is always positive, since $\theta$ satisfies condition $(iii)$ of Section~\ref{sec:intro-AdS-param}, and since the arcs with positive weight crossing $a_i$ have length at least $2 r_{i,n}$ in $N_{i,n}$, while the two arcs with negative weight crossing $a_i$ have length exactly $2 r_{i,n}$ in $N_{i,n}$. The later converges to the $\theta$-length function $\ell_\theta(p_\infty^{(1)})+ \cdots + \ell_\theta(p_\infty^{(p+1)})$ of the limiting polygons with respect to the limiting horoballs. However, by altering the radii of the neighborhoods $N_{i,n}$, we may arrange for the limiting horoball around each of the new vertices to be arbitrarily small (i.e. far out toward infinity), making $\ell_\theta(p_\infty^{(1)})+ \cdots + \ell_\theta(p_\infty^{(p+1)})$ arbitrarily large. It follows that $\ell_\theta(p_n) \to +\infty$.
\end{proof}

\begin{proof}[Proof of Lemma \ref{lm:path-convex}]
Let $p\in \poly_N$, and let $\lambda$ be a measured lamination on $p$, that is,
a set of disjoint diagonals $\beta_1, \cdots, \beta_q$ each with a weight $\lambda_i>0$.
We need to prove that the function $t\to \ell_\theta(E_{t\lambda}p)$ is 
convex with strictly positive second derivative. To prove this, we prove an analogue of the Kerckhoff--Wolpert formula in this setting, specifically:
\begin{equation}\label{eqn:KW}
\frac d{dt} \ell_\theta(E_{t \lambda}p) =  \sum \theta_i \lambda_j \cos(\varphi_{ij}) + K,
\end{equation}
where $\varphi_{ij} \in (0,\pi)$ is the angle at which the edge $\alpha_i$ of $\Gamma$ crosses the edge $\beta_j$ of the support of $\lambda$, the sum is taken over all $i,j$ so that $\alpha_i$ intersects $\beta_j$ non-trivially, and $K := K(\theta, \lambda)$ is independent of $p$ and $t$.
The lemma follows from this formula by a standard argument about earthquakes (see \cite[Lemma 3.6]{ker_nie}): each angle $\varphi_{ij}$  of intersection strictly decreases with $t$ because, from the point of view of the edge $\beta_j$, the endpoints at infinity of $\alpha_i$ are moving to the left.

It suffices to prove the formula~\eqref{eqn:KW} in the case that the lamination $\lambda$ is a single diagonal $\beta$ with weight equal to one. We choose horocycles $h_{v,t}$ at each vertex $v$ and at each time $t$ along the earthquake path as follows. Begin at time $t = 0$ with any collection of horocycles $\{h_{v,0}\}$. For a vertex $v$ that is not an endpoint of $\beta$, we simply apply the earthquake $E_{t \lambda}$ to $h_{v,0}$: Define $h_{v,t} = E_{t\lambda} h_{v,0}$. If $w$ is an endpoint of $\beta$, then the earthquake breaks the horocycle $h_{w,0}$ into two pieces. We define $h_{w,t}$ to be the horocycle equidistant from these two pieces. An easy calculation in hyperbolic plane geometry shows that, for $\alpha_i$ an edge of $\Gamma$ crossing $\beta$, we have 
$$ \frac d{dt} \ell(\alpha_i) = \frac{d}{dt} \operatorname{dist}(h_{v,t}, h_{v',t}) =  \cos(\varphi_{i}),$$
where $\varphi_{i}$ is the angle at which $\alpha_i$ crosses $\beta$, where $v$ and $v'$ are the endpoints of $\alpha_i$, and where $\operatorname{dist}(\cdot, \cdot)$ denotes the signed distance between horocycles. Further $\ell(\beta)$ remains constant along the earthquake path. Finally, for any edge $\alpha_k$ which shares one endpoint $v$ with $\beta$, we have that $\frac{d}{dt} \ell(\alpha_k) = \pm 1/2$ is independent of $p$ and $t$; the sign depends on whether $\alpha_k$ lies on one side of $\beta$, or the other.
\end{proof}

\subsection{Proof of Theorem~\ref{thm:main-HP}}
\label{sec:proof-main-HP}

We now have tools to prove Theorem~\ref{thm:main-HP}. First, however, we must prove Proposition~\ref{prop:imageinA-HP}.

\begin{proof}[Proof of Proposition~\ref{prop:imageinA-HP}]
We must prove that the dihedral angles $\theta = \Psi_{\Gamma}(P)$ of any ideal polyhedron $P \in \HPPoly_{\Gamma}$ satisfies the three conditions (i), (ii), (iii) required for $\theta \in \Angles_{\Gamma}$, as described in Section~\ref{sec:intro-AdS-param}. Condition (i) is simply our convention of labeling the dihedral angles of equatorial edges with negative signs. So, we must prove that $\theta$ satisfies (ii) and (iii).

That $\theta$ satisfies condition (ii) follows from the fact that the sum of the dihedral angles at a vertex of an ideal polyhedron in $\HH^3$ is constant (equal to $2\pi$). By Proposition~\ref{prop:angles-limit}, the dihedral angles of $P$ are simply the derivatives of the (exterior) dihedral angles of $Q_t$, where $Q_t$ is a path of ideal polyhedra in hyperbolic space (or anti-de Sitter space), as in Proposition~\ref{prop:limit}.

Now, let us prove that $\theta$ satisfies (iii). Consider a path $c$ on $P$ normal to the $1$--skeleton $\Gamma$ and crossing exactly two edges of the equator. Then, without affecting the combinatorics of the path, we deform so that $c$ is precisely $P \cap H$ for some vertical (degenerate) plane $H$ that is orthogonal to \emph{both} edges of the equator crossed by~$c$. Note that the angle between a non-degenerate line $\alpha$ and a degenerate plane $H$ is precisely the angle formed between the lines $\varpi(\alpha)$ and $\varpi(H)$ in $\HH^2$ and therefore we can indeed achieve that $H$ is orthogonal to both edges of the equator simultaneously (by contrast to the analogous situation in $\HH^3$ or $\AdS^3$).
The plane $H$ is isomorphic to a copy of two-dimensional half-pipe geometry $\HP^2$. Inside $H$, the edges of $c$ are non-degenerate, forming a polygon with exterior angles \emph{bounded above} by the corresponding dihedral angles of~$P$; indeed if $\theta_i$ is the dihedral angle between two faces in $\HP^3$ and $\vartheta_i$ is the angle formed by those faces when intersected with $H$, then by Proposition~\ref{prop:HP-angle-bound}, $\operatorname{sign}(\vartheta_i) = \operatorname{sign}(\theta_i)$ and $|\vartheta_i| \leq |\theta_i|$ with equality if and only if $H$ is orthogonal to the line of intersection between the faces; therefore the exterior angle in $H$ at each of the two points where $c$ intersects the equator is equal to the exterior dihedral angle along that equatorial edge (and both are negative) while the exterior angle at any other vertex of $c$ is strictly less that the exterior dihedral angle of $P$ at the corresponding edge (and both are positive). By the infinitesimal Gauss-Bonnet formula in $\HP^2$ (Proposition~\ref{prop:inf-GB}), the sum of the exterior angles of $c$ is positive and so it follows that the sum of the exterior dihedral angles over the edges of $P$ crossed by $c$ is also positive.
\end{proof}

\begin{proof}[Proof of Theorem~\ref{thm:main-HP}]
The map $F: \HPPoly \to \poly \times \Angles$, taking an HP ideal polyhedron to its projection to $\HH^2$, an ideal polygon, and to its dihedral angles, has a continuous left inverse. Let $G: \poly \times \Angles \to \HPPoly$ be the map that takes $p \in \poly$ and bends according to the \emph{top angles} $\theta_+$ of $\theta \in \Angles$, ignoring the rest of the information in $\theta$ (the bottom and equatorial dihedral angles). Then $G \circ F$ is the identity. Hence, to show that $\Psi = \PsiHP$ is a homeomorphism, we need only show that there is a continuous map $H: \Angles \to \poly$ such that $G(H(\Psi(P)), \Psi(P)) = P$. The existence of such a continuous map $H$ is guaranteed by Proposition~\ref{pr:convex} and a simple application of the Implicit Function Theorem as follows. For $\theta \in \Angles$, define $H(\theta)$ to be the unique minimum in $\poly$ of $\ell_\theta$ given by Proposition~\ref{pr:convex}. That $H$ is continuous (in fact differentiable on all strata of $\Angles$) follows from the convexity of $\ell_\theta$, thought of as a function on $\poly$. Now, recall that the space of ideal polygons $\poly$ identifies with the space of doubles $\doubles$ in $\Teich_{0,N}$. Hence, because $H(\theta)$ minimizes $\ell_\theta$ over $\poly$, the restriction of $d \ell_\theta$ (now thought of as a one-form on all of $\Teich_{0,N}$) to $\doubles$ is zero at (the double of) $H(\theta)$. 
It then follows that the infinitesimal shear $e_\theta$ on $\Teich_{0,N}$ is tangent to the subspace of doubles $\doubles$ at (the double of) $H(\theta)$ because $e_\theta$ is dual to $\ell_\theta$ (Lemma~\ref{lm:gradient}) and the space of doubles $\doubles$ is Lagrangian (Proposition~\ref{pr:doubles}).
Therefore $e_\theta$ determines a well-defined infinitesimal deformation of the polygon $H(\theta)$ and the pair $p = H(\theta), V = e_\theta(H(\theta))$ determines an HP polyhedron $P$ such that $F(P) = (H(\theta), \theta)$ as in the discussion in Section~\ref{subsec:polyhedra-HP}. The formula $G(H(\Psi(P)),\Psi(P)) = P$ follows, and this completes the proof of Theorem~\ref{thm:main-HP}.
\end{proof}

%
%
%

\section{Properness}\label{sec:properness}

In this section we will prove the two properness lemmas needed for the proofs of the main results. Lemma \ref{lem:Phi-proper} states that the map $\Phi$, sending an ideal polyhedron in $\AdS^3$ to its induced metric, is proper. Lemma \ref{lem:PsiAdS-proper}, when combined with Proposition~\ref{prop:imageinA-AdS}, will imply properness of the map sending a polyhedron of fixed combinatorics to its dihedral angles.

\subsection{Properness for the induced metric (Lemma \ref{lem:Phi-proper})}

To prove Lemma \ref{lem:Phi-proper}, we consider a compact subset $\mathcal K \subset \Teich_{0,N}$. 
We must show that the set $\Phi^{-1}(\mathcal K)$ is a compact subset of $\overline \AdSPoly$. 
In other words, if $P$ is a polyhedron with $m = \Phi(P) \in \mathcal K$, 
we must show that $P$ lies in a compact subset of $\overline \AdSPoly$.

Since there are finitely many triangulations of the disk with $N$ vertices, 
we may consider polyhedra $P$ with fixed combinatorics, that is
the graph $\Gamma$ is fixed. We may assume $\Gamma$ is a traingulation by adding edges if necessary.

\begin{figure}[hbt] 
\centering 
\includegraphics[height=5cm]{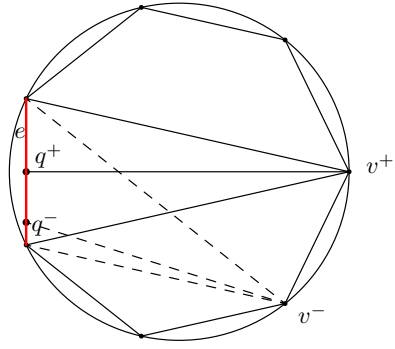}
\caption{The polyhedron $P$ with combinatorics given by $\Gamma$. 
The red edge is $e$, and $q^+ = \pi_R(v^+; e)$, $q^- = \pi_R(v^-; e)$.}
\label{Fig:hep}
\end{figure}

Recall that the induced metric $m$ on $P$ is related to the left and right metrics 
$m_L$ and $m_R$ by the diagram in Theorem \ref{thm:earthquakes}: $m_R = E_\theta(m)$ and $m = E_\theta(m_L)$, where $\theta: \Gamma \to \RR$ is the assignment of  exterior dihedral angles to the edges of $P$ and $E_\theta$ is the shear amp associated to $\theta$. 
Also, recall that $m_L$ and $m_R$ are cusped metrics on the sphere that come 
from doubling the metric structures on the ideal polygon 
obtained by projecting the vertices of $P$ to
the left and right foliations of $\partial_\infty \AdS^3$.
To show that $P$ lies in a compact set, we must show that $m_L$ and $m_R$ 
lie in compact sets. It is enough to show that 
$\theta$ remains bounded over $\Phi_\gamma^{-1}(\mathcal K)$.
Although we have not yet proved Proposition~\ref{prop:imageinA-AdS}, we will use here that $\theta$ satisfies conditions (i) and (ii) in the definition of $\Angles_\Gamma$ (Section~\ref{sec:intro-AdS-param}). That these conditions are satisfied is essentially trivial, see Section~\ref{sec:necess}.

Consider an edge $e$ of the equator $\gamma$ of $\Gamma$, and recall that $\theta(e) < 0$ (condition (i) of the definition of $\Angles_\Gamma$). Let $s_L(e), s_R(e), s(e)$ denote the shear coordinate along $e$, with respect to $\Gamma$, 
of the left metric $m_L$, the right metric $m_R$, and the induced metric $m$. 
Then, by Theorem \ref{thm:earthquakes}, we have:
$$ s_R(e) - s(e) = \theta(e) = s(e) - s_L(e)~.$$
Now the edge $e$ belongs to a unique triangle of $\Gamma$ in the top hemisphere of $\Sigma_{0,N}$, 
the third vertex of which we denote by $v^+$. 
On the bottom hemisphere, the edge $e$, again, belongs to a unique triangle, whose third vertex we denote by $v^-$.

There are two cases to consider. Recall that we fixed an orientation of the equator $\gamma$. Imagining that we view $\Sigma_{0,N}$ from above, it is intuitive to call the positive direction \emph{left} and the negative direction \emph{right}. 
First suppose $v^+$ lies to the left of $v^-$ when viewed from $e$. 
The restriction of the right metric $m_R$ to the top hemisphere of $\Sigma_{0,N}$ is a marked hyperbolic ideal polygon $p_R$, in which the vertex $v^+$ again lies to the left of $v^-$. 
Since $m_R$ is the double of $p_R$, 
we may calculate the shear coordinate $s_R(e)$ of $s_R$ by the simple formula:
$$ s_R(e) = \pi_R(v^+; e) - \pi_R(v^-; e),$$
where $\pi_R(v; e)$ denotes the projection of $v$ onto the edge $e$ in $p_R$, see Figure \ref{Fig:hep}. Then we have 
$s_R(e) > 0$ and so $s_L(e) > s(e) > s_R(e) > 0$. In particular,
$$ \theta(e) = s_R(e) - s(e) > -s(e).$$
In the case that $v^+$ lies to the right of $v^-$, we examine the left metric $m_L$. 
In the restriction $p_L$ of $m_L$ to the top hemisphere, the vertex $v^+$ again lies to the right of $v^-$ 
and so, by a similar calculation as above, $s_L(e) < 0$ and so $s_R(e) < s(e) < s_L(e) < 0$. Therefore
$$ \theta(e) = s(e) - s_L(e) > s(e)~.$$
In either case, $\theta(e)$ is bounded, because the shears $s(e)$ are bounded, 
as $m$ varies over the compact set $\mathcal{K}$. 

We have shown that all of the edges $e$ for which $\theta(e) < 0$ have $\theta(e)$ bounded. 
It then follows that the other edges $e'$, for which $\theta(e') \geq 0$, 
also have $\theta(e')$ bounded, since the sum of all positive and negative angles along edges 
coming into any vertex of $P$ must be zero (condition (ii) of the definition of $\Angles_\Gamma$). Therefore $\Phi^{-1}(\mathcal K)$ is compact.

\subsection{Proof of Lemma \ref{lem:PsiAdS-proper}}

Let $\Gamma \in \Graph(\Sigma_{0,N}, \gamma)$. We consider a sequence $(P_n)_{n\in \N}$ going to infinity in $\AdSPoly_N$ such that the dihedral angles $\theta_n = \PsiAdS_\Gamma(P_n)$ converge to $\theta_\infty \in \RR^E$, where $E = E(\Gamma)$ denotes the edges of $\Gamma$ as usual.
We msut show that $\theta_\infty$ fails to satisfy condition (iii) of Section~\ref{sec:intro-AdS-param}.

For each $n$, let $p^L_n = \varpi_L(P_n)$ and $p^R_n = \varpi_R(P_n)$ be the ideal polygons whose ideal vertices are the left and right projections of the ideal vertices of $P_n$ (as in Section~\ref{sec:ideal_poly}).
 Let $v^L_{1,n},\cdots, v^L_{N,n}$ denote the vertices in $\RP^1$ of $p_n^L$, and similarly 
let $v^R_{1,n},\cdots, v^R_{N,n}$ denote the vertices of $p_n^R$. By applying an isometry of $\AdS^3$, 
we may assume that the first three vertices of $P_n$ are $(0,0)$, $(1,1)$ and $(\infty, \infty)$ independent of $n$, so that
$v^L_{1,n} = v^R_{1,n} = 0$, $v^L_{2,n} = v^R_{2,n} = 1$ and $v^L_{3,n} = v^R_{3,n} = \infty$ for all $n$.

Since $\theta_n$ converges to the limit $\theta_\infty$ and the polyhedra 
$P_n$ diverge, the sequence of
ideal polygons $(p_n^L)_{n\in \N}$ diverges (in the space of ideal $N$-gons up to equivalence). 
Reducing to a subsequence, we may assume all of the vertices converge to well-defined 
limits $v^L_{i,n} \to v^L_{i, \infty} \in \mathbb{RP}^1$. 
However, for some indices $i$, we will have $v^L_{i, \infty} = v^L_{i+1, \infty}$. 
Now, since the right polygon $p^R_n$ is obtained from $p^L_n$ by an earthquake of bounded magnitude,
it follows that each vertex $v^R_{i,n}$ also converges to a well-defined limit $v^R_{i, \infty}$ 
and that $v^L_{i, \infty} = v^L_{i+1, \infty}$ if and only if $v^R_{i, \infty} = v^R_{i+1, \infty}$. 
In other words the polyhedra $P_n$ converge to a convex ideal polyhedron $P_\infty$ 
of strictly fewer vertices. 

The combinatorial structure of $P_\infty$ is obtained from $\Gamma$ by collapsing 
vertices and the corresponding edges and faces in the obvious way: 
if two vertices that span an edge collapse together, then that edge disappears. 
If that edge bounded a triangle, then that triangle collapses to an edge, and so on. 
Let $\Gamma_\infty$ denote the $1$-skeleton of $P_\infty$, and let $\Gamma_\infty^*$ denote the dual graph.  
Consider a simple path 
$c_\infty$ in $\Gamma_\infty^*$. We may lift $c_\infty$ to a path $c$ in the dual graph $\Gamma^*$ 
fof $\Gamma$ in the obvious way: an edge of $c_\infty$ is dual to an edge $e$ of $P_\infty$. 
Under the collapse $\Gamma \to \Gamma_\infty$, $e$ lifts to a collection of consecutive edges 
in $\Gamma$ which determines a path of adjacent edges in $\Gamma^*$.   
The sum of the dihedral angles assigned by $\theta_n$ 
to the path $c$ converges to the sum of the dihedral angles of $P_\infty$ over the edges of $c_\infty$.

Now consider an ideal vertex of $P_\infty$ which is the limit of two or more vertices of the $P_n$, 
and let $c_\infty$ denote the path of edges bounding the face of $\Gamma_\infty^*$ dual to this vertex. 
Of course, the sum of the angles over the edges of $c_\infty$ is zero (by condition (ii) in the definition of $\Angles_{\Gamma_\infty}$). 
It therefore follows that $\theta_\infty$ assigns angles that sum to zero around the edges of the path $c$. 
Therefore $\theta_\infty$ violates condition (iii) in the definition of $\Angles_\Gamma$, since $c$ does not bound a face in $\Gamma^*$, and the proof is complete.
\section{Rigidity}\label{sec:rigidity}
This section is dedicated to the local versions of Theorems~\ref{thm:main-AdS-angles} and ~\ref{thm:main-AdS-metrics}, which are Lemma~\ref{lem:Phi-rigidity} and Lemma~\ref{lem:PsiAdS-rigidity}.

\subsection{The Pogorelov map for $\AdS^n$}\label{sec:Pogorelov}

We recall here the definition and main properties of the infinitesimal Pogorelov map,
which turns infinitesimal rigidity problems for polyhedra (or submanifolds) in constant 
curvature pseudo-Riemannian space-forms into similar infinitesimal rigidity problems 
in flat spaces, where they are easier to deal with. 
These maps, as well as their non-infinitesimal counterparts, were discovered by Pogorelov 
\cite[Chapter 4]{Pogorelov} (in the Riemannian case). Another account and some geometric
explanations of the existence of these maps can be found in Schlenker \cite[Prop. 5.7]{shu} or in
Fillastre \cite[Section 3.3]{fillastre3}. See also Labourie--Schlenker \cite[Cor. 3.3]{iie} or Izmestiev \cite{izmestiev:projective}. We follow here mostly the presentation given in \cite[Section 3.3]{fillastre3}, and refer to this paper for the proofs.

Although we will return to dimension three shortly, we describe the Pogorelov map in any dimension~$n$.
Consider the complement $U$ in $\AdS^n$ of a spacelike totally geodesic hyperplane $H_0$, dual to a point $x_0 \in \AdS^n$. Here duality means that $H_0$ is defined by the equation $\langle x_0, x\rangle = 0$, where $\langle \cdot, \cdot \rangle$ is the inner product of signature $(n-1,1)$ defining $\AdS^n$. Then $U$ is naturally the intersection of $\AdS^n$ with an affine chart $\RR^n$ of projective space, and we may take $\iota(x_0) = 0$ to be the origin of this affine chart, where $\iota : U \hookrightarrow \RR^n$ denotes the inclusion. The union of all light-like geodesics passing through $x_0$ is called the light cone $C(x_0)$. 

We equip $\RR^n$ with a flat Lorentzian metric, making it into a copy of Minkowski space $\RR^{n-1,1}$. We may choose this metric so that the inclusion $\iota$ is an isometry at the tangent space to~$x_0$. 
This has the pleasant consequence that $\iota(C(x_0))$ is precisely the light cone of $\iota(x_0)$ in $\RR^{n-1,1}$. We now define a bundle map $\Upsilon: TU \to T \RR^{n-1,1}$ over the inclusion $\iota : U \hookrightarrow \RR^{n-1,1}$ as follows: $\Upsilon$ agrees with $d \iota$ on $T_{x_0} U$. For any $x \in U \setminus C(x_0)$, and any vector $v \in T_x  U$, write $v = v_r + v_{\perp}$, where $v_r$ is tangent to the radial geodesic passing through $x_0$ and $x$, and $v_{\perp}$ is orthogonal to this radial geodesic, and define
\begin{equation}\label{eqn:Pogorelov}
\Upsilon(v) = \sqrt{\frac{\| \hat r \|^2}{\| d\iota(\hat r) \|^2}} d\iota(v_r) + d\iota( v_{\perp}),
\end{equation}
where $\hat r$ is the unit radial vector (so $\|\hat r\|^2 = \pm 1$) and the norm $\| \cdot \|$ in the numerator of the first term is the AdS metric, while the norm in the denominator is the Minkowski metric. Note that a radial geodesic of $U$ (passing through $x_0$) is taken by $\iota$ to a radial geodesic in $\RR^{n-1,1}$ (passing through the origin) of the same type (space-like, light-like, time-like), although the length measure along the geodesic is not preserved. Hence the quantity under the square-root in \eqref{eqn:Pogorelov} is always positive.

The key property of the infinitesimal Pogorelov map is the following (the proof is an easy computation in coordinates, see \cite[Lemma 3.4]{fillastre3}).

\begin{lemma} \label{lm:pogorelov}
Let $Z$ be a vector field on $U \setminus C(x_0) \subset \AdS^n$. Then $Z$ is a Killing vector field if and only
if $\Upsilon(Z)$ (wherever defined) is a Killing vector field for the Minkowski metric on $\R^{n-1,1}$.
\end{lemma}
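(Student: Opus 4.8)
The statement to prove is that a vector field $Z$ on $U \setminus C(x_0) \subset \AdS^n$ is Killing (for the AdS metric) if and only if $\Upsilon(Z)$ is Killing (for the Minkowski metric on $\RR^{n-1,1}$). The plan is to reduce everything to an explicit coordinate computation, since the map $\Upsilon$ is defined by an explicit formula \eqref{eqn:Pogorelov} and the spaces of Killing fields on both sides are finite-dimensional of the same dimension, namely $\dim \mathfrak{so}(n-1,2) = \dim(\R^{n-1,1} \rtimes \mathfrak{so}(n-1,1))$. First I would set up adapted coordinates: use a ``polar-type'' coordinate system on $U$ centered at $x_0$, in which the radial geodesics through $x_0$ are the coordinate rays and the level sets of the radial parameter are the totally geodesic hyperplanes equidistant from $H_0$ (or their analogues for time-like and light-like directions, handled by continuity/density of the space-like region). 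In these coordinates the AdS metric takes a warped-product form, and $\iota$ sends radial geodesics to radial rays in $\R^{n-1,1}$ through the origin, rescaling only the radial length. The map $\Upsilon$ then acts diagonally: it rescales the radial component of a tangent vector by the factor $\sqrt{\|\hat r\|^2 / \|d\iota(\hat r)\|^2}$ and leaves the orthogonal component alone.

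The core of the argument is then to write the Killing equation $\mathcal{L}_Z g_{\AdS} = 0$ in these coordinates, apply $\Upsilon$ componentwise, and check that the transformed field $\Upsilon(Z)$ satisfies $\mathcal{L}_{\Upsilon(Z)} g_{\mathrm{Mink}} = 0$ identically — and conversely. Because $\Upsilon$ is a linear bundle isomorphism over $\iota$ (on the complement of the light cone), it suffices to verify that the two systems of first-order linear PDEs are carried to one another; this is the ``easy computation in coordinates'' the excerpt refers to, and it is exactly the computation carried out in \cite[Lemma 3.4]{fillastre3}, so I would either reproduce that computation or cite it directly. A cleaner, essentially equivalent route: both $U \setminus C(x_0)$ with its AdS metric and $\iota(U)\setminus C(0)$ with the Minkowski metric are projectively flat, and the radial rescaling in \eqref{eqn:Pogorelov} is precisely the one making the identity map on the underlying projective manifold intertwine the two Levi-Civita connections along radial directions up to the correction term; one checks the Killing equations are preserved because they only depend on the connection and the conformal class in the relevant way. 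I would present the coordinate version as the primary proof for self-containedness.

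The main obstacle, and the one point requiring care rather than mechanical calculation, is the behavior across the light cone $C(x_0)$ and the degeneration of the formula \eqref{eqn:Pogorelov} there: the decomposition $v = v_r + v_\perp$ breaks down where $\hat r$ is light-like, which is exactly on $C(x_0)$, so $\Upsilon$ is only defined on $U \setminus C(x_0)$. This is why the statement is phrased with the caveat ``wherever defined.'' I would handle this by noting that $U \setminus C(x_0)$ has several connected components (the space-like, future-time-like, and past-time-like regions relative to $x_0$), that a Killing field is determined by its $1$-jet at any point, and that on each component the coordinate computation goes through verbatim; the equivalence is then componentwise, which is all the statement claims. No global matching across $C(x_0)$ is needed for this lemma — that subtlety is deferred to wherever the lemma is applied, where one argues that an infinitesimal isometry of a polyhedron, being globally defined, restricts to a genuine Killing field on the ambient space. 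So the lemma itself reduces to a purely local, component-by-component verification, and I would keep the write-up short by citing \cite[Lemma 3.4]{fillastre3} for the explicit coordinate identity after setting up the framework above.
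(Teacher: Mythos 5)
The paper does not prove this lemma itself: it simply cites \cite[Lemma 3.4]{fillastre3} for the coordinate computation, and your proposal ultimately does the same, so the approaches coincide. Your framing — polar coordinates adapted to the radial geodesics through $x_0$, verification componentwise on the complement of $C(x_0)$, and the remark that extension across the light cone is deferred to the application — is a sound elaboration of the citation and matches the intent of the paper's brief treatment.
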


\noindent In fact, the lemma implies that the bundle map $\Upsilon$, which so far has
only been defined over $U \setminus C(x_0)$, has a continuous extension to all of~$U$. The bundle map $\Upsilon$ is called an \emph{infinitesimal Pogorelov map}.

Next, the bundle map $\Xi: T\R^{n-1,1}\to T\R^{n}$ over the identity, which simply
changes the sign of the $n$-th coordinate of a given tangent vector,
has the same property: it sends Killing vector fields in $\R^{n-1,1}$ to Killing vector
fields for the Euclidean metric on $\R^{n}$. Hence the map $\Pi = \Xi \circ \Upsilon$ is a bundle map over the inclusion $U \hookrightarrow \RR^n$ with the following property:

\begin{lemma} \label{lm:pogorelov-Euclidean}
Let $Z$ be a vector field on $U \subset \AdS^n$. Then $Z$ is a Killing vector field if and only
if $\Pi(Z)$ is a Killing vector field for the Euclidean metric on $\RR^n$.
\end{lemma}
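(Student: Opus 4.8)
The plan is to obtain the statement by composing Lemma~\ref{lm:pogorelov} with an elementary linear-algebra fact about the bundle map $\Xi$; all of the geometry is already packaged in Lemma~\ref{lm:pogorelov}, so what remains is essentially bookkeeping.

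First I would recall the classical description of the Killing fields of a flat pseudo-Riemannian space: for a non-degenerate symmetric form $q$ on $\RR^n$, a vector field $Z$ satisfies $\mathcal{L}_Z q = 0$ precisely when $Z(x) = Ax + b$ for a constant $b \in \RR^n$ and a constant endomorphism $A$ skew-adjoint with respect to $q$ (that $A$ is constant follows by differentiating the Killing equation once more). So the Minkowski Killing fields of $\RR^{n-1,1}$, with form $\eta = \mathrm{diag}(1,\dots,1,-1)$, are the affine maps $x \mapsto Ax + b$ with $A^{T}\eta + \eta A = 0$, and the Euclidean Killing fields of $\RR^n$ are those with $A^{T} + A = 0$.

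Next I would analyze $\Xi$. Writing $D = \mathrm{diag}(1,\dots,1,-1)$, the bundle map $\Xi$ over $\mathrm{id}_{\RR^n}$ acts on vector fields by $(\Xi Z)(x) = D\, Z(x)$, hence sends $x \mapsto Ax + b$ to $x \mapsto (DA)x + Db$. Since $D^{2} = I$, the condition $A^{T}\eta + \eta A = 0$ is equivalent to $(DA)^{T} = -DA$, i.e.\ to $DA$ being skew-symmetric, while $b \mapsto Db$ is a linear automorphism of $\RR^n$. Therefore $\Xi$ maps the space of Minkowski Killing fields of $\RR^{n-1,1}$ bijectively onto the space of Euclidean Killing fields of $\RR^n$, the inverse being the same sign-change map; the same holds after restriction to any open subset, since a Killing field on a connected open set is the restriction of a global affine one. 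In particular a field $W$ on an open subset of $\RR^{n-1,1}$ is $\eta$-Killing if and only if $\Xi(W)$ is Euclidean-Killing.

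Finally I would assemble the chain. Let $Z$ be a vector field on $U \subset \AdS^n$. Since the tensor $\mathcal{L}_Z g$ (with $g$ the AdS metric) is continuous and $U \setminus C(x_0)$ is dense in $U$, the field $Z$ is Killing on $U$ if and only if its restriction to $U \setminus C(x_0)$ is Killing; by Lemma~\ref{lm:pogorelov} (and the continuity of $\Upsilon$ noted after it) this holds if and only if $\Upsilon(Z)$ is Minkowski-Killing on the corresponding open subset of $\RR^{n-1,1}$; and by the previous paragraph this holds if and only if $\Pi(Z) = \Xi(\Upsilon(Z))$ is Euclidean-Killing on $\iota(U) \subset \RR^n$ (using once more that $\mathcal{L}_{\Pi(Z)} g_{\mathrm{Eucl}}$ is continuous and vanishes on a dense set). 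This is exactly the claim. I do not expect a real obstacle here: the only points requiring any care are identifying the fiberwise action of $\Xi$ with multiplication by $D$ and invoking the already established continuous extension of $\Upsilon$ across the light cone $C(x_0)$.
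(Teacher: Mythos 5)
Your proof is correct and follows the same route the paper takes implicitly: the paper simply asserts that the sign-change map $\Xi$ carries Minkowski Killing fields to Euclidean Killing fields and then composes with Lemma~\ref{lm:pogorelov}. You have filled in the elementary verification (the classification of affine Killing fields and the computation that $A^{T}\eta + \eta A = 0$ is equivalent to $DA$ being skew-symmetric) together with the density argument across the light cone, which the paper leaves to the reader after noting the continuous extension of $\Upsilon$.
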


\noindent The bundle map $\Pi$ is also called an infinitesimal Pogorelov map. Henceforth we return to the setting of three-dimensional geometry.

\subsection{Rigidity of Euclidean polyhedra}

In order to make use of the infinitesimal Pogorelov map defined above, we recall
some elementary and well-known results about the rigidity of convex Euclidean polyhedra. 
It has been known since Legendre \cite{legendre} and Cauchy \cite{Cauchy}
that convex polyhedra in Euclidean three-space $\RR^3$ are globally rigid. In fact, given two polyhedra $P_1, P_2$, if there is map $\partial P_1 \to \partial P_2$ which respects the combinatorics and is an isometry on each face, then the map is the restriction of a global isometry of Euclidean space. 
Later Dehn \cite{dehn-konvexer} proved that convex Euclidean polyhedra are also 
infinitesimally rigid. In fact, he showed that any first-order deformation $V$ of a polyhedron $P$ that preserves the combinatorics and the metric on each face is the restriction of a global Killing vector field. Here $V$ is not allowed, for example, to deform the polyhedron so that a quadrilateral face becomes two triangular faces. Still later, A.D. Alexandrov  \cite{alexandrov} proved a stronger version of this statement:

\begin{theorem}[Alexandrov] \label{thm:alexandrov}
Let $P$ be a convex polyhedron in $\R^3$, and let $V$ be an infinitesimal deformation
of $P$ (that might or might not change the combinatorics). Then, if the induced metric on each face is fixed, at first order, under
 $V$, the deformation $V$ is the restriction to $P$ of a global Euclidean Killing field.
\end{theorem}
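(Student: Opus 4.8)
The plan is to reduce the theorem to the infinitesimal rigidity of convex polyhedra with \emph{fixed} combinatorics, established by Dehn \cite{dehn-konvexer}, by first triangulating the faces of $P$. Each face of a convex polyhedron is a convex polygon, so we may fan-triangulate each face from one of its own vertices; this refines $\partial P$ to a simplicial cell structure without altering the underlying convex body and produces a convex polyhedron $\hat P$ whose faces are all triangles. Among the edges of $\hat P$, some are genuine edges of $P$ and the rest are diagonals of faces of $P$, hence \emph{flat}: the dihedral angle of $\hat P$ along such an edge is $\pi$. The deformation $V$ moves the vertices of $P$, hence of $\hat P$, and I claim the induced deformation $\hat V$ of $\hat P$ fixes, to first order, the induced metric on every triangular face. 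Indeed each triangle of $\hat P$ has its three vertices in a single face $F$ of $P$; since $V$ fixes the induced metric on $F$ to first order it fixes the three mutual distances among these vertices, and a Euclidean triangle is determined by its side lengths, so the metric on that triangle is fixed to first order.

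Given this, the theorem follows once we know that a convex polyhedron with triangular faces --- even one with some flat edges --- is infinitesimally rigid in the strong sense that every deformation fixing the metric on each triangular face to first order is a Euclidean Killing field; this is the Cauchy--Dehn argument in the form due to Alexandrov \cite{alexandrov}. Its structure, in our case, is as follows. Since $\hat V$ fixes all face angles of $\hat P$, the link of each vertex $v$ is a spherical polygon whose side lengths (the face angles at $v$) are fixed and whose vertex angles are the dihedral angles of $\hat P$ along the edges at $v$; write $\delta(e)$ for the first-order variation of the dihedral angle along an edge $e$ under $\hat V$. The infinitesimal form of Cauchy's arm lemma (Legendre \cite{legendre}, Cauchy \cite{Cauchy}), applied to the convex link polygon at $v$, shows that, reading the $\delta(e)$ cyclically around $v$, either all of them vanish or their signs alternate at least four times. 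Marking the (planar) $1$--skeleton of $\hat P$ by the signs of $\delta$ and combining this local estimate with Euler's formula gives the usual contradiction unless $\delta \equiv 0$ on every edge of $\hat P$. Once $\delta \equiv 0$ and every triangle is first-order rigid, one propagates across the dual graph: the Killing field agreeing with $\hat V$ on a chosen base triangle also agrees with $\hat V$ on each adjacent triangle, because the shared edge is fixed and the dihedral angle across it does not vary to first order; by connectedness of the dual graph of $\hat P$, $\hat V$ is globally this Killing field, and restricting to $\partial P$ proves the statement.

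The main obstacle is precisely the presence of the flat edges of $\hat P$: Dehn's original statement assumes strict convexity, and one must verify that the arm-lemma step survives when the link polygon at $v$ has straight angles (corresponding to flat edges at $v$). At such a vertex of the link, a nonzero $\delta$ either pushes the angle past $\pi$, making the deformed link polygon momentarily non-convex there, or cuts a genuine corner; the content of Alexandrov's refinement is that the infinitesimal arm lemma still forces at least four sign alternations around any link polygon that is (weakly) convex at time zero, so the global Cauchy count proceeds without change. One could, alternatively, bypass this refinement by noting that the conclusion is stable under small perturbations of $P$ among convex polyhedra combinatorially equivalent to $\hat P$ and deducing the flat case from the strictly convex case of Dehn's theorem by a limiting argument, but the direct combinatorial route is the one I would follow.
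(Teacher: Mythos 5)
The paper does not prove this statement; it is imported as a black box from Alexandrov~\cite{alexandrov}, so there is no internal argument to compare yours against. Your sketch follows the standard route to Alexandrov's theorem and correctly identifies where the work lies: fan-triangulating the faces reduces the problem to a triangulated convex polyhedron $\hat P$ that may have flat edges, and the genuine content beyond Dehn's theorem is that the Cauchy--Dehn sign-counting argument still goes through when the link of a vertex is only weakly convex (contains straight angles). Your reduction step is sound --- fixing the intrinsic metric on a face to first order fixes all pairwise intrinsic vertex distances to first order, hence the side lengths and therefore the metric of each triangle in the fan --- and you correctly attribute the weakly convex arm lemma step to Alexandrov rather than reproving it.

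One caveat concerns the alternative you float at the end. Deducing the flat-edged case from Dehn's strictly convex case ``by a limiting argument'' does not work as stated: infinitesimal rigidity is an \emph{open} condition, not a closed one. The rank of the rigidity matrix is lower semicontinuous, so its kernel dimension is upper semicontinuous, and a limit of infinitesimally rigid polyhedra can be infinitesimally flexible --- that failure mode is exactly what has to be ruled out here. The direct combinatorial route you say you would follow is the one that actually works; the perturbation shortcut would require an additional argument that you do not supply.
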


\subsection{Proof of Lemma~\ref{lem:Phi-rigidity} (and Lemma~\ref{lem:PsiAdS-rigidity})}

We first prove Lemma~\ref{lem:Phi-rigidity}. Lemma~\ref{lem:PsiAdS-rigidity} then follows from it and Theorem~\ref{thm:duality}.

Let $P\in \AdSPoly_N$. We argue by contradiction and
suppose that $\Phi$ is not a local immersion at $P$. This means
that there exists a tangent vector $V$ to $\AdSPoly$ at $P$ such that 
$d\Phi(V)=0$. In other terms, there is a first-order deformation $V$ of 
$P$, as an ideal polyhedron in $\AdS^3$, which does not change the induced metric.

Now, $V$ is described by tangent vectors $V_i \in T_{z_i} \partial_\infty \AdS^3$ at each ideal vertex $z_i$. Since $P$ is convex, it is contained in the complement $U \subset \AdS^3$ of a spacelike totally geodesic plane. We wish to use the Pogorelov map $\Pi$ defined in Section~\ref{sec:Pogorelov} above. However, $\Pi$ is \emph{not} defined over the ideal boundary, so we need to be slightly careful.
We may assume that the $1$--skeleton $\Gamma$ of $P$ is a triangulation. If not, we simply add diagonals to all of the non-triangular faces as needed. Consider a triangular face $T = \Delta z_{i_1} z_{i_2} z_{i_3}$. The tangent vectors $V_{i_1}, V_{i_2}, V_{i_3}$ determine a unique Killing field $X$, which defines the motion of the points of $T$ under the deformation. The deformation vectors for the vertices of an adjacent triangle $T' = \Delta z_{i_2} z_{i_1} z_{i_4}$ similarly determine a Killing field $X'$, which determines the motion of the points of $T'$. In general, $X$ and $X'$ might not agree on the common edge $e = z_1 z_2$. However, because $d\Phi(V) = 0$, the shear coordinate along $e$ does not change to first order, and therefore $X$ and $X'$ do agree along the edge $e$. It follows that $V$ defines a vector field $W$ on $\partial P$ whose restriction to any face agrees with a Killing field of $\AdS^3$. We now apply the Pogorelov map to obtain $\Pi(W)$, a vector field on the boundary of a convex polyhedron $\iota(P)$ in Euclidean space $\RR^3$. By Lemma~\ref{lm:pogorelov-Euclidean}, the restriction of $\Pi(W)$ to each face of $\iota(P)$ agrees with a Euclidean Killing field. By Theorem~\ref{thm:alexandrov}, $\Pi(W)$ must be the restriction of a global Euclidean Killing field~$Y$. Hence, again, using Lemma~\ref{lm:pogorelov-Euclidean}, we see that $W$ was the restriction of a global Killing field $\Pi^{-1}(Y)$ of $\AdS^3$ and therefore $V$ represents the trivial deformation in $\AdSPoly_N$. This completes the proof of Lemma~\ref{lem:Phi-rigidity}.

%
%
%

\section{Necessary conditions on the dihedral angles: proof of Proposition~\ref{prop:imageinA-AdS}}\label{sec:necess}

In this section we prove Proposition~\ref{prop:imageinA-AdS}, which states that the map $\PsiAdS_\Gamma$, taking an ideal polyhedron $P$ in $\AdS$ with $1$--skeleton $\Gamma$ to its dihedral angles $\theta = \PsiAdS_\Gamma(P)$, has image in the convex cone $\Angles_\Gamma$; in other words $\theta$ satisfies conditions (i), (ii), and (iii) of Section~\ref{sec:intro-AdS-param}. That $\theta$ satisfies (i) is just our sign convention for dihedral angles. That the dihedral angles $\theta$ satisfy (ii) follows exactly as in the hyperbolic setting: The intersection of $P$ with a small ``horo-torus" centered about an ideal vertex of $P$ is a convex polygon in the Minkowski plane, whose exterior angles are equal to the corresponding exterior dihedral angles on $P$.

The difficult part of Proposition~\ref{prop:imageinA-AdS} is to prove that $\theta$ satisfies condition~(iii), and the remainder of this section is dedicated to this claim. Consider a simple cycle $e_0^*, e_1^*, \ldots, e_n^* = e_0^*$ in $\Gamma^*$ such that $\theta(e_j^*) < 0$ for exactly two edges $j = 1, r$. Let $f_i^*$ be the vertex of $\Gamma^*$, dual to a two-dimensional face $f_i$ of $P$, which is an endpoint of $e_i^*$ and $e_{i+1}^*$. In other words, the face $f_i$ of $P$ contains the edges $e_i$ and $e_{i+1}$. We must prove that the sum $\theta(e_1^*) + \cdots + \theta(e_n^*) > 0$.

We now define a polyhedron $Q$ by ``extending" the faces $f_1, \ldots, f_n$ and forgetting about the other faces of $P$. More rigorously: Since $P$ is contained in an affine chart of $\RP^3$, a lift $\tilde P$ of $P$ to the three-sphere $S^3$ is a convex polyhedron contained in an open half-space of $S^3$. Define $\tilde Q$ to be the intersection of the half-spaces defined by the lifts of $f_1, \ldots, f_n$. Then generically $\tilde Q$ will be contained in an open half-space, in which case $\tilde Q$ projects to a compact polyhedron $Q$ in some affine chart of $\RP^3$. We will, in a sense, reduce the generic case to the easier case that $\tilde Q$ is \emph{not} contained in an open half-space, which we treat first. In this case, the combinatorial structure of $\tilde Q$ is very simple in that $\tilde Q$ has exactly two antipodal vertices. The projection $Q$ of $\tilde Q$ to $\RP^3$ has one vertex, which is contained in every face $f_1, \ldots, f_n$ and edge $e_1, \ldots, e_n$ of $Q$. Therefore $f_1, \ldots, f_n$ and $e_1, \ldots, e_n$ are orthogonal to the time-like plane $A$ dual to that vertex. As in Section~\ref{sec:Pogorelov}, duality is defined with respect to the inner product of signature $(2,2)$ that defines $\AdS^3$. The intersection $q = A \cap Q$ is a convex compact polygon lying in $A \cong \AdS^2$ whose exterior angles are equal to the exterior dihedral angles of $Q$. That (iii) holds in this case now follows from:

\begin{claim} \label{claim:AdS2-angles}
The sum of the exterior angles of a compact, convex, space-like polygon $q$ in $\AdS^2$ is strictly positive.
\end{claim}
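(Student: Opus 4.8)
The plan is to reduce the statement to a Gauss--Bonnet argument in $\AdS^2$, analogous to the way the Gauss--Bonnet theorem in $\HH^2$ gives that the exterior angles of a compact hyperbolic polygon sum to more than $2\pi$. Concretely, $\AdS^2$ is a Lorentzian surface of constant curvature $-1$, and the Gauss--Bonnet theorem for a compact surface-with-boundary applies to it just as in the Riemannian case, provided we are careful about the signs introduced by the Lorentzian signature. For a compact, convex, space-like polygon $q \subset \AdS^2$ whose edges are all space-like geodesics, the boundary $\partial q$ is a closed space-like curve, and the geodesic curvature contributions come only from the vertices. The key computation is that the Gauss--Bonnet formula reads, with the curvature $K \equiv -1$,
\begin{equation}\label{eqn:GB-AdS2}
\int_q K \, dA + \sum_i \vartheta_i = \text{(topological term)},
\end{equation}
where $\vartheta_i$ are the exterior angles at the vertices and the right-hand side is the appropriate Euler-characteristic multiple. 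I would first verify the precise form of \eqref{eqn:GB-AdS2}, including the correct sign of the area term, by working in the projective (Klein) model of $\AdS^2$, where the polygon $q$ sits as an honest convex Euclidean polygon inside the region $x_1^2 - x_3^2 < 1$; the point is that as $q$ is convex, all exterior angles $\vartheta_i$ have a consistent sign, and the area $\int_q dA$ with respect to the $\AdS^2$ area form is \emph{negative} (or we arrange conventions so that $\int_q K\, dA$ is positive). Then \eqref{eqn:GB-AdS2} immediately yields $\sum_i \vartheta_i > 0$.

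The cleanest route, which avoids delicate sign bookkeeping in the Lorentzian Gauss--Bonnet theorem, is instead to use a transitional/limiting argument mirroring the proof of the infinitesimal Gauss--Bonnet formula (Proposition~\ref{prop:inf-GB}). Namely: a compact convex space-like polygon $q$ in $\AdS^2$ can be realized as a limit $\mathfrak{a}_t q_t$ of convex polygons $q_t$ in $\HH^2$ (or one can directly deform within $\AdS^2$), and one tracks how the exterior angles behave. But more simply, I would observe that since $q$ is compact and space-like, it can be foliated by the radial structure coming from an interior point, decompose $q$ into space-like triangles, and reduce to the statement for a single compact convex space-like triangle in $\AdS^2$. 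For a space-like triangle, the sum of exterior angles equals $\pi + \text{Area}$ or $\pi$-type formula — precisely, by the Gauss--Bonnet formula for a space-like triangle in constant curvature $-1$ Lorentzian geometry, the sum of the interior angles is \emph{less than} $\pi$ (the defect equals the area, with the Lorentzian sign making it positive), so the sum of exterior angles exceeds $2\pi > 0$. Summing over the triangles in the decomposition, the contributions at the interior subdivision points cancel (they sum to $2\pi$ around each interior vertex, contributing $0$ to the exterior-angle count after accounting for the two triangles meeting there), and one is left with $\sum_i \vartheta_i = \sum_{\text{triangles}} (\text{exterior angle sum}) - 2\pi(\#\text{interior vertices}) - \ldots > 0$, again strictly positive because each triangle contributes a strictly positive area term.

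The main obstacle I anticipate is pinning down the correct sign conventions for angles and area in $\AdS^2$: in Lorentzian geometry the "angle" between two space-like directions is a hyperbolic angle (an element of $\RR$, as emphasized in Section~\ref{sec:intro-AdS-param}), and one must check that convexity of $q$ forces all exterior hyperbolic angles to be positive and that the $\AdS^2$ area of $q$ has the sign making the Gauss--Bonnet defect come out positive. Once the correct version of the Gauss--Bonnet formula \eqref{eqn:GB-AdS2} is established with these conventions — most safely by the limiting argument from $\HH^2$, where everything is standard and the signs are determined by continuity — the claim is immediate. I would therefore organize the proof as: (1) set up the projective model and the sign conventions for space-like angles; (2) prove Gauss--Bonnet for a single space-like triangle by the degeneration argument $q_t \to q$ from hyperbolic triangles (using that each angle and the area vary continuously and the hyperbolic Gauss--Bonnet formula holds along the path), or alternatively by a direct integral computation; (3) triangulate $q$ and sum, with cancellation at interior vertices, to conclude $\sum_i \vartheta_i > 0$.
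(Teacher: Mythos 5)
Your overall strategy---invoke a Gauss--Bonnet theorem for Lorentzian polygons, or alternatively reduce to triangles and induct---is exactly what the paper does (it cites Birman--Nomizu for the Lorentzian Gauss--Bonnet formula and mentions the triangles-plus-induction alternative). However, two of the intermediate assertions in your sketch are incorrect for the Lorentzian setting, and one of them would actually break the proof if carried out literally.

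First, the claim that ``as $q$ is convex, all exterior angles $\vartheta_i$ have a consistent sign'' is false. In $\AdS^2$ the exterior angle at a vertex of a space-like convex polygon is a hyperbolic (real-valued) angle whose sign depends on whether the two incident edges lie in opposite or in the same space-like quadrant of the light cone at that vertex, \emph{not} merely on convexity. Indeed, in the setting of Section~\ref{sec:necess}, the polygon $q = A \cap Q$ has exterior angles equal to the dihedral angles $\theta(e_j)$ of $Q$, and by construction exactly two of these (the ones dual to equatorial edges) are negative while the rest are positive. This mixed-sign phenomenon is precisely why the claim has content; if all exterior angles were positive there would be nothing to prove. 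The paper emphasizes this in the dual formulation (Claim~\ref{claim:AdS2-dual-lengths}): two of the dual edges leave $\AdS^2$ and carry negative length.

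Second, the form of the Gauss--Bonnet identity you write down, $\int_q K\,dA + \sum \vartheta_i = 2\pi\chi(q)$, is the Riemannian one. For a Lorentzian surface the tangent line of a closed space-like boundary does not wind once around $\RP^1$ within the space-like cone, and the constant on the right is not $2\pi$; the correct Lorentzian Gauss--Bonnet for a geodesic polygon bounding a disk (Birman--Nomizu) has no $2\pi$ topological term, so one gets directly $\sum_i \vartheta_i = -\int_q K\,dA = \mathrm{Area}(q) > 0$. Your third-paragraph claim that a space-like triangle in $\AdS^2$ has ``interior angles summing to less than $\pi$'' and ``exterior angles exceeding $2\pi$'' likewise imports the Riemannian $\pi$-anchored angle arithmetic, which does not apply when angles are boost parameters in $\RR$ rather than elements of the circle; the correct triangle statement is simply that the exterior angle sum equals the area.

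Finally, the suggested ``limiting argument from $\HH^2$'' using the transformations $\mathfrak{a}_t$ is not available here: those rescalings realize $\HP$ as a limit of \emph{collapsing} three-dimensional structures, and there is no one-parameter family of ambient geometries interpolating at finite size between the Riemannian $\HH^2$ and the Lorentzian $\AdS^2$ along which a fixed polygon's angle sum could be tracked by continuity. If you want a transitional argument, you must pass through $\HP^2$ (as in Proposition~\ref{prop:inf-GB}), where only the \emph{infinitesimal} Gauss--Bonnet formula survives. So the self-contained route for this claim is to establish the Lorentzian Gauss--Bonnet formula directly (or prove the area formula for a single space-like triangle in $\AdS^2$ by an explicit computation), rather than to transfer it from $\HH^2$ by a limit.
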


\begin{proof}
This follows directly from the Gauss-Bonnet formula for Lorentzian polygons (see \cite{bir_gau}). Alternatively, one may easily prove the claim directly for triangles and then argue by induction. 
\end{proof}

Before continuing to the general case, it is useful to examine the dual picture in this simple case. Let $q^*$ denote the dual convex polygon in $\RP^2$ (where $\RP^2$ is identified with its dual via the signature $(2,1)$ inner product defining $\AdS^2$). Since all edges of $q$ are space-like, the vertices of $q^*$ are contained in $\AdS^2$.  If $v$ is a vertex of $q$ with positive exterior angle, then the dual edge $v^*$ in $q^*$ is a space-like edge contained in $\AdS^2$. However, if $v$ is a vertex of $q$ with negative exterior angle, then the dual edge $v^*$ of $q^*$ is begins and ends in $\AdS^2$ but contains a segment outside of $\AdS^2$. Conversely, the dual of any convex polygon $q^*$ in $\RP^2$ having the properties just described is a convex compact polygon in $\AdS^2$.  Note that the length of an edge in $q^*$ is equal to the dihedral angle at the corresponding vertex of $q$, with the two edges which leave $\AdS^2$ having negative length. Therefore Claim~\ref{claim:AdS2-angles} is equivalent to:

\begin{claim} \label{claim:AdS2-dual-lengths}
Let $q^*$ be a convex polygon in $\RP^2$ with vertices in $\AdS^2$ and with space-like edges, all but two (non-adjacent) of which are contained in $\AdS^2$. Then the sum of the lengths of the edges of $q^*$ is positive.
\end{claim}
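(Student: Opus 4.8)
The plan is to set up coordinates in which the geometry of $\AdS^2 \subset \RP^2$ and its dual is completely explicit, and then reduce the statement to an elementary inequality about hyperbolic trigonometry. Recall that $\AdS^2$ is the set of negative lines in $\RR^{2,1}$ with respect to a form of signature $(2,1)$, and that under the projective duality $\RP^2 \leftrightarrow (\RP^2)^*$ induced by this form, a point of $\AdS^2$ corresponds to a time-like line in $\RP^2$ (a projective line meeting $\AdS^2$), while a space-like line corresponds to a point outside $\overline{\AdS^2}$. The polygon $q^*$ has all its vertices in $\AdS^2$ and all its edges space-like; its dual is then exactly a compact convex space-like polygon $q$ in $\AdS^2$, with the length of an edge of $q^*$ equal to the exterior dihedral angle at the corresponding vertex of $q$ (negative precisely for the two edges of $q^*$ that leave $\AdS^2$, as explained in the text). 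Since Claim~\ref{claim:AdS2-dual-lengths} has just been shown to be equivalent to Claim~\ref{claim:AdS2-angles} in the text, one clean route is simply to invoke Claim~\ref{claim:AdS2-angles}; but since we want a self-contained argument, I will argue directly on $q^*$.

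First I would triangulate the polygon $q^*$ by drawing all diagonals from a single vertex $v_0$; this is possible because $q^*$ is convex. This expresses $q^*$ as a union of triangles $\Delta_1,\dots,\Delta_{n-2}$, each with vertices in $\AdS^2$, each with space-like boundary edges, and each convex. The sum of the signed edge lengths of $q^*$ then differs from $\sum_k \bigl(\text{perimeter of }\Delta_k\bigr)$ by a correction coming from the interior diagonals: each diagonal is counted twice, once with each orientation, and I would check that the contributions from the two ``bad'' edges of $q^*$ (the ones leaving $\AdS^2$) are handled correctly because after triangulation at most one triangle contains a segment of a bad edge as a boundary arc, or --- more carefully --- I would choose $v_0$ to be a vertex not adjacent to either bad edge so that each bad edge sits in exactly one triangle. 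Actually the cleanest bookkeeping is via the dual picture: the decomposition of $q^*$ by diagonals from $v_0$ is dual to writing $q$ as a bent polygonal fan, and the additivity of exterior angles under this subdivision is exactly condition~(ii)-type cancellation. So the claim reduces to the single-triangle case: the sum of the three signed edge lengths of a convex space-like triangle in $\AdS^2$ (equivalently, the sum of the three exterior angles of a convex space-like triangle) is positive.

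For the triangle case I would argue directly. A convex space-like geodesic triangle in $\AdS^2$ has two possible ``types'' depending on how many vertices see a light-cone configuration, but in all cases one can normalize by an isometry of $\AdS^2$ (the group $\OO(2,1)$) so that the triangle is very explicit --- e.g.\ place one vertex at a standard point and use the $\SO(2,1)$-action to fix a second edge. One then computes the three exterior angles as functions of the two free parameters (edge lengths, say) using the Lorentzian laws of cosines and sines in $\AdS^2$, and shows the sum is positive; equivalently, one uses the Gauss--Bonnet theorem for Lorentzian surfaces (\cite{bir_gau}, or \cite{birman1984gauss}), which gives $\sum(\text{exterior angles}) = \operatorname{Area}(q) > 0$ since $\AdS^2$ has curvature $-1 < 0$ and the area form is positive on space-like regions, wait --- here one must be careful with signs in the Lorentzian Gauss--Bonnet formula, and the correct statement (after tracking the convention that makes exterior angles along the equator negative) is precisely that the total exterior angle equals the (positive) area. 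This sign-tracking in the Lorentzian Gauss--Bonnet formula, and correctly matching it to the sign convention for the two ``bad'' edges of $q^*$, is the one place where care is genuinely required; it is the main obstacle, and it is essentially the same subtlety already flagged in the text in relating Claim~\ref{claim:AdS2-angles} to Claim~\ref{claim:AdS2-dual-lengths}. Once the triangle case is settled with the correct signs, the additivity from the first step finishes the proof.
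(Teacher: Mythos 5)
Your proof ultimately invokes the same two ingredients as the paper --- the duality between signed edge lengths of $q^*$ and signed exterior angles of $q$, and the Lorentzian Gauss--Bonnet formula --- so in that sense the route is the same. But the extra step you insert, triangulating $q^*$ by diagonals from a single vertex $v_0$, is not sound as stated and doesn't buy you anything. There are three specific problems. First, the diagonals $v_0v_j$ are chords of a convex polygon in $\RP^2$ whose endpoints lie in $\AdS^2$, but $\AdS^2$ is not a convex subset of $\RP^2$; you have not checked that those diagonals are space-like segments contained in $\AdS^2$, and if a diagonal exits $\AdS^2$ its contribution has a sign that you never account for. Second, even granting all diagonals are space-like and positive, the identity $\operatorname{per}(q^*) = \sum_k \operatorname{per}(\Delta_k) - 2\sum_j \ell(d_j)$ gives no information: positivity of each $\operatorname{per}(\Delta_k)$ does not control the subtraction of $2\sum_j \ell(d_j)$. (This is in sharp contrast to the situation on the $q$ side, where exterior-angle sums are genuinely additive under cutting by a diagonal, precisely because the Lorentzian Gauss--Bonnet formula has no $2\pi$ term.) Third, your assertion that the fan triangulation of $q^*$ ``is dual to writing $q$ as a bent polygonal fan'' is not correct projective duality: the dual of a diagonal of $q^*$ is a single point of $\RP^2$ lying outside $q$, not a decomposition of $q$, so the bookkeeping you propose to import from condition~(ii) has no actual carrier. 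Finally, a minor point: for $q^*$ a quadrilateral every vertex is adjacent to a bad edge, so the choice of $v_0$ you describe is impossible and the base of the induction needs separate treatment.

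The remedy is simply to not triangulate $q^*$ at all: pass to the dual compact convex space-like polygon $q$ in $\AdS^2$, where the sum you want to bound is exactly the sum of the signed exterior angles, and apply the Lorentzian Gauss--Bonnet formula to $q$ in one step (this is what the paper does, quoting \cite{bir_gau}). If you insist on reducing to triangles, do the induction on $q$ by cutting along interior diagonals --- there the exterior angles genuinely split additively across a cut, the subtriangles are honest compact space-like triangles in $\AdS^2$, and the triangle case follows either from Gauss--Bonnet or a direct Lorentzian trigonometry computation, exactly as you describe. Your attempt to carry out the analogous decomposition on $q^*$ is the one step that does not work.
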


\noindent This dual point of view will be useful in the general case, which we turn to now.

Consider the generic case that the polyhedron $Q$ is compact in an affine chart of $\RP^3$. In this case, $Q$ will have extra edges, in addition to $e_1, \ldots, e_n$, which are not contained in $\AdS^3$; these edges may be either space-like or time-like. Let $Q^*$ denote the dual polyhedron in $\RP^3$, where we identify $\RP^3$ with its dual via the inner product of signature $(2,2)$ that defines $\AdS^3$. By perturbing a small amount if necessary, we may assume that all vertices of $Q$ lie outside of the closure of $\AdS^3$, so that the faces of $Q^*$ are each time-like. The vertices of $Q^*$, dual to the space-like faces $f_1, \ldots, f_n$, lie in $\AdS^3$.  The dual edges $e_1^*, \ldots, e_n^*$ are space-like and form a Hamiltonian cycle in $\partial Q^*$ dividing it into two convex polyhedral surfaces $(\partial Q^*)_1$ and~$(\partial Q^*)_2$. We need only work with one of these surfaces, say $(\partial Q^*)_1$. The surface $(\partial Q^*)_1$ is a polygon, bent along some interior edges. Note that two of the perimeter edges $e_1^*$ and $e_r^*$ of $(\partial Q^*)_1$ each contain a segment outside of $\AdS^3$, while $e_2^*, \ldots, e_{r-1}^*$ and $e_{r+1}^*, \ldots, e_n^*$ are contained in $\AdS^3$.  We will show:
\begin{lemma}\label{lem:intrinsically-convex}
The surface $(\partial Q^*)_1$ is intrinsically locally convex.
\end{lemma}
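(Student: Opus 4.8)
The plan is to show that the surface $(\partial Q^*)_1$ is intrinsically locally convex by checking convexity at each vertex separately, distinguishing whether the vertex lies in $\AdS^3$ or not, and whether it is an interior vertex of the polygon $(\partial Q^*)_1$ or a perimeter vertex lying on one of the $e_i^*$. First I would recall that $(\partial Q^*)_1$ is a piecewise totally geodesic spacelike surface in $\AdS^3$ (recall the faces of $Q^*$ are time-like, and $(\partial Q^*)_1$ is cut out by the spacelike edges $e_1^*,\dots,e_n^*$), so away from the edges leaving $\AdS^3$ it inherits a hyperbolic metric. Intrinsic local convexity at a vertex $w$ means that the total cone angle of the induced metric around $w$, measured on the side of the surface, is at most $\pi$ (a convex angle) — more precisely, the link of $w$ in $(\partial Q^*)_1$ is a convex arc. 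For vertices lying in $\AdS^3$ where the induced metric is genuinely hyperbolic, this is exactly the statement that $Q^*$, being the dual of the convex polyhedron $Q$, is convex in the appropriate sense near the dual vertex; I would make this precise by appealing to the dual picture already set up in the two-dimensional case (Claims~\ref{claim:AdS2-angles} and~\ref{claim:AdS2-dual-lengths}), applied to the link, since the link of a vertex of $Q^*$ is dual to a face of $Q$, and convexity of that face of $Q$ translates into convexity of the link.

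The key steps, in order: (1) Establish that $(\partial Q^*)_1$ is spacelike and piecewise totally geodesic, with the perimeter edges $e_1^*,\dots,e_n^*$ spacelike, so that the intrinsic metric is hyperbolic except along the two exceptional edges $e_1^*, e_r^*$ which each contain a segment outside $\AdS^3$. (2) At an interior vertex $w$ of $(\partial Q^*)_1$ lying in $\AdS^3$: the faces of $Q^*$ adjacent to $w$ are time-like planes through the dual point $w^*$, and $w^* = f_i$ for some $i$; these faces are orthogonal to the line $w$ (as in the two-dimensional analysis), so slicing by the plane $A$ dual to... actually one should slice by a small "horo-torus" or a plane transverse to $w$. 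The link of $w$ in $Q^*$ is then a convex spacelike polygon in a copy of $\AdS^2$, and the portion of it lying in $(\partial Q^*)_1$ is a convex arc — this uses convexity of $Q$ directly, since the link of $w$ in $Q^*$ is dual to the face $f_i = w^*$ of $Q$, which is convex. (3) At a perimeter vertex $w$ of $(\partial Q^*)_1$ lying in $\AdS^3$ (an endpoint of some $e_j^*$ that is genuinely inside $\AdS^3$): the same link analysis applies, now the relevant arc of the link is bounded by directions tangent to the two perimeter edges meeting at $w$; convexity again follows from convexity of the dual face of $Q$ together with the fact that $e_j^*$ is spacelike. (4) At the four endpoints of the two exceptional edges $e_1^*, e_r^*$: here one must be careful because the metric is not hyperbolic along these edges. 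But each such vertex still lies in $\AdS^3$ (the edges $e_1^*, e_r^*$ begin and end in $\AdS^3$, only a middle segment escapes), and the local picture near the vertex is still that of a convex polyhedral cone in $\AdS^3$; the induced metric near the vertex, restricted to $(\partial Q^*)_1$, is still a convex cone metric by the same duality argument, since the exceptional edges being spacelike means the link near $w$ is still a spacelike convex arc in $\AdS^2$. (5) Conclude that $(\partial Q^*)_1$ is intrinsically locally convex everywhere.

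The main obstacle I anticipate is Step (4) — handling the vertices adjacent to the edges $e_1^*$ and $e_r^*$ which leave $\AdS^3$. Along these edges the intrinsic geometry is not simply hyperbolic, and one must argue that even though the edge "dips outside" $\AdS^3$, the induced metric on $(\partial Q^*)_1$ near the relevant vertices remains a legitimate (convex) cone metric. I expect the resolution to come from the observation that $e_1^*$ and $e_r^*$ are spacelike edges of $Q^*$ (they are dual to the equatorial edges $e_1, e_r$ of $Q$, which carry negative dihedral angle, and the sign convention / cross-ratio analysis of Section~\ref{sec:models} forces the dual edges to be spacelike), so $(\partial Q^*)_1$ really is a spacelike surface including along these edges, and the apparent pathology is only that a sub-segment of the edge leaves the region $\AdS^3$ but the surface containing it does not fail to be spacelike. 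Once that is understood, the link at these vertices is analyzed exactly as in Steps (2)–(3) via the two-dimensional duality, and Claim~\ref{claim:AdS2-dual-lengths} is precisely the tool that handles the arc of the link that passes near the portion of the edge outside $\AdS^3$. A secondary technical point is to make sure the link analysis correctly identifies "convex from which side," i.e. that the outward bending of $Q^*$ corresponds to intrinsic convexity of $(\partial Q^*)_1$ and not concavity; this is a matter of tracking the duality between $Q$ and $Q^*$ carefully and comparing with the sign conventions already fixed.
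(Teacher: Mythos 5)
Your high-level strategy---check convexity vertex by vertex by analyzing links---matches the paper's, but there are several genuine problems with the execution, the first of which is a misreading of the geometry that undermines everything downstream.

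First, $(\partial Q^*)_1$ is \emph{not} a spacelike surface, and its induced metric is \emph{not} hyperbolic. The faces of $Q^*$ are dual to the vertices of $Q$, which by hypothesis lie outside $\overline{\AdS^3}$; the dual of such a point is a \emph{time-like} plane, so each face of $(\partial Q^*)_1$ carries a Lorentzian (signature $(1,1)$) metric. The surface is locally modelled on $\AdS^2$, not $\HH^2$, and ``intrinsic local convexity'' means developability onto a \emph{time-like} plane $\AdS^2$ with convex image. This is the opposite of what you wrote; calling the surface spacelike is not a minor imprecision, since the whole link analysis hinges on which part of the link carries positive length and which negative.

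Second, and more fundamentally, you are missing the key tool. The link of a vertex $f_i^*$ is a polygonal path $\sigma$ in $\HS^2$ (the space of rays in $T_{f_i^*}\AdS^3$), not a path in a circle or in $\partial\AdS^2$. Its endpoints $x,y$ lie in the de Sitter part $\dS^2$ (because $e_i^*$ and $e_{i+1}^*$ are spacelike), while its interior passes through $\HH^2_+$. Local convexity at $f_i^*$ is equivalent to the \emph{signed} length $\mathscr L(\sigma)$ being positive, but the de Sitter segments of $\sigma$ contribute \emph{negative} length. So one needs a substitute for the triangle inequality in $\HS^2$ saying that a time-oriented path from $x$ to $y$ is at least as long as the geodesic $\sigma_{x,y}$ (and $\mathscr L(\sigma_{x,y})>0$ since $\sigma_{x,y}$ crosses $\HH^2_+$). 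That is exactly Claim~\ref{cl:HS}, which is the heart of the paper's proof and has no analogue in your outline. Claims~\ref{claim:AdS2-angles} and~\ref{claim:AdS2-dual-lengths}, which you propose to invoke, are statements about convex polygons already developed into $\AdS^2$ or $\RP^2$; they are used \emph{after} local convexity is established, to deduce that the angle sum is positive, and cannot be used in place of the $\HS^2$ length inequality.

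Two smaller points. There are no interior vertices of $(\partial Q^*)_1$ to handle: the Hamiltonian cycle $e_1^*,\dots,e_n^*$ visits every vertex $f_i^*$ of $\partial Q^*$, so your Step (2) is vacuous. And the endpoints of $e_1^*$ and $e_r^*$ are not actually the special case you anticipate; at those vertices the link endpoints still lie in $\dS^2$ exactly as at every other vertex, and the paper treats all $f_i^*$ uniformly. The genuine subtlety---the mixed-signature link in $\HS^2$---occurs at every vertex, not only near the two exceptional edges.
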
 
\noindent The lemma says that when $(\partial Q^*)_1$ is ``un-folded" onto a time-like plane (a copy of $\AdS^2$), it is convex and therefore isomorphic to some $q^*$ as in Claim~\ref{claim:AdS2-dual-lengths} above. Therefore condition (iii) will follow from the lemma. 
Before embarking on the proof, we draw on some intuition from the Riemannian setting.
To show that a developable polyhedral surface $S$ in a Riemannian space ($\RR^3$ say) is intrinsically locally convex, one must simply show that the total angle of $S$ at each vertex is less than~$\pi$. Equivalently, one examines the \emph{link} of each vertex $v$ of $S$, which is naturally a polygonal path in the unit sphere in the tangent space at $v$: $S$ is locally convex at $v$ if and only if the length of this polygonal path is less than~$\pi$. We show that $(\partial Q^*)_1$ is locally convex in much the same way, by examining the link of each vertex of $(\partial Q^*)_1$ and measuring how long it is. However the space of rays emanating from a point in a Lorentzian space is not the Riemannian unit sphere, but rather what is called the \emph{HS sphere} or $\HS^2$.

\subsection{The geometry of the HS sphere}

\emph{HS geometry}, introduced in \cite{shu,cpt} and used recently in \cite{colI,colII}, 
is the natural local geometry near a point in a Lorentzian space-time such as $\AdS^3$. 
In those papers, HS-structures with cone singularities occur naturally as the induced geometric
structures on the boundary of polyhedra or, in a related manner, on the links of vertices of the singular
graph in Lorentzian 3-manifolds with cone singularities. Here we will use comparatively simpler notions
without cone singularities.

The tangent space at a point of $\AdS^3$ is a copy of the three-dimensional Minkowski space $\RR^{2,1}$. 
The \emph{HS sphere} $\HS^2$ is the space of rays based at the origin in $\RR^{2,1}$.
It admits a natural decomposition into five subsets:
\begin{itemize}
  \item Let $\HH^2_+$ (respectively $\HH^2_{-}$) denote the future oriented (resp. past oriented) time-like rays. Both $\HH^2_+$ and $\HH^2_-$ are copies of the Klein model for the hyperbolic plane and are equipped with the standard hyperbolic metric in the usual way. 
    \item Let $\dS^2$ denote the space-like rays, equipped with the standard de Sitter metric.
  \item The light-like rays form two circles, $\partial\HH^2_+$ and $\partial\HH^2_{-}$, which are the boundaries of $\HH^2_+$ and $\HH^2_{-}$ respectively. 
\end{itemize} 
\noindent The group $\operatorname{SO}_0(2, 1)$ of time-orientation and orientation preserving linear isometries of $\R^{2,1}$ acts naturally (and projectively) on $\HS^2$, preserving this decomposition. The geodesic $\sigma_{x,y}$ between two (non-antipodal) points $x,y \in \HS^2$ is defined to be the positive span of the two rays $x,y$.
The space $\HS^2$ is equipped with a (partially defined) signed distance function $d(\cdot, \cdot)$ as follows. 
\begin{itemize}
\item If $x, y \in \HH^2_+$ or $x, y \in \HH^2_-$ then $d(x,y)$ is the usual hyperbolic distance, equal to the hyperbolic length of $\sigma_{x,y}$.
\item Let $x, y \in \dS^2$. We will only be interested in the case that $\sigma_{x,y}$ is time-like, meaning the plane in $\RR^{2,1}$ spanned by $\sigma_{x,y}$ has mixed signature. If $\sigma_{x,y}$ is contained in $\dS^2$, then $d(x,y)$ is defined to be the de Sitter length of $\sigma_{x,y}$, taken to be a negative (rather than imaginary) number. Note that in this case $d(x,y) = - d(x^*, y^*)$, where $x^*$ (resp. $y^*$) denotes the geodesic line dual to $x$ (resp. $y$) in $\HH^2_+$ (equal to the intersection with $\HH^2_+$ of the orthogonal complement of $x$ (resp. $y$)). In the case that $\sigma_{x,y}$ passes through $\HH^2_+$ (or $\HH^2_-$), we define $d(x,y) = +d(x^*, y^*)$.
\item Let $x \in \HH^2_+$ and $y \in \dS^2$. Then we define $d(x,y) = +d(x,y^*)$ if $x$ and $y$ lie on opposite sides of $y^*$ or $d(x,y) = -d(x,y^*)$ if $x$ and $y$ lie on the same side of $y^*$.
\end{itemize}

\noindent We note that this distance function may be similarly defined in terms of the Hilbert distance (cross ratios) with respect to $\partial \HH^2_+$ or $\partial \HH^2_-$.
Let $\sigma$ be a polygonal path in $\HS^2$ with endpoints $x,y \in \dS^2$ and call $\sigma$ \emph{time-oriented} if $\sigma$ is the concatenation of three polygonal subpaths: a path crossing from from $x$ to $\HH^2_+$ which is future-oriented, followed by a path in $\HH^2_+$, followed by a path from $\HH^2_+$ back into $\dS^2$ which is past oriented. The length $\mathscr L(\sigma)$ is defined to be the sum of the lengths of the geodesic segments comprising $\sigma$. It is important to note that $\mathscr L(\sigma)$ is well-defined under sub-division. 
The crucial ingredient in the proof of Lemma~\ref{lem:intrinsically-convex} is the following substitute for the triangle inequality.
\begin{claim} \label{cl:HS}
Let $\sigma$ be a time-oriented polygonal path with endpoints $x,y \in \dS^2$ and suppose further that $\sigma_{x,y}$ is time-like and crosses through $\HH^2_+$. Then $\mathscr L(\sigma) \geq \mathscr L(\sigma_{x,y})$.
\end{claim}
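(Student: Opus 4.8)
\textbf{Proof strategy for Claim~\ref{cl:HS}.}

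The plan is to reduce everything to a comparison between a geodesic path and a broken path inside the single hyperbolic plane $\HH^2_+$, using the duality $z \mapsto z^*$ built into the definition of the signed distance $d(\cdot,\cdot)$. First I would decompose the time-oriented path $\sigma$ as $\sigma_1 * \mu * \sigma_2$, where $\sigma_1$ is the future-oriented spacelike subpath from $x$ to some point $a \in \partial\HH^2_+$ (more precisely crossing into $\HH^2_+$), $\mu$ is a polygonal path inside $\HH^2_+$, and $\sigma_2$ is the past-oriented subpath from some point $b \in \HH^2_+$ back into $\dS^2$, ending at $y$. Likewise, since $\sigma_{x,y}$ is timelike and crosses $\HH^2_+$, the geodesic $\sigma_{x,y}$ decomposes as $\sigma_{x,a'} * \nu * \sigma_{b',y}$ where $a', b' \in \HH^2_+$ are the two points where $\sigma_{x,y}$ enters and exits $\HH^2_+$ and $\nu = \sigma_{a',b'}$ is a hyperbolic geodesic segment. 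By the additivity of $\mathscr L$ under subdivision and the definition of $d$, it suffices to control each of the three pieces.

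The key reduction is the following: for a point $x \in \dS^2$ and a point $p \in \HH^2_+$ lying on the far side of $x^*$ from the dual of $x$ — the situation forced by time-orientation — the signed distance $d(x,p) = + d(x^*, p)$ equals the \emph{hyperbolic} distance in $\HH^2_+$ from $p$ to the geodesic line $x^*$. Thus the spacelike ``legs'' of any time-oriented path get converted, via duality, into hyperbolic distances from points of $\HH^2_+$ to fixed geodesic lines, and the whole length $\mathscr{L}(\sigma)$ becomes
$$ \mathscr L(\sigma) = d_{\HH^2_+}(x^*, a) + \mathscr L_{\HH^2_+}(\mu) + d_{\HH^2_+}(b, y^*), $$
where $a$ is the first point of $\mu$ and $b$ its last point (I should be careful here about the precise endpoints of $\mu$ relative to $\partial\HH^2_+$, possibly passing to a limit, since the ``crossing point'' is on $\partial\HH^2_+$; one handles this by taking $a,b$ to be the endpoints of the maximal subpath of $\sigma$ lying in the closure of $\HH^2_+$). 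Similarly $\mathscr L(\sigma_{x,y}) = d_{\HH^2_+}(x^*, a') + d_{\HH^2_+}(a',b') + d_{\HH^2_+}(b', y^*)$, and since $\sigma_{x,y}$ is a geodesic its intersection $\sigma_{a',b'}$ with $\HH^2_+$ realizes the common perpendicular (or at least a distance-minimizing configuration) between the two lines $x^*$ and $y^*$. So the claim becomes: among all broken geodesic paths in $\HH^2$ from the line $x^*$ to the line $y^*$, the one that goes straight along the common perpendicular (appropriately interpreted, allowing the lines to be ultraparallel, asymptotic, or crossing) has minimal length. This is a standard and elementary fact about the hyperbolic plane — the distance between two geodesics is realized by their common perpendicular (or is $0$ if they meet), and any polygonal path joining them has length at least that distance; one proves it by projecting orthogonally onto one of the lines, or by a direct convexity argument for the distance-to-a-geodesic function.

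\textbf{Main obstacle.} The genuinely delicate point is bookkeeping the signs and the geometry at the transition between $\dS^2$ and $\HH^2_+$: one must verify that the time-orientation hypothesis on $\sigma$ is exactly what guarantees, at every vertex where the path crosses $\partial\HH^2_+$, that the relevant points lie on the correct sides of the dual lines so that the signed de Sitter lengths become \emph{positive} hyperbolic distances under duality (rather than negative ones), and similarly that the hypothesis ``$\sigma_{x,y}$ is timelike and crosses $\HH^2_+$'' forces $x^*$ and $y^*$ to be in the configuration (disjoint, or meeting) where the common-perpendicular comparison is valid and gives a nonnegative quantity. I would organize this by first proving the pure-$\dS^2$-to-$\HH^2_+$ sub-case (a single spacelike segment followed by a hyperbolic segment) carefully by hand, checking the sign convention against the definition of $d$, and then assembling the general statement from these building blocks together with the hyperbolic-plane comparison, using additivity of $\mathscr{L}$ under subdivision to insert vertices at the $\partial\HH^2_+$ crossings. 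A limiting argument may be needed if $x^*$ and $y^*$ are asymptotic (common perpendicular at infinity), where the infimum is still the right quantity. No step requires heavy computation, but the case analysis at the light cone $\partial\HH^2_+$ is where care is essential.
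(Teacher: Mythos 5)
There is a genuine gap, and it sits at the very center of your proposed reduction. Your key formula
$$\mathscr L(\sigma) = d_{\HH^2_+}(x^*,a)+\mathscr L_{\HH^2_+}(\mu)+d_{\HH^2_+}(b,y^*)$$
implicitly treats the spacelike subpaths $\sigma_1,\sigma_2$ as if each were a single geodesic segment from a $\dS^2$ endpoint straight into $\HH^2_+$. In general they are polygonal, with several consecutive vertices $x_0,\ldots,x_i$ (resp.\ $x_{j+1},\ldots,x_n$) in $\dS^2$, and each consecutive de Sitter pair contributes a term $d(x_k,x_{k+1}) = -\,d(x_k^*,x_{k+1}^*)$, a \emph{negative} number, to $\mathscr L(\sigma)$. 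In particular $\mathscr L(\sigma_1)$ can itself be negative, so it cannot equal the nonnegative quantity $d_{\HH^2_+}(x^*,a)$. Because of this, the problem does not reduce to ``a broken hyperbolic path joining the lines $x^*$ and $y^*$ has length at least the common-perpendicular distance''; you must somehow make the positive hyperbolic contribution $\mathscr L_{\HH^2_+}(\mu)$ overcome all those intermediate negative de Sitter contributions, and your proposed building-block assembly does not explain how that cancellation occurs. (The naive merging of two de Sitter segments into one goes in the wrong direction: replacing $-d(L_1,L_2)-d(L_2,L_3)$ by $-d(L_1,L_3)$ does not obviously decrease length.)

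The paper's proof closes exactly this gap with one structural observation you are missing: after a small subdivision near $\partial\HH^2_+$, \emph{all} of the dual lines $x_i^*,x_{i-1}^*,\ldots,x_0^*,x_n^*,\ldots,x_{j+1}^*$ lie between the two hyperbolic endpoints $x_{i+1}$ and $x_j$, and are crossed \emph{in that precise order} by the geodesic $[x_{i+1},x_j]$ in $\HH^2_+$. Chopping $[x_{i+1},x_j]$ at these crossings and applying the triangle inequality to each piece gives
$$d(x_{i+1},x_j) \;\ge\; d(x_{i+1},x_i^*) + \sum d(x_k^*,x_{k-1}^*) + d(x_0^*,x_n^*) + \sum d(x_k^*,x_{k-1}^*) + d(x_{j+1}^*,x_j),$$
while the triangle inequality inside $\HH^2_+$ gives $\mathscr L_{\HH^2_+}(\mu)\ge d(x_{i+1},x_j)$. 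Substituting the first bound into the second produces exactly the positive terms needed to cancel the negative de Sitter contributions, leaving $\mathscr L(\sigma)\ge d(x_0^*,x_n^*)=\mathscr L(\sigma_{x,y})$. So your instinct that duality and hyperbolic-plane triangle inequalities are the right tools is correct, but the argument must be run through the whole ordered chain of intermediate dual lines, not just the two extremal ones $x^*$ and $y^*$. If you want to repair your write-up, the thing to prove is this ordering/nesting of the dual lines for a time-oriented path (which is where the ``future-oriented/past-oriented'' hypotheses and the subdivision enter), and then carry out the two-step triangle-inequality cancellation above.
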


\begin{proof}
Let $x= x_0, x_1,\ldots, x_n = y$ be the ordered vertices of $\sigma$, with $x_0, \ldots, x_i$ lying in $\dS^2$, $x_{i+1}, \ldots, x_j$ lying in $\HH^2_+$, and $x_{j+1}, \ldots, x_n$ lying in $\dS^2$. Then,
\begin{align*} \mathscr L(\sigma) &= -\sum_{k=0}^{i-1} d(x_k^*, x_{k+1}^*) \ \ \ +\epsilon_1 d_(x_i^*, x_{i+1})\\ & + \sum_{k=i+1}^j d(x_{k}, x_{k+1}) \ \ \ +\epsilon_2 d_(x_j, x_{j+1}^*) 
-\sum_{k = j+1}^{n-1} d(x_{k}^*, x_{k+1}^*)
\end{align*}
where $\epsilon_1, \epsilon_2 = \pm 1$.
We may assume, by sub-dividing, that $x_i$ and $x_{i+1}$ are on the same side of $x_i^*$ and that $x_j$ and $x_{j+1}$ are on the same side of $x_{j+1}^*$, so that $\epsilon_1 = \epsilon_2 = -1$. Therefore all of the dual lines $x_0^*, \ldots, x_i^*$ and $x_{j+1}^*, \ldots, x_n^*$ lie in between $x_{i+1}$ and $x_j$ in $\HH^2_+$.
In fact, the dual lines are arranged, in order from closest to $x_{i+1}$ to closest to $x_j$, as follows: $x_i^*, x_{i-1}^*, \ldots, x_0^*, x_n^*, x_{n-1}^*, \ldots, x_{j+1}^*$. 
See Figure \ref{Fig:HS}.
\begin{figure}[hbt] 
{\centering 

 \def\svgwidth{4.5in}
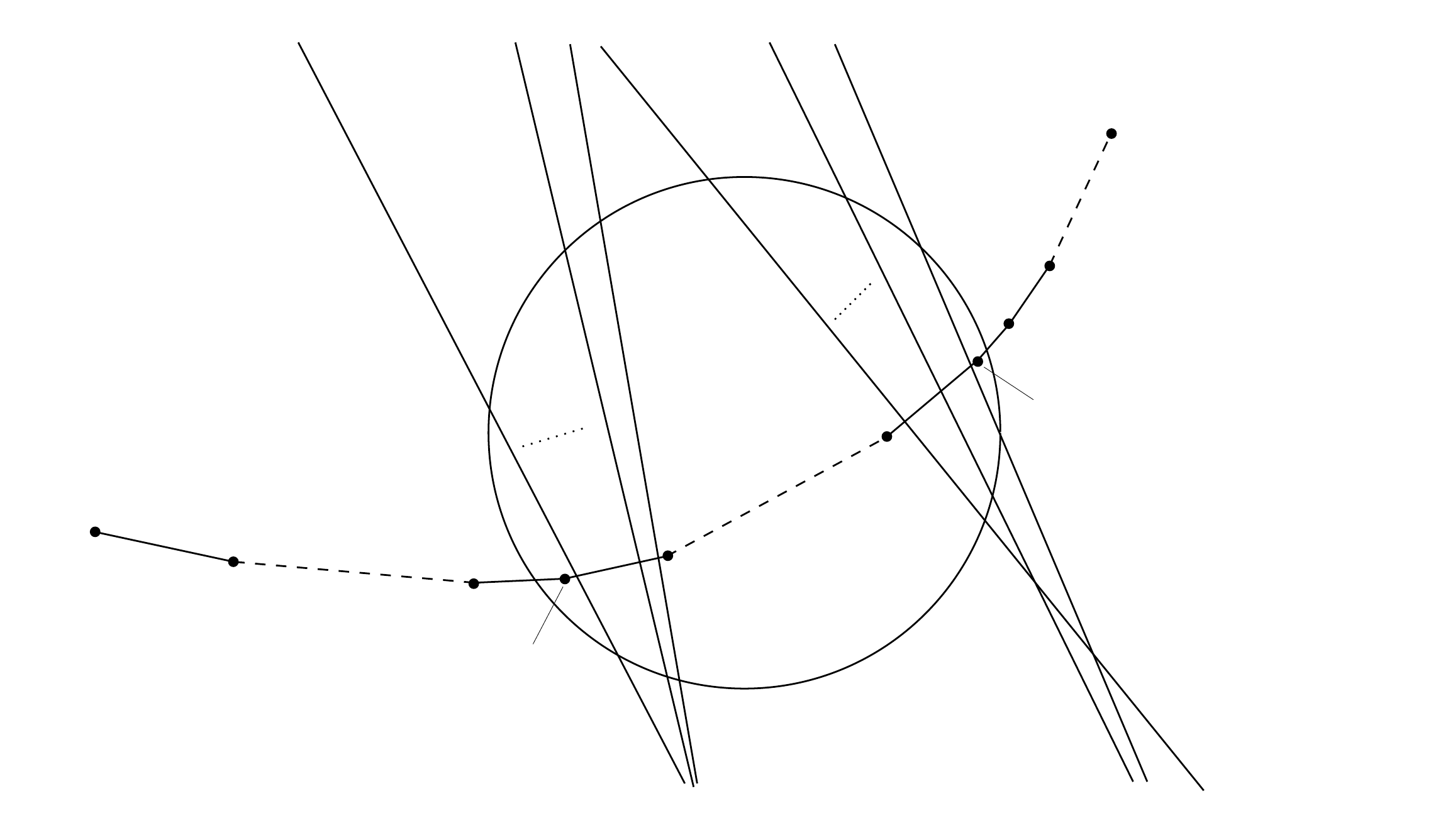

}

\caption{In the proof of Claim~\ref{cl:HS}, $\sigma$ is a time oriented polygonal path in $\HS^2$. In Lemma~\ref{lem:intrinsically-convex}, we apply Claim~\ref{cl:HS} to the case that $\sigma$ is the link of a vertex of $(\partial Q^*)_1$, which is convex (as drawn).}
\label{Fig:HS}
\end{figure}
Therefore, we have, by the triangle inequality in $\HH^2$, that 
\begin{align*}
d(x_{i+1}, x_{j}) \geq & \ \ d(x_{i+1}, x_i^*) + \sum_{k=1}^i d(x_k^*, x_{k-1}^*) \\ & + d(x_0^*, x_n^*) + \sum_{k=j+2}^ {n} d(x_k^*, x_{k-1}^*) + d(x_{j+1}^*, x_j)\end{align*}
since the line connecting $x_{i+1}$ to $x_j$ crosses each of the dual lines in the above equation. 
Again by the triangle inequality in $\HH^2$, we also have
$$d(x_{i+1}, x_{j}) \leq \sum_{k=i+1}^{j-1} d(x_{k}, x_{k+1}) .$$
It follows that $\mathscr L(\sigma_{x,y}) = d(x_0^*, x_n^*) \leq \mathscr L(\sigma)$.

\end{proof}

\subsection{Proof of Lemma~\ref{lem:intrinsically-convex}}

To complete the proof of Proposition~\ref{prop:imageinA-AdS}, we now prove Lemma~\ref{lem:intrinsically-convex} which states that the convex pleated polygon $(\partial Q^*)_1$ is intrinsically locally convex.
Consider a vertex $f_i^*$ of $(\partial Q^*)_1$.  We consider the link $\sigma$ at $f_i^*$ of $(\partial Q^*)_1$, a polygonal path in the space of rays in $T_{f_i^*} \AdS^3$ which is naturally a copy of $\HS^2$. The endpoints $x$ and $y$ of $\sigma$ correspond to two consecutive dual edges $e_i^*$ and $e_{i+1}^*$ in the perimeter of $(\partial Q^*)_1$. Since the $e_j^*$ are space-like, $x,y$ lie in $\dS^2 \subset \HS^2$. By assumption, the edges $e_i$ and $e_{i+1}$ intersect outside of $\AdS^3$ (at a point which is positive with respect to the $(2,2)$ form), and therefore the plane containing $e_i^*$ and $e_{i+1}^*$ (which is dual to this point) is time-like, thus so is $\sigma_{x,y}$. By convexity of $Q$, the geodesic $\sigma_{x,y}$ passes through a hyperbolic region of $\HS^2$, which without loss in generality we take to be $\HH^2_+$. Further, by convexity of $Q$ and the fact that each of the faces of $\partial Q^*$ is time-like, the link $\sigma$ at $f_i^*$ of $(\partial Q^*)_1$ is time-oriented in the sense defined in the previous section. Therefore, it follows from Claim~\ref{cl:HS} that $\mathscr L(\sigma) \geq \mathscr L(\sigma_{x,y}) > 0$.
Lemma~\ref{lem:intrinsically-convex} now follows because $\mathscr L(\sigma)$ is a complete invariant of the local geometry of $(\partial Q^*)_1$ at $f_i^*$; the development of $(\partial Q^*)_1$ onto a copy of $\AdS^2$ is convex at this vertex if and only if the length of the link is positive. This completes the proof of Proposition~\ref{prop:imageinA-AdS}.

%
%
%

\section{Topological arguments}\label{sec:topology}

\subsection{The topology of the space of ideal polyhedra}\label{sec:topolo_poly}

\begin{prop}\label{topol_poly}
If $N \geq 3$, the space $\overline{\AdSPoly}_N$ is connected.
If $N \geq 6$, then $\AdSPoly_N$ is connected and simply connected.
\end{prop}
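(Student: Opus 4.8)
The plan is to parameterize $\overline{\AdSPoly}_N$ and $\AdSPoly_N$ explicitly enough to read off connectivity and, in the second case, simple connectivity. The starting point is the description from Section~\ref{sec:ideal_poly}: by Proposition~\ref{prop:cyclic-order}, an element of $\AdSPoly_N$ is determined by a pair $(p_L, p_R)$ of ideal $N$-gons whose vertices $x_1,\dots,x_N$ and $y_1,\dots,y_N$ lie in the \emph{same} cyclic order on $\RP^1$, taken up to the diagonal $\PSL(2,\RR)$-action. After normalizing the first three vertices of $P$ to $(0,0),(1,1),(\infty,\infty)$, the space $\AdSPoly_N$ becomes an open subset of $\poly_N \times \poly_N$, namely the set of pairs of ideal $N$-gons with the same cyclic vertex order and with the normalization above. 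Since $\poly_N$ itself is (after the same normalization) an open convex cell — the ordered configuration space of $N-3$ points on an interval of $\RP^1$, which is a convex open subset of $\RR^{N-3}$ — the first task is to understand the "same cyclic order'' constraint cutting out $\AdSPoly_N$ inside the product of two such cells.

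First I would handle $\overline{\AdSPoly}_N$: its points are non-degenerate polyhedra together with the collapsed ones, parameterized by $\poly_N$ (an ideal polygon, thought of as $p_L = p_R$). I claim $\overline{\AdSPoly}_N$ deformation retracts onto the locus $p_L = p_R \cong \poly_N$. Indeed, given $(p_L,p_R)$ with matching cyclic order, Corollary~\ref{cor:earthquake} produces a measured lamination $\lambda$ (the bending data $\theta_+$) with $p_R = E_\lambda p_L$; the straight-line path $t\mapsto E_{t\lambda} p_L$ from $p_L$ to $p_R$ stays in $\poly_N$, and the family $(p_L, E_{t\lambda}p_L)$ stays inside $\overline{\AdSPoly}_N$ because shearing preserves cyclic order (it is realized by an honest earthquake of the hyperbolic plane). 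This gives a retraction onto $\poly_N$, which is a convex cell, hence contractible; in particular $\overline{\AdSPoly}_N$ is connected for all $N \ge 3$. (One must check the retraction is continuous and that it extends over the collapsed stratum, where $\lambda = 0$; this is routine since $E_{t\lambda}$ depends continuously on $(\lambda, p_L)$.)

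For the statement about $\AdSPoly_N$ itself, I would argue that $\AdSPoly_N$ is homotopy equivalent to a configuration-type space and compute its low-dimensional homotopy directly. Using the earthquake coordinates $(p_L, \lambda)$ with $\lambda$ a measured lamination on $p_L$, the fiber over $p_L\in \poly_N$ is the space $\ML_N^{\mathrm{fill}}$ of measured laminations on the standard ideal $N$-gon whose support (together with $\gamma$) is three-connected — equivalently $\lambda$ lies in the interior of $\ML_N$ relative to the filling condition. The complex $\ML_N$ is a cone, and the filling (three-connected) locus is its complement of a subcomplex of positive codimension; when $N \ge 6$ the number of diagonals $N-3 \ge 3$ is large enough that this complement is connected and simply connected (the non-filling locus has codimension $\ge 2$ inside each top cell and the cell structure is that of a manifold-with-boundary away from it). Combining this with the contractibility of $\poly_N$ and the triviality (up to homotopy) of the bundle $\AdSPoly_N \to \poly_N$ — which follows because $\poly_N$ is contractible — yields that $\AdSPoly_N$ is connected and simply connected for $N \ge 6$. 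For the excluded small cases $N=4,5$ one notes the fiber $\ML_N^{\mathrm{fill}}$ can fail to be simply connected (too few diagonals), which is exactly why the hypothesis $N\ge 6$ appears and why the main theorem treats $N=4,5$ separately.

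\textbf{Main obstacle.} The delicate point is the topology of the fiber $\ML_N^{\mathrm{fill}}$ — i.e., understanding the complex of filling (three-connected) laminations on the ideal $N$-gon and verifying it is $1$-connected precisely when $N \ge 6$. This is a combinatorial-topological statement about which sub-multicurves of the $N$-gon are three-connected when doubled, and establishing connectivity and simple-connectivity requires a careful stratification of $\ML_N$ by the support graph and a Morse-theory or nerve-type argument on that stratification; the codimension count that makes removing the non-filling strata harmless is exactly what breaks for $N = 4, 5$. Everything else (the retraction onto the diagonal, continuity of earthquake coordinates, convexity of $\poly_N$) is straightforward once Corollary~\ref{cor:earthquake} is in hand.
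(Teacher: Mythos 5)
Your first paragraph contains the right ingredient, but you do not quite use it: by Proposition~\ref{prop:cyclic-order} and the definition of the marking, the space $\overline{\AdSPoly}_N$ is \emph{exactly} $\poly_N \times \poly_N$ with the quotient taken by $\PSL(2,\RR)\times\PSL(2,\RR)$ acting factor-by-factor --- the ``same cyclic order'' condition you worry about is automatically enforced by the marking, so there is no further constraint to understand. Normalizing the first three vertices of each polygon to $\infty, 0, 1$ identifies $\poly_N$ with an open cell $\cong\RR^{N-3}$, hence $\overline{\AdSPoly}_N \cong \RR^{N-3}\times\RR^{N-3}$ is contractible. Your earthquake-retraction argument for the first statement does work, but this is a longer route to the same conclusion.

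The genuine gap is in the second part, and it comes from misidentifying the non-degeneracy condition. You claim the fiber over $p_L$ is the space $\ML_N^{\mathrm{fill}}$ of \emph{filling} (three-connected) laminations, and then correctly flag that analyzing the topology of that complex would be delicate. But this condition is not what cuts out $\AdSPoly_N$: a polyhedron is non-degenerate if and only if its vertices are not coplanar, which happens if and only if $p_L$ and $p_R$ are not isometric as marked polygons, i.e.\ $\lambda\neq 0$. Three-connectedness of the $1$--skeleton is then \emph{automatic}, by Steinitz's theorem applied to the convex polyhedron $P$; you do not need to impose it and it does not cut anything further out of $\ML_N$. So $\AdSPoly_N$ is simply $\overline{\AdSPoly}_N$ minus the diagonal, i.e.\ $\RR^{N-3}\times\RR^{N-3}$ minus its diagonal, which is homotopy equivalent to $S^{N-4}$. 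That sphere is connected and simply connected exactly when $N\ge 6$; for $N=4$ it is $S^0$ (disconnected) and for $N=5$ it is $S^1$ (not simply connected), which is the actual reason the hypothesis appears --- not any failure of a filling-laminations complex to be $1$-connected. Once the fiber is identified correctly as $\ML_N\setminus\{0\}$ (or equivalently $\poly_N$ minus a point), the ``main obstacle'' you name dissolves and the computation is immediate.
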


\begin{proof}
By Proposition~\ref{prop:cyclic-order}, the space $\overline{\AdSPoly}_N$ identifies with the space $\poly_N \times \poly_N$ of pairs $(p_L, p_R)$ of marked ideal $N$-gons in the hyperbolic plane considered up to the action of $\PSL_2 \RR \times \PSL_2\RR$. 
The space $\AdSPoly_N$ is obtained from $\overline{\AdSPoly}_N$ by removing all pairs $(p_L, p_R)$ such that $p_L$ and $p_R$ are isometric.  Using the action of $\PSL_2 \RR \times \PSL_2\RR$ we may, in a unique way, put $p_L$ and $p_R$ into standard position so that the first three vertices of each polygon are $\infty,0$ and~$1$.  The remaining vertices of $p_L$ form an increasing sequence of $N-3$ points $x_4<\cdots < x_N$ in $(1,\infty)$. Similarly, the remaining vertices of $p_R$ also form an increasing sequence $ y_4 < \cdots < y_N$ in $(1,\infty)$ and $p_L$ is isometric to $p_R$ if and only if $(x_4, \ldots, x_N) = (y_4, \ldots, y_N)$. It follows that $\overline{\AdSPoly}_N$ is homeomorphic to $\RR^{N-3} \times \RR^{N-3}$ and $\AdSPoly_N$ is homeomorphic to $\RR^{N-3} \times \RR^{N-3}$ minus the diagonal. Therefore $\AdSPoly_N$ is homotopy equivalent to the sphere of dimension $N-4$.
\end{proof}

\begin{prop}
If $N \geq 6$, then $\HPPoly_N$ is connected and simply connected.
\end{prop}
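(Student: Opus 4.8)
The plan is to parametrize $\HPPoly_N$ directly and then read off its topology, in close analogy with Proposition~\ref{topol_poly}. The key point is that, just as $\overline{\AdSPoly}_N$ identifies with $\poly_N \times \poly_N$, the space of all marked convex ideal polyhedra in $\HP^3$ with $N$ vertices (degenerate ones included) identifies with the tangent bundle $T\poly_N$ of the space of marked ideal $N$-gons in $\HH^2$: by the discussion in Section~\ref{subsec:polyhedra-HP}, a polyhedron $P$ corresponds to the pair $(\varpi(P), V(P))$, where $p = \varpi(P) \in \poly_N$ and $V(P) \in T_p\poly_N$ is the infinitesimal deformation of $p$ determined by $P$. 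I would first check that every pair $(p, V)$ arises. Writing the ideal vertices of $p$ as $v_1, \dots, v_N \in \RP^1$ and a representative of $V$ as tangent vectors $w_i \in T_{v_i}\RP^1$, the points $z_i = v_i + \sigma w_i \in \partial_\infty\HP^3 \cong T\RP^1$ span a convex ideal polyhedron; as in the proof of Proposition~\ref{prop:cyclic-order}, it is enough to verify that any four of them span an ideal simplex, and this is automatic since the cross ratio $z = (z_{i_1}, z_{i_2}; z_{i_3}, z_{i_4})$ has the form $z = x + \sigma \dot x$ with $x = (v_{i_1}, v_{i_2}; v_{i_3}, v_{i_4}) \in \RR\setminus\{0,1\}$, so that $z \in \BB$ and $|z|^2 = x^2 > 0$, $|1-z|^2 = (1-x)^2 > 0$, whence~\eqref{spacelike} applies. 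Then I would identify the non-degenerate polyhedra: $P$ is collapsed (onto an ideal polygon inside a non-degenerate plane) if and only if $V(P) = 0$ in $T_{\varpi(P)}\poly_N$, since one may use the structure group $\mathbb G_0 \cong T\PSL(2,\RR)$ to trivialize all the $w_i$ simultaneously exactly when they come from a Killing field, i.e.\ when $V(P) = 0$. Thus $\HPPoly_N$ is homeomorphic to the complement of the zero section of $T\poly_N$.

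With this in hand the topology is immediate. As in the proof of Proposition~\ref{topol_poly}, the structure group puts $p$ in the standard position where its first three vertices are $\infty, 0, 1$ and the remaining ones form an increasing tuple $1 < x_4 < \dots < x_N$, exhibiting $\poly_N \cong \RR^{N-3}$; the infinitesimal part of $\mathbb G_0$ then identifies $T_p\poly_N$ with $\RR^{N-3}$ as well. Hence $\HPPoly_N \cong \RR^{N-3} \times (\RR^{N-3}\setminus\{0\})$, which deformation retracts onto a sphere $S^{N-4}$. For $N \geq 6$ we have $N - 4 \geq 2$, so this sphere — and therefore $\HPPoly_N$ — is connected and simply connected.

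The step I expect to require the most care is establishing that $P \mapsto (\varpi(P), V(P))$ is a homeomorphism onto $T\poly_N$ minus its zero section, in particular that a convex ideal polyhedron in $\HP^3$ is the convex hull of its $N$ ideal vertices, so that no extra vertices or faces can appear and the collapse locus really is cut out by $V = 0$. This is the half-pipe analogue of Proposition~\ref{prop:cyclic-order} together with the identification $\overline{\AdSPoly}_N \cong \poly_N\times\poly_N$; it follows from the same convexity arguments used there, now applied in the degenerate algebra $\RR + \RR\sigma$, and can also be obtained as a limit of the AdS statement via Proposition~\ref{prop:limit}. The remainder is a direct transcription of the argument for Proposition~\ref{topol_poly}.
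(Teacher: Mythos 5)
Your proposal follows essentially the same route as the paper: identify $\HPPoly_N$ with pairs $(p, V)$ — an ideal $N$-gon together with a non-trivial infinitesimal deformation — modulo $T\PSL(2,\RR)$, normalize by placing the first three vertices at $\infty, 0, 1$ and killing the tangent vectors there, and conclude that $\HPPoly_N \cong T\RR^{N-3}$ minus the zero section, hence homotopy equivalent to $S^{N-4}$. The only difference is that you spell out the cross-ratio verification that every pair $(p,V)$ actually arises and that collapse is exactly $V=0$; the paper treats this as already established in Section~\ref{subsec:polyhedra-HP}, so your extra care is welcome but not a deviation in method.
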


\begin{proof}
Recall from Section~\ref{subsec:polyhedra-HP} that the space $\HPPoly_N$ identifies with the space of pairs $(p, V)$ where $p$ is a marked ideal $N$-gon in the hyperbolic plane and $V$ is a non-trivial infinitesimal deformation of $p$ considered up to the action of $T \PSL_2 \RR$. Using this action we may, in a unique way, place $(p, V)$ in standard position so that the first three vertices of $p$ are $x_1 = \infty, x_2 = 0, x_3 = 1$ and so that $V(x_1) = 0$, $V(x_2) = 0$, and $V(x_3) = 0$. The remaining $N-3$ tangent vectors are not all zero and their basepoints form an increasing sequence in $(1, \infty)$. It follows that $\HPPoly_N$ is homeomorphic to $T\RR^{N-3}$ minus the zero section. Therefore $\HPPoly_N$ is homotopy equivalent to the sphere of dimension $N-4$.
\end{proof}

\noindent As a corollary of Theorem~\ref{thm:main-HP} and this proposition we have:

\begin{cor}
\label{cor:topology}
The space of angle assignments $\Angles$ is connected and simply connected whenever the number of vertices $N \geq 6$.
\end{cor}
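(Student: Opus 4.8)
The plan is to transport the topological information from the space of half-pipe polyhedra through the homeomorphism established in Theorem~\ref{thm:main-HP}. First I would recall that $\Angles$ here means the target of $\PsiHP$, namely the disjoint union of all $\Angles_\Gamma$ with $\Gamma \in \Graph(\Sigma_{0,N},\gamma)$ glued along the faces corresponding to common subgraphs, so that $\Angles$ depends only on $N$. Theorem~\ref{thm:main-HP} asserts precisely that $\PsiHP : \HPPoly_N \to \Angles$ is a homeomorphism. Thus any topological property of $\HPPoly_N$ is shared by $\Angles$.

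Second, I would invoke the proposition immediately preceding this corollary, which states that $\HPPoly_N$ is connected and simply connected whenever $N \geq 6$ (indeed, it is there shown to be homotopy equivalent to $S^{N-4}$, which is connected and simply connected for $N-4 \geq 2$, i.e.\ $N \geq 6$). Since connectedness and simple connectedness are topological invariants preserved under homeomorphism, it follows that $\Angles \cong \HPPoly_N$ is connected and simply connected for $N \geq 6$, which is the desired conclusion. There is no real obstacle here: the entire content has already been carried out in Theorem~\ref{thm:main-HP} and in the computation of the homotopy type of $\HPPoly_N$; the corollary is simply the conjunction of these two facts.
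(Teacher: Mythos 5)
Your proof is correct and is precisely the paper's argument: the paper states the corollary as an immediate consequence of Theorem~\ref{thm:main-HP} (that $\PsiHP$ is a homeomorphism onto $\Angles$) together with the preceding proposition that $\HPPoly_N$ is homotopy equivalent to $S^{N-4}$. Nothing is missing.
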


\subsection{$\PsiAdS$ is a local homeo}

Lemma~\ref{lem:PsiAdS-rigidity} says that for each triangulation $\Gamma \in\Graph(N, \gamma)$, the map $\Psi_\Gamma: \AdSPoly \to \RR^E$ is a local immersion at any ideal polyhedron $P$ whose $1$--skeleton is contained in $\Gamma$. We now deduce the following result.

\begin{lemma} \label{lem:local-diffeo}
$\PsiAdS: \AdSPoly \to \Angles$ is a local homeomorphism.
\end{lemma}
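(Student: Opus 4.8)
The plan is to show that $\PsiAdS$ is both an open map and a local immersion at every point, since the domain and target have the same (finite) dimension and a local immersion between manifolds of equal dimension that is also open is a local homeomorphism. The subtlety is that $\AdSPoly$ is stratified by combinatorial type $\Gamma$, and the relevant "manifold" structure near a polyhedron $P$ with $1$--skeleton $\Gamma_0$ is captured by choosing any triangulation $\Gamma$ refining $\Gamma_0$ and using Lemma~\ref{lem:PsiAdS-rigidity}, which asserts that $\PsiAdS_\Gamma : \AdSPoly \to \RR^E$ is a local immersion near $P$. Since $\AdSPoly$ is a manifold of dimension $2(N-3)$ (by Proposition~\ref{topol_poly}) and $\PsiAdS_\Gamma$ is a local immersion, the image $\PsiAdS_\Gamma(\AdSPoly)$ is locally a $2(N-3)$--dimensional submanifold of $\RR^E$, and $\PsiAdS_\Gamma$ restricts to a local homeomorphism from a neighborhood of $P$ onto a neighborhood of $\PsiAdS_\Gamma(P)$ in that submanifold.

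The key steps, in order, are as follows. First, fix $P \in \AdSPoly_{\Gamma_0}$ and choose a triangulation $\Gamma \in \Graph(N,\gamma)$ refining $\Gamma_0$; then near $P$ every nearby polyhedron $P'$ also has $1$--skeleton refined by $\Gamma$, so $\PsiAdS_\Gamma$ is defined and (by Lemma~\ref{lem:PsiAdS-rigidity}) a local immersion there. Second, observe that $\PsiAdS(P')$ and $\PsiAdS_\Gamma(P')$ determine one another in a neighborhood of $P$: the dihedral angles along the true edges of $P'$ are read off directly from $\PsiAdS_\Gamma(P')$, and conversely the angles along the added diagonals of $\Gamma$ (which lie in the interior of faces of $P'$ and hence carry dihedral angle zero, or, for polyhedra whose skeleton is slightly larger, carry the appropriate angle) are recovered continuously; more precisely, $\PsiAdS$ near $P$ is the composition of $\PsiAdS_\Gamma$ with the projection that forgets the coordinates dual to edges not present in the skeleton, and this projection is a homeomorphism onto the stratum of $\Angles$ containing $\PsiAdS(P)$ by the definition of $\Angles$ as the disjoint union of the $\Angles_\Gamma$ glued along common subgraphs. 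Third, by Proposition~\ref{prop:imageinA-AdS} the image lands in $\Angles$, which by construction is (near $\PsiAdS(P)$) a manifold of dimension $2(N-3)$: indeed $\Angles_\Gamma$ is cut out of $\RR^E$ by the $N$ linear equations (ii) (one of which is redundant, giving $N-1$ independent constraints) among $3N-6$ coordinates, hence has dimension $3N-6-(N-1) = 2N-5$... one checks this matches after accounting for the gluing and the marking so that the local dimension of $\Angles$ equals $\dim \AdSPoly = 2(N-3)$. Fourth, conclude: $\PsiAdS$ near $P$ is a local immersion from a $2(N-3)$--manifold into a $2(N-3)$--manifold, hence by invariance of domain a local homeomorphism.

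The main obstacle I anticipate is bookkeeping the stratified structure cleanly: one must verify that the local dimension of $\Angles$ at $\PsiAdS(P)$ genuinely equals $\dim\AdSPoly$ (so that "local immersion" upgrades to "local homeomorphism"), and that passing between $\PsiAdS$ and $\PsiAdS_\Gamma$ respects the stratum structure — i.e. that the natural projection from $\Angles_\Gamma$ onto the appropriate face corresponding to $\Gamma_0$ is a homeomorphism near the relevant point and that $\Angles$ is, locally, a topological manifold of the right dimension even along its singular-looking strata (where several $\Angles_\Gamma$ meet). This is where one uses that $\AdSPoly$ itself is a manifold and that $\PsiAdS_\Gamma$ is a genuine (injective-on-tangent-vectors) local immersion, so that the image submanifold structure is inherited and then pushed down along the forgetful projection. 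Once the dimension count and the stratum identification are in place, the conclusion is immediate from invariance of domain.
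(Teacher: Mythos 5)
Your overall architecture is the same as the paper's (local immersion into a target of matching dimension, plus openness from the cell-complex topology on $\Angles$, yields local homeomorphism), but you have a genuine error in the dimension count, and that count is exactly the load-bearing step.

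You write that the $N$ linear equations of condition (ii) have one redundancy, giving $N-1$ independent constraints and hence $\dim \Angles_\Gamma = 3N-6-(N-1) = 2N-5$, and then wave at ``accounting for the gluing and the marking'' to reconcile this with $\dim \AdSPoly = 2(N-3) = 2N-6$. That reconciliation cannot work: $2N-5 \neq 2N-6$, and no amount of bookkeeping with strata or markings changes the dimension of the linear solution space of (ii). In fact there is \emph{no} redundancy: a relation $\sum_i a_i \mathcal E_i = 0$ among the vertex-sum functionals forces $a_i + a_j = 0$ for every edge $v_iv_j$, i.e. a proper sign $2$-coloring of $\Gamma$; since $\Gamma$ (after triangulating) contains a $3$-cycle it is not bipartite, so all $a_i = 0$. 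Thus the rank is exactly $N$ and $\dim\Angles_\Gamma = 3N-6-N = 2N-6$, which is what makes the invariance-of-domain step legitimate. The paper proves rank $N$ by a slightly different route, splitting into cases according to the parity of $N$ (for odd $N$ solving for all equatorial weights by alternating sums; for even $N$ the alternating sum of the $\mathcal E_i$ kills the equator but survives on a diagonal whose endpoints are an even number of steps apart on the equator, which must exist in a triangulation). Either argument is fine, but one of them must be made: your guess that there is a redundancy — perhaps by analogy with Euler-characteristic-type relations in other angle-structure settings — is simply false here, and with $2N-5$ in place of $2N-6$ your local immersion is into a space of strictly larger dimension and you cannot conclude local surjectivity.

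A secondary but smaller issue: you assert that $\PsiAdS_\Gamma$ near $P$ factors as $\PsiAdS$ followed by a projection that ``forgets the coordinates dual to edges not present in the skeleton.'' This is backwards. The map $\PsiAdS_\Gamma$ records angles along all edges of $\Gamma$ including the added diagonals (which carry angle $0$ exactly when the corresponding diagonal lies in a flat face), while $\PsiAdS$ records the angles of the actual $1$--skeleton. What you really want to say, and what the paper says, is that $\PsiAdS_\Gamma$ is a local diffeomorphism onto $V_\Gamma$ (the linear space cut out by (ii)) in a neighborhood of $P$, for every triangulation $\Gamma$ whose edge set contains the $1$--skeleton of $P$; the piecing-together across strata then produces $\PsiAdS$, which is open by definition of the topology on $\Angles$. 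Once the dimension count is corrected, the rest of your argument goes through as in the paper.
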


\begin{proof}
Given any $\Gamma \in \Graph(\Sigma_{0,N}, \gamma)$, we must first show that the dimension of $\Angles_\Gamma$ (if non-empty) is $2N -6$. The dimension of the convex cone $\Angles_\Gamma$ is determined by the rank of the $N$ equations of condition (ii). Assume first that $N$ is odd. Then these equations may be used to eliminate the the $N$ weights on the equator. Indeed if $\mathcal E_i$ denotes the equation of (ii) determined by the vertex $v_i$ of $\Gamma$, then treating indices cyclically we find that $$\mathcal E_{j+1} - \mathcal E_{j+2} + \cdots - \mathcal E_{j-1} + \mathcal E_j$$ is an equation which depends on (the weight at) the edge $e_j$ with endpoints $v_j$ and $v_{j+1}$ but on no other edge of the equator. This shows that the equations $\mathcal E_1, \ldots, \mathcal E_N$ have rank $N$ and the dimension of $\Angles_\Gamma$ is therefore $3N -6  - N = 2N-6$. Next if $N$ is even, we may only eliminate $N-1$ of the weights on the equator because all equatorial weights cancel in the alternating sum:
$$\mathcal E_1 - \mathcal E_2 + \cdots +\mathcal E_{N-1} - \mathcal E_N.$$
However, note that this sum is not trivial since it depends non-trivially on (the weight at) any edge whose two endpoints are an even number of edges apart along the equator. Since $\Gamma$ is a triangulation, there must exist some such edge. So the equations defined by condition (ii) in the definition of  $\Angles_\Gamma$ have rank $N$ in this case as well.

Next, for each triangulation $\Gamma$, let $V_\Gamma \subset \RR^{E(\Gamma)}$ be the subspace satisfying the equations of condition (ii). Since $V_\Gamma$ has dimension $2N-6$, as shown above, each of the maps $\PsiAdS_\Gamma$ is a local diffeomorphism at any polyhedron $P$ whose $1$--skeleton is a subgraph of $\Gamma$. The map $\PsiAdS$, pieced together from the $\PsiAdS_\Gamma$over all $\Gamma$, is an open map by the definition of the topology of the complex $\Angles$. Further, since each $\PsiAdS_\Gamma$ is a local diffeomorphism in a neighborhood of any point in the \emph{closure} of the stratum of $\AdSPoly$ defined by $\Gamma$, we have that $\PsiAdS$ is a local bijection to $\Angles$. It follows that $\PsiAdS$ is a local homeomorphism.
\end{proof}

Lemma~\ref{lem:local-diffeo} and Lemma~\ref{lem:PsiAdS-proper} imply that $\PsiAdS$ is a covering. Since for $N \geq 6$, $\AdSPoly$ is connected and $\Angles$ is connected and simply connected, we conclude that Theorem~\ref{thm:main-AdS-angles} holds when $N \geq 6$.

\subsection{The cases $N=4,5$}
Although the topology of $\Angles$ is slightly more complicated when $N=4,5$, the proof of Theorem~\ref{thm:main-AdS-angles} is straightforward in these cases. In the case $N=4$, the space $\AdSPoly$ is the space of marked (non-degenerate) ideal tetrahedra and has two components and the map $\PsiAdS$ is easily seen to be a homeomorphism. Indeed, an ideal tetrahedra in $\AdS^3$ is determined by its shape parameter (see Section~\ref{ads_background}); its dihedral angles may be determined directly from the shape parameter. Conversely, the shape parameter is determined by any two angles along edges emanating from a common vertex. Therefore an ideal tetrahedron is determined entirely by the local geometry near any ideal vertex. 

In the case $N=5$, both $\AdSPoly$ and $\Angles$ are homotopy equivalent to the circle. To show that the map $\PsiAdS$ is a homeomorphism, rather than some non-trivial covering, consider an ideal polyhedron $P$. We may cut $P$ into two ideal tetrahedra $T, T'$ along some interior triangular face $\Delta$. The tetrahedron $T$ is determined by the angles along the three edges emanating from any vertex of $T$, in particular the vertex not belonging to $\Delta$. These three angles are dihedral angles of $P$ as well, so it follows that the geometry of $T$ is determined by the dihedral angles of $P$. Similarly, the geometry of $T'$ is determined by the dihedral angles of $P$. Since there is exactly one way to glue $T$ and $T'$ back together (with the correct combinatorics), the geometry of $P$ is determined by its dihedral angles, i.e. $\PsiAdS$ is injective, and is therefore a homeomorphism.

\subsection{Proof of Theorem \ref{thm:main}}\label{sec:proof_main}
Finally, we prove Theorem~\ref{thm:main}. The equivalence of (C) and (H) is immediate from Theorems~\ref{thm:main-HP} and~\ref{thm:main-AdS-angles}.
We now show the equivalence of (H) and (S) using Theorem \ref{thm:main-AdS-angles} and Rivin's Theorem, discussed in Section \ref{sec:rivin_param}. 
Let $\Gamma \in \Graph(\Sigma_{0,N}, \gamma)$, and as usual let $E = E(\Gamma)$ denote the edges of $\Gamma$. First suppose $P\in \AdSPoly_\Gamma$, and let $\theta = \PsiAdS(P)\in \Angles_{\Gamma}$. For any $t > 0$, the weights $t \theta$ are also in $\Angles_\Gamma$. We choose $t > 0$ so that:
\begin{itemize}
 \item[(A)] for all edges $e \in E$, $t\theta(e)\in (-\pi,\pi)\setminus\{0\}$. 
 \item[(B)] for all of the finitely many simple cycles $c$ in $\Gamma^*$, the sum of the values of $t \theta$ along $c$ is greater than $-\pi$.
\end{itemize}
Note that any simple cycle $c$, as in (B) above, crosses the equator $\gamma$ at least twice. If $c$ crosses the equator $\gamma$ exactly twice, then this sum will either be zero, if $c$ bounds a face of $\Gamma^*$ (condition (ii) in the definition of $\Angles_\Gamma$), or positive if not (condition (iii)).
Noting that $t \theta(e) \in (-\pi, 0)$ if $e \in \gamma$ and $t \theta(e) \in (0, \pi)$ if not, we let $\theta': E \to (0,\pi)$ be defined by 
$$\theta'(e) =
\left\{
	\begin{array}{ll}
		t\theta(e)  & \mbox{if $e$ is not an edge of $\gamma$,} \\
		\pi+t\theta(e) & \mbox{if $e$ is an edge of $\gamma$.}
	\end{array}
\right.$$ Then $\theta'$ satisfies the three conditions of Rivin's Theorem and is therefore realized as the dihedral angles of some ideal polyhedron $P'$ in $\HH^3$. In the projective model for $\HH^3$, $P'$ is a polyhedron inscribed in the sphere with $1$--skeleton $\Gamma$.

Conversely, suppose $P'$ is an ideal polyhedron in $\HH^3$ with $1$--skeleton $\Gamma$. Then the dihedral angles $\theta':E \to (0,\pi)$ of $P'$ satisfy the three conditions of Rivin's Theorem. We define $\theta: E \to \RR$ by 
$$\theta(e) =
\left\{
	\begin{array}{ll}
		\theta'(e)  & \mbox{if $e$ is not an edge of $\gamma$,} \\
    \theta'(e)-\pi & \mbox{if $e$ is an edge of $\gamma$.}
	\end{array}
\right.$$

Then $\theta$ is easily seen to satisfy the three conditions in the definition of $\Angles_\Gamma$ and so by Theorem~\ref{thm:main-AdS-angles}, $\theta = \PsiAdS(P)$ for some $P \in \AdSPoly$. In the projective model for $\AdS^3$, $P$ is a polyhedron inscribed in the hyperboloid with $1$--skeleton~$\Gamma$.
This completes the proof of Theorem~\ref{thm:main}.

\begin{Remark}
Let $\Gamma$ be a planar graph and suppose $\Gamma$ is realized as the $1$--skeleton of some ideal polyhedron inscribed in the sphere. Note that $\Gamma$ may contain many different Hamiltonian cycles. Applying the above to each Hamiltonian cycle $\gamma$ shows the following: The components of the space of realizations of $\Gamma$ as the $1$--skeleton of a polyhedron inscribed in the hyperboloid (or similarly, the cylinder) is in one-one correspondence with the Hamiltonian cycles in $\Gamma$. 
\end{Remark}

\begin{appendix}
  
\section{Ideal polyhedra with dihedral angles going to zero}
\label{sec:transitional-proof}

We outline an alternative proof of Proposition~\ref{prop:imageinA-AdS} using transitional geometry ideas.
The argument uses Lemmas~\ref{lem:PsiAdS-proper} and~\ref{lem:PsiAdS-rigidity} to produce deformation paths of polyhedra with dihedral angles going to zero in a prescribed manner. Here is the basic idea: starting from an ideal polyhedron $P \in \AdSPoly$ with dihedral angles $\theta$, we deform $P$ so that the dihedral angles are proportional to $\theta$ and decrease toward zero. An appropriate rescaled limit of these collapsing polyhedra yields an ideal polyhedron $P_\infty'$ in $\HP^3$ whose (infinitesimal) dihedral angles are precisely $\theta$; we then conclude, via Proposition~\ref{prop:imageinA-HP}, that $\theta$ was in $\Angles$ to begin with. 

The main ingredient is the following proposition. Recall the projective transformations $\mathfrak a_t$ of Section~\ref{subsec:polyhedra-HP}, which when applied to (the projective model of) $\AdS^3$ yield $\HP^3$ in the limit as $t \to 0$.

\begin{Proposition}\label{prop:flat-limit}
Let $\Gamma \in\Graph(\Sigma_{0,N}, \gamma)$ and consider weights $\theta \in \RR^{E(\Gamma)}$ that satisfy conditions (i), (ii), and the following weaker version of (iii):
\begin{itemize}
\item[(iii'):]  If $e_1^*, \ldots, e_n^*$ form a simple circuit that does not bound a face of $\Gamma^*$, and such that exactly two of the edges are dual to edges of the equator, then $\theta(e_1^*) + \cdots + \theta(e_n^*) \neq 0$.
\end{itemize}
  Let $P_k$ be a sequence in $\AdSPoly_{\Gamma}$ with dihedral angles $t_k \theta$ such that $t_k \to 0$. Then:
\begin{enumerate}
\item $P_k$ converges to an ideal $N$-gon $P_\infty$ in the hyperbolic plane.
\item $\mathfrak a_{t_k} P_k$ converges to an ideal polyhedron $P_\infty'$ in $\HP^3$ with $1$--skeleton $\Gamma$ and infinitesimal dihedral angles $\theta$.
\end{enumerate}
\end{Proposition}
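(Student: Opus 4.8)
The plan is to combine properness (Lemma~\ref{lem:PsiAdS-proper}) with a compactness/rescaling argument. First I would observe that the sequence $P_k \in \AdSPoly_\Gamma$ cannot stay in a compact subset of $\AdSPoly$: if it did, a subsequence would converge to some $P_\infty \in \AdSPoly$ with dihedral angles $\lim t_k \theta = 0$, contradicting condition (i) (which forces the equatorial angles of any honest polyhedron to be strictly negative). So $P_k \to \infty$ in $\AdSPoly$. Now I apply Lemma~\ref{lem:PsiAdS-proper}: since the angles $t_k\theta$ converge (to $0 \in \RR^E$), the limit $\theta_\infty = 0$ must fail condition (iii). Tracing through the proof of Lemma~\ref{lem:PsiAdS-proper}, the failure of (iii) occurs precisely because $P_k$ converges (after normalizing three vertices to $0,1,\infty$ in both left and right factors, as in Section~\ref{sec:ideal_poly}) to a convex ideal polyhedron $P_\infty$ with strictly fewer vertices, whose combinatorics is obtained from $\Gamma$ by collapsing vertices. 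The weaker hypothesis (iii') is exactly what I need to rule out any \emph{partial} collapse: if only some vertices collapsed, then $P_\infty$ would be a genuine lower-complexity polyhedron in $\AdSPoly$ and the path $c$ in $\Gamma^*$ going around a collapsed vertex would carry total $\theta$-weight zero while not bounding a face of $\Gamma^*$ and crossing the equator exactly twice --- contradicting (iii'). (One must check here that collapsing vertices that span an equatorial edge is the relevant scenario so that exactly two of the dual edges of $c$ are equatorial; this follows because the equator is a single Hamiltonian cycle.) Hence \emph{all} the vertices collapse onto a single spacelike plane, i.e. $P_k$ converges to an ideal $N$-gon $P_\infty$ in a copy of $\HH^2 \subset \AdS^3$ — more precisely, the left and right polygons $p_k^L, p_k^R$ both converge to the same ideal $N$-gon $P_\infty$ in the hyperbolic plane, which gives statement (1).

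For statement (2), I would invoke Proposition~\ref{prop:limit}: the rescaled polyhedra $\mathfrak a_{t_k} P_k$ converge to an ideal polyhedron $P_\infty'$ in $\HP^3$ provided the collapse $P_k \to P_\infty$ is, to first order in $t_k$, governed by a nontrivial tangent vector — that is, provided $\frac{1}{t_k}(P_k - P_\infty)$ has a nonzero limit $W$ which is an infinitesimal deformation of $P_\infty$ as an ideal polygon. To produce $W$ and identify it, I would use Theorem~\ref{thm:earthquakes} (or its infinitesimal shadow): the right metric $m_R^k$ is obtained from the left metric $m_L^k$ by shearing along $\Gamma$ with weights $2 t_k \theta$, and both converge to the double of $P_\infty$; dividing by $t_k$ and passing to the limit shows that the first-order difference between the left and right projections is exactly the infinitesimal shear $e_{2\theta}$, which is nonzero since $\theta \not\equiv 0$ (indeed condition (i) forces $\theta$ to be nonzero on the equator). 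This pins down $W = V(P_\infty')$ and, via Proposition~\ref{prop:angles-limit} (the infinitesimal dihedral angles in $\HP^3$ are the derivatives of the dihedral angles of $Q_t = \mathfrak a_{t}^{-1}(\mathfrak a_{t_k}P_k)$, reparametrized), shows that $P_\infty'$ has $1$--skeleton $\Gamma$ and infinitesimal dihedral angles $\lim \frac{1}{t_k}(t_k \theta) = \theta$.

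The main obstacle I anticipate is the step ruling out \emph{partial} collapse and, relatedly, making sure the limit object $P_\infty'$ in $\HP^3$ has the \emph{full} combinatorics $\Gamma$ rather than something degenerate (e.g. a face collapsing to an edge in the HP limit even though no vertices merge). The first issue is handled cleanly by hypothesis (iii') as sketched above, but one has to be careful about the bookkeeping of how many equatorial edges the relevant dual cycle $c$ crosses — this is where (iii') is tailored to the situation. The second issue requires controlling the non-degeneracy of the infinitesimal deformation $W$ edge-by-edge: one needs that for each edge $\alpha$ of $\Gamma$, the infinitesimal variation of the shear coordinate (which equals $\theta(\alpha)$ by the computation in the proof of Theorem~\ref{thm:HP-bend}) together with the limiting shear of $P_\infty$ assembles into a genuine non-degenerate HP tetrahedron shape parameter $z(\alpha) = \varepsilon e^{s(\alpha)}(1 + \sigma\theta(\alpha))$ for every edge; since $e^s(1+\sigma\theta)$ is always a unit-square-norm-times-positive element of $\RR+\RR\sigma$, this is automatic, so in fact no edge or face degenerates in the limit. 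Thus the combinatorics is preserved and $P_\infty'$ is an honest non-degenerate ideal polyhedron in $\HP^3$. Once both statements are in hand, Proposition~\ref{prop:imageinA-HP} applied to $P_\infty'$ yields $\theta \in \Angles_\Gamma$, which is the application for which the proposition is designed.
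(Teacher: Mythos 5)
There is a genuine gap in your argument for statement (1), and it reflects a structural difference with the paper's proof. You try to deduce the absence of partial collapse directly from Lemma~\ref{lem:PsiAdS-proper}, writing that ``the path $c$ in $\Gamma^*$ going around a collapsed vertex would carry total $\theta$-weight zero.'' But Lemma~\ref{lem:PsiAdS-proper} only shows that the \emph{limit} weights $\theta_\infty = \lim_k t_k\theta = 0$ fail condition (iii); since $\theta_\infty\equiv 0$, that conclusion is vacuous here and gives no information about $\sum_{e\in c}\theta(e)$ itself. Said differently: the angle sum $\sum_c t_k\theta = t_k\sum_c\theta$ converges to zero for \emph{every} cycle $c$, whether or not vertices merge, so the unrescaled limit cannot distinguish a violation of (iii$'$). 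To extract the claim that $\sum_c\theta=0$ around a merged vertex one must rescale first, which is exactly what the paper does: it passes to the half-pipe limit $\mathfrak a_{t_k}P_k \to P_\infty'$, shows (by a lemma) that the infinitesimal dihedral angles of $P_\infty'$ at a collapsed edge are sums of $\theta$-weights over the preimage edges, and then applies an infinitesimal angle-sum-zero statement for the horo-cylinder cross-section at a merged vertex. Only then does (iii$'$) yield the contradiction. In short, the logical order in the paper is (2)-then-(1), not (1)-then-(2) as you propose, and the step you assert without proof is the crux that the HP rescaling supplies.

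Two smaller points. First, your parenthetical that a partial collapse would make $P_\infty$ ``a genuine lower-complexity polyhedron in $\AdSPoly$'' is not right: since all dihedral angles of $P_\infty$ are limits of $t_k\theta\to 0$, the limit is always degenerate (a polygon in a plane), regardless of how many vertices merge, so nondegeneracy cannot be used to rule out collapse. Second, for statement (2), your identification of $W$ via $m_R^k = E_{2t_k\theta}m_L^k$ and a formal division by $t_k$ is a reasonable heuristic, but it presupposes that the rescaled sequence $\mathfrak a_{t_k}P_k$ actually converges; the paper secures that convergence with an explicit compactness claim (bounding the $P_k$ between two families of spacelike planes whose $\mathfrak a_t$-images converge to nondegenerate planes in $\HP^3$), which you would need to reproduce or replace. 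Your final remark that the shape parameters $z(\alpha)=\varepsilon e^{s(\alpha)}(1+\sigma\theta(\alpha))$ are automatically nondegenerate is only meaningful once one already knows the $s(\alpha)$ converge to finite values, which again requires (1); as written this is circular.
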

\noindent We briefly mention an analogue of the proposition in setting of quasi-Fuchsian hyperbolic three-manifolds. The first conclusion of the proposition can be seen as an analogue of Series' theorem \cite{ser_lim}, which states that when the bending data of a sequence of quasi-Fuchsian representations goes to zero in a controlled manner, the convex cores collapse to a Fuchsian surface. The second part is the analogue of work of Danciger--Kerckhoff \cite{dan_tra} showing that after application of appropriate projective transformations (in our notation, the $\mathfrak a_t$), the collapsing convex cores of such quasi-Fuchsian representations converge to a convex core in half-pipe geometry. 

\begin{proof}
We adapt the proof of Lemma~\ref{lem:PsiAdS-proper} (properness of the map $\PsiAdS$). As in that proof, we may again assume that the ideal vertices $(v^L_{1,k}, v^R_{1,k}), \ldots, (v^L_{N,k}, v^R_{N,k})$ of $P_k$ satisfy that:
\begin{itemize}
\item $v^L_{1,k} = v^R_{1,k} = 0$, $v^L_{2,k} = v^R_{2,k} = 1$, $v^L_{3,k} = v^R_{3,k} = \infty$;
\item For each $i \in \{1,\ldots N\}$, $v^L_{i,k} \to v^L_{i,\infty}$ and $v^R_{i,k} \to v^R_{i,\infty}$; and
\item $v^L_{i, \infty} = v^L_{i+1, \infty}$ if and only if $v^R_{i, \infty} = v^R_{i+1, \infty}$.
\end{itemize}

Therefore, we again find that the limit $P_\infty$ of $P_k$ (in this normalization) is a convex ideal polyhedron in $\AdS^3$, possibly of fewer vertices, and possibly degenerate (i.e. lying in a two-plane). The dihedral angle at an edge $e$ of $P_\infty$ is again the sum of $\theta_\infty(e')$ over all edges $e'$ of $\Gamma$ which collapse to~$e$, where in this case $\theta_\infty = 0$. Therefore all dihedral angles of $P_\infty$ are zero and we have that $P_\infty$ is an ideal polygon lying in the hyperbolic plane $\plane$ containing the ideal triangle $\Delta_0$ spanned by $(0,0), (1,1)$, and $(\infty, \infty)$. 
To prevent collapse, we apply the projective transformations $\mathfrak a_{t_k}$ to the $P_k$. 
\begin{claim}
Up to taking a subsequence (in fact not necessary), the vertices $\mathfrak a_{t_k} v_{i,k}$ converge to points $v_{i,\infty}'$ in the ideal boundary $\partial_\infty \HP^3$. 
\end{claim}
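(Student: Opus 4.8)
The plan is to understand the coordinates $\mathfrak a_{t_k} v_{i,k} \in \partial_\infty \HP^3 \cong T\RP^1$ explicitly in terms of the AdS coordinates $v_{i,k} = (v^L_{i,k}, v^R_{i,k}) \in \RP^1 \times \RP^1$. Recall from Section~\ref{subsec:polyhedra-HP} that $\mathfrak a_{t_k}$ is the projective transformation induced by the algebra isomorphism $\mathfrak a_{t_k}\colon \Rtau \to \BB_{t_k}$ sending $\tau \mapsto \kappa_{t_k}/|t_k|$, and that in the limit $\BB_{t_k} \to \Rsigma$. Using the idempotent decomposition $z = \frac{1-\tau}{2} x + \frac{1+\tau}{2} y$ for an ideal point with left coordinate $x$ and right coordinate $y$, the transformation $\mathfrak a_{t_k}$ rescales the ``$\tau$-direction'' by $1/|t_k|$, so that the image point, written in the algebra $\BB_{t_k}$, has real part $\sim \frac{x+y}{2}$ and imaginary part $\sim \frac{y-x}{2|t_k|}$ (up to the identification $\BB_{t_k}\cong \Rsigma$ in the limit and the identification of $\mathbb P^1\Rsigma$ with $T\RP^1$). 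So the first thing I would do is write this out carefully: $\mathfrak a_{t_k} v_{i,k}$ corresponds to the tangent vector to $\RP^1$ based at $\frac{v^L_{i,k} + v^R_{i,k}}{2}$ (roughly) with tangent component proportional to $\frac{v^R_{i,k} - v^L_{i,k}}{t_k}$.

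Next I would invoke the earthquake relation. By Theorem~\ref{thm:earthquakes} (applied to $P_k$), the right metric $m^R_k$ is obtained from the left metric $m^L_k$ by shearing along $\Gamma$ with weights $2 t_k \theta$; equivalently, by Corollary~\ref{cor:earthquake}, the polygon $p^R_k = \varpi_R(P_k)$ is obtained from $p^L_k = \varpi_L(P_k)$ by an earthquake whose total mass is $O(t_k)$. Since both $p^L_k$ and $p^R_k$ converge (to $P_\infty$, possibly with some vertices colliding, in the fixed normalization where vertices $1,2,3$ are pinned at $0,1,\infty$), the difference $v^R_{i,k} - v^L_{i,k}$ is $O(t_k)$: indeed an earthquake of magnitude $O(t_k)$ moves each ideal endpoint of the polygon by $O(t_k)$, because the earthquake map on $\RP^1$ depends smoothly on the transverse measure and vanishes to first order with it. This is exactly the content already used in Lemma~\ref{lem:PsiAdS-proper} (``the right polygon $p^R_n$ is obtained from $p^L_n$ by an earthquake of bounded magnitude''), here upgraded to a quantitative $O(t_k)$ estimate. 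Consequently $\frac{v^R_{i,k} - v^L_{i,k}}{t_k}$ is bounded, and after passing to a subsequence it converges; the real part $\frac{v^L_{i,k}+v^R_{i,k}}{2}$ converges to $v^L_{i,\infty}$ since $p^L_k, p^R_k \to P_\infty$. Therefore $\mathfrak a_{t_k} v_{i,k}$ converges to a well-defined point $v_{i,\infty}' \in T\RP^1 = \partial_\infty \HP^3$.

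The main obstacle is making the estimate $v^R_{i,k} - v^L_{i,k} = O(t_k)$ genuinely uniform and of the correct order, \emph{including} at those indices $i$ where vertices collide in the limit. Near a collision, the polygons $p^L_k, p^R_k$ are degenerating, so one must check that the earthquake estimate survives the degeneration — i.e. that the quantity $\frac{v^R_{i,k} - v^L_{i,k}}{t_k}$ does not blow up even as $v^L_{i,k} - v^L_{i+1,k} \to 0$. The cleanest way around this is to argue differentially rather than with the global earthquake map: by Theorem~\ref{thm:HP-bend} (or directly via Proposition~\ref{prop:angles-limit}), the infinitesimal deformation $V$ obtained as the limit of $\frac{1}{t_k}(p^R_k - p^L_k)$ is precisely the infinitesimal earthquake of $P_\infty$ along the limiting lamination $2\theta_\infty$ restricted to the surviving edges; since infinitesimal earthquakes along a fixed finite-leaved lamination are given by an explicit (smooth, indeed affine in the weights) vector field on the space of configurations of points on $\RP^1$, the limit exists without any subsequence and the estimate is automatic. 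I would then remark that this simultaneously identifies the limiting configuration $\{v_{i,\infty}'\}$ as $\varpi(P_\infty') = P_\infty$ together with the infinitesimal deformation data $V(P_\infty')$, which is exactly what the subsequent argument needs; in particular, by condition (iii') the vertices $v_{i,\infty}'$ are pairwise distinct as points of $T\RP^1$ (their basepoints may collide in $\RP^1$, but the tangent components separate them), so $\mathfrak a_{t_k} P_k$ does not collapse.
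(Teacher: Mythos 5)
Your proposal takes a genuinely different route from the paper. The paper's proof is a synthetic trapping argument: it exhibits two families of space-like planes $\mathcal Q_\pm(t)$ in $\AdS^3$ that sandwich $P_k$, whose time-like separation is $\operatorname{O}(t)$, and observes that $\mathfrak a_t \mathcal Q_\pm(t)$ converge to honest non-degenerate planes in $\HP^3$, so the rescaled polyhedra stay in a compact region. Your proof instead works in the left/right coordinates, reducing the claim to the quantitative estimate $v^R_{i,k}-v^L_{i,k} = \operatorname{O}(t_k)$ via the earthquake relation of Theorem~\ref{thm:earthquakes}. This coordinate approach is natural and, if completed, would give the stronger conclusion (no subsequence needed); but as written there is a gap, and the patch you propose does not close it.

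You correctly identify the gap yourself: the estimate must be uniform as $p_k^L$ degenerates. But your proposed fix — invoking Theorem~\ref{thm:HP-bend} or Proposition~\ref{prop:angles-limit} to conclude that $\tfrac{1}{t_k}(p^R_k - p^L_k)$ \emph{is} the infinitesimal earthquake — is circular. Proposition~\ref{prop:limit} and Proposition~\ref{prop:angles-limit} are hypotheses-about-a-smooth-family statements: they assume a family $Q_t$ already differentiable at $t=0$, which is precisely what the convergence of $\mathfrak a_{t_k} P_k$ would establish, not a premise you may assume. Theorem~\ref{thm:HP-bend} likewise describes the deformation field $V(P)$ of an ideal polyhedron $P$ that is already given in $\HP^3$; you cannot invoke it before you know the rescaled limit exists. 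So "the limit is the infinitesimal earthquake" is the conclusion, not an input, and asserting that infinitesimal earthquakes are given by smooth explicit vector fields does not by itself show the difference quotients converge to them.

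The honest way to close the gap along your lines is to prove the boundedness directly from the shear vector fields, with attention to where the moved vertices sit relative to the shearing geodesics. Writing $v_i^R$ and $v_i^L$ as the same real-analytic function of the shear coordinates evaluated at $s^L_k + 2t_k\theta$ and at $s^L_k$, the displacement is controlled by $\partial v_i / \partial s_\alpha$ along the interpolating path. For a diagonal $\alpha=[a,b]$ that separates $v_i$ from the base triangle containing $\infty$, the shear field at $v_i$ is $\pm(v_i-a)(v_i-b)/(b-a)$, and because $v_i$ lies in the bounded arc between $a$ and $b$ (it is on the side away from $\infty$), one has $|(v_i-a)(v_i-b)/(b-a)|\le |b-a|/4$, which remains bounded (indeed goes to $0$) as $a,b$ collide. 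The fact that the dangerous regime $|z-a|\gg|b-a|$ never occurs for a \emph{moved} vertex in this normalization is exactly the uniformity input you need and is not a consequence of generic smoothness of the earthquake map. One still has to deal with the normalization at $\infty$ and the higher-order term in the Taylor expansion, but this is the core of a correct argument. Finally, your last sentence ("by condition (iii') the vertices $v_{i,\infty}'$ are pairwise distinct ...") is not part of the Claim and in fact is what the rest of the proof of Proposition~\ref{prop:flat-limit} establishes afterwards; it should not be asserted here.
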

\begin{proof}
 This can be seen from the following simple compactness statement, which may be verified by induction: Given $M \geq 1$ and $\Theta > 0$, there exists two smooth families of space-like planes $\mathcal Q_+(t)$ and $\mathcal Q_-(t)$, defined for $t \geq 0$, such that 
 \begin{itemize}
 \item $\mathcal Q_+(0) = \mathcal  Q_-(0) = \plane$.
\item $\mathcal Q_+(t)$ and $\mathcal Q_-(t)$ are disjoint for $t > 0$ and their common perpendicular is a fixed time-like line $\alpha$ (independent of $t$). 
\item The time-like distance (along $\alpha$) between $\mathcal Q_+(t)$ and $\mathcal Q_-(t)$ is $\operatorname{O}(t)$.
\item Any space-like convex connected ideal polygonal surface in $\AdS^3$ for which $\Delta_0$ is (contained in) a face, which has at most $M$ faces, and all of whose dihedral angles are bounded by $t \Theta$ lies to the past of $\mathcal Q_+(t)$ and to the future of $\mathcal Q_-(t)$.
 \end{itemize}
 The first three conditions above imply that the limit of $\mathfrak a_t \mathcal Q_+(t)$ and $\mathfrak a_t \mathcal Q_-(t)$ as $t \to 0$ are two disjoint non-degenerate planes $\mathcal Q_+'$ and $\mathcal Q_-'$ in $\HP^3$. Therefore, the limit of $\mathfrak a_{t_k} P_k$ must, after extracting a subsequence if necessary, converge to some polyhedron in $\HP^3 \cup \partial_\infty \HP^3$ lying below $\mathcal Q_+'$ and above $\mathcal Q_-'$.
 \end{proof}

As in the proof of Lemma~\ref{lem:PsiAdS-proper}, the limit of $\mathfrak a_{t_k} P_k$ is the convex hull $P_\infty'$ of $v_{1,\infty}', \ldots, v_{n, \infty}'$ in $\HP^3$. The $1$--skeleton $\Gamma'$ of $P_\infty'$ is obtained from the original $1$--skeleton $\Gamma$ by collapsing some edges to vertices and some faces to edges or vertices. A simple argument in HP geometry gives that:
\begin{lemma}
Given $e' \in \Gamma'$, the infinitesimal dihedral angle $\theta_\infty'(e')$ of $P_\infty'$ at $e'$ is the sum of $\theta(e) = \frac{d}{dt} t \theta(e)\big|_{t=0}$ over all edges $e$ which collapse to~$e'$.
\end{lemma}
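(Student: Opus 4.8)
The plan is to reduce the statement to the additivity of bending across the collapsing region, together with the fact that in the degeneration $\XX(\BB_{t_k})\to\HP^3$ the finite hyperbolic rotations of $P_k$ become infinitesimal rotations of $\HP^3$. Note first that if $e'$ already is an edge of $\Gamma$ surviving in $\Gamma'$ (so it is ``covered'' by itself alone) the claim is essentially Proposition~\ref{prop:angles-limit}; the real content appears when several edges $e_1,\dots,e_m$ of $\Gamma$ merge onto $e'$. By the description of the collapse $\Gamma\to\Gamma'$ given in the proof of Lemma~\ref{lem:PsiAdS-proper} (and recalled above), the edges of $\Gamma$ that collapse onto $e'$ are a collection $e_1,\dots,e_m$ whose duals form a path of adjacent edges in $\Gamma^*$ joining $f^*$ to $f'^*$, where $f,f'$ are the two faces of $P_\infty'$ incident to $e'$. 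Concretely this means there is a chain of faces $g_0,g_1,\dots,g_m$ of $P_k$ with $g_{j-1}\cap g_j=e_j$, such that $\mathfrak a_{t_k}(g_0)$ and $\mathfrak a_{t_k}(g_m)$ converge to $f$ and $f'$ (in particular their supporting planes converge), while $\mathfrak a_{t_k}(e_j)\to e'$ for every $j$, since each $e_j$ collapses onto $e'$.

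Now for each $j$ let $R_j\in\operatorname{Isom}^+(\AdS^3)=\PGL^+(2,\Rtau)$ be the hyperbolic rotation that fixes the spacelike line $e_j$ of $P_k$ pointwise and carries the supporting plane of $g_{j-1}$ onto that of $g_j$; by the definition of the exterior dihedral angle in Section~\ref{sec:intro-AdS-param} together with the cross-ratio formula $z=\varepsilon e^{s+\tau\theta}$, its rotation angle is the exterior dihedral angle $t_k\theta(e_j)$, the signs being taken consistently along the chain. Then $R:=R_m\cdots R_1$ carries the supporting plane of $g_0$ onto that of $g_m$. Conjugating by $\mathfrak a_{t_k}$ and letting $k\to\infty$: since $\mathfrak a_{t_k}(e_j)\to e'$ and the angle $t_k\theta(e_j)\to 0$ at rate $\theta(e_j)$, a direct computation (the one underlying Proposition~\ref{prop:limit}, equivalently the coordinate form of Lemma~\ref{lem:HP3-action} and the definition of an infinitesimal rotation) shows $\mathfrak a_{t_k}R_j\,\mathfrak a_{t_k}^{-1}\to 1+a_j\sigma$, the infinitesimal rotation of $\HP^3$ about the axis $e'$ of infinitesimal angle $\theta(e_j)$. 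Because $\sigma^2=0$ all cross-terms in the product vanish, so
\[
\mathfrak a_{t_k} R\, \mathfrak a_{t_k}^{-1}=\prod_{j=1}^{m}\bigl(\mathfrak a_{t_k}R_j\,\mathfrak a_{t_k}^{-1}\bigr)\ \longrightarrow\ \prod_{j=1}^{m}(1+a_j\sigma)=1+\Bigl(\sum_{j=1}^{m}a_j\Bigr)\sigma\qquad(k\to\infty);
\]
and as the $a_j$ are all multiples of a common generator (the infinitesimal translation along $\varpi(e')$ of unit length), $\sum_j a_j$ is the infinitesimal rotation about $e'$ of infinitesimal angle $\sum_j\theta(e_j)$. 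On the other hand $\mathfrak a_{t_k}R\,\mathfrak a_{t_k}^{-1}$ carries the supporting plane of $\mathfrak a_{t_k}(g_0)\to f$ onto that of $\mathfrak a_{t_k}(g_m)\to f'$, so the limiting element $1+(\sum_j a_j)\sigma$ fixes $e'$ pointwise and carries the plane of $f$ onto the plane of $f'$ --- that is, it realizes the bending of $P_\infty'$ at $e'$. By the definition of the infinitesimal dihedral angle in Section~\ref{subsec:polyhedra-HP} (via $z=\varepsilon e^{s+\sigma\theta}$) this gives $\theta_\infty'(e')=\sum_{j=1}^m\theta(e_j)$, which is the assertion.

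The one genuinely delicate point --- and the main thing to get right --- is the bookkeeping of signs and orientations: one must check that ``the rotation carrying $g_{j-1}$ onto $g_j$'', defined consistently along the chain $g_0,\dots,g_m$, really has angle equal to the exterior dihedral angle with the sign convention of Section~\ref{sec:intro-AdS-param} (whose hallmark is that equatorial edges receive negative angles, and along which exactly one $e_j$ is equatorial precisely when $e'$ is), and that the limiting motion is then the bending of $P_\infty'$ at $e'$ with the matching convention of Section~\ref{subsec:polyhedra-HP}. Both are forced by the cross-ratio characterizations $z=\varepsilon e^{s+\tau\theta}$ and $z=\varepsilon e^{s+\sigma\theta}$, together with the facts that the cross-ratio operation commutes with the algebra maps $\mathfrak a_{t_k}$ and with the identification $T\RP^1\cong\PP^1(\Rsigma)$; this compatibility is what the ``simple argument in HP geometry'' really amounts to. A variant of the whole argument can be phrased using only the multiplicativity of cross-ratios, $(p,q;r_0,r_m)=\prod_j(p,q;r_{j-1},r_j)$, but the shifting base pair $e_j=v_{j-1}v_j$ makes the rotation-composition formulation cleaner.
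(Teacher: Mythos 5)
The paper gives no proof of this lemma at all, stating only ``a simple argument in HP geometry gives that''; your proposal supplies a complete and correct argument, and it is very plausibly the argument the authors had in mind. The structure is sound: you reduce to a chain of faces $g_0,\dots,g_m$ of $P_k$ separated by the collapsing edges $e_1,\dots,e_m$, define the finite rotations $R_j$ of $\AdS^3$ that carry the supporting plane of $g_{j-1}$ onto that of $g_j$ about the space-like axis $e_j$ with angle $t_k\theta(e_j)$, observe that $\mathfrak a_{t_k}R_j\mathfrak a_{t_k}^{-1}$ converges (by the rescaled-limit principle underlying Proposition~\ref{prop:angles-limit} together with Lemma~\ref{lem:HP3-action}) to the infinitesimal rotation $1+a_j\sigma$ of $\HP^3$ about $e'$ of infinitesimal angle $\theta(e_j)$, and then use $\sigma^2=0$ to get $\prod_j(1+a_j\sigma)=1+(\sum_j a_j)\sigma$. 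Since all $a_j$ are multiples of the one generator (the unit infinitesimal translation along $\varpi(e')$), the limiting transformation is the infinitesimal rotation about $e'$ of angle $\sum_j\theta(e_j)$, and it carries the plane of $f$ to the plane of $f'$, which is precisely the bending datum measured by $\theta_\infty'(e')$. The one place where you are brief is the precise sign and orientation bookkeeping; you flag this honestly and correctly locate its resolution in the compatibility of the cross-ratio formulas $z=\varepsilon e^{s+\tau\theta}$ and $z=\varepsilon e^{s+\sigma\theta}$ with the rescaling maps $\mathfrak a_t$. The final remark about a cross-ratio-multiplicativity variant is also accurate, and you are right that the shifting base pair makes the rotation-composition formulation cleaner.
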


Next, consider the projection $\varpi: \operatorname{HP}^3 \to \plane$. Note that $\varpi(v_{i,\infty}') = v_{i,\infty}$. Let $\mathcal H$ denote the HP horo-cylinder which is the inverse image under $\varpi$ of a small horocycle in $\plane$ centered at a vertex $v_{i, \infty}$ of $P_\infty$. The metric on $\mathcal H$ inherited from $\HP^3$ is flat and degenerate; it is the pull-back under $\varpi$ of the metric on a horocycle. The intersection of $\mathcal H$ with $P_\infty'$ is a convex polygon $q$ in $\mathcal H$. The infinitesimal angles at vertices of $q$ are the same as the infinitesimal dihedral angles of the corresponding edges of $P_\infty'$. Note that the vertices of $q$ are the intersection with $\mathcal H$ of all edges emanating from the ideal points $v_{j, \infty}'$ such that $v_{j, \infty} = v_{i,\infty}$. A simple calculation in this degenerate plane shows that:

\begin{lemma}
The infinitesimal angles of $q$ sum to zero.
\end{lemma}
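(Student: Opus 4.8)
The plan is to carry out the ``simple calculation'' directly in the degenerate plane $\mathcal H = \varpi^{-1}(C)$, where $C \subset \plane$ is the small horocycle centered at the base point $v_{i,\infty} \in \RP^1$. First I would fix coordinates $(u,w)$ on $\mathcal H$: $u$ the arclength along $C \cong \mathbb E^1$, and $w = L$ the fiber-length function of Section~\ref{s:HP3-geometry}. In these coordinates the induced (degenerate) metric is $du^2$ and the fibers are the lines $\{u = \mathrm{const}\}$, so $\mathcal H$ is a flat degenerate plane, fibered over $\mathbb E^1$ --- the analogue, over a horocycle, of the plane $\HP^2$ attached to a geodesic of $\HH^2$. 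The key structural observation is that each edge of $q$ is a \emph{straight line} $w = au + b$ in these coordinates: an edge of $q$ is the trace on $\mathcal H$ of a face $F$ of $P_\infty'$ that contains one of the ideal points $v_{j,\infty}'$ lying over $v_{i,\infty}$; such an $F$ is the graph over a region of $\HH^2$ of an infinitesimal isometry $a$ of $\HH^2$ fixing $v_{i,\infty}$ (by Lemma~\ref{lem:HP3-action}, the fiber value of a non-degenerate plane at $X \in \plane$ is $\mathrm{rot}(a,X)$); since $a$ is upper triangular relative to $v_{i,\infty}$, the function $X \mapsto \mathrm{rot}(a,X)$ restricted to the horocycle $C$ is affine in the arclength $u$. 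Hence $q$ is a genuine convex polygon in $\mathcal H$ with straight edges, each with a well-defined slope $a \in \RR$ (independent of its direction of traversal).

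The second step computes the infinitesimal angles. By the two-dimensional horocyclic version of Lemma~\ref{lem:HP3-action}, an infinitesimal rotation of $\mathcal H$ about a point $(u_0,w_0)$ acts by $w \mapsto w + c\,(u-u_0)$; that is, it adds the constant $c$ to the slope of every line through that point, and $c$ is exactly the angle of the rotation. Therefore the infinitesimal exterior angle of $q$ at a vertex $\ell_j \cap \ell_{j+1}$ --- the angle of the infinitesimal rotation carrying the line $\ell_j$ to the line $\ell_{j+1}$ --- equals the slope jump $a_{j+1} - a_j$, with a single sign fixed once and for all by an orientation of $\partial q$. Labelling the edges of $q$ cyclically as $\ell_1,\dots,\ell_m$ with slopes $a_1,\dots,a_m$ (indices mod $m$), we get
\[ \sum_{j=1}^{m}\bigl(\text{infinitesimal angle of }q\text{ at }\ell_j\cap\ell_{j+1}\bigr) \;=\; \sum_{j=1}^{m}(a_{j+1}-a_j) \;=\; 0 \]
by telescoping. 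Combined with the identification --- stated just before the lemma --- of the infinitesimal angles of $q$ with the infinitesimal dihedral angles of $P_\infty'$ along the corresponding edges, this is the assertion.

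The part requiring care is not the telescoping but setting up the geometry of $\mathcal H$: that it is intrinsically the flat degenerate plane $\mathbb E^1 \times \RR$, that the contributing faces of $P_\infty'$ meet it in straight lines, and that infinitesimal angles there are measured by slope jumps --- all mild extensions of the material in Sections~\ref{s:HP3-geometry} and~\ref{subsec:polyhedra-HP}. As a cross-check one can instead argue by limits, in parallel with the rest of this appendix and with Section~\ref{sec:necess}: realize $\mathcal H$ as a rescaled limit $\mathfrak a_{t_k}\mathcal H_k$ of horo-tori $\mathcal H_k \subset \AdS^3$ based at $v_{i,\infty}$, so that $q = \lim \mathfrak a_{t_k}(\mathcal H_k \cap P_k)$; each $\mathcal H_k \cap P_k$ is a closed convex polygon in the Minkowski plane, whose exterior angles are the dihedral angles of $P_k$ and hence sum to zero (the very fact behind condition (ii)), so dividing by $t_k$ and letting $k \to \infty$ gives the claim. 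Intrinsically this is the curvature-zero specialization of the infinitesimal Gauss--Bonnet formula (Proposition~\ref{prop:inf-GB}), with the area term absent and no ``$2\pi$'' because the rotational part of the structure group of a flat degenerate plane is the additive group $\RR$, which has no winding. Together with the two preceding lemmas this finishes the proof of Proposition~\ref{prop:flat-limit}, and so of Proposition~\ref{prop:imageinA-AdS}.
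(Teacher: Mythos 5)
The paper gives no proof of this lemma---it is introduced with ``A simple calculation in this degenerate plane shows that''---so there is nothing of the authors' to compare against; I can only assess your argument on its own terms. The overall strategy (coordinates $(u,w)$ on $\mathcal H$, angles as slope jumps, telescoping) is the right sort of ``simple calculation'' and I believe it can be made to work, but there is a concrete gap in the key step.

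The gap is the assertion that a face $F$ of $P_\infty'$ meeting $\mathcal H$ ``is the graph over a region of $\HH^2$ of an infinitesimal isometry $a$ of $\HH^2$ fixing $v_{i,\infty}$.'' Writing $F = (1 + a\sigma)\plane$ and identifying $\partial_\infty\HP^3 \cong T\RP^1$, the ideal boundary of $F$ over a point $v\in\RP^1$ is the tangent vector $a(v)\in T_v\RP^1$. So $F$ passing through the ideal vertex $v'$ of $P_\infty'$ over $v_{i,\infty}$ forces $a(v_{i,\infty})$ to equal the tangent component of $v'$, \emph{not} to vanish; generically that tangent component is nonzero (it is exactly the data of the infinitesimal deformation $V(P_\infty')$ of the collapsed polygon $P_\infty$). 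With $v_{i,\infty}=\infty$ and $a = \begin{pmatrix}\alpha & \beta\\ \gamma & -\alpha\end{pmatrix}$, a direct computation gives $\rot(a, X)$ along the horocycle $y=c$ equal to $-\gamma c(1+u^2)+2\alpha u+\beta/c$ in arclength $u$, which is affine precisely when $\gamma = 0$. So the edges of $q$ are parabolas, not lines, and the telescoping as you wrote it (constant slope per edge) is not immediate. Two standard repairs are available: (a) apply an element of $\mathbb G_0$ to put the single ideal vertex $v'$ of $P_\infty'$ over $v_{i,\infty}$ on the zero section (the lemma is invariant, so this is harmless); or (b) note that consecutive faces $F_j, F_{j+1}$ around $q$ share an edge $L_j$ with $\varpi(L_j)$ ending at $v_{i,\infty}$, so $b_{j+1}-b_j$ is a translation along $\varpi(L_j)$ and hence \emph{does} fix $v_{i,\infty}$; consequently the quadratic coefficient $-\gamma c$ is the same for all the edges of $q$, and the slope-jump $w_{j+1}'(u_j)-w_j'(u_j)$ is independent of $u_j$ and telescopes. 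Either way, the point that must be made---and is not made in your write-up---is that all the faces contributing to $q$ contain a common ideal vertex of $P_\infty'$ over $v_{i,\infty}$; this is what gives the common leading coefficient, and it is not a priori obvious in the context where the lemma is applied (several vertices of $P_k$ are collapsing together).

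The ``cross-check'' has a separate imprecision: $v_{i,\infty}$ (or $(v_{i,\infty},v_{i,\infty})\in\partial_\infty\AdS^3$) is not a vertex of $P_k$, so the exterior angles of the flat Lorentzian polygon $\mathcal H_k \cap P_k$ are \emph{not} equal to the dihedral angles of $P_k$: the edges do not meet the horo-torus orthogonally, and the angle one sees is distorted (the $\AdS$ analogue of Proposition~\ref{prop:HP-angle-bound}). One needs a horo-torus chosen so that $\mathfrak a_{t_k}\mathcal H_k\to\mathcal H$ and an argument that the distortion factors tend to $1$ fast enough after dividing by $t_k$. Also, $\mathcal H_k\cap P_k$ may pick up vertices from edges of $P_k$ joining two collapsing vertices, which cross $\mathcal H_k$ twice; these must be seen to vanish in the limit.

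None of this is fatal---both repairs above are short---but as written the proof assumes the crucial ``affine in $u$'' step rather than establishing it.
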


Now, suppose, for contradiction, that $v_{i+1, \infty} = v_{i, \infty}$. Then, the vertices of $q$ correspond to a path $c'$ of edges of $\Gamma'$ whose inverse image under the collapse is a path $c$ of edges in $\Gamma$ which do not bound a face of $\Gamma^*$. It follows from the above that the sum of $\theta(e)$ over the edges $e$ of the path $c$ is zero, contradicting the condition~(iii').
\end{proof}

\begin{remark}
This argument also works in the context of hyperbolic ideal polyhedra with dihedral angles going to zero and $\pi$ at a controlled rate.
\end{remark}
\begin{remark}
Assuming the stronger condition~(iii) on $\theta$, the limiting ideal polygon $P_\infty$ must be the unique minimum of the length function $\ell_\theta$ over the space $\poly_N$ of marked ideal polygons. See the proof of Theorem~\ref{thm:main-HP}.
\end{remark}

\begin{proof}[Outline of alternative proof of Proposition~\ref{prop:imageinA-AdS}]
Let $\Gamma \in \Graph(\Sigma_{0,N}, \gamma)$ and suppose $P \in \AdSPoly_\Gamma$ such that the dihedral angles $\theta = \PsiAdS(P) \in \RR^{E(\Gamma)}$ violate condition (iii) in the definition of $\Angles_\Gamma$. We argue by contradiction. First we show that there are nearby weights $\theta'$ satisfying conditions (i), (ii), as well as condition (iii') of Proposition~\ref{prop:flat-limit} above and so that at least one of the angle sum expressions of (iii') is strictly \emph{negative}. This may already be the case for $\theta$. If not, then there is at least one angle sum expressions as in (iii) which evaluates to zero, and we will perturb. In the case that $\Gamma$ is a triangulation, it is simple to verify that that none of the angle sum expressions in condition (iii) is locally constant when the equations of condition (ii) are satisfied, and therefore a nearby $\theta'$ exists as desired, since (iii') consists of only finitely many conditions . If $\Gamma$ is not a triangulation, then it could be the case that an angle sum expression as in condition (iii) is constant equal to zero on the entire space of weights satisfying (ii) (see Figure~\ref{fig:strange-example} for an example!). A quick study of the possible ways such degenerate behavior can happen reveals that it is always possible to add a small number of edges (at most one for each angle sum expression of (iii) which evaluates to zero for $\theta$) with very small positive weights, while perturbing the other weights slightly, to produce $\theta'$ as desired. 

Next, by Lemma~\ref{lem:local-diffeo} (which was a simple consequence of Lemma~\ref{lem:PsiAdS-rigidity}, independent of Proposition~\ref{prop:imageinA-AdS}), there is an ideal polyhedron $P' \in \AdSPoly$, close to $P$, so that $\PsiAdS(P') = \theta'$. Now, consider the path of weights $t\theta'$, defined for $t > 0$. Lemma~\ref{lem:local-diffeo} implies that there exists a path $P_t$ in $\AdSPoly$ such that $\PsiAdS(P_t) = t \theta'$, defined at least for $t$ close to one. In fact, the path $P_t$ may be defined for all $1 \geq t > 0$. Indeed if the limit as $t \to T > 0$ of $P_t$ failed to exist, then the proof of Lemma~\ref{lem:PsiAdS-proper} would imply that $\PsiAdS(P_t)$ either goes to infinity or limits to an element of $\RR^{E(\Gamma)}$ for which some angle sum expression as in (iii) is exactly zero, impossible since the limit as $t \to T$ of $\PsiAdS(P_t)$ is, of course, equal to~$T \theta$.
Hence, we may apply Proposition~\ref{prop:flat-limit} to the path $P_t$. The result is an ideal polyhedron $P_\infty' \in \HPPoly$ whose infinitesimal dihedral angles are precisely $\theta'$. This contradicts Proposition~\ref{prop:imageinA-HP} since $\theta'$ does not satisfy (iii).

 \end{proof}
 
 \begin{figure}
 \def\svgwidth{2.5in}
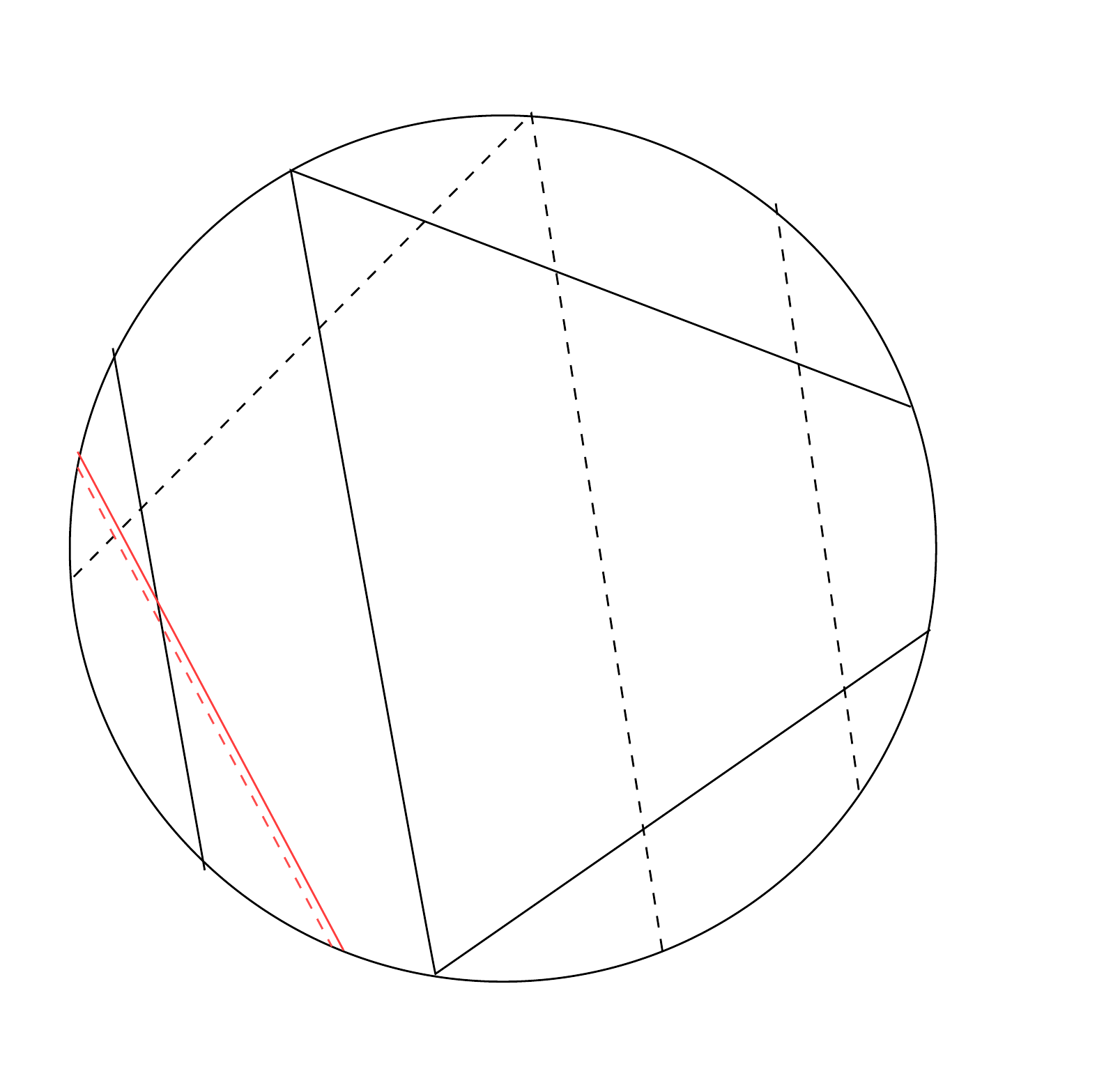
\caption{The black edges (including circular arcs, and dotted edges) form a three-connected graph which contains a Hamiltonian path (the circular arcs), but which is never realized as the $1$--skeleton of a convex ideal polyhedron in $\AdS^3$. The red path determines a path in the dual graph as in condition (iii) for which the angle sum is identically zero over any systems of weights satisfying (ii). Indeed the angle sum is precisely the alternating sum of the terms in the vertex equations (for vertices 1--9) with signs as labeled in the diagram. \label{fig:strange-example}}
 \end{figure}
  
\end{appendix}

\bibliographystyle{amsplain}
\bibliography{sample2,adsbib,sample1}

\end{document}